\documentclass[]{interact}
\usepackage{epstopdf}
\usepackage{subfigure}

\usepackage[numbers,sort&compress,merge]{natbib}
\usepackage{amsmath,amssymb}
\usepackage{hyperref}
\usepackage{mathrsfs}
\usepackage{amsfonts}
\usepackage{mathtools}
\usepackage{tikz}
\usepackage{lmodern}
\usepackage{bigints}
\usepackage{relsize}
\usepackage{bbm}
\usepackage{amsthm}
\bibpunct[, ]{(}{)}{,}{n}{,}{,}

\theoremstyle{plain}
\newtheorem{theorem}{Theorem}[section]
\newtheorem{lemma}[theorem]{Lemma}
\newtheorem{corollary}[theorem]{Corollary}

\theoremstyle{definition}
\newtheorem{definition}[theorem]{Definition}

\theoremstyle{remark}
\newtheorem{remark}{Remark}
\newtheorem{notation}{Notation}
\newcommand{\norm}[1]{\left\lVert#1\right\rVert}
\DeclareMathOperator{\sech}{sech}

\DeclareMathOperator{\hess}{Hess}

\begin{document}


\title{On the kink-kink collision problem for the $\phi^{6}$ model\\
with low speed}

\author{
\name{Abdon Moutinho \textsuperscript{a}\thanks{CONTACT Abdon Moutinho Email: moutinho@math.univ-paris13.fr}}
\affil{\textsuperscript{a} LAGA, Université Sorbonne Paris Nord, Villetaneuse, France}
}

\maketitle

\begin{abstract}
 We study the elasticity of the collision of two kinks with an incoming low speed $v\in (0,1)$ for the nonlinear wave equation in dimension $1+1$ known as the $\phi^{6}$ model. We prove for any $k\in\mathbb{N}$ that if the incoming speed $v$ is small enough, then, after the collision, the two kinks will move away with a velocity $v_{f}$ such that $\vert v_{f}-v\vert\leq v^{k}$ and the energy of the remainder will also be smaller than $v^{k}.$ This manuscript is the continuation of our previous paper where we constructed a sequence $\phi_{k}$ of approximate solutions for the $\phi^{6}$ model. The proof of our main result relies on the use of the set of approximate solutions from our previous work, modulation analysis, and a refined energy estimate method to evaluate the precision of our approximate solutions during a large time interval. 
\end{abstract}

\begin{keywords}
Solitons; collision; kinks; $\phi^{6}$ model; dimension $1+1;$ scalar field; non-integrable model; stability 
\end{keywords}

\section{Introduction}
\subsection{Background}
Considering the potential function $U(\phi)=\phi^{2}(1-\phi^{2})^{2},$ the partial differential equation known as the $\phi^{6}$ model in domain $1+1$ is defined by:
\begin{equation}\label{nlww}
    \partial_{t}^{2}\phi(t,x)-\partial_{x}^{2}\phi(t,x)+ U^{'}(\phi(t,x))=0 \text{, $(t,x) \in \mathbb{R}\times \mathbb{R}$}.
\end{equation}
\par The solutions $\phi(t,x)$ of \eqref{nlww} 
preserve the energy given by
\begin{equation}\label{energy}\tag{Energy}
    E(\phi)(t)=\int_{\mathbb{R}}\frac{\left[\partial_{t}\phi(t,x)\right]^{2}+\left[\partial_{x}\phi(t,x)\right]^{2}}{2}+U(\phi(t,x))\,dx,
\end{equation}
and
the momentum 
\begin{equation}\label{momentum}\tag{Momentum}
    P(\phi)={-}\int_{\mathbb{R}}\partial_{t}\phi(t,x)\partial_{x}\phi(t,x)\,dx.
\end{equation}
The kinetic energy and potential energy are given, respectively, by
\begin{equation*}
    E_{kin}(\phi)(t)=\int_{\mathbb{R}}\frac{\left[\partial_{t}\phi(t,x)\right]^{2}}{2}\,dx,\quad
    E_{pot}(\phi)(t)=\int_{\mathbb{R}}\frac{\left[\partial_{x}\phi(t,x)\right]^{2}}{2}+U(\phi(t,x))\,dx.
\end{equation*}
\par The vacuum set $\mathcal{V}$ of the potential function $U$ is the set $U^{{-}1}\{0\}=\{{-}1,0,1\}.$ The unique constant solutions with finite energy of \eqref{nlww} are the functions of the form $\phi\equiv \eta,$ for any $\eta\in \mathcal{V}.$  
\par Furthermore, it is well known that if a solution $\phi(t,x)$ of the partial differential equation \eqref{nlww} is in the energy space, which is the set of strong solutions with finite energy, then the solution is global-in-time and there exist numbers $\eta_{1},\,\eta_{2}\in\mathcal{V}$ such that
\begin{equation*}
    \lim_{x\to{-}\infty}\phi(t,x)=\eta_{1},\,\, \lim_{x\to{+}\infty}\phi(t,x)=\eta_{2},
\end{equation*}
for all $t\in\mathbb{R}.$
The set of solutions of \eqref{nlww} with finite energy is invariant under space translation, time translation, space reflection, time reflection, and Lorentz transformations.
\par The unique non-constant stationary solutions of \eqref{nlww} with finite energy are the kinks which are the space translation of either $H_{0,1}(x)$ or $H_{-1,0}(x)$ that are denoted by
\begin{equation*}
    H_{0,1}(x)=\frac{e^{\sqrt{2}x}}{\sqrt{1+e^{2\sqrt{2}x}}},\quad H_{-1,0}(x)=-H_{0,1}({-}x)=\frac{-e^{{-}\sqrt{2}x}}{\sqrt{1+e^{{-}2\sqrt{2}x}}},
\end{equation*}
 and the anti-kinks which are the space translation of the following functions \begin{equation*}
    H_{1,0}(x)=H_{0,1}({-}x)=\frac{e^{-\sqrt{2}x}}{\sqrt{1+e^{-2\sqrt{2}x}}},\quad H_{0,-1}(x)=-H_{0,1}(x)=\frac{-e^{\sqrt{2}x}}{\sqrt{1+e^{2\sqrt{2}x}}}.
\end{equation*}
 Using the identity $ H^{'}_{0,1}(x)=\sqrt{2}\frac{e^{\sqrt{2}x}}{\left(1+e^{2\sqrt{2}x}\right)^{\frac{3}{2}}},$ it is not difficult to verify that
\begin{equation}\label{kinkmass}
    \norm{\frac{d}{dx} H_{0,1}(x)}_{L^{2}_{x}(\mathbb{R})}^{2}=\frac{1}{2\sqrt{2}}.
\end{equation}
The kink $H_{0,1}$ satisfies the Bogomolny identity, which is 
$
    H^{'}_{0,1}(x)=\sqrt{2U(H_{0,1}(x))},
$
and the following estimates for any $k\geq 1$
\begin{equation}\label{le2}
    \left\vert \frac{d^{k}}{dx^{k}}H_{0,1}(x) \right\vert\lesssim_{k} \min\left(e^{\sqrt{2}x},e^{-2\sqrt{2}x}\right),
\end{equation}
and clearly
\begin{equation}\label{le1}
    \left\vert H_{0,1}(x) \right\vert\leq e^{\sqrt{2}\min(x,0)}.
\end{equation}
\par For the $\phi^{6}$ model there are stability results for the kinks. In \cite{first}, the orbital stability of two kinks with energy close to the minimal was obtained, and also the dynamics of two interacting kinks, which is a kink-kink solution with low kinetic energy and potential energy slightly bigger than the minimum possible for two kinks, was described in function of the initial data and the energy of the solution. In \cite{asympt}, the asymptotic stability of a kink for the $\phi^{6}$ model was obtained, moreover, asymptotic stability of a single kink was also obtained for a certain class of nonlinear wave equations of dimension $1+1.$ There are also asymptotic stability results for a single kink in other models, for example see \cite{munoz} and \cite{kinkdelort} for the $\phi^{4}$ model.
\par This manuscript is the sequel of the work done in \cite{second}. In this paper, we study the traveling kink-kink solutions of \eqref{nlww} with speed $0<v<1$ small enough. More precisely, we consider the following definition.
\begin{definition}
The traveling kink-kink with speed $v\in(0,1)$ are the set of solutions $\phi(t,x)$ that satisfies for some positive constants $K,\,c$ and any $t\geq K$ the following decay estimate
\begin{equation}\label{expkinkink}
   \norm{ \left(\phi(t,x),\partial_{t}\phi(t,x)\right)-\overrightarrow{H_{0,1}}\left(\frac{x-vt}{\sqrt{1-v^{2}}}\right)-\overrightarrow{H_{-1,0}}\left(\frac{x+vt}{\sqrt{1-v^{2}}}\right)}_{H^{1}_{x}(\mathbb{R})\times L^{2}_{x}(\mathbb{R})}\leq e^{-ct},
\end{equation}
where, for any $-1<v<1$ and any $y\in\mathbb{R},$
\begin{align}\label{vetorh1}
    \overrightarrow{H_{0,1}}\left(\frac{x-vt+y}{\sqrt{1-v^{2}}}\right)=&
    \begin{bmatrix}
     H_{0,1}\left(\frac{x-vt+y}{\sqrt{1-v^{2}}}\right)\\
     \frac{-v}{\sqrt{1-v^{2}}} H^{'}_{0,1}\left(\frac{x-vt+y}{\sqrt{1-v^{2}}}\right)
    \end{bmatrix},\\ \label{vetorh2} 
    \overrightarrow{H_{-1,0}}\left(\frac{x+vt-y}{\sqrt{1-v^{2}}}\right)=&\begin{bmatrix}
     H_{-1,0}\left(\frac{x+vt-y}{\sqrt{1-v^{2}}}\right)\\
     \frac{v}{\sqrt{1-v^{2}}} H^{'}_{-1,0}\left(\frac{x+vt-y}{\sqrt{1-v^{2}}}\right)
     \end{bmatrix}.
\end{align}
\end{definition}
The existence and uniqueness for any $0<v<1$ of solutions $\phi(t,x)$ satisfying \eqref{expkinkink} was obtained in \cite{multison}, but the uniqueness of the solution of \eqref{nlww} satisfying for $0<v<1$
\begin{equation*}
   \lim_{t\to+\infty}\norm{ \overrightarrow{\phi}(t,x)-\overrightarrow{H_{0,1}}\left(\frac{x-vt}{\sqrt{1-v^{2}}}\right)+\overrightarrow{H_{-1,0}}\left(\frac{x+vt}{\sqrt{1-v^{2}}}\right)}_{H^{1}_{x}(\mathbb{R})\times L^{2}_{x}(\mathbb{R})}=0
\end{equation*}
is still an open problem. For references on the existence and uniqueness of multi-soliton solutions of other nonlinear dispersive partial differential equations, see for example \cite{multigkdv} and \cite{multisupergkdv}. 
\par For non-integrable dispersive models, there exist previous results about the inelasticity of the collision of two solitons. For example, in the article \cite{collision1}, Martel and Merle verified that the collision between two solitons with nearly equal speed is not elastic. More precisely, they obtained that the incoming speed of the two solitons is different of their outgoing speed after their collision. \par Since the $\phi^{6}$ model is a non-integrable system, 
the collision of two kinks with low speed $0<v<1$ is expected to be inelastic. More precisely, we were expecting the existence of a value $k>1$ such that if $0<v\ll 1$ and $\phi(t,x)$ is a solution \eqref{nlww} satisfying the condition \eqref{expkinkink}, 
then $\phi(t,x)$ should have inelasticity of order $v^{k},$ which means the existence of $t<0$ with $\vert t\vert \gg 1$ such that 
\begin{equation}\label{contr1}
\left(\phi(t,x),\partial_{t}\phi(t,x)\right)=\overrightarrow{H_{0,1}}\left(\frac{x+v_{f}t+y_{1}(t)}{\sqrt{1-v_{f}^{2}}}\right)+\overrightarrow{H_{{-}1,0}}\left(\frac{x-v_{f}t+y_{2}(t)}{\sqrt{1-v_{f}^{2}}}\right)+ro(t,x),
\end{equation}
with $v^{k}\ll\norm{ro(t)}_{H^{1}_{x}(\mathbb{R})\times L^{2}_{x}(\mathbb{R})}\ll v$ and $v_{f}(t),\,y_{1},\,y_{2}$ satisfying
\begin{equation}\label{contr2}
    v^{k}\ll\left\vert v_{f}(t)-v\right\vert+\max_{j\in\{1,2\}}\vert \dot y_{j}(t)\vert\ll v,
\end{equation}
  for all $t<0$ satisfying $\vert t\vert\gg 1.$
Actually, in the quartic $gKdV,$ the collision of the two solitons satisfies a similar property than our previous expectations in \eqref{contr1} and \eqref{contr2}, see Theorem $1$ in the article \cite{collision1} of Martel and Merle for more details.    
\par However, in this manuscript, we prove for the $\phi^{6}$ model and any $k>1$ that if $0<v\ll 1$ and $t$ is close to $-\infty,$ both estimates \eqref{contr1} and \eqref{contr2} are not possible. Indeed, we demonstrate that if $v\ll 1$ and $\phi(t,x)$ satisfies \eqref{expkinkink}, then there exists a number $e_{k,2v}\in\mathbb{R}$ satisfying, for all $t$ close to $-\infty,$ 
\begin{equation*}
   \left(\phi(t,x),\partial_{t}\phi(t,x)\right)=\overrightarrow{H_{0,1}}\left(\frac{x+v_{f}t-e_{k,2v}}{\sqrt{1-v_{f}^{2}}}\right)+\overrightarrow{H_{-1,0}}\left(\frac{x-v_{f}t+e_{k,2v}}{\sqrt{1-v_{f}^{2}}}\right)+r_{c,v}(t,x),
\end{equation*}
$\limsup_{t\to-\infty}\norm{r_{c,v}(t)}_{H^{1}_{x}\times L^{2}_{x}}\leq v^{2k}$ and
\begin{equation}\label{elasticity22}
   \limsup_{t\to-\infty} \vert v_{f}(v,t)-v\vert\leq v^{2k}.
\end{equation}
In conclusion, the inelasticity of the collision of two kinks cannot be of any order $v^{k}$ for any $1\ll k\in\mathbb{N},$ if the incoming speed $v$ of the kinks is small enough. The problem to verify the inelasticity of the collision of kinks for the $\phi^{6}$ model is still open. But, because of the conclusion obtained in this paper, the change $\vert v-v_{f}\vert$ in the speeds of each soliton is much smaller than any monomial function $v^{k},$ more precisely for all $k>0$
\begin{equation}\label{newnewnew}
    \lim_{v\to 0^{+}}\limsup_{t\to{-}\infty}\frac{\vert v_{f}(v,t)-v\vert}{v^{k}}=0,
\end{equation}
which is a new result. 
\par The study of collision of kinks for the $\phi^{6}$ model is important for high energy physics, see for example \cite{kinkcollision} and \cite{collision}. Actually, in the article \cite{kinkcollision}, it was obtained numerically the existence of a critical speed $v_{c}$ such that if each of the two kinks moves with speed $v$ with absolute value less than $v_{c}$ and they approach each other, then they will collide and the collision will be very elastic, which is exactly the result we obtained rigorously in this paper. The study of the dynamics of multi-soliton solutions of the $\phi^{6}$ model has also applications in condensed matter physics, see \cite{condensed}, and cosmology, see \cite{cosmic}. 
\par For other nonlinear dispersive equations, there exist rigorous results of inelasticity and stability of collision of solitons. For  $gKdV$ models, the inelasticity of collision of solitons was proved  for the quartic $gKdV$ in \cite{collision1}, and, for a certain class of generalized $gKdV,$ inelasticity of collision between solitons was also proved in \cite{munozkdv1} and  \cite{munozkdv2} by Muñoz, see also the article \cite{stabcol} of Martel and Merle. For nonlinear Schrödinger equation, in \cite{perelman}, Perelman studied the collision of two solitons of different sizes and obtained that after that the solution does not preserve the two solitons' structure after the collision. See also the work \cite{collisioncw} by Martel and Merle about inelasticity of the collision of two solitons for the fifth dimensional energy critical wave equation.
\subsection{Main Results}
The main theorem obtained in this manuscript is the following result:
\begin{theorem}\label{maintheo}
There exists a continuous function $v_{f}:(0,1)\times \mathbb{R}\to (0,1)$ and, for any $0<\theta<1$ and $k\in\mathbb{N}_{\geq 2},$ there exists $0<\delta(\theta,k)<1,$ such that if $0<v<\delta(\theta,k),$ and $\phi(t,x)$ is a travelling kink-kink solution of \eqref{nlww} with speed $v,$ then there exists a number $e_{v,k}$ such that $\vert e_{v,k}\vert <\ln\left(\frac{8}{v^{2}}\right)$ and if $t\leq -\frac{\ln{\left(\frac{1}{v}\right)}^{2-\theta}}{v},$ then $\vert v_{f}(v,t)-v\vert<v^{k}$ and 
     \begin{multline*}
    \norm{\phi(t,x)-H_{0,1}\left(\frac{x-e_{k,v}+v_{f}t}{\sqrt{1-v_{f}^{2}}}\right)-H_{-1,0}\left(\frac{x+e_{k,v}-v_{f}t}{\sqrt{1-v_{f}^{2}}}\right)}_{H^{1}_{x}(\mathbb{R})}\\ {+}\norm{\partial_{t}\phi(t,x)-\frac{v_{f}}{\sqrt{1-v_{f}^{2}}} H^{'}_{0,1}\left(\frac{x-e_{v,k}+v_{f}t}{\sqrt{1-v_{f}^{2}}}\right)+\frac{v_{f}}{\sqrt{1-v_{f}^{2}}} H^{'}_{-1,0}\left(\frac{x+e_{v,k}-v_{f}t}{\sqrt{1-v_{f}^{2}}}\right)}_{L^{2}_{x}(\mathbb{R})}\leq v^{k}.
    \end{multline*}
    If $\frac{-4\ln{\left(\frac{1}{v}\right)}^{2-\theta}}{v}\leq t\leq \frac{-\ln{\left(\frac{1}{v}\right)}^{2-\theta}}{v},$ then
    \begin{multline*}
        \norm{\phi(t,x)-H_{0,1}\left(\frac{x-e_{k,v}+vt}{\sqrt{1-v^{2}}}\right)-H_{-1,0}\left(\frac{x+e_{k,v}-vt}{\sqrt{1-v^{2}}}\right)}_{H^{1}_{x}(\mathbb{R})}\\{+}\norm{\partial_{t}\phi(t,x)-\frac{v}{\sqrt{1-v^{2}}} H^{'}_{0,1}\left(\frac{x-e_{v,k}+vt}{\sqrt{1-v^{2}}}\right)+\frac{v}{\sqrt{1-v^{2}}} H ^{'}_{-1,0}\left(\frac{x+e_{v,k}-vt}{\sqrt{1-v^{2}}}\right)}_{L^{2}_{x}(\mathbb{R})}\leq v^{k}.
    \end{multline*}
\end{theorem}
Clearly, Theorem \ref{maintheo} implies \eqref{newnewnew}.
Actually, the first item of Theorem \ref{maintheo} is a consequence of the second item of this theorem and the following result about orbital stability of two moving kinks.
\begin{theorem}\label{orbittheo}
There exists a constant $c>0$ and, for any $\theta\in(0,1),$ there exists $\delta(\theta)\in(0,1)$ such that if $0<v<\delta(\theta),$ and $(\psi_{0}(x),\psi_{1}(x))\in H^{1}_{x}(\mathbb{R})\times L^{2}_{x}(\mathbb{R})$ is an odd function satisfying
\begin{equation}
    \norm{(\psi_{0},\psi_{1})}_{H^{1}_{x}\times L^{2}_{x}}< v^{2+\theta},
\end{equation}
and $y_{0}\geq -4\ln{v},$ then the solution $(\phi(t,x),\partial_{t}\phi(t,x))$ of the Cauchy problem
\begin{equation}\label{nlwwsmooth}
    \begin{cases}
         \partial^{2}_{t}\phi(t,x)-\partial^{2}_{x}\phi(t,x)+\dot U(\phi(t,x))=0,\\
         \begin{bmatrix}
         \phi(0,x)\\
         \partial_{t}\phi(0,x)
         \end{bmatrix}=
         \begin{bmatrix}
         H_{0,1}\left(\frac{x-y_{0}}{\sqrt{1-v^{2}}}\right)+H_{{-}1,0}\left(\frac{x+y_{0}}{\sqrt{1-v^{2}}}\right)+\psi_{0}(x)\\
         \frac{{-}v}{\sqrt{1-v^{2}}} H^{'}_{0,1}\left(\frac{x-y_{0}}{\sqrt{1-v^{2}}}\right)+\frac{v}{\sqrt{1-v^{2}}} H^{'}_{{-}1,0}\left(\frac{x+y_{0}}{\sqrt{1-v^{2}}}\right)+\psi_{1}(x)
         \end{bmatrix}
    \end{cases}
\end{equation}
is given for all $t\geq 0$ by
\begin{equation}\label{modufor}
    \begin{bmatrix}
     \phi(t,x)\\
     \partial_{t}\phi(t,x)
    \end{bmatrix}
    =
    \begin{bmatrix}
         H_{0,1}\left(\frac{x-y(t)}{\sqrt{1-v^{2}}}\right)+H_{{-}1,0}\left(\frac{x+y(t)}{\sqrt{1-v^{2}}}\right)+\psi(t,x)\\
         \frac{{-}v}{\sqrt{1-v^{2}}} H^{'}_{0,1}\left(\frac{x-y(t)}{\sqrt{1-v^{2}}}\right)+\frac{v}{\sqrt{1-v^{2}}} H^{'}_{{-}1,0}\left(\frac{x+y(t)}{\sqrt{1-v^{2}}}\right)+\partial_{t}\psi(t,x)
    \end{bmatrix},
\end{equation}
such that
\begin{align}\label{globalorbit}
  \vert y(0)-y_{0} \vert + \norm{\overrightarrow{\psi}(t,x)}_{H^{1}_{x}\times L^{2}_{x}}\leq &c \norm{\overrightarrow{\psi_{0}}(x)}_{H^{1}_{x}\times L^{2}_{x}}^{\frac{1}{2}}+c(1+y_{0})^{\frac{1}{2}}e^{{-}\sqrt{2}y_{0}},\\ \nonumber
  \vert \dot y(t) -v\vert\leq &c\norm{\overrightarrow{\psi_{0}}(x)}_{H^{1}_{x}\times L^{2}_{x}},
\end{align}
for all $t\in\mathbb{R}_{\geq 0}.$
\end{theorem}

\subsection{Notation}
In this subsection, we explain the notation that we are going to use in the next sections. 
\begin{notation}
\par First, for any real function $f:\mathbb{R}^{2}\to\mathbb{R}$ satisfying the conditions $f(t,\cdot)\in L^{\infty}_{x}(\mathbb{R}),$ and $\partial_{t}f(t,\cdot)\in L^{2}_{x}(\mathbb{R}),$ we denote the function $\overrightarrow{f}:\mathbb{R}^{2}\to\mathbb{R}^{2}$ by
\begin{equation*}
    \overrightarrow{f}(t,x)=\left(f(t,x),\partial_{t}f(t,x)\right) \text{, for every $(t,x)\in\mathbb{R}^{2}.$}
\end{equation*}
\par For any $k\in\mathbb{N}$ and any smooth function $f:\mathbb{R}\to\mathbb{R},$ we use the following notation
\begin{equation*}
    f^{(k)}(x)=\frac{d^{x}}{dx^{k}}f(x) \text{, for all $x\in\mathbb{R}.$}
\end{equation*}
\par For any $z\in \mathbb{R},$ we use the following notation $H^{z}_{0,1}(x)=H_{0,1}(x-z),$ $H^{z}_{-1,0}(x)=H_{-1,0}(x-z).$ 
For any subset $\mathcal{D}\subset\mathbb{R},$ any $v\in (0,1)$ and any function $y:\mathcal{D}\to\mathbb{R},$ we define the functions $\overrightarrow{H_{0,1,v,y}}:\mathcal{D}\times\mathbb{R}\to\mathbb{R}^{2},\, \overrightarrow{H_{{-}1,0,v,y}}: \mathcal{D}\times\mathbb{R}\to\mathbb{R}^{2}$ by
\begin{align*}
\overrightarrow{H_{0,1,v,y}}(t,x)=&\begin{bmatrix}
H_{0,1}\left(\frac{x-v t+y(t)}{\sqrt{1-v^{2}}}\right)\\
     \frac{{-}v}{\sqrt{1-v^{2}}} H^{'}_{0,1}\left(\frac{x-vt+y(t)}{\sqrt{1-v^{2}}}\right)
     \end{bmatrix},\\
\overrightarrow{H_{{-}1,0,v,y}}(t,x)=&\begin{bmatrix}
H_{{-}1,0}\left(\frac{x+vt-y(t)}{\sqrt{1-v^{2}}}\right)\\
     \frac{v}{\sqrt{1-v^{2}}} H^{'}_{-1,0}\left(\frac{x+vt-y(t)}{\sqrt{1-v^{2}}}\right)
     \end{bmatrix}.
\end{align*}
\par For any set $\mathcal{D}\subset\mathbb{R}$ and any non-negative function $k:\mathcal{D}\to \mathbb{R}_{\geq 0},$ we say that $f(x)=O(k(x)),$ if $f$ has the same domain $\mathcal{D}$ as $k$ and there is a constant $C>0$ such that
$\vert f(x) \vert \leq C k(x) \text{ for any $x \in D.$}$
For any two non-negative real functions $f_{1}(x)$ and $f_{2}(x),$ we have that the condition $f_{1}\lesssim f_{2}$ is true if there is a constant $C>0$ such that $f_{1}(x)\leq C f_{2}(x) \text{ for any $x\in\mathbb{R}.$} $
Furthermore, for a finite number of real variables $\alpha_{1},\,...,\,\alpha_{n}$ and  two non-negative functions $f_{1}(\alpha_{1},...,\alpha_{n},x)$ and $f_{2}(\alpha_{1},...,\alpha_{n},x)$ both with domain $\mathcal{D}\times\mathbb{R}\subset \mathbb{R}^{n+1},$ we say that $f_{1}\lesssim_{\alpha_{1},..., \alpha_{n}}f_{2}$ if there is a positive function $L:\mathcal{D}\to \mathbb{R}_{+}$ such that
\begin{equation*}
    f_{1}(\alpha_{1},...,\alpha_{n},x)\leq L(\alpha_{1},\, ... , \alpha_{n})f_{2}(\alpha_{1},...,\alpha_{n},x) \text{ for all $(\alpha_{1},...,\alpha_{m},x)\in\mathcal{D}\times\mathbb{R}.$} 
\end{equation*}
We denote $f_{1}\cong f_{2}$ if $f_{1}\lesssim f_{2}$ and $f_{2}\lesssim f_{1}.$ 
\par We consider for any $f\in H^{1}_{x}(\mathbb{R})$ and any $g\in L^{2}_{x}(\mathbb{R})$ the following norms
\begin{equation*}
    \norm{f}_{H^{1}_{x}}=\norm{f}_{H^{1}_{x}(\mathbb{R})}=\left(\norm{f}_{L^{2}_{x}(\mathbb{R})}^{2}+\norm{\frac{df}{dx}}_{L^{2}_{x}(\mathbb{R})}^{2}\right)^{\frac{1}{2}},\quad
    \norm{g}_{L^{2}_{x}}=\norm{g}_{L^{2}_{x}(\mathbb{R})}.
\end{equation*}
In this manuscript, we consider the norm $\norm{\cdot}_{H^{1}_{x}\times L^{2}_{x}}$ given by
\begin{equation*}
    \norm{(f_{1}(x),f_{2}(x))}_{H^{1}_{x}\times L^{2}_{x}}=\left(\norm{f_{1}}_{H^{1}_{x}(\mathbb{R})}^{2}+\norm{f_{2}(x)}_{L^{2}_{x}(\mathbb{R})}^{2}\right)^{\frac{1}{2}},
\end{equation*}
for any $(f_{1},f_{2})\in H^{1}_{x}(\mathbb{R})\times L^{2}_{x}(\mathbb{R}).$ For any $(f_{1},f_{2}) \in L^{2}_{x}(\mathbb{R})\times L^{2}_{x}(\mathbb{R})$ and any $(g_{1},g_{2})\in L^{2}_{x}(\mathbb{R})\times L^{2}_{x}(\mathbb{R}),$ we denote
\begin{equation*}
    \left\langle (f_{1},f_{2}),\,(g_{1},g_{2}) \right\rangle=\int_{\mathbb{R}} f_{1}(x)g_{1}(x)+f_{2}(x)g_{2}(x)\,dx.
\end{equation*}
For any functions $f_{1}(x),\,g_{1}(x)\in L^{2}_{x}(\mathbb{R}),$ we denote
\begin{equation*}
    \left\langle f_{1},g_{1} \right\rangle=\int_{\mathbb{R}}f_{1}(x)g_{1}(x)\,dx.
\end{equation*}
\par In this manuscript, we consider the set $\mathbb{N}$ as the set of all positive integers.
For any for any $n\in\mathbb{N},$ and any $a,\,b\in\mathbb{R}^{n},$  we denote the scalar product in the Euclidean space $\mathbb{R}^{n}$ by 
\begin{equation*}
    \left\langle a : b\right\rangle=\sum_{j=1}^{n} a_{j}b_{j},
\end{equation*}
 where $a=\left(a_{1},...,a_{n}\right)$ and $b=\left(b_{1},...,b_{n}\right).$
 
\end{notation}
\subsection{Organization of the manuscript}
\par First, from the global well-posedness of the partial differential equation \eqref{nlww}, we recall that if $\phi$ is a strong solution of \eqref{nlww} with finite energy satisfying $\lim_{x\to\pm\infty}\phi(t_{0},x)=\pm 1$ for some $t_{0}\in\mathbb{R},$ then the function $\phi$ satisfies
\begin{equation*}
   \norm{\phi(t,x)-H_{0,1}(x)-H_{{-}1,0}(x)}_{H^{1}_{x}(\mathbb{R})}<{+}\infty,
\end{equation*}
for all $t\in\mathbb{R}.$
\par In the Subsection \ref{appsec} of Section \ref{presection}, we are going to review our preview results from the paper \cite{second} about the existence of a sequence of approximate solutions $\left(\varphi_{k,v}\right)_{k\geq 2}$ of \eqref{nlww} for which there exists a set of real numbers $\left(y_{k}(v)\right)_{k\geq 2}$ satisfying 
\begin{equation*}
   \lim_{t\to{+}\infty}\norm{\overrightarrow{\varphi_{k}}(t,x)-\overrightarrow{H_{0,1,v,y_{k}}}(t,x)-\overrightarrow{H_{{-}1,0,v,y_{k}}}(t,x)}_{H^{1}_{x}\times L^{2}_{x}}=0,
\end{equation*}
and if $v\ll 1,$ then $\norm{\partial^{l}_{t}\Lambda\left(\varphi_{k,v}\right)(t,x)}_{H^{s}_{x}}\lesssim_{s,l} v^{2k+l-\frac{1}{2}}e^{{-}2\sqrt{2}v\vert t\vert}$ for all $t\in\mathbb{R},\,l\in\mathbb{N}\cup\{0\},$ and $s\geq 0.$ 
\par In the Subsection \ref{auxaux} of Section \ref{presection}, we are going to verify that any solution of \eqref{nlww} with finite energy close to a sum of two kinks can be written as 
\begin{multline}\label{p1}
    \phi(t,x)=\varphi_{k,v}(t,x)+\frac{y_{1}(t)}{\sqrt{1-\frac{\dot d(t)^{2}}{4}}} H^{'}_{0,1}\left(\frac{x-\frac{d(t)}{2}+c_{k}(t)}{\sqrt{1-\frac{\dot d(t)^{2}}{4}}}\right)+
    \frac{y_{2}(t)}{\sqrt{1-\frac{\dot d(t)^{2}}{4}}} H^{'}_{0,1}\left(\frac{-x-\frac{d(t)}{2}+c_{k}(t)}{\sqrt{1-\frac{\dot d(t)^{2}}{4}}}\right)\\
    +u(t,x),
\end{multline}
such that, for any $t\in\mathbb{R},\,u(t)\in H^{1}_{x}(\mathbb{R})$ satisfies the following orthogonality conditions
\begin{align*}
    \left\langle u(t,x),\, H^{'}_{0,1}\left(\frac{x-\frac{d(t)}{2}+c_{k}(t)}{\sqrt{1-\frac{\dot d(t)^{2}}{4}}}\right) \right\rangle=0,\\
    \left\langle u(t,x),\, H^{'}_{0,1}\left(\frac{-x-\frac{d(t)}{2}+c_{k}(t)}{\sqrt{1-\frac{\dot d(t)^{2}}{4}}}\right) \right\rangle=0.
\end{align*}
Moreover, using $\Lambda(\phi)\equiv 0,$ we can verify that $y_{1},\,y_{2}\in C^{2}(\mathbb{R}).$
Furthermore, using the formula \eqref{p1}, we will estimate $\Lambda(\phi)(t,x).$ More precisely, we estimate the expression
$
   \Lambda\left(\phi\right)(t,x) -\Lambda\left(\varphi_{k,v}\right)(t,x),
$
in function of $y_{1}(t),\,y_{2}(t),\,d(t),\,\,u(t,x)$ and the estimate of the term
$
\Lambda\left(\varphi_{k,v}\right)(t,x)    
$ will follow from the main results of Subsection \ref{appsec} about the decay of approximate solutions.
The  function $c_{k}(t)$ will not appear in the evaluation of $\Lambda(\phi)(t,x),$ since we are going to use only its decay.
\par Next, in Section  \ref{energysection}, we are going to construct a function $L(t)$ to estimate $\norm{\left(u(t),\partial_{t}u(t)\right)}_{H^{1}_{x}\times L^{2}_{x}}$ during a large time interval.  
The main argument in this section is analogous to the ideas of Section $4$ of \cite{first}. More precisely, for
\begin{equation*}
    w_{k,v}(t,x)=\frac{x-\frac{d(t)}{2}+c_{k}(t)}{\sqrt{1-\frac{\dot d(t)^{2}}{4}}},
\end{equation*}
we consider first
\begin{equation*}
    L_{1}(t)=\int_{\mathbb{R}}\partial_{t}u(t,x)^{2}+\partial_{x}u(t,x)^{2}+ U^{''}\left(H_{0,1}\left(w_{k,v}(t,x)\right)-H_{0,1}\left(w_{k,v}(t,{-}x)\right)\right)u(t,x)^{2}\,dx.
\end{equation*}
From the orthogonality conditions satisfied by $u(t,x),$ if $v\ll 1,$ we deduce the following coercivity inequality 
\begin{equation*}
    \norm{\left(u(t),\partial_{t}u(t)\right)}_{H^{1}_{x}\times L^{2}_{x}}^{2}\lesssim L_{1}(t).
\end{equation*}
The function $L(t)$ will be constructed after correction terms $L_{2}(t)$ and $L_{3}(t)$ are added to $L_{1}(t).$  The motivation for the usage of the correction term $L_{3}(t)$ is to reduce the growth of the modulus of the following expression
\begin{multline*}
    2\int_{\mathbb{R}}\left[\partial^{2}_{t}u(t,x)-\partial^{2}_{x}u(t,x)+ U^{''}\left(H_{0,1}\left(w_{k,v}(t,x)\right)-H_{0,1}\left(w_{k,v}(t,x)\right)\right)u(t,x)\right]\partial_{t}u(t,x)\,dx
\end{multline*}
 in $\dot L_{1}(t).$ The time derivative of $L_{2}(t)$ will cancel with the expression
 \begin{equation*}
     \int_{\mathbb{R}}\frac{\partial}{\partial t}\left[ U^{''}\left(H_{0,1}\left(w_{k,v}(t,x)\right)-H_{0,1}\left(w_{k,v}(t,x)\right)\right)\right]u(t,x)^{2}\,dx,
 \end{equation*}
 from $\dot L_{1}(t).$
 Finally, under additional conditions in the growth of the functions $y_{1}(t),\,y_{2}(t),$ if $0<v\ll 1,$ the function $L(t)=\sum_{j=1}^{3}L_{j}(t)$ will satisfy for a constant $C(k)$ depending only on $k$ the following estimates  
 \begin{align*}
     \left\vert \dot L(t)\right\vert\lesssim &\frac{v}{\ln{\left(\frac{1}{v}\right)}}\norm{(u(t),\partial_{t}u(t))}_{H^{1}_{x}\times L^{2}_{x}}^{2},\\
     \norm{(u(t),\partial_{t}u(t))}_{H^{1}_{x}\times L^{2}_{x}}^{2}\lesssim& L(t)+C(k) v^{4k}\ln{\left(\frac{1}{v}\right)}^{2n_{k}},
 \end{align*}
for all $t$ in a large time interval, $n_{k}$ is the number denoted in Theorem \ref{toobig}. Therefore, using Gronwall Lemma and the two estimates above, we are going to obtain an upper bound for $\norm{(u(t),\partial_{t}u(t))}_{H^{1}_{x}\times L^{2}_{x}}$ when $t$ belongs to a large time interval.
\par In Section \ref{p1main}, we are going to estimate $\norm{\phi(t)-\phi_{k,v}(t)}_{H^{1}_{x}\times L^{2}_{x}}$
during a large time interval. This estimate follows from the study of a linear ordinary differential system whose solutions $\hat{y}_{1},\,\hat{y}_{2}$ are close to $y_{1},\,y_{2}$ during a time interval of size much larger than $\frac{-\ln{(v)}}{v}$ and from the conclusions of the last section. Indeed, the closeness of the functions $y_{1},\,y_{2}$ with $\hat{y}_{1},\,\hat{y}_{2}$ during this large time interval is guaranteed because of the upper bound obtained for $\norm{(u(t),\partial_{t}u(t))}_{H^{1}_{x}\times L^{2}_{x}}$ from the control of $L(t),$ which implies that $y_{1},\,y_{2}$ will satisfy a ordinary differential system very close to the linear ordinary differential system satisfied by $\hat{y}_{1}$ and $\hat{y}_{2}.$ 
\par In Section \ref{kinkstravel}, we are going to prove Theorem \ref{orbittheo}, the proof of this result is inspired in the demonstration of Theorem $1$ of \cite{asympt} and Theorem $1$ of \cite{orbitalsch}. This result will imply in the next section the second item of Theorem \ref{maintheo}. In addition, the main techniques used in this section are modulation techniques based on section $2$ of \cite{asympt} and based on \cite{orbitalsch}, the use of conservation of energy of $\phi(t,x)$ and the monotonicity of the localized momentum given by
\begin{equation*}
    P_{+}\left(\phi(t),\partial_{t}\phi(t)\right)=-\int_{0}^{+\infty}\partial_{t}\phi(t,x)\partial_{x}\phi(t,x)\,dx.
\end{equation*}
\par Finally, in Section \ref{colsec}, we will show that the demonstration of Theorem \ref{maintheo} is a direct consequence of the main results of Sections \ref{p1main} and \ref{kinkstravel}. For complementary information, see the Appendices sections.
\section{Preliminaries}\label{presection}
\subsection{Approximate solutions}\label{appsec}
\begin{definition}
We define $\Lambda$ as the nonlinear operator with domain $C^{2}(\mathbb{R}^{2},\mathbb{R})$ that satisfies:
\begin{equation*}
    \Lambda (\phi_{1})(t,x)=\partial_{t}^{2}\phi_{1}(t,x)-\partial_{x}^{2}\phi_{1}(t,x)+\dot U(\phi_{1}(t,x)),
\end{equation*}
for any $\phi_{1}(t,x) \in C^{2}(\mathbb{R}^{2},\mathbb{R}).$
\end{definition}
\par In the paper \cite{second}, we constructed a sequence of approximate solutions $(\phi_{k}(v,t,x))_{k\in \mathbb{N}_{\geq 2}}$ of the partial differential equation \eqref{nlww} such that
\begin{align*}
    \lim_{t\to{+}\infty}\norm{\phi_{k}(v,t,x)-H_{0,1}\left(\frac{x-vt}{\sqrt{1-v^{2}}}\right)-H_{-1,0}\left(\frac{x+vt}{\sqrt{1-v^{2}}}\right)}_{H^{1}_{x}}=& 0,\\
    \lim_{t\to{+}\infty}\norm{\partial_{t}\phi_{k}(v,t,x)+\frac{v}{\sqrt{1-v^{2}}} H^{'}_{0,1}\left(\frac{x-vt}{\sqrt{1-v^{2}}}\right)-\frac{v}{\sqrt{1-v^{2}}} H^{'}_{-1,0}\left(\frac{x+vt}{\sqrt{1-v^{2}}}\right)}_{L^{2}_{x}}=&0
\end{align*}
More precisely, in \cite{second} we proved the following result
\begin{theorem}\label{approximated theorem}
There exist a sequence of functions $\left(\phi_{k}(v,t,x)\right)_{k\geq 2},$ a sequence of real values $\delta(k)>0$ and a sequence of numbers $n_{k}\in\mathbb{N}$ such that for any $0<v<\delta(k),\,\phi_{k}(v,t,x)$ satisfies 
\begin{align*}
    \lim_{t\to +\infty}\norm{\phi_{k}(v,t,x)-H_{0,1}\left(\frac{x-vt}{\sqrt{1-v^{2}}}\right)-H_{-1,0}\left(\frac{x+vt}{\sqrt{1-v^{2}}}\right)}_{H^{1}_{x}}=0,\\
    \lim_{t\to +\infty}\norm{\partial_{t}\phi_{k}(v,t,x)+\frac{v}{\sqrt{1-v^{2}}}H^{'}_{0,1}\left(\frac{x-vt}{\sqrt{1-v^{2}}}\right)-\frac{v}{\sqrt{1-v^{2}}}H^{'}_{-1,0}\left(\frac{x+vt}{\sqrt{1-v^{2}}}\right)}_{L^{2}_{x})}=0,\\
    \lim_{t\to -\infty}\norm{\phi_{k}(v,t,x)-H_{0,1}\left(\frac{x+vt-e_{v,k}}{\sqrt{1-v^{2}}}\right)-H_{-1,0}\left(\frac{x-vt+e_{v,k}}{\sqrt{1-v^{2}}}\right)}_{H^{1}_{x}}=0,\\
     \lim_{t\to -\infty}\norm{\partial_{t}\phi_{k}(v,t,x)-\frac{v}{\sqrt{1-v^{2}}}H^{'}_{0,1}\left(\frac{x+vt-e_{v,k}}{\sqrt{1-v^{2}}}\right)+\frac{v}{\sqrt{1-v^{2}}}H^{'}_{-1,0}\left(\frac{x-vt+e_{v,k}}{\sqrt{1-v^{2}}}\right)}_{ L^{2}_{x}}=0,
    \end{align*}
    with $e_{v,k}\in\mathbb{R}$ satisfying
\begin{equation*}
    \lim_{v\to 0}\frac{\left\vert e_{v,k} -\frac{\ln{\left(\frac{8}{v^{2}}\right)}}{\sqrt{2}}\right\vert}{v\vert\ln{(v)}\vert^{3}}=0.
\end{equation*} 
Moreover, if $0<v<\delta(k),$ then for any $s\geq 0$ and $l\in\mathbb{N}\cup\{0\},$ there is $C(k,s,l)>0$ such that
     \begin{equation*}
     \norm{\frac{\partial^{l}}{\partial t^{l}}\Lambda(\phi_{k}(v,t,x))}_{H^{s}_{x}(\mathbb{R})}\leq C(k,s,l)v^{2k+l}\left(\vert t\vert v+\ln{\left(\frac{1}{v^{2}}\right)}\right)^{n_{k}}e^{-2\sqrt{2}\vert t\vert v}.
    \end{equation*}
\end{theorem}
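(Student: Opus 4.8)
The plan is to build $\phi_k$ by a modulated-ansatz-plus-iterative-correction scheme, organised as a finite expansion in powers of $v^{2}$. Throughout one uses the discrete symmetry $(x,\phi)\mapsto(-x,-\phi)$ of \eqref{nlww} (valid since $\dot U$ is odd): because $H_{-1,0}(y)=-H_{0,1}(-y)$, the superposition below is invariant under it, so one may look for $\phi_k$ in that symmetry class, which in particular forces the modulation parameter to be even in $t$.

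\emph{Step 1: leading profile and reduced dynamics.} Introduce a half-separation $d=d_v(t)$ and take the zeroth approximation $W(t,x)=H_{0,1}\bigl(\tfrac{x-d(t)}{\sqrt{1-\dot d(t)^{2}}}\bigr)+H_{-1,0}\bigl(\tfrac{x+d(t)}{\sqrt{1-\dot d(t)^{2}}}\bigr)$, with the matching two-component object for $\overrightarrow{W}$. Computing $\Lambda(W)$ and using the Bogomolny identity together with \eqref{le1}--\eqref{le2}, the error splits into a relativistic/modulation part of size $O(\ddot d)+O(\dot d^{4})$, concentrated on each kink, and an interaction part concentrated near $x=0$ whose leading term is a fixed profile times $c\,e^{-2\sqrt2\,d(t)}$. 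Projecting $\Lambda(W)$ on the translation modes $\dot H_{0,1},\dot H_{-1,0}$ and cancelling the projection gives a one-dimensional Newton equation $M\ddot d=C\,e^{-2\sqrt2\,d}+(\text{lower order})$ with $M=\norm{\dot H_{0,1}}_{L^{2}_{x}}^{2}=\tfrac1{2\sqrt2}$ and $C>0$ --- the approximate companion of the two-interacting-kink dynamics of \cite{first}. Its scattering solution with $\dot d\to v$ and $d(t)-vt\to0$ as $t\to+\infty$ is obtained by quadrature; using the conserved reduced energy one finds the turning point $d(0)\approx\tfrac1{2\sqrt2}\ln\tfrac1{v^{2}}$ and $d(t)+vt\to e_{v,k}$ as $t\to-\infty$, and an expansion in $v$ yields $e_{v,k}=\tfrac1{\sqrt2}\ln\tfrac{8}{v^{2}}+O(v|\ln v|^{3})$. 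Moreover $d(t)-vt,\ \dot d-v,\ \ddot d,\dots$ are $O(v^{l})$ in their $l$-th derivative and bounded by $\lesssim v^{l}\bigl(|t|v+\ln\tfrac1{v^{2}}\bigr)^{O(1)}e^{-2\sqrt2|t|v}$.

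\emph{Step 2: iterative corrections.} Write $\phi_k=W+\sum_{j=1}^{k-1}v^{2j}\psi_j$ and choose the $\psi_j$ so that $v^{2j}\psi_j$ cancels the $O(v^{2j})$ part of $\Lambda\bigl(W+\sum_{i<j}v^{2i}\psi_i\bigr)$. For each $j$ and frozen $t$ this is a linear elliptic problem on $\mathbb{R}$, $\bigl(-\partial_x^{2}+\ddot U(W)\bigr)\psi_j=(\text{explicit source built from }e^{-2\sqrt2 d},\ \ddot d\text{ and }\psi_{<j})$, to be solved modulo the (approximate) two-dimensional kernel; solvability follows from the Fredholm alternative --- the linearization of $\dot U$ around the two-kink profile is nonnegative, its kernel is spanned up to exponentially small errors by the two kink translation modes, and it has a spectral gap bounded below uniformly in $v$ (each $\phi^{6}$ kink being a nondegenerate minimizer) --- once the obstruction is absorbed into an $O(v^{2j})$ correction of the Newton equation of Step 1; this is why the trajectory and the $\psi_j$ must be produced together. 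Since every source is, in the boosted spatial variables, a product of $\phi^{6}$-kink tails and Schwartz functions times a polynomial in $t$, it lies in one of the spaces $S^{+}_{n}$ of Definition \ref{s+}, and so do the $\psi_j$, with $n_j$ increasing in $j$ (the polynomial degree records the secular terms produced when integrating the slow ODEs). One then obtains $\norm{\partial_t^{l}\psi_j}_{H^{s}_{x}}\lesssim_{s,l}v^{l}\bigl(|t|v+\ln\tfrac1{v^{2}}\bigr)^{n_j}e^{-2\sqrt2|t|v}$; summing the (essentially geometric in $v^{2}$) series and keeping the worst exponent gives the claimed bound on $\norm{\partial_t^{l}\Lambda(\phi_k)}_{H^{s}_{x}}$. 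The gain of $v^{l}$ per $\partial_t$ is just the statement that $\phi_k$ is globally smooth and depends on $t$ only through the slow trajectory $d(t)$ and the interaction factor $e^{-2\sqrt2 d(t)}\lesssim v^{2}(\dots)e^{-2\sqrt2|t|v}$, on each of which $\partial_t$ produces a factor $O(v)$.

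\emph{Step 3: asymptotics and the main difficulty.} Because every $\psi_j$ and the interaction term carry $e^{-2\sqrt2|t|v}\to0$, one has $\overrightarrow{\phi_k}(v,t,\cdot)\to\overrightarrow{W}(d_v(t),t,\cdot)$ in $H^{1}_{x}\times L^{2}_{x}$ as $|t|\to\infty$, with $\dot d_v\to v$; combined with $d(t)-vt\to0$ ($t\to+\infty$) and $d(t)+vt\to e_{v,k}$ ($t\to-\infty$) from Step 1, this produces exactly the four limits --- and the $L^{2}_{x}$-limits for $\partial_t\phi_k$ --- in the statement, the quantitative expansion of $e_{v,k}$ having already been obtained there. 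The main obstacle is Step 2: one must control the linearization of $\dot U$ around the \emph{two}-kink profile uniformly in the large, $v$-dependent separation and modulo its roughly two-dimensional approximate kernel, despite the delicate spectral features of the single $\phi^{6}$ kink (no internal mode, a threshold resonance at the bottom of the continuous spectrum, asymmetric tails $e^{\sqrt2 x}$ versus $e^{-2\sqrt2 x}$), and, simultaneously, one must propagate through the $k-1$ iterations the exact $v$-size \emph{and} the exact $t$-profile $\bigl(|t|v+\ln\tfrac1{v^{2}}\bigr)^{n_k}e^{-2\sqrt2|t|v}$ of the error and of all of its $x$- and $t$-derivatives; this last point is precisely why the corrections are sought in the class $S^{+}_{n}$, which is stable under the operations --- multiplication by kink tails, inverting the linearized operator, multiplication by polynomials in $t$ --- that the recursion repeatedly performs.
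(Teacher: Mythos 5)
The paper does not prove Theorem \ref{approximated theorem} here at all --- it is imported from \cite{second} with proof by citation --- and your modulated two-kink ansatz with Newton-type reduced dynamics for the separation, an explicit first interaction correction solving the linearized equation (the paper's $e^{-\sqrt{2}d(t)}\mathcal{G}$, cf.\ \eqref{Group}), higher-order corrections living in the classes $S^{+}_{\infty}$, and trajectory corrections $r_{j}(v,t)$ absorbed at each stage is exactly the structure of that construction as recalled in Theorem \ref{toobig}, so your outline is essentially the same approach (granting that it asserts, rather than proves, the uniform coercivity modulo the approximate kernel and the propagation of the precise $\left(\vert t\vert v+\ln\left(\tfrac{1}{v^{2}}\right)\right)^{n_{k}}e^{-2\sqrt{2}\vert t\vert v}$ profile through the recursion, which you correctly identify as the hard points). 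One concrete fix: a trajectory that is even in $t$ cannot simultaneously satisfy $d(t)-vt\to 0$ as $t\to+\infty$ and $d(t)+vt\to e_{v,k}\neq 0$ as $t\to-\infty$; the paper reconciles the symmetric profile $\phi_{k,v}$ with the asymmetric normalization of the theorem by the explicit time translation $\tau_{k,v}$ of Remark \ref{appp}, and your sketch should state this step rather than fold it silently into Step 1.
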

 We consider the Schwartz function $\mathcal{G}$ defined by
\begin{equation}\label{G(x)}
     \mathcal{G}(x)=e^{-\sqrt{2}x}-\frac{e^{-\sqrt{2}x}}{(1+e^{2\sqrt{2}x})^{\frac{3}{2}}}+2\sqrt{2}x\frac{e^{\sqrt{2}x}}{(1+e^{2\sqrt{2}x})^{\frac{3}{2}}}+k_{1}\frac{e^{\sqrt{2}x}}{(1+e^{2\sqrt{2}x})^{\frac{3}{2}}},
\end{equation}
for all $x\in\mathbb{R},$ where $k_{1}$ is the real number such that $\mathcal{G}$ satisfies $\left\langle \mathcal{G}(x),\, H^{'}_{0,1}(x) \right\rangle_{L^{2}_{x}(\mathbb{R})}=0.$ The function $\mathcal{G}$ satisfies the following identity
 \begin{equation}\label{Group}
 -\frac{d^{2}}{dx^{2}} \mathcal{G}(x)+U^{''}(H_{0,1}(x))\mathcal{G}(x)=\left[-24 H_{0,1}(x)^{2}+30 H_{0,1}(x)^{4}\right]e^{-\sqrt{2}x}+8\sqrt{2} H^{'}_{0,1}(x),
 \end{equation}
see Lemma $A.1$ and Remark $A.2$ in the Appendix of \cite{second} for the proof.
\par From now on, for any $v\in(0,1),$ we consider the function $d_{v}:\mathbb{R}\to\mathbb{R}$ defined by
\begin{equation*}
d_{v}(t)=\frac{1}{\sqrt{2}}\ln{\left(\frac{8}{v^{2}}\cosh{\left(\sqrt{2}v t\right)}^{2}\right)} \text{, for any $t\in\mathbb{R}.$}
\end{equation*} The function $d_{v}$ describes the movement between two kinks for the $\phi^{6}$ model during a large time interval when their total energy is small and their initial speeds are both zero. For more information, see Theorem $1.11$ from \cite{first}.
\par Moreover, from the proof of Theorem \ref{approximated theorem} in \cite{second}, we can construct inductively an explicit sequence of smooth functions $(\varphi_{k,v})_{k\in\mathbb{N}_{\geq 2}}$ and for each $k\in\mathbb{N}_{\geq 2}$ there exists  a real number $\tau_{k,v}$ satisfying $\vert \tau_{k,v}\vert <\frac{\sqrt{2}}{v}\ln{\left(\frac{8}{v^{2}}\right)}$ such that $\phi_{k}(v,t,x)\coloneqq\varphi_{k,v}(t+\tau_{k,v},x)$ satisfies Theorem \ref{approximated theorem} for all $k\in\mathbb{N}_{\geq 2}.$ More precisely, from the paper \cite{second}, we have the following theorem:
\begin{theorem}\label{toobig}
There exist a sequence of approximate solutions $\varphi_{k,v}(t,x),$ functions $r_{k}(v,t)$ that are smooth and even on $t,$ and numbers $n_{k}\in\mathbb{N}$ such that if $0<v\ll 1,$ then for any $m\in\mathbb{N}_{\geq 1}$
\begin{equation}\label{aproxr}
    \frac{\vert r_{k}(v,t)\vert}{\mathcal{C}(k,0,0,0)}\leq v^{2(k-1)} \ln{\left(\frac{1}{v}\right)}^{n_{k}},\,\frac{\left\vert\frac{\partial^{m}}{\partial t^{m}}r_{k}(v,t)\right\vert}{\mathcal{C}(k,0,l,0)}\leq v^{2(k-1)+m}\left[\ln{\left(\frac{1}{v}\right)}+\vert t\vert v\right]^{n_{k}}e^{-2\sqrt{2}\vert t\vert v},
\end{equation}
$\varphi_{k,v}(t,x)$ satisfies for $\rho_{k}(v,t)=-\frac{d_{v}(t)}{2}+\sum_{j=2}^{k}r_{j}(v,t)$ the identity
\begin{align} \nonumber
    \varphi_{k,v}(t,x)=&H_{0,1}\left(\frac{x+\rho_{k}(v,t)}{\sqrt{1-\frac{\dot d_{v}(t)^{2}}{4}}}\right)+H_{-1,0}\left(\frac{x-\rho_{k}(v,t)}{\sqrt{1-\frac{\dot d_{v}(t)^{2}}{4}}}\right)\\ \nonumber &{+}e^{-\sqrt{2}d_{v}(t)}\left[\mathcal{G}\left(\frac{x+\rho_{k}(v,t)}{\sqrt{1-\frac{\dot d_{v}(t)^{2}}{4}}}\right)-\mathcal{G}\left(\frac{-x+\rho_{k}(v,t)}{\sqrt{1-\frac{\dot d_{v}(t)^{2}}{4}}}\right)\right]
    \\ \label{aproxfor} &{+}\mathcal{R}_{k,v}\left(vt,\frac{x+\rho_{k}(v,t)}{\sqrt{1-\frac{\dot d_{v}(t)^{2}}{4}}}\right)-\mathcal{R}_{k,v}\left(vt,\frac{-x+\rho_{k}(v,t)}{\sqrt{1-\frac{\dot d_{v}(t)^{2}}{4}}}\right)
\end{align}
the following estimates for any $l\in\mathbb{N}\cup\{0\}$ and $s\geq 1$
\begin{equation}\label{geraldecay}
    \norm{\frac{\partial^{l}}{\partial t^{l}}\Lambda(\varphi_{k,v}(t,x)}_{H^{s}_{x}(\mathbb{R})}\lesssim_{k,s,l}v^{2k+l}\left[\ln{\left(\frac{1}{v^{2}}\right)}+\vert t\vert v\right]^{n_{k}}e^{-2\sqrt{2}\vert t\vert v},
\end{equation}
and 
\begin{equation}\label{orthodecay}
 \left\vert\frac{d^{l}}{dt^{l}}\left[\left\langle \Lambda(\varphi_{k,v})(t,x),\, H^{'}_{0,1}\left(\frac{x+\rho_{k}(v,t)}{(1-\frac{\dot d_{v}(t)^{2}}{4})^{\frac{1}{2}}}\right)\right\rangle\right]\right\vert\lesssim_{k,l} v^{2k+l+2}\left[\ln{\left(\frac{1}{v^{2}}\right)}+\vert t\vert v\right]^{n_{k}+1}e^{-2\sqrt{2}\vert t\vert v},    
\end{equation}
where $\mathcal{R}_{k}(t,x)$ is a finite sum of functions $p_{k,i,v}(t)h_{k,i}(x)$ with $h_{k,i}\in\mathscr{S}(\mathbb{R})$ and each $p_{k,i,v}(t)$ being an even function satisfying, for all $m\in \mathbb{N},$
\begin{equation*}
\left \vert\frac{d^{m}p_{k,i,v}(t)}{dt^{m}} \right\vert\lesssim_{k,m,3}v^{4}\left(\ln{\left(\frac{1}{v^{2}}\right)}+\vert t\vert \right)^{n_{k,i}}e^{-2\sqrt{2}\vert t\vert },
\end{equation*}
where $n_{k,i}\in\mathbb{N}$ depends only on $k$ and $i.$ 
\end{theorem}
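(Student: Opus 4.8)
Since this is Theorem~4.1 of the companion paper \cite{second}, I only indicate the strategy. The plan is an iteration that kills the residual $\Lambda(\phi_{k,v})$ one power of the interaction strength $e^{-\sqrt{2}d_{v}(t)}=\frac{v^{2}}{8}\sech^{2}(\sqrt{2}vt)\sim v^{2}$ at a time, starting from a modulated, Lorentz-boosted two-kink configuration. First I would fix $\gamma(t)=(1-\dot d_{v}(t)^{2}/4)^{1/2}$ and the base modulation $\rho(t)=-d_{v}(t)/2$, so that $\gamma^{2}=1-\dot\rho^{2}$ and the ansatz is locally close to an exact travelling kink of speed $\dot\rho$, and compute $\Lambda$ of $H_{0,1}(\tfrac{x+\rho}{\gamma})+H_{-1,0}(\tfrac{x-\rho}{\gamma})$. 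Because $-\ddot H_{0,1}+\dot U(H_{0,1})=0$, the residual reduces, in the variable $z=\tfrac{x+\rho}{\gamma}$ near the first kink and up to $O(v^{4})$, to $\tfrac{\ddot\rho}{\gamma}\dot H_{0,1}(z)+\big(24H_{0,1}(z)^{2}-30H_{0,1}(z)^{4}\big)e^{-\sqrt{2}z}e^{-\sqrt{2}d_{v}}$, the second term being the interaction $\dot U(\mathrm{sum})-\dot U(H_{0,1})-\dot U(H_{-1,0})$ computed from the far tail $-e^{-\sqrt{2}z}e^{-\sqrt{2}d_{v}}$ of the second kink and the identity $\ddot U-2=-24H_{0,1}^{2}+30H_{0,1}^{4}$. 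Since $d_{v}$ solves the leading Newtonian ODE $\ddot d_{v}=16\sqrt{2}\,e^{-\sqrt{2}d_{v}}$ for the separation, one has $\ddot\rho=-8\sqrt{2}\,e^{-\sqrt{2}d_{v}}$, and that sum equals exactly $-e^{-\sqrt{2}d_{v}}\big(-\mathcal G''+\ddot U(H_{0,1})\mathcal G\big)(z)$ by the elliptic identity \eqref{Group}. Hence adding $e^{-\sqrt{2}d_{v}(t)}\big[\mathcal G(\tfrac{x+\rho}{\gamma})-\mathcal G(\tfrac{-x+\rho}{\gamma})\big]$ --- whose second term is forced by the symmetry $\phi(t,x)=-\phi(t,-x)$ of the ansatz together with $H_{-1,0}=-H_{0,1}(-\cdot)$ --- cancels the $O(e^{-\sqrt{2}d_{v}})$ part of $\Lambda$ (the $\partial_{t}^{2}$ of this correction is only $O(v^{4})$ and does not interfere) and leaves a residual of size $O(v^{4})$. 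The constant $k_{1}$ in \eqref{G(x)} is fixed by imposing $\langle\mathcal G,\dot H_{0,1}\rangle_{L^{2}}=0$, which singles out the unique decaying solution of \eqref{Group} modulo the zero mode $\dot H_{0,1}$.

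For the inductive step, assume $\phi_{j,v}$ has been built with $\Lambda(\phi_{j,v})$ of size $O(v^{2j})$ times the weights of \eqref{geraldecay} and modulation $\rho_{j}=-d_{v}/2+\sum_{i=2}^{j}r_{i}$. I would set $\phi_{j+1,v}=\phi_{j,v}+\big[Q_{j+1}(vt,\tfrac{x+\rho_{j+1}}{\gamma})-Q_{j+1}(vt,\tfrac{-x+\rho_{j+1}}{\gamma})\big]$ with $\rho_{j+1}=\rho_{j}+r_{j+1}$, where near each kink the profile $Q_{j+1}$ solves $-\partial_{z}^{2}Q_{j+1}+\ddot U(H_{0,1}(z))Q_{j+1}=-\mathrm{source}_{j+1}$, $\mathrm{source}_{j+1}$ being the localized $O(v^{2j})$ leading part of $\Lambda(\phi_{j,v})$ near that kink (after recentering by $r_{j+1}$). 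Two things must be verified. First, $\mathrm{source}_{j+1}\in S^{+}_{\infty}\cap\mathscr{S}(\mathbb{R})$, so that $Q_{j+1}\in S^{+}_{\infty}\cap\mathscr{S}(\mathbb{R})$ too: this is where Definition~\ref{s+} is essential, since the scheme only produces finite sums of products of $H_{0,1}$, $\dot H_{0,1}$, their derivatives, powers of $e^{-\sqrt{2}z}$, polynomials in $z$, and the scalars $r_{i}$, so one must check that $S^{+}_{\infty}$ is stable under these operations and under inversion of $-\partial_{z}^{2}+\ddot U(H_{0,1})$ on its range, and one takes each $Q_{j+1}$ orthogonal to $\dot H_{0,1}$. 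Second, the solvability condition $\langle\mathrm{source}_{j+1},\dot H_{0,1}\rangle=0$ is imposed through $r_{j+1}$: since recentering contributes a $\ddot r_{j+1}\dot H_{0,1}$-term and $\norm{\dot H_{0,1}}_{L^{2}}^{2}=\tfrac{1}{2\sqrt{2}}$ by \eqref{kinkmass}, this becomes a linear second-order ODE for $r_{j+1}$ with coefficients even in $t$ (one homogeneous solution is $\dot d_{v}$, the other grows linearly in $t$) and forcing $-\langle\Lambda(\phi_{j,v}),\dot H_{0,1}\rangle$ of size $O(v^{2(j+1)})$ times exponential weights, by the inductive form of \eqref{orthodecay}; solving by variation of parameters with the unique even solution gives $r_{j+1}$ satisfying \eqref{aproxr}. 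Iterating $k-1$ times and collecting all correction profiles into $\mathcal{R}_{k,v}=\sum_{i}p_{k,i,v}(t)h_{k,i}(x)$ produces \eqref{aproxfor} with the stated properties of the $p_{k,i,v}$.

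The estimate \eqref{geraldecay} then follows by bookkeeping the weights: each correction from stage $2$ on carries at least one more factor $e^{-\sqrt{2}d_{v}(t)}=\tfrac{v^{2}}{8}\sech^{2}(\sqrt{2}vt)\lesssim v^{2}e^{-2\sqrt{2}v|t|}$ than the previous one; the prefactor $(\ln\tfrac{1}{v^{2}}+v|t|)^{n_{k}}$ is produced by the $2\sqrt{2}x$-linear term in $\mathcal G$ and its higher-stage analogues, which pick up powers of $d_{v}(t)$; each $\partial_{t}$ costs a factor $\dot d_{v}\lesssim v$; and the $H^{s}$-bounds are provided by the Schwartz and $S^{+}_{\infty}$ structure of every profile. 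The sharpened bound \eqref{orthodecay}, with its extra $v^{2}$, reflects the solvability structure of the scheme: $d_{v}$ and the $r_{j}$ are chosen precisely so that the component of $\Lambda(\phi_{k,v})$ along the translation mode $\dot H_{0,1}$ --- which is, up to the explicit $\tfrac{1}{2\sqrt{2}}\ddot r_{k}$-contribution, the right-hand side of the last modulation ODE --- is controlled one power of $v$ above $\Lambda(\phi_{k,v})$ itself; the base case is the identity $\beta=4$ for $\beta:=\langle(24H_{0,1}^{2}-30H_{0,1}^{4})e^{-\sqrt{2}z},\dot H_{0,1}\rangle$, which holds because in \eqref{Group} one has $\mathcal G\in L^{2}$ and $\dot H_{0,1}\in\ker(-\partial_{z}^{2}+\ddot U(H_{0,1}))$, and which is exactly what makes the $\ddot\rho$-contribution and the interaction-contribution to $\langle\Lambda(\mathrm{ansatz}),\dot H_{0,1}\rangle$ cancel at leading order. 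The step I expect to be the main obstacle is the closedness of $S^{+}_{\infty}$ under the full set of operations of the scheme --- in particular that inverting the nonlocal operator $-\partial_{z}^{2}+\ddot U(H_{0,1})$ on its range preserves both the holomorphic extension required in Definition~\ref{s+} and the odd-power expansion in $e^{\sqrt{2}z}$ --- together with the bookkeeping of polynomial-in-$z$ growth needed to keep the exponents $n_{k}$ finite and uniform in $v$; once that algebra is settled, the remaining energy and elliptic estimates, though lengthy, are routine.
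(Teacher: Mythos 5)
The paper does not prove this theorem itself --- it is quoted verbatim from the companion work and the ``proof'' is a citation to Theorem $4.1$ of \cite{second} --- so your sketch can only be checked against the construction it describes and against the identities recorded elsewhere in this paper. On that basis your outline is essentially the same approach as the cited one, and every computation in it that can be verified here does check out: $\ddot d_{v}=16\sqrt{2}\,e^{-\sqrt{2}d_{v}}$ and hence $\ddot\rho=-8\sqrt{2}\,e^{-\sqrt{2}d_{v}}$ follow from \eqref{d}; $\ddot U(\phi)-2=-24\phi^{2}+30\phi^{4}$; the cancellation of the leading residual by the $\mathcal G$-correction is exactly \eqref{Group}; and your solvability identity $\bigl\langle(24H_{0,1}^{2}-30H_{0,1}^{4})e^{-\sqrt{2}z},\dot H_{0,1}\bigr\rangle=4$, obtained by pairing \eqref{Group} with the zero mode and using \eqref{kinkmass}, agrees with \eqref{IDD2}, while your description of the homogeneous solutions of the linearized modulation ODE matches the matrix $-32e^{-\sqrt{2}d(t)}$ and the fundamental system appearing in \eqref{Trueode}. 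What you present is, however, a plan rather than a proof: the induction bookkeeping, the bounds \eqref{aproxr} and \eqref{orthodecay}, and above all the closure of $S^{+}_{\infty}$ from Definition~\ref{s+} under the operations of the scheme (including inversion of $-\partial_{z}^{2}+\ddot U(H_{0,1})$ on the orthogonal complement of $\dot H_{0,1}$) are named but not carried out, and these are precisely the steps that occupy the bulk of \cite{second}; you correctly identify them as the main obstacle.
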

\begin{remark}\label{n2}
 Furthermore, Remark $5.2$ of the paper \cite{second} implies that if $v>0$is small enough, then the function $r_{2}$ satisfies
 \begin{equation*}
    \norm{r_{2}(v,\cdot)}_{L^{\infty}(\mathbb{R})}\lesssim v^{2}\ln{\left(\frac{1}{v^{2}}\right)},\,\, \left\vert \frac{\partial^{l}}{\partial t^{l}}r_{2}(v,t)\ \right\vert \lesssim_{l} v^{2+l}\left[\ln{\left(\frac{1}{v^{2}}\right)}+\vert t \vert v\right]e^{{-}2\sqrt{2}\vert t\vert v},
 \end{equation*}
for all $l\in\mathbb{N}.$
\end{remark}
\begin{remark}
At first look, the statement of Theorem \ref{toobig} seems to contain excessive information about the approximate solutions $\phi_{k}(v,t,x)$ of \cite{second}. However, we are going to need every information of Theorem \ref{toobig} to study the elasticity and stability of the collision of two kinks with low speed $0<v<1.$
\end{remark}
\subsection{Auxiliary estimates}\label{auxaux}
First, we recall the Lemma $2.1$ of \cite{first}.
\begin{lemma}\label{interactt}
There exists a real function $C:\mathbb{R}^{3}\times\mathbb{N}\cup\{0\}\to\mathbb{R}_{>0}$ such that for any real numbers $x_{2},x_{1}$ satisfying $z=x_{2}-x_{1}>0$ and $\alpha,\,\beta,\,m>0$ with $\alpha\neq \beta$ the following bound holds:
\begin{equation*}
    \int_{\mathbb{R}}\vert x-x_{1}\vert ^{m} e^{-\alpha(x-x_{1})_{+}}e^{-\beta(x_{2}-x)_{+}}\leq C(\alpha,\beta,m) \max\left(\left(1+z^{m}\right)e^{-\alpha z},e^{-\beta z}\right),
\end{equation*}
Furthermore, for any $\alpha>0$, the following bound holds
\begin{equation*}
    \int_{\mathbb{R}}\vert x-x_{1}\vert^{m} e^{-\alpha(x-x_{1})_{+}}e^{-\alpha(x_{2}-x)_{+}}\leq C(\alpha,\alpha,m)\left[1+z^{m+1}\right] e^{-\alpha z}.
\end{equation*}
\end{lemma}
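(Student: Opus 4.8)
The plan is to split the real line into the three intervals $(-\infty,x_{1}]$, $[x_{1},x_{2}]$ and $[x_{2},+\infty)$. On each of them at least one of the positive parts in the integrand collapses: on the left interval $(x-x_{1})_{+}=0$, on the right interval $(x_{2}-x)_{+}=0$, and on the middle one both are affine. Performing on each piece the obvious change of variables --- $u=x_{1}-x$ on $(-\infty,x_{1}]$ and $u=x-x_{1}$ on the other two --- reduces the estimate to one-dimensional elementary integrals, in which the dependence on the geometry enters only through $z$ and through exponential prefactors.

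On $(-\infty,x_{1}]$ one has $x_{2}-x=z+u$ with $u\ge 0$, so the contribution is $e^{-\beta z}\int_{0}^{\infty}u^{m}e^{-\beta u}\,du=\Gamma(m+1)\beta^{-(m+1)}e^{-\beta z}$, i.e.\ a constant depending only on $\beta$ and $m$ times $e^{-\beta z}$. On $[x_{2},+\infty)$ the contribution is $\int_{z}^{\infty}u^{m}e^{-\alpha u}\,du$; writing $u=z+w$ and using $(z+w)^{m}\lesssim_{m}z^{m}+w^{m}$ gives the bound $C(\alpha,m)(1+z^{m})e^{-\alpha z}$. These two pieces already account for the two entries of the maximum in the statement.

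The interval $[x_{1},x_{2}]$ is the only place where the hypothesis $\alpha\ne\beta$ is used. With $u=x-x_{1}\in[0,z]$ one has $x_{2}-x=z-u$, so the contribution equals $e^{-\beta z}\int_{0}^{z}u^{m}e^{(\beta-\alpha)u}\,du$. If $\alpha>\beta$, extending the integral to $[0,\infty)$ bounds it by $\Gamma(m+1)(\alpha-\beta)^{-(m+1)}e^{-\beta z}$. If $\beta>\alpha$, using $u^{m}\le z^{m}$ on $[0,z]$ and integrating the exponential gives $(\beta-\alpha)^{-1}z^{m}e^{(\beta-\alpha)z}e^{-\beta z}=(\beta-\alpha)^{-1}z^{m}e^{-\alpha z}$. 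Summing the three contributions, and in the case $\alpha>\beta$ absorbing $(1+z^{m})e^{-\alpha z}$ into $e^{-\beta z}$ by means of $\sup_{z\ge 0}(1+z^{m})e^{-(\alpha-\beta)z}<\infty$, yields the first inequality. For the second inequality ($\alpha=\beta$) the exponential inside the middle integral is identically $1$, so that integral equals $z^{m+1}/(m+1)$; adding this to the two outer contributions gives $C(\alpha,\alpha,m)(1+z^{m+1})e^{-\alpha z}$, the extra power of $z$ being exactly the one lost on the middle interval.

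I do not expect a real obstacle here; the only point demanding attention is that the constant $C(\alpha,\beta,m)$ be independent of the configuration, i.e.\ of $z$, which is clear because each elementary integral above is either a $z$-independent Gamma integral or an explicit expression that is uniformly bounded for $z\ge 0$ once the claimed exponential factor is extracted. It is worth observing that the bound for $\alpha\ne\beta$ degenerates as $\alpha\to\beta$ --- the factor $(\beta-\alpha)^{-1}$ coming from the middle interval blows up --- which is precisely why the equal-exponent case has to be stated separately and only with the weaker weight $1+z^{m+1}$.
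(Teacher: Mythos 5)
Your proof is correct and follows essentially the same route as the paper: normalize to $x_{1}=0$, $x_{2}=z$, split $\mathbb{R}$ into the three regions where the positive parts simplify, and bound each elementary integral, with the middle interval $[0,z]$ being the only place where $\alpha\neq\beta$ matters and where the extra factor $z^{m+1}$ appears when $\alpha=\beta$. Your treatment of the middle interval is in fact slightly more careful than the paper's (explicit case split $\alpha>\beta$ versus $\beta>\alpha$ and uniformity of the constant in $z$), but the argument is the same.
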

Actually, we will also need to use the following lemma, which we proved in \cite{second}.
\begin{lemma}\label{dots+}
In notation of Theorem \ref{toobig}, for $0<v\ll 1,$ let $w_{k,v}:\mathbb{R}^{2}\to\mathbb{R}$ be the following function
\begin{equation*}
    w_{k,v}(t,x) =\frac{x+\rho_{k}(v,t)}{\sqrt{1-\frac{\dot d_{v}(t)^{2}}{4}}},
\end{equation*}
and let $f\in L^{\infty}_{x}(\mathbb{R})$ be a function satisfying $\dot f\in \mathscr{S}(\mathbb{R}).$
Then, if 0<$v\ll 1,$ we have for any $l\in\mathbb{N}$ that
\begin{equation*}
    \frac{\partial^{l}}{\partial t^{l}}f\left(w_{k,v}(t,x)\right)
\end{equation*}
is a finite sum of functions $q_{k,l,i,v}(t)h_{i}\left(w_{k,v}(t,x)\right)$ with each $h_{i}\in \mathscr{S}(\mathbb{R})$ and any $q_{k,l,i,v}(t)$ is a smooth real function satisfying
\begin{equation*}
\norm{q_{k,l,i,v}}_{L^{\infty}(\mathbb{R})}\lesssim v^{l}.
\end{equation*}
Furthermore, if $0<v\ll 1,$ we have for all $l\in\mathbb{N}$ and any $s\geq 0$ that 
\begin{equation*}
    \norm{\frac{\partial^{l}}{\partial t^{l}}f\left(w_{k,v}(t,x)\right)}_{H^{s}_{x}(\mathbb{R})}\lesssim_{k,s,l} v^{l}.
\end{equation*}
\end{lemma}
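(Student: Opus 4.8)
The plan is to differentiate $f(w_{k,v}(t,x))$ in $t$ by the chain rule and to exploit that $\partial_{t}$ acts on $w_{k,v}$ in a very rigid way. Write $a_{v}(t)=\bigl(1-\tfrac14\dot d_{v}(t)^{2}\bigr)^{1/2}$, so that $w_{k,v}(t,x)=\bigl(x+\rho_{k}(v,t)\bigr)\big/a_{v}(t)$; then
\begin{equation*}
\partial_{t}w_{k,v}(t,x)=\frac{\dot\rho_{k}(v,t)}{a_{v}(t)}-\frac{\dot a_{v}(t)}{a_{v}(t)}\,w_{k,v}(t,x)
\end{equation*}
is affine in $w_{k,v}$, hence $\partial_{t}\bigl[f(w_{k,v})\bigr]=\dot f(w_{k,v})\,\partial_{t}w_{k,v}=-\tfrac{\dot a_{v}}{a_{v}}\,w_{k,v}\,\dot f(w_{k,v})+\tfrac{\dot\rho_{k}}{a_{v}}\,\dot f(w_{k,v})$. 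Since $\dot f\in S^{+}_{\infty}$ by hypothesis, and since $S^{+}_{\infty}$ is stable under differentiation and under multiplication by the variable — both facts one reads off Definition \ref{s+}, using that multiplication by $x$ raises the polynomial degree by one while preserving the Schwartz property, and that for $g\in S^{+}$ with $g(x)=G(e^{\sqrt2 x})$ one has $\dot g(x)=\sqrt2\, zG'(z)\big|_{z=e^{\sqrt2 x}}$ with $zG'(z)$ and $G(z)$ keeping the expansion $\sum_{k}a_{k}z^{2k+1}$ — the right-hand side already has the desired form for $l=1$, the coefficients being $-\dot a_{v}/a_{v}$ and $\dot\rho_{k}/a_{v}$.

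For general $l$ I would induct on $l$: it suffices that $\partial_{t}$ maps a single term $q(t)\,h(w_{k,v}(t,x))$ with $h\in S^{+}_{\infty}$ to
\begin{equation*}
\partial_{t}\bigl[q(t)\,h(w_{k,v})\bigr]=\dot q(t)\,h(w_{k,v})-\frac{\dot a_{v}(t)}{a_{v}(t)}\,q(t)\,\bigl(y\,h'(y)\bigr)\big|_{y=w_{k,v}}+\frac{\dot\rho_{k}(v,t)}{a_{v}(t)}\,q(t)\,h'(w_{k,v}),
\end{equation*}
where $y\mapsto y\,h'(y)$ and $h'$ again lie in $S^{+}_{\infty}$. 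Iterating, $\partial_{t}^{l}f(w_{k,v}(t,x))=\sum_{i}q_{k,l,i,v}(t)\,h_{i}(w_{k,v}(t,x))$ with finitely many terms, each $h_{i}$ obtained from $\dot f$ by a bounded number of operations $g\mapsto g'$ and $g\mapsto y\,g(y)$ (hence $h_{i}\in S^{+}_{\infty}$, independent of $v$), and each $q_{k,l,i,v}$ a smooth function, namely a polynomial in $\dot a_{v}/a_{v}$, $\dot\rho_{k}/a_{v}$ and their $t$-derivatives (smooth because $d_{v}$ is explicit and the $r_{j}$ are smooth by Theorem \ref{toobig}).

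It then remains to prove $\norm{q_{k,l,i,v}}_{L^{\infty}(\mathbb{R})}\lesssim_{k,l}v^{l}$, which I would do by tracking powers of $v$ through the recursion, strengthened to the statement that every $q_{k,l,i,v}$ satisfies $\tfrac{d^{p}}{dt^{p}}q_{k,l,i,v}(t)=O(v^{l+p})$ for all $p\ge0$. From $\dot d_{v}(t)=2v\tanh(\sqrt2 vt)$ one gets $a_{v}(t)^{2}=1-v^{2}\tanh^{2}(\sqrt2 vt)\in[1-v^{2},1]$, and a direct chain-rule estimate gives $\tfrac{d^{p}}{dt^{p}}a_{v}(t)=O(v^{p+2})$ and $\tfrac{d^{p}}{dt^{p}}a_{v}(t)^{-1}=O(v^{p+2})$ for $p\ge1$ (both are $1+O(v^{2})$ when $p=0$), whence $\tfrac{d^{p}}{dt^{p}}(\dot a_{v}/a_{v})=O(v^{p+3})$. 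For $\dot\rho_{k}=-\tfrac12\dot d_{v}+\sum_{j=2}^{k}\dot r_{j}$ the term $\tfrac12\dot d_{v}=v\tanh(\sqrt2 vt)$ has $p$-th derivative $O(v^{p+1})$, while \eqref{aproxr} together with the elementary bound $\sup_{s\ge0}\bigl(\ln(1/v)+s\bigr)^{n_{j}}e^{-2\sqrt2 s}\lesssim_{n_{j}}\ln(1/v)^{n_{j}}$ yields $\tfrac{d^{p+1}}{dt^{p+1}}r_{j}(v,t)=O\bigl(v^{2(j-1)+p+1}\ln(1/v)^{n_{j}}\bigr)=O(v^{p+1})$ for $j\ge2$ once $v$ is small (since $v^{2}\ln(1/v)^{n_{j}}\to0$); hence $\tfrac{d^{p}}{dt^{p}}(\dot\rho_{k}/a_{v})=O(v^{p+1})$. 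The induction now closes because $\partial_{t}$ either falls on a coefficient, improving $O(v^{l+p})$ to $O(v^{l+p+1})$, or creates a factor $\dot a_{v}/a_{v}$ or $\dot\rho_{k}/a_{v}$ that costs at least one extra power of $v$; taking $p=0$ gives the claimed bound.

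Finally, for the $H^{s}_{x}$ estimate, note that each $h_{i}\in S^{+}_{\infty}\subset\mathscr S(\mathbb{R})$ and that $x\mapsto w_{k,v}(t,x)$ is affine with slope $a_{v}(t)^{-1}\in[1,(1-v^{2})^{-1/2}]$ and a $t$-dependent shift, so the change of variables $y=w_{k,v}(t,x)$ gives $\norm{\partial_{x}^{m}\bigl[h_{i}(w_{k,v}(t,\cdot))\bigr]}_{L^{2}_{x}}^{2}=a_{v}(t)^{1-2m}\norm{h_{i}^{(m)}}_{L^{2}}^{2}\lesssim_{m}\norm{h_{i}}_{H^{m}}^{2}$ uniformly in $t$ for $v$ small; summing over $m$ gives $\norm{h_{i}(w_{k,v}(t,\cdot))}_{H^{s}_{x}}\lesssim_{s}\norm{h_{i}}_{H^{\lceil s\rceil}}$, and combining with the coefficient bound yields $\norm{\partial_{t}^{l}f(w_{k,v}(t,\cdot))}_{H^{s}_{x}}\lesssim_{k,s,l}v^{l}$. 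I expect the only genuinely delicate point to be the uniform-in-$t$ bookkeeping of the coefficients — in particular checking that the corrections $r_{j}$ inside $\rho_{k}$ do not spoil the gain of one power of $v$ per time derivative, which is exactly where \eqref{aproxr} and the smallness of $v^{2}\ln(1/v)^{n_{j}}$ enter; everything else is the chain rule together with the essentially algebraic closure properties of $S^{+}_{\infty}$.
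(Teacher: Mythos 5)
Your argument is correct: the chain rule gives $\partial_t w_{k,v}=\dot\rho_k/a_v-(\dot a_v/a_v)w_{k,v}$, the closure of $S^{+}_{\infty}$ under $g\mapsto g'$ and $g\mapsto yg(y)$ (which follows from Definition \ref{s+}, since $zF'(z)$ keeps the odd power-series form) makes the induction on $l$ close, and your strengthened hypothesis $\frac{d^{p}}{dt^{p}}q_{k,l,i,v}=O(v^{l+p})$ together with the bounds $\dot a_v/a_v=O(v^{3})$, $\dot\rho_k/a_v=O(v)$ (using \eqref{aproxr} and $v^{2(j-1)}\ln(1/v)^{n_j}\to0$) yields the coefficient estimate, while the affine change of variables gives the $H^{s}_{x}$ bound. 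The paper defers this proof to Lemma $3.2$ and Remark $3.3$ of \cite{second}, and your route is the same one used there, so there is nothing to add.
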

Moreover, we are going to use the following result several times in the computation of the estimates of this manuscript.
\begin{lemma}\label{mulsoblemma}
For any $s\geq 1,$ we have for any functions $f,\, g\in\mathscr{S}(\mathbb{R})$ that
\begin{equation*}
   \norm{f g}_{H^{s}_{x}(\mathbb{R})}\lesssim_{s} \norm{f}_{H^{s}_{x}(\mathbb{R})}\norm{g}_{L^{\infty}_{x}(\mathbb{R})}+ \norm{ g}_{H^{s}_{x}(\mathbb{R})} \norm{f}_{L^{\infty}_{x}(\mathbb{R})}\norm{g}_{L^{\infty}_{x}(\mathbb{R})} \lesssim_{s} \norm{f}_{H^{s}_{x}(\mathbb{R})}\norm{g}_{H^{s}_{x}(\mathbb{R})}.
\end{equation*}
As a consequence,
\begin{equation*}
   \norm{f g}_{H^{s}_{x}(\mathbb{R})}\lesssim_{s_{0}} \norm{f}_{H^{s+1}_{x}(\mathbb{R})}\norm{g}_{H^{s+1}_{x}(\mathbb{R})}, 
\end{equation*}
for all $s\geq 0.$
\end{lemma}
\begin{proof}
    See the proof of Lemma $A.8$ in the book \cite{dsipersivebook}.
\end{proof}
\par Finally, we need also Lemma $2.5$ of \cite{first} which studies the coercive properties of the operator
\begin{equation*}
    -\partial^{2}_{x}+U^{''}\left(H^{z}_{0,1}(x)+H_{-1,0}(x)\right),
\end{equation*}
when $z\gg 1.$ More precisely:
\begin{lemma}\label{coerccc}
 There exist $c,\,\delta>0$ such that if $z\geq\frac{1}{\delta}$, then for any $g \in H^{1}(\mathbb{R})$ satisfying
 \begin{equation*}
    \left\langle g(x),\, H^{'}_{0,1}(x-z) \right\rangle=\left\langle g(x),\,  H^{'}_{-1,0}(x)\right\rangle=0,
\end{equation*}
we have that
\begin{equation*}
    \left\langle -\frac{d^{2}}{dx^{2}} g(x)+ U^{''}\left(H_{0,1}(x-z)+H_{-1,0}(x)\right)g(x),\,g(x)\right\rangle\geq c\norm{g}_{H^{1}_{x}(\mathbb{R})}^{2}.
\end{equation*}
\end{lemma}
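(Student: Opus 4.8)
The plan is to deduce the two-kink coercivity from the classical one-kink estimate by an IMS-type localization in the transition region $x\approx z/2$, where both kinks and both translation modes are exponentially small.

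\emph{Single-bubble input.} The operator $L_{0,1}:=-\partial_x^2+\ddot U(H_{0,1})$ is nonnegative with $\ker L_{0,1}=\mathbb{R}\dot H_{0,1}$; since $\dot H_{0,1}>0$, this is the ground state, so $0=\inf\mathrm{spec}(L_{0,1})$, and since $\mathrm{spec}_{\mathrm{ess}}(L_{0,1})=[\min(\ddot U(0),\ddot U(1)),+\infty)=[2,+\infty)$, the eigenvalue $0$ is simple and isolated by a gap $\mu>0$. Hence $\langle L_{0,1}h,h\rangle\geq\mu\norm{h}_{L^2}^2$ for $h\perp\dot H_{0,1}$; combining with $\langle L_{0,1}h,h\rangle\geq\norm{h'}_{L^2}^2-C\norm{h}_{L^2}^2$ (valid since $\ddot U(H_{0,1})\in L^\infty$) and interpolating gives $c_0>0$ with $\langle L_{0,1}h,h\rangle\geq c_0\norm{h}_{H^1}^2$ for every $h\perp\dot H_{0,1}$. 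By $H_{-1,0}(x)=-H_{0,1}(-x)$ the same holds for $L_{-1,0}:=-\partial_x^2+\ddot U(H_{-1,0})$ on $\{\dot H_{-1,0}\}^\perp$. For $\phi^6$ this spectral picture is classical, as $\ddot U(H_{0,1})=2-24H_{0,1}^2+30H_{0,1}^4$ is an explicit Schrödinger potential (cf. \cite{asympt}).

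\emph{Localization.} Fix smooth $\chi,\psi\geq 0$ with $\chi^2+\psi^2\equiv 1$, $\chi\equiv 1$ on $\{x\leq z/4\}$, $\chi\equiv 0$ on $\{x\geq 3z/4\}$, and $|\chi'|+|\psi'|\lesssim z^{-1}$; write $Q_z(g)$ for the quadratic form in the statement. The IMS identity gives $Q_z(g)=Q_z(\chi g)+Q_z(\psi g)-\int(|\chi'|^2+|\psi'|^2)g^2$, and applied to $\norm{g'}_{L^2}^2$ it gives $\norm{\chi g}_{H^1}^2+\norm{\psi g}_{H^1}^2\geq\norm{g}_{H^1}^2$. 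On $\supp\chi$ one has $|H_{0,1}(x-z)|\leq e^{-\sqrt2 z/4}$ by \eqref{le1}, so $\ddot U(H_{0,1}(x-z)+H_{-1,0}(x))=\ddot U(H_{-1,0}(x))+O(e^{-\sqrt2 z/4})$ uniformly, and therefore $Q_z(\chi g)=\langle L_{-1,0}(\chi g),\chi g\rangle+O(e^{-\sqrt2 z/4})\norm{g}_{L^2}^2$; symmetrically $Q_z(\psi g)=\langle L^z_{0,1}(\psi g),\psi g\rangle+O(e^{-\sqrt2 z/4})\norm{g}_{L^2}^2$ with $L^z_{0,1}:=-\partial_x^2+\ddot U(H_{0,1}(x-z))$. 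Since $\langle g,\dot H_{-1,0}\rangle=0$ and $\chi-1$ is supported in $\{x\geq z/4\}$, where $\dot H_{-1,0}$ decays exponentially by \eqref{le2}, $|\langle\chi g,\dot H_{-1,0}\rangle|\lesssim e^{-\sqrt2 z/4}\norm{g}_{L^2}$; writing $\chi g=b\,\dot H_{-1,0}+h$ with $h\perp\dot H_{-1,0}$, $|b|\lesssim e^{-\sqrt2 z/4}\norm{g}_{L^2}$, and using $L_{-1,0}\dot H_{-1,0}=0$, we get $\langle L_{-1,0}(\chi g),\chi g\rangle=\langle L_{-1,0}h,h\rangle\geq c_0\norm{h}_{H^1}^2\geq c_0\norm{\chi g}_{H^1}^2-Ce^{-\sqrt2 z/4}\norm{g}_{H^1}^2$, and likewise for $\psi g$ using $\langle g,\dot H_{0,1}(\cdot-z)\rangle=0$.

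\emph{Conclusion.} Collecting the pieces and using $\norm{g}_{L^2}\leq\norm{g}_{H^1}$ and $\int(|\chi'|^2+|\psi'|^2)g^2\lesssim z^{-2}\norm{g}_{H^1}^2$, one gets $Q_z(g)\geq\big(c_0-C(e^{-\sqrt2 z/4}+z^{-2})\big)\norm{g}_{H^1}^2\geq\tfrac{c_0}{2}\norm{g}_{H^1}^2$ once $z$ is large enough, which proves the lemma. I expect the only genuinely delicate point to be bookkeeping: checking that each error — the cutoff cross terms, the $O(e^{-\sqrt2 z/4})$ perturbations of the potential on each half-line, and the $O(e^{-\sqrt2 z/4})$ orthogonality defects — is controlled by a quantity tending to $0$ uniformly in $z$, so that it can be absorbed into $c_0\norm{g}_{H^1}^2$; beyond the one-kink spectral gap, no further spectral analysis is needed.
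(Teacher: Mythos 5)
Your proof is correct. The paper gives no argument for Lemma \ref{coerccc} beyond citing Lemma $2.5$ of \cite{first}, and its closest in-text analogue, Lemma \ref{coercir} in Appendix A, is proved by exactly the scheme you use (single-kink coercivity of $-\partial_x^2+\ddot U(H_{0,1})$ on $\{\dot H_{0,1}\}^{\perp}$ plus a localization decoupling the two kinks, with the final assembly again deferred to \cite{first}); your IMS argument, with the spectral-gap input, the exponentially small transfer of the orthogonality conditions, and the $O(e^{-\sqrt2 z/4}+z^{-2})$ error bookkeeping, is a correct and self-contained version of that standard route.
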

\begin{proof}
See the proof of Lemma $9$ in \cite{first}.
\end{proof}
In this manuscript, to simplify our notation, we denote $d_{v}(t)$ by $d(t),$ which means that
\begin{equation}\label{d}
    d(t)=\frac{1}{\sqrt{2}}\ln{\left(\frac{8}{v^{2}}\cosh{(\sqrt{2}vt)}^{2}\right)}.
\end{equation}
In Lemma $3.1$ of \cite{second}, we have verified by induction the following estimates 
\begin{equation}\label{decayd}
   \vert \dot d(t)\vert\lesssim v \text{, and for any $l\in\mathbb{N}_{\geq 2}$} \left\vert d^{(l)}(t) \right\vert\lesssim_{l} v^{l}e^{{-}2\sqrt{2}\vert t\vert v}.
\end{equation}
From now on, we consider for each $k\in\mathbb{N}_{\geq 2}$ the function $\phi_{k,v}(t,x)$  satisfying Theorem \ref{toobig}. Next, for $T_{0,k}>0$ to be chosen later, we consider the following kind of Cauchy problem
\begin{equation}\label{IVP1}
    \begin{cases}
       \partial^{2}_{t}\phi(t,x)-\partial^{2}_{x}\phi(t,x)+ U^{'}\left(\phi(t,x)\right)=0,\\
       \norm{(\phi(T_{0,k},x),\partial_{t}\phi(T_{0,k},x))-(\phi_{k,v}(T_{0,k},x),\partial_{t}\phi_{k,v}(T_{0,k},x)))}_{H^{1}_{x}(\mathbb{R})\times L^{2}_{x}(\mathbb{R})}<v^{8k}.
    \end{cases}
\end{equation}
Our first objective is to prove the following theorem.
\begin{theorem}\label{energyE}
There is a constant $C>0$ and for any for any $0<\theta<\frac{1}{4},\,k\in\mathbb{N}_{\geq 3}$ there exist $C_{1}(k)>0,\,\delta_{k,\theta}>0$ and $\eta_{k}\in\mathbb{N}$ such that if $0<v<\delta_{k,\theta}$ and $T_{0,k}=\frac{32k}{2\sqrt{2}}\frac{\ln{\left(\frac{1}{v^{2}}\right)}}{v},$ then any solution $\phi(t,x)$ of \eqref{IVP1} satisfies:
\begin{equation}\label{globalstronger}
    \norm{(\phi(t,x),\partial_{t}\phi(t,x))-(\varphi_{k,v}(t,x),\partial_{t}\varphi_{k,v}(t,x))}_{H^{1}_{x}\times L^{2}_{x}}<C_{1}(k)v^{2k}\ln{\left(\frac{1}{v}\right)}^{\eta_{k}}\exp\left(C\frac{v\vert t-T_{0,k}\vert}{\ln{\left(v\right)}}\right),
\end{equation}
if 
\begin{equation*}
    \vert t-T_{0,k}\vert<\frac{\ln{\left(\frac{1}{v}\right)}^{2-\theta}}{v}.    
\end{equation*}
 \end{theorem}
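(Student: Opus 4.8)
We propose to prove Theorem \ref{energyE} by a modulation-plus-energy argument closed by a continuity (bootstrap) argument, following the scheme of Section $4$ of \cite{first}. First, given a solution $\phi$ of \eqref{IVP1}, as long as $\phi(t,\cdot)$ stays in a small tube around $\phi_{k,v}(t,\cdot)$ and $d(t)\gg 1$ (which holds since $d(t)\geq\frac{1}{\sqrt{2}}\ln(8/v^{2})$), the implicit function theorem produces unique $C^{2}$ parameters $y_{1}(t),y_{2}(t)$ and a remainder $u(t,x)$ obeying the two orthogonality conditions of \eqref{p1}. With $w_{k,v}(t,x)=\frac{x-d(t)/2+c_{k}(t)}{\sqrt{1-\dot d(t)^{2}/4}}$ one records the equivalence
\[
\norm{(\phi,\partial_{t}\phi)-(\phi_{k,v},\partial_{t}\phi_{k,v})}_{H^{1}_{x}\times L^{2}_{x}}^{2}\cong \vert y_{1}\vert^{2}+\vert y_{2}\vert^{2}+\vert\dot y_{1}\vert^{2}+\vert\dot y_{2}\vert^{2}+\norm{(u,\partial_{t}u)}_{H^{1}_{x}\times L^{2}_{x}}^{2},
\]
the $O(v^{2})$ corrections coming from $\sqrt{1-\dot d^{2}/4}=1+O(v^{2})$ being harmless. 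Substituting \eqref{p1} into $\Lambda(\phi)=0$ and expanding, using \eqref{geraldecay} for the $\phi_{k,v}$ part, gives an equation for $u$ of the form
\[
\partial_{t}^{2}u-\partial_{x}^{2}u+\ddot U\!\big(H_{0,1}(w_{k,v}(t,x))-H_{0,1}(w_{k,v}(t,-x))\big)u=-\Lambda(\phi_{k,v})+\mathcal{E}(y_{1},y_{2})+\mathcal{N}(u,y_{1},y_{2}),
\]
where $\mathcal{N}$ collects the terms at least quadratic in $(u,y_{1},y_{2})$ and $\mathcal{E}$ is linear in $y_{1},y_{2}$ and their first two derivatives; projecting this equation onto $\dot H_{0,1}(w_{k,v}(t,\pm x))$ yields a linear second-order ODE system $\ddot y+A(t)y=G(t)$ for $y=(y_{1},y_{2})$, where the kink--kink interaction gives $\norm{A(t)}\lesssim v^{2}e^{-2\sqrt{2}v\vert t\vert}$ and, by the crucial gain \eqref{orthodecay}, $\norm{G(t)}\lesssim v^{2k+2}[\ln(1/v^{2})+v\vert t\vert]^{n_{k}+1}e^{-2\sqrt{2}v\vert t\vert}$ plus terms bounded by $\norm{(u,\partial_{t}u)}_{H^{1}_{x}\times L^{2}_{x}}$.

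Next, I would build the energy functional $L=L_{1}+L_{2}+L_{3}$ as sketched in the introduction. By Lemma \ref{coerccc}, applied at separation $z=d(t)+O(v^{2})\gg 1$ after undoing the Lorentz rescaling, together with the orthogonality of $u$, one gets $\norm{(u,\partial_{t}u)}_{H^{1}_{x}\times L^{2}_{x}}^{2}\lesssim L_{1}(t)$. One then chooses $L_{2}$ so that $\dot L_{2}$ cancels $\int\partial_{t}[\ddot U(\cdots)]u^{2}$, and $L_{3}=-2\int\Lambda(\phi_{k,v})u\,dx$, of size $\lesssim v^{2k}\ln(1/v)^{n_{k}}e^{-2\sqrt{2}v\vert t\vert}\norm{u}_{L^{2}}$, so that $\dot L_{3}$ cancels $2\int\Lambda(\phi_{k,v})\partial_{t}u$ and only the much smaller $-2\int\partial_{t}\Lambda(\phi_{k,v})u$ remains. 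Estimating every surviving contribution to $\dot L$ with \eqref{geraldecay}, Lemma \ref{dots+}, \eqref{decayd} and $d(t)\cong\ln(1/v)$ on the time interval in question --- the point being that a term not already quadratically small carries either a factor $\dot d(t)\sim v$ \emph{and} an interaction factor $e^{-\sqrt{2}d(t)}\lesssim v^{2}$, or a factor $1/d(t)\lesssim 1/\ln(1/v)$ --- one obtains
\[
\vert\dot L(t)\vert\lesssim\frac{v}{\ln(1/v)}\norm{(u,\partial_{t}u)}_{H^{1}_{x}\times L^{2}_{x}}^{2}+v^{2k+1}[\ln(1/v)+v\vert t\vert]^{\eta_{k}}e^{-2\sqrt{2}v\vert t\vert}\norm{(u,\partial_{t}u)}_{H^{1}_{x}\times L^{2}_{x}},
\]
together with $\norm{(u,\partial_{t}u)}_{H^{1}_{x}\times L^{2}_{x}}^{2}\lesssim L(t)+C(k)v^{4k}\ln(1/v)^{2n_{k}}$, the latter absorbing $\vert L_{3}\vert$ into $\frac{1}{2}L_{1}$ by Young's inequality.

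Finally, on a maximal subinterval $I\ni T_{0,k}$ of $\{\vert t-T_{0,k}\vert<\ln(1/v)^{2-\theta}/v\}$ I would assume the bootstrap bound $\norm{(\phi,\partial_{t}\phi)-(\phi_{k,v},\partial_{t}\phi_{k,v})}_{H^{1}_{x}\times L^{2}_{x}}\leq 2C_{1}(k)v^{2k}\ln(1/v)^{\eta_{k}}\exp\!\big(Cv\vert t-T_{0,k}\vert/\ln v\big)$, which controls $u,y_{1},y_{2}$ and their derivatives on $I$. Inserting these bounds into the differential inequality for $L$ and applying Gronwall --- with $L(T_{0,k})\lesssim v^{16k}$ coming from the hypothesis $\norm{(\phi(T_{0,k}),\partial_{t}\phi(T_{0,k}))-(\phi_{k,v}(T_{0,k}),\partial_{t}\phi_{k,v}(T_{0,k}))}<v^{8k}$, and $\int_{I}v^{2k+1}[\cdots]e^{-2\sqrt{2}v\vert t\vert}\,dt\lesssim v^{2k}\ln(1/v)^{\eta_{k}}$ --- gives $\norm{(u(t),\partial_{t}u(t))}_{H^{1}_{x}\times L^{2}_{x}}\lesssim v^{2k}\ln(1/v)^{\eta_{k}}\exp\!\big(Cv\vert t-T_{0,k}\vert/\ln v\big)$. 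A parallel Gronwall for $\ddot y+A(t)y=G(t)$, whose homogeneous flow amplifies $\dot y$ by at most $1+\int_{I}\norm{A}\lesssim 1+v$ and $y$ by at most $O(\ln(1/v)^{2-\theta})$ over $I$, with data $\vert y(T_{0,k})\vert+\vert\dot y(T_{0,k})\vert\lesssim v^{8k}$ and forcing $G$ (whose $v^{2k+2}$-part, integrated twice, contributes $O(v^{2k}\ln(1/v)^{\eta_{k}})$, the remainder being $u$-controlled), gives the same bound for $\vert y_{1}\vert+\vert y_{2}\vert+\vert\dot y_{1}\vert+\vert\dot y_{2}\vert$. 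For $v$ small enough these improve the constant $2C_{1}(k)$ to $C_{1}(k)$, hence $I$ exhausts $\{\vert t-T_{0,k}\vert<\ln(1/v)^{2-\theta}/v\}$ and \eqref{globalstronger} follows with $\eta_{k}$ a suitable power of the logarithm.

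I expect the main obstacle to be the middle step: choosing $L_{2},L_{3}$ and checking that after their cancellations \emph{every} surviving term in $\dot L$ is either genuinely quadratic in $(u,\partial_{t}u)$ with the small prefactor $v/\ln(1/v)$ rather than $v$, or a forcing term with sufficient decay. A rate $v$ in place of $v/\ln(1/v)$ would produce a Gronwall factor of size $\exp(Cv\cdot\ln(1/v)^{2-\theta}/v)=v^{-C\ln(1/v)^{1-\theta}}$ over the time span of the theorem, which is far too large; extracting this logarithmic gain from the kink separation $d(t)\gtrsim\ln(1/v)$, exactly as in Section $4$ of \cite{first}, is the technical heart of the argument.
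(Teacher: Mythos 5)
Your overall architecture --- modulation subject to the orthogonality conditions \eqref{orthogonall}, the functional $L=L_{1}+L_{2}+L_{3}$ with coercivity supplied by Lemma \ref{coerccc}, the logarithmic gain $v/\ln(1/v)$ extracted from the kink separation $d(t)\gtrsim \ln(1/v)$, and a final bootstrap --- is exactly the paper's, and your closing paragraph correctly identifies why a rate $v$ in place of $v/\ln(1/v)$ would be fatal. The gap is in the step you dispose of with ``a parallel Gronwall'' for the modulation system $\ddot y+A(t)y=G(t)$.

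First, no Gronwall-type estimate yields the amplification factors you assert over an interval of length $|I|\sim \ln(1/v)^{2-\theta}/v$. Writing the system in first-order form, $\int_{I}\norm{\mathcal{A}(t)}\,dt\geq |I|$; and even treating $A(t)y$ as a source in the twice-integrated Duhamel formula, the Volterra/Neumann iteration is only controlled by $\exp\left(|I|\,\norm{A}_{L^{1}_{t}}\right)=\exp\left(c\ln(1/v)^{2-\theta}\right)$, which is superpolynomially large in $1/v$ --- precisely the loss you flag as fatal for $L$. The bounds you want (homogeneous solutions growing at most like $v|t|$) are a special feature of $A(t)\sim v^{2}\sech(\sqrt{2}vt)^{2}M$ and must be \emph{proved}: the paper does this by diagonalizing ($e_{1}=y_{1}-y_{2}$, $e_{2}=y_{1}+y_{2}$), exhibiting the exact fundamental system built from $\tanh(\sqrt{2}vt)$ and $\sqrt{2}vt\tanh(\sqrt{2}vt)-1$ with Wronskian $-\sqrt{2}v$, and applying variation of parameters as in \eqref{linsys}; an alternative elementary route is a weighted Volterra estimate in the norm $\sup_{t}|y(t)|/(1+v|t|)$ using $\int(1+v|s|)\norm{A(s)}\,ds=O(v)$. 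Your specific claim that $\dot y$ is amplified by at most $1+O(v)$ is in fact false: normalizing data $(y,\dot y)(T_{0,k})=(0,1)$ against this fundamental system gives $|\dot y(0)|\sim k\ln(1/v)$, because $\frac{d}{dt}\tanh(\sqrt{2}vt)$ ranges from $O(v^{16k+1})$ at $t=T_{0,k}$ to $O(v)$ at $t=0$. This particular discrepancy only costs logarithms, but it shows the ODE analysis has not actually been carried out.

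Second, the projected equation is not of the form $\ddot y+A y=G$ with $G$ as benign as you state. Projecting $\partial_{t}^{2}u$ onto $\dot H_{0,1}(w_{k,v}(t,\pm x))$ and using the orthogonality conditions leaves a term $\frac{\dot d(t)}{(1-\dot d(t)^{2}/4)^{1/2}}\langle \partial_{t}u,\ddot H_{0,1}(w_{k,v})\rangle=O(v\norm{\partial_{t}u}_{L^{2}_{x}})$ in $\ddot y_{j}$, see \eqref{odeyj}. Under the bootstrap this is $O(v^{2k+1}\ln(1/v)^{\eta_{k}})$ pointwise with no $e^{-2\sqrt{2}v|t|}$ decay, so its integral over $I$ is $O(v^{2k}\ln(1/v)^{2-\theta+\eta_{k}})$; that forces $|\dot y_{j}|\lesssim v^{2k}$ instead of $v^{2k+1}$, and then $|y_{j}|\leq |y_{j}(T_{0,k})|+|I|\sup|\dot y_{j}|\lesssim v^{2k-1}$, a full power of $v$ short of \eqref{globalstronger}. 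The paper removes this term by passing to the corrected variables $c_{j}(t)=y_{j}(t)-y_{j}(T_{0,k})+2\sqrt{2}\int_{T_{0,k}}^{t}\frac{\dot d(s)}{(1-\dot d(s)^{2}/4)^{1/2}}\langle u(s),\ddot H_{0,1}(w_{k,v}(s,\pm x))\rangle\,ds$, after which the residual forcing is $O(v^{2}\norm{(u,\partial_{t}u)}_{H^{1}_{x}\times L^{2}_{x}})$ plus quadratic remainders. Without this correction, or an equivalent integration by parts in time, your bootstrap does not close.
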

Clearly, we can obtain from Theorem  \ref{energyE} and Theorem \ref{toobig} the following result:
\begin{corollary}\label{coenergyE}
There is a constant $C>0$ and for any $0<\theta<\frac{1}{4},\,k\in\mathbb{N}_{\geq 3}$ there exist $C_{1}(k)>0,\,\delta_{k,\theta}>0$ and $\eta_{k}\in\mathbb{N}$ such that if $0<v<\delta_{k,\theta}$ and $T_{0,k}=\frac{32k}{2\sqrt{2}}\frac{\ln{\left(\frac{1}{v^{2}}\right)}}{v},$ then any solution $\phi(t,x)$ 
of
\begin{equation*}\
    \begin{cases}
       \partial^{2}_{t}\phi(t,x)-\partial^{2}_{x}\phi(t,x)+ U^{'}\left(\phi(t,x)\right)=0,\\
       \norm{(\phi(T_{0,k},x),\partial_{t}\phi(T_{0,k},x))-(\phi_{k}(v,T_{0,k},x),\partial_{t}\phi_{k}(v,T_{0,k},x)))}_{H^{1}_{x}(\mathbb{R})\times L^{2}_{x}(\mathbb{R})}<v^{8k}
    \end{cases}
\end{equation*}
satisfies
\begin{equation*}
    \norm{(\phi(t,x),\partial_{t}\phi(t,x))-(\phi_{k}(v,t,x),\partial_{t}\phi_{k}(v,t,x))}_{H^{1}_{x}\times L^{2}_{x}}<C_{1}(k)v^{2k}\ln{\left(\frac{1}{v}\right)}^{\eta_{k}}\exp\left(C\frac{v\vert t-T_{0,k}\vert}{\ln{\left(v\right)}}\right),
\end{equation*}
if 
\begin{equation*}
    \vert t-T_{0,k}\vert<\frac{\ln{\left(\frac{1}{v}\right)}^{2-\theta}}{v}.
\end{equation*}
\end{corollary}
\begin{proof}[Proof of Corollary \ref{coenergyE}.]
It follows from Theorem \ref{energyE} and Theorems \ref{approximated theorem}, \ref{toobig}. 
\end{proof}
\par With the objective of simplifying the demonstration of Theorem \ref{energyE}, we are going to elaborate on necessary lemmas before the proof of Theorem \ref{energyE}. 
Similarly to the paper \cite{second}, using the notation of Theorem \ref{toobig}, we consider 

\begin{equation}\label{wk}
    w_{k,v}(t,x)=\frac{x-\frac{d(t)}{2}+c_{k}(v,t)}{\sqrt{1-\frac{\dot d(t)^{2}}{4}}}.
\end{equation}
From now on, we denote any solution $\phi(t,x)$ of the partial differential equation \eqref{IVP1}  as
\begin{equation}\label{notnot}
    \phi(t,x)=\varphi_{k,v}(t,x)+\frac{y_{1}(t)}{\sqrt{1-\frac{\dot d(t)^{2}}{4}}} H^{'}_{0,1}\left(w_{k,v}(t,x)\right)\do+\frac{y_{2}(t)}{\sqrt{1-\frac{\dot d(t)^{2}}{4}}} H^{'}_{0,1}\left(w_{k,v}(t,{-}x)\right)+u(t,x),
\end{equation}
such that
\begin{equation}\label{orthogonall}
    \left\langle u(t,x), H^{'}_{0,1}\left(w_{k,v}(t,x)\right)\right\rangle=\left\langle u(t,x), H^{'}_{0,1}\left(w_{k,v}(t,{-}x)\right)\right\rangle=0.
\end{equation}
\par Therefore, for $\zeta_{k}(t)=d(t)-2c_{k}(v,t)$ and from the orthogonal conditions \eqref{orthogonall} satisfied by $u(t,x),$ we deduce the following identity
\begin{equation}\label{liny1y2}
    \begin{bmatrix}
     y_{1}(t)\\
     y_{2}(t)
    \end{bmatrix}
    =
    M(t)^{{-1}}\begin{bmatrix}
     \left\langle \phi(t,x)-\varphi_{k,v}(t,x), H^{'}_{0,1}\left(w_{k,v}(t,x)\right)\right\rangle\\
     \left\langle \phi(t,x)-\varphi_{k,v}(t,x), H^{'}_{{-}1,0}\left(w_{k,v}(t,{-}x)\right)\right\rangle
    \end{bmatrix}.
\end{equation}
where, for any $t\in\mathbb{R},$ $M(t)$ is denoted by
\begin{equation*}
    M(t)=\begin{bmatrix}
      \norm{ H^{'}_{0,1}}_{L^{2}_{x}}^{2} & \left\langle  H^{'}_{0,1}(x-\zeta_{k}(t)), H^{'}_{{-}1,0}(x) \right\rangle\\
      \left\langle  H^{'}_{0,1}(x-\zeta_{k}(t)), H^{'}_{{-}1,0}(x) \right\rangle & \norm{ H^{'}_{0,1}}_{L^{2}_{x}}^{2}
    \end{bmatrix}.
\end{equation*}
Moreover, since  $\ln{\left(\frac{1}{v}\right)}\lesssim\zeta_{k},$ we obtain from Lemma \ref{interactt} that
$ \left\langle  H^{'}_{0,1}(x-\zeta_{k}(t)), H^{'}_{{-}1,0}(x) \right\rangle\ll 1.$ Therefore, since the matrix $M(t)$ is a smooth function with domain $\mathbb{R}$, then $M(t)^{{-}1}$ is also smooth on $\mathbb{R}.$ 
\par Next, for $\psi(t,x)=\phi(t,x)-\varphi_{k,v}(t,x),$ we obtain from the partial differential equation \eqref{IVP1} that $\psi(t,x)$ satisfies the following partial differential equation
\begin{align}\label{pp1}
  \frac{\partial^{2}}{\partial t^{2}}\psi(t,x)-\frac{\partial^{2}}{\partial x^{2}}\psi(t,x)+ \Lambda(\varphi_{k,v})(t,x)+\sum_{j=2}^{6}\frac{U^{(j)}\left(\varphi_{k,v}(t,x)\right)}{(j-1)!}\psi(t,x)^{j-1}=0.
\end{align}
Since $\varphi_{k,v}$ satisfies Theorem \ref{toobig} and the partial differential equation \eqref{nlww} is globally well-posed in the energy space, we can verify for any initial data $(\psi_{0}(x),\psi_{1}(x))\in H^{1}_{x}(\mathbb{R})\times L^{2}_{x}(\mathbb{R})$ that there exists a unique solution $\psi(t,x)$ of \eqref{pp1} satisfying $(\psi(0,x),\partial_{t}\psi(0,x))=(\psi_{0}(x),\psi_{1}(x))$ and 
\begin{equation}\label{gltt}
(\psi(t,x),\partial_{t}\psi(t,x))\in C\left(\mathbb{R};H^{1}_{x}(\mathbb{R})\times L^{2}_{x}(\mathbb{R})\right).
\end{equation}
\par Therefore, for any function $h\in\mathscr{S}(\mathbb{R}),$ we deduce from \eqref{pp1} that
\begin{align*}
    \frac{d}{dt}\left\langle \psi(t,x),h(x)\right\rangle=&\left\langle \partial_{t}\psi(t,x),h(x)\right\rangle,\\
    \frac{d^{2}}{dt^{2}}\left\langle \psi(t,x),h(x)\right\rangle=&\left\langle \frac{\partial^{2}}{\partial x^{2}}\psi(t,x)- U^{'}\left(\varphi_{k,v}(t,x)+\psi(t,x)\right)+ U^{'}\left(\varphi_{k,v}(t,x)\right),h(x)\right\rangle,
\end{align*}
which implies that the real function $\mathcal{P}_{1}(t)=\left\langle \psi(t,x), H^{'}_{0,1}\left(w_{k,v}(t,x)\right)\right\rangle$ and  the real function $\mathcal{P}_{2}(t)=\left\langle \psi(t,x), H^{'}_{{-}1,0}\left(w_{k,v}(t,{-}x)\right)\right\rangle$ are in $C^{2}(\mathbb{R}).$ In conclusion, using equation \eqref{liny1y2} and the product rule of derivative, we deduce that $y_{1},\, y_{2}\in C^{2}(\mathbb{R}).$
\par In conclusion, we obtain the following lemma:
\begin{lemma}\label{pdeu}
Assuming the same hypotheses of Theorem \ref{energyE}, there exist functions $y_{1},\,y_{2}:\mathbb{R}\to\mathbb{R}$ of class $C^{2}$ such that any solution $\phi(t,x)$ of \eqref{IVP1} satisfies for any $t\in\mathbb{R}$ the following identity
\begin{equation*}
 \phi(t,x)=\varphi_{k,v}(t,x)+\frac{y_{1}(t)}{\sqrt{1-\frac{\dot d(t)^{2}}{4}}} H^{'}_{0,1}\left(w_{k,v}(t,x)\right)+\frac{y_{2}(t)}{\sqrt{1-\frac{\dot d(t)^{2}}{4}}} H^{'}_{0,1}\left(w_{k,v}(t,{-}x)\right)+u(t,x), 
\end{equation*}
where $(u(t),\partial_{t}u(t))\in H^{1}_{x}(\mathbb{R})\times L^{2}_{x}(\mathbb{R})$ and the function $u$ satisfies the following orthogonality conditions:
\begin{align*}
    \left\langle u(t,x),H^{'}_{0,1}\left(w_{k,v}(t,x)\right)\right\rangle=& 0,\\
    \left\langle u(t,x),H^{'}_{0,1}\left(w_{k,v}(t,{-}x)\right)\right\rangle=& 0.
\end{align*}
\end{lemma}
\begin{remark}\label{easyremark}
Using Lemmas \ref{dots+}, \ref{mulsoblemma}, \ref{pdeu} $,\Lambda(\phi)=0,$ Theorem \ref{toobig}, Remark \ref{n2} and  identities $ \frac{d^{3}}{dx^{3}}H_{0,1}(x)=U^{''}\left(H_{0,1}(x)\right) H^{'}_{0,1}(x),\, \ddot d(t)=16\sqrt{2}e^{{-}\sqrt{2}d(t)},$ we can deduce that $u$ satisfies the following partial differential equation
\begin{multline}\label{uequation}
 \Lambda\left(\varphi_{k,v}\right)(t,x)+\partial^{2}_{t}u(t,x)-\partial^{2}_{x}u(t,x)+ U^{''}\left(\varphi_{k,v}(t,x)\right)\left(\phi(t,x)-\varphi_{k,v}(t,x)\right)\\{+}\frac{\ddot y_{1}(t)}{\sqrt{1-\frac{\dot d(t)^{2}}{4}}} H^{'}_{0,1}\left(w_{k,v}(t,x)\right)
  +\frac{\ddot y_{2}(t)}{\sqrt{1-\frac{\dot d(t)^{2}}{4}}} H^{'}_{0,1}\left(w_{k,v}(t,{-}x)\right)
  -\frac{y_{1}(t)8\sqrt{2}e^{-\sqrt{2}d(t)}}{1-\frac{\dot d(t)^{2}}{4}} H^{''}_{0,1}\left(w_{k,v}(t,x)\right)\\-\frac{y_{2}(t)8\sqrt{2}e^{-\sqrt{2}d(t)}}{1-\frac{\dot d(t)^{2}}{4}} H^{''}_{0,1}\left(w_{k,v}(t,{-}x)\right)
  -\frac{\dot y_{1}(t)\dot d(t)}{1-\frac{\dot d(t)^{2}}{4}} H^{''}_{0,1}\left(w_{k,v}(x,t)\right)-\frac{\dot y_{2}(t)\dot d(t)}{1-\frac{\dot d(t)^{2}}{4}} H^{''}_{0,1}\left(w_{k,v}(t,{-}x)\right)
  \\-y_{1}(t)\frac{ U^{''}\left(H_{0,1}\left(w_{k,v}(t,x)\right)\right)}{\sqrt{1-\frac{\dot d(t)^{2}}{4}}} H^{'}_{0,1}\left(w_{k,v}(t,x)\right)-y_{2}(t)\frac{ U^{''}\left(H_{0,1}\left(w_{k,v}(t,{-}x)\right)\right)}{\sqrt{1-\frac{\dot d(t)^{2}}{4}}} H^{'}_{0,1}\left(w_{k,v}(t,{-}x)\right)\\
 =\mathcal{Q}(t,x),
\end{multline}
where $\mathcal{Q}(t,\cdot )$ is a function in $H^{1}_{x}(\mathbb{R})$ satisfying for all $t\in\mathbb{R}$
\begin{multline*}
    \norm{\mathcal{Q}(t,x)}_{H^{1}_{x}(\mathbb{R})}\lesssim 
    \norm{u(t)}_{H^{1}_{x}}^{2}+\norm{u(t)}_{H^{1}_{x}}^{6}+\max_{j\in\{1,\,2\}}\vert y_{j}(t)\vert^{2}+\max_{j\in\{1,\,2\}}\vert y_{j}(t)\vert^{6}\\{+}\left[\max_{j\in\{1,2\}}\vert\dot y_{j}(t)\vert+v\max_{j\in\{1,2\}}\vert y_{j}(t)\vert\right] v^{3}\left(\ln{\left(\frac{1}{v^{2}}\right)}+\vert t\vert v\right)e^{-2\sqrt{2}\vert t\vert v},
\end{multline*}
if $v>0$ is small enough.
\end{remark}
\par Next, from equation \eqref{uequation} of Remark \ref{easyremark},  we consider the terms
\begin{align}\label{termy1}
     Y_{1}(t,x)=&\left[ U^{''}\left(\varphi_{k,v}(t,x)\right)- U^{''}\left(H_{0,1}\left(w_{k,v}(t,x)\right)\right)\right]\frac{y_{1}(t)}{\sqrt{1-\frac{\dot d(t)^{2}}{4}}} H^{'}_{0,1}\left(w_{k,v}(t,x)\right),\\ \label{Y2(t)}
   Y_{2}(t,x)=&\left[U^{''}\left(\varphi_{k,v}(t,x)\right)- U^{''}\left(H_{0,1}\left(w_{k,v}(t,{-}x)\right)\right)\right]\frac{y_{2}(t)}{\sqrt{1-\frac{\dot d(t)^{2}}{4}}} H^{'}_{0,1}\left(w_{k,v}(t,{-}x)\right).
\end{align}
Now, we will estimate the expressions
\begin{equation*}
    \left\langle Y_{1}(t), H^{'}_{0,1}\left(w_{k,v}(t,x)\right) \right\rangle,\,\left\langle Y_{2}(t), H^{'}_{0,1}\left(w_{k,v}(t,{-}x)\right) \right\rangle.
\end{equation*}
\begin{lemma}\label{y1y2}
In notation of Theorem \ref{toobig} and Lemma \ref{pdeu}, the functions $Y_{1}(t)$ and $Y_{2}(t)$ satisfy
\begin{align*}
    \left\langle Y_{1}(t),\, H^{'}_{0,1}\left(w_{k,v}(t,x)\right) \right\rangle=&4\sqrt{2}e^{-\sqrt{2}d(t)}y_{1}(t)+y_{1}(t)Res_{1}(v,t),\\
    \left\langle Y_{2}(t),\, H^{'}_{0,1}\left(w_{k,v}(t,x)\right) \right\rangle=&{-}4\sqrt{2}e^{-\sqrt{2}d(t)}y_{2}(t)+y_{2}(t)Res_{2}(v,t),
\end{align*}
where, for any $j\in\{1,2\}$ and all $v\in(0,1),$ the function $Res_{j}(v,t)$ is a Schwartz function on $t$ satisfying for any $l\in\mathbb{N}\cup\{0\},$ if $0<v\ll 1,$ the following estimate
\begin{equation}\label{esttt}
    \left\vert\frac{\partial^{l}}{\partial t^{l}}Res_{j}(v,t)\right\vert\lesssim_{l} v^{l+4}\left[\ln{\left(\frac{1}{v^{2}}\right)}+\vert t\vert v\right]^{\eta_{k}}e^{-2\sqrt{2}\vert t\vert v},
\end{equation}
for a number $\eta_{k}\geq 0$ depending only on $k\in\mathbb{N}_{\geq 2}.$
\end{lemma}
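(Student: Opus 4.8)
The plan is to expand $\ddot U(\phi_{k,v}(t,x))-\ddot U(H_{0,1}(w_{k,v}(t,x)))$ using the explicit structure of $\phi_{k,v}$ from Theorem \ref{toobig}. Recall $\ddot U(\phi)=2-24\phi^{2}+30\phi^{4}$, so $\ddot U(\phi_{k,v})-\ddot U(H_{0,1}(w_{k,v}(t,\cdot)))$ is a polynomial in $\phi_{k,v}$ and $H_{0,1}(w_{k,v}(t,\cdot))$ vanishing when the two arguments agree; hence it is divisible by $\phi_{k,v}(t,x)-H_{0,1}(w_{k,v}(t,x))$. By \eqref{aproxfor}, that difference equals $H_{-1,0}(w_{k,v}(t,-x))$ plus $e^{-\sqrt 2 d(t)}$ times $\mathcal G$-terms plus the lower-order $\mathcal R_{k,v}$-terms. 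First I would isolate the leading contribution: near the support of $\dot H_{0,1}(w_{k,v}(t,x))$ (i.e. where the first kink lives), the dominant term is the linearization coming from the tail of the second kink $H_{-1,0}(w_{k,v}(t,-x))$, whose size there is $\cong e^{-\sqrt 2 d(t)}$ after the change of variables $y=w_{k,v}(t,x)$ and using \eqref{le1}, \eqref{le2}. Concretely, the main term of $\langle Y_1(t),\dot H_{0,1}(w_{k,v}(t,\cdot))\rangle/y_1(t)$ should reduce, via the Bogomolny identity $\dot H_{0,1}=\sqrt{2U(H_{0,1})}$ and an explicit computation of $\langle [\ddot U'(H_{0,1})\cdot(\text{tail})]\,\dot H_{0,1},\dot H_{0,1}\rangle$, to a constant multiple of $e^{-\sqrt2 d(t)}$; matching constants gives the coefficient $4\sqrt 2$.

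The key computational input is the identity \eqref{Group} for $\mathcal G$, which was designed precisely so that the source term $[-24H_{0,1}^2+30H_{0,1}^4]e^{-\sqrt2 x}+8\sqrt2\dot H_{0,1}$ is orthogonal-compatible with $\dot H_{0,1}$; this is what produces the clean coefficient. I would compute $\langle [-24H_{0,1}(y)^2+30H_{0,1}(y)^4]e^{-\sqrt2 y},\dot H_{0,1}(y)\rangle$ explicitly using the closed form of $H_{0,1}$ (substitute $s=e^{\sqrt2 y}$, rational integrand), and likewise pin down the constant $k_1$ from the normalization $\langle\mathcal G,\dot H_{0,1}\rangle=0$. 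The term $8\sqrt2\dot H_{0,1}$ contributes $8\sqrt 2\|\dot H_{0,1}\|_{L^2}^2=8\sqrt2\cdot\frac{1}{2\sqrt2}=4$ by \eqref{kinkmass}; tracking the $e^{-\sqrt 2 d(t)}$ prefactor and the Jacobian $(1-\dot d^2/4)^{-1/2}=1+O(v^2)$ yields exactly $4\sqrt2 e^{-\sqrt 2 d(t)}$ as the main coefficient, with the $O(v^2)$ Jacobian correction absorbed into $Rest_1$. The $Y_2$ case is identical up to the reflection $x\mapsto -x$, which flips the sign of the tail contribution and hence gives $-4\sqrt2 e^{-\sqrt2 d(t)}$.

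For the remainder $Rest_j(v,t)$: everything not captured by the leading term is a sum of (i) the $\mathcal R_{k,v}$-contributions, which by Theorem \ref{toobig} carry a prefactor $v^4(\ln(1/v^2)+|t|v)^{n_{k,i}}e^{-2\sqrt2|t|v}$ and whose $t$-derivatives gain powers of $v$ by the chain rule applied to $w_{k,v}$ together with \eqref{decayd}; (ii) quadratic-and-higher terms in the difference $\phi_{k,v}-H_{0,1}(w_{k,v})$, each of which contributes an extra factor $e^{-\sqrt2 d(t)}\cong v^2(\cosh\sqrt2 vt)^{-2}\lesssim v^2 e^{-2\sqrt2|t|v}$; and (iii) the Jacobian expansion corrections of size $O(v^2 e^{-2\sqrt2|t|v})$. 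Since $e^{-\sqrt2 d(t)}=\frac{v^2}{8}(\cosh\sqrt2 vt)^{-2}$, the leading term is itself $\cong v^2$, so the quadratic corrections are genuinely of relative size $v^2$, i.e. absolute size $v^4[\dots]^{\eta_k}e^{-2\sqrt2|t|v}$ after combining with the $(\ln(1/v^2)+|t|v)^{n_k}$ factors; taking $\eta_k$ large enough (e.g. $\eta_k = 2n_k+1$ or the maximum of the relevant $n_{k,i}$) absorbs all the polynomial weights. The $t$-derivative bounds \eqref{esttt} follow by differentiating under the integral sign: each $\partial_t$ either hits $w_{k,v}$ (producing $\dot\rho_k$ or $\dot d$, both $\lesssim v$ with the faster decay from \eqref{decayd} for higher derivatives and from \eqref{aproxr} for $\dot r_j$), hits $e^{-\sqrt2 d(t)}$ (producing $\dot d\lesssim v$), or hits an $\mathcal R_{k,v}$ or $p_{k,i,v}$ factor (gaining $v$ by Lemma \ref{dots+} and the decay in Theorem \ref{toobig}); in all cases each derivative costs one factor of $v$ while preserving the exponential decay, after possibly enlarging the polynomial exponent.

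\medskip

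\noindent\textbf{Main obstacle.} The delicate point is isolating the \emph{exact} leading coefficient $4\sqrt2 e^{-\sqrt2 d(t)}$ rather than merely $O(e^{-\sqrt2 d(t)})$, because the argument relies on the precise second-order expansion of $H_{-1,0}(w_{k,v}(t,-x))$ in the region $|x|\lesssim 1$ (where $\dot H_{0,1}(w_{k,v}(t,x))$ is concentrated), keeping track of the fact that $w_{k,v}(t,-x)= -w_{k,v}(t,x)+ \tfrac{2(\rho_k(v,t)+?)}{\sqrt{1-\dot d^2/4}}$ shifts by $\cong d(t)$, so $H_{-1,0}$ there behaves like $-e^{-\sqrt2(w_{k,v}(t,x)+d(t)+\cdots)}+O(e^{-3\sqrt2(\cdots)})$. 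One must verify that the $\mathcal G$-correction term in $\phi_{k,v}$ (which is also $\cong e^{-\sqrt2 d(t)}$) does \emph{not} contribute to the leading coefficient against $\dot H_{0,1}$ — and this is exactly guaranteed by the normalization $\langle\mathcal G,\dot H_{0,1}\rangle=0$ built into \eqref{G(x)} and by \eqref{Group}. Getting this bookkeeping right, with all $O(v^2)$-relative errors correctly shunted into $Rest_j$, is the crux; the rest is a routine (if lengthy) application of Lemma \ref{interactt}, \eqref{le1}–\eqref{le2}, Lemma \ref{dots+} and the decay estimates of Theorem \ref{toobig}.
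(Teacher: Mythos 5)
Your overall decomposition is the same as the paper's: Taylor-expand $\ddot U(\phi_{k,v})-\ddot U\left(H_{0,1}(w_{k,v}(t,x))\right)$ around the single kink, isolate the tail of $H_{-1,0}(w_{k,v}(t,-x))$ and the $e^{-\sqrt{2}d(t)}\mathcal{G}$-correction as the two contributions of size $e^{-\sqrt{2}d(t)}$, and push everything else (the $\mathcal{R}_{k,v}$ terms, the Taylor terms of order $j\geq 4$, the Jacobian corrections) into $Rest_{j}$ using Lemma \ref{interactt}, Lemma \ref{dots+} and Theorem \ref{toobig}; that part of your plan is sound, as is your treatment of the $t$-derivatives. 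The gap is in the computation of the leading coefficient. You assert that the $\mathcal{G}$-correction does \emph{not} contribute to the leading term because $\langle \mathcal{G},\dot H_{0,1}\rangle=0$. This is false: the pairing that actually appears is $\int_{\mathbb{R}}U^{(3)}(H_{0,1})\dot H_{0,1}^{2}\,\mathcal{G}\,dx$ (see \eqref{partp2}), which is not controlled by $\langle \mathcal{G},\dot H_{0,1}\rangle$ and does not vanish. The coefficient $4\sqrt{2}$ is obtained precisely by \emph{combining} the tail contribution $-e^{-\sqrt{2}d(t)}\int U^{(3)}(H_{0,1})\dot H_{0,1}^{2}e^{-\sqrt{2}x}\,dx$ of \eqref{partp1} with the $\mathcal{G}$-contribution $+e^{-\sqrt{2}d(t)}\int U^{(3)}(H_{0,1})\dot H_{0,1}^{2}\mathcal{G}\,dx$ of \eqref{partp2}; their difference equals $4\sqrt{2}$ by Lemma \ref{est2}, and neither piece alone gives the right constant.

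Your explicit numerology is also incorrect. Pairing the right-hand side of \eqref{Group} directly against $\dot H_{0,1}$, as you propose, computes $\langle -\ddot{\mathcal{G}}+\ddot U(H_{0,1})\mathcal{G},\,\dot H_{0,1}\rangle$, which is $0$ by self-adjointness of $-\partial_{x}^{2}+\ddot U(H_{0,1})$ and the fact that $\dot H_{0,1}$ lies in its kernel; indeed your two partial answers, $\langle[\ddot U(H_{0,1})-2]e^{-\sqrt{2}x},\dot H_{0,1}\rangle=-4$ (by \eqref{IDD2}) and $8\sqrt{2}\norm{\dot H_{0,1}}_{L^{2}_{x}}^{2}=4$, sum to $0$, not to $4\sqrt{2}$. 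The correct pairing, as in the proof of Lemma \ref{est1}, is against $\dot U(H_{0,1})=\ddot H_{0,1}$ after two integrations by parts: there the $8\sqrt{2}\dot H_{0,1}$ source drops out and the $[\ddot U(H_{0,1})-2]e^{-\sqrt{2}x}$ source produces the $4\sqrt{2}$ through \eqref{IDD2}. Finally, the sign $-4\sqrt{2}$ for $Y_{2}$ is not obtained by ``reflection'': $\langle Y_{2},\dot H_{0,1}(w_{k,v}(t,x))\rangle$ is an overlap integral between functions localized near the two different kinks, a distance $d(t)$ apart, whose leading term is $y_{2}(t)\int[\ddot U(H_{0,1})-2]\dot H_{0,1}(x)\dot H_{-1,0}\bigl(x+d(t)(1-\dot d(t)^{2}/4)^{-1/2}\bigr)\,dx$, again evaluated through \eqref{IDD2}; this is a genuinely different mechanism from the $Y_{1}$ case and requires its own argument.
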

\begin{proof}[Proof of Lemma \ref{y1y2}.]
\par First, we observe that

\begin{equation*}
    \left \vert \frac{d^{l}}{d t^{l}}e^{{-}\sqrt{2}d(t)}\right\vert=\left\vert\frac{d^{l}}{d t^{l}}\frac{v^{2}}{8}\sech{\left(\sqrt{2}v t\right)}^{2}\right\vert\lesssim_{l} v^{2+l}e^{{-}2\sqrt{2}\vert t\vert v}.
\end{equation*}
\par Using Taylor's Expansion Theorem, Theorem \ref{toobig} and Lemma \ref{mulsoblemma}, we deduce that
\begin{multline*}
U^{''}\left(\varphi_{k,v}(t,x)\right)\\
    \begin{aligned}
    =& U^{''}\left(H_{0,1}(w_{k,v}(t,x))-H_{0,1}\left(w_{k,v}(t,{-}x)\right)\right)\\
&{+}e^{-\sqrt{2}d(t)}U^{(3)}\left(H_{0,1}(w_{k,v}(t,x))-H_{0,1}\left(w_{k,v}(t,{-}x)\right)\right)\left[\mathcal{G}(w_{k,v}(t,x))-\mathcal{G}(w_{k,v}(t,{-}x))\right]\\
 &{+}res_{1}(v,t,x),
    \end{aligned}
\end{multline*}
where, if $0<v\ll 1,$ $res_{1}(v,t,x)$ is a smooth function on the variables $(t,x)$  which satisfies for some $\eta_{k}\in\mathbb{N}$ and any $s\geq0,\,l\in\mathbb{N}\cup\{0\}$ the following inequality
\begin{equation}\label{dre1}
\norm{\frac{\partial^{l}}{\partial t^{l}}res_{1}(v,t,x)}_{H^{s}_{x}}\lesssim_{s,l}v^{4+l}\left[\ln{\left(\frac{1}{v^{2}}\right)}+\vert t\vert v\right]^{\eta_{k}}e^{{-}2\sqrt{2}\vert t\vert v}.
\end{equation}
Therefore, using identity
\begin{multline*}
U^{''}\left(\varphi_{k,v}(t,x)\right)- U^{''}\left(H_{0,1}(w_{k,v}(t,x)\right)\\= U^{''}\left(\varphi_{k,v}(t,x)\right)- U^{''}\left(H_{0,1}(w_{k,v}(t,x)-H_{0,1}(w_{k,v}(t,{-}x))\right)\\
     {+} U^{''}\left(H_{0,1}(w_{k,v}(t,x)-H_{0,1}(w_{k,v}(t,{-}x))\right)- U^{''}\left(H_{0,1}(w_{k,v}(t,x)\right),
\end{multline*}
 we obtain that
\begin{multline}\label{Y1}
Y_{1}(t,x)\sqrt{1-\frac{\dot d(t)^{2}}{4}}
 \\
 \begin{aligned}
=&\left[ U^{''}\left(H_{0,1}(w_{k,v}(t,x))-H_{0,1}\left(w_{k,v}(t,{-}x)\right)\right)-U^{''}\left(H_{0,1}(w_{k,v}(t,x))\right)\right]y_{1}(t) H^{'}_{0,1}\left(w_{k,v}(x,t)\right)\\
&{+}y_{1}(t)e^{-\sqrt{2}d(t)}U^{(3)}\left(H_{0,1}(w_{k,v}(t,x))-H_{0,1}\left(w_{k,v}(t,{-}x)\right)\right)\mathcal{G}(w_{k,v}(t,x)) H^{'}_{0,1}\left(w_{k,v}(t,x)\right)\\
    &{-}y_{1}(t)e^{-\sqrt{2}d(t)}U^{(3)}\left(H_{0,1}(w_{k,v}(t,x))-H_{0,1}\left(w_{k,v}(t,{-}x)\right)\right)\mathcal{G}(w_{k,v}(t,-x)) H^{'}_{0,1}\left(w_{k,v}(t,x)\right)\\
    &{+}y_{1}(t)res_{1}(v,t,x).
\end{aligned}
\end{multline}
By a similar reasoning, we obtain that
\begin{multline}\label{y2}
      Y_{2}(t,x)\sqrt{1-\frac{\dot d(t)^{2}}{4}}\\
      \begin{aligned}
      =&\left[U^{''}\left(H_{0,1}(w_{k,v}(t,x))-H_{0,1}\left(w_{k,v}(t,{-}x)\right)\right)-U^{''}\left(H_{0,1}(w_{k,v}(t,{-}x))\right)\right]y_{2}(t)H^{'}_{0,1}\left(w_{k,v}(t,{-}x)\right)\\
    &{+}y_{2}(t)e^{{-}\sqrt{2}d(t)}U^{(3)}\left(H_{0,1}(w_{k,v}(t,x))-H_{0,1}\left(w_{k,v}(t,{-}x)\right)\right)\mathcal{G}(w_{k,v}(t,x))H^{'}_{0,1}\left(w_{k,v}(t,{-}x)\right)\\
    &{-}y_{2}(t)e^{-\sqrt{2}d(t)}U^{(3)}\left(H_{0,1}(w_{k,v}(t,x))-H_{0,1}\left(w_{k,v}(t,{-}x)\right)\right)\mathcal{G}(w_{k,v}(t,-x))H^{'}_{0,1}\left(w_{k,v}(t,{-}x)\right)\\
    &{+}y_{2}(t)res_{2}(v,t,x), 
    \end{aligned}
\end{multline}
where if $0<v\ll 1,$ $res_{2}(v,t,x)$ is a smooth function on $t,\,x$ satisfying, for some constant $\eta_{k}\geq 0,$ any $l\in\mathbb{N}\cup\{0\}$ and $s\geq 0,$ the following estimate
\begin{equation}\label{res2}
\norm{\frac{\partial^{l}}{\partial t^{l}}res_{2}(v,t,x)}_{H^{s}_{x}}\lesssim_{s,l}v^{4+l}\left[\ln{\left(\frac{1}{v^{2}}\right)}+\vert t\vert v\right]^{\eta_{k}}e^{{-}2\sqrt{2}\vert t\vert v}.
\end{equation}
\par Next, from the Fundamental Theorem of Calculus, we have for any $\zeta>1$ that
\begin{multline*}
    \left[U^{''}\left(H^{\zeta}_{0,1}(x)+H_{-1,0}(x)\right)- U^{''}\left(H^{\zeta}_{0,1}(x)\right)\right]\partial_{x}H^{\zeta}_{0,1}(x)
    \\=U^{(3)}\left(H^{\zeta}_{0,1}(x)\right)H_{-1,0}(x)\partial_{x} H^{\zeta}_{0,1}(x)+\int_{0}^{1}U^{(4)}\left(H^{\zeta}_{0,1}+\theta H_{-1,0}\right)(1-\theta)H_{-1,0}(x)^{2}\partial_{x} H^{\zeta}_{0,1}(x)\,d\theta,
\end{multline*}
from which with Lemma \ref{interactt}, estimates \eqref{le2}, \eqref{le1} and $\left\vert \frac{d^{l}}{dx^{l}}\left[H_{{-}1,0}(x)+e^{{-}\sqrt{2}x}\right] \right\vert\lesssim_{l}\min\left(e^{{-}\sqrt{2}x},e^{{-}3\sqrt{2}x}\right),$ we obtain that
\begin{multline}\label{partp1}
   \left\langle \left[U^{''}\left(H^{\zeta}_{0,1}(x)+H_{-1,0}(x)\right)-U^{''}\left(H^{\zeta}_{0,1}(x)\right)\right]\partial_{x}H^{\zeta}_{0,1}(x),\,\partial_{x} H^{\zeta}_{0,1}(x)\right\rangle\\=
   {-}e^{-\sqrt{2}\zeta}\int_{\mathbb{R}} U^{(3)}\left(H_{0,1}(x)\right)H^{'}_{0,1}(x)^{2}e^{-\sqrt{2}x}\,dx+res_{3}(\zeta),
\end{multline}
with $res_{3}\in C^{\infty}(\mathbb{R}_{\geq 1})$ satisfying for all $l\in\mathbb{N}\cup\{0\}$ and $\zeta\geq 1$
\begin{equation*}
    \left\vert res^{(l)}_{3}(\zeta)\right\vert \lesssim_{l} \zeta e^{-2\sqrt{2}\zeta}.
\end{equation*}
\par Next, using $U\in C^{\infty}(\mathbb{R})$ and estimates \eqref{le2}, \eqref{le1}, we deduce for al $\zeta\geq 1$ and any $l\in\mathbb{N}\cup\{0\}$ that
\begin{equation*}
  \left\vert
  \frac{\partial^{l}}{\partial \zeta^{l}}\left[U^{(3)}\left(H^{\zeta}_{0,1}(x)+H_{{-}1,0}(x)\right)-U^{(3)}\left(H^{\zeta}_{0,1}(x)\right)
  \right]\right\vert\lesssim_{l} \left\vert   H_{{-}1,0}(x)\right\vert.   
\end{equation*}
Therefore, since $\mathcal{G}$ defined  in \eqref{G(x)} is a Schwartz function, Lemma \ref{interactt} implies that
\begin{equation*}
    int(\zeta)=\left\langle \left[U^{(3)}\left(H^{\zeta}_{0,1}(x)+H_{{-}1,0}(x)\right)-U^{(3)}\left(H^{\zeta}_{0,1}(x)\right)\right]\mathcal{G}(x-\zeta) \partial_{x} H^{\zeta}_{0,1}(x),\,\partial_{x}H^{\zeta}_{0,1}(x)\right\rangle
\end{equation*}
satisfies for all $\zeta\geq 1$ and any $l\in\mathbb{N}\cup\{0\}$ the following inequality
$
    \left\vert int^{(l)}(\zeta) \right\vert\lesssim_{l} e^{-\sqrt{2}\zeta}.
$ Moreover, using the following identity
\begin{equation}\label{u3ide}
    U^{(3)}(\phi)={-}48\phi+120\phi^{3},
\end{equation} we can deduce similarly that 
\begin{equation*}
    int_{2}(\zeta)= \left\langle U^{(3)}\left(H^{\zeta}_{0,1}(x)+H_{-1,0}(x)\right)\mathcal{G}({-}x)H^{'}_{{-}1,0}(x),\,\partial_{x} H^{\zeta}_{0,1}(x)\right\rangle
\end{equation*}
 satisfies
$
    \left\vert int^{(l)}_{2}(\zeta) \right\vert\lesssim_{l} e^{-\sqrt{2}\zeta}
$
for any $l\in\mathbb{N}\cup\{0\}$ and $\zeta\geq 1.$
As a consequence, we deduce that there exists a real function $int_{3}:\mathbb{R}_{\geq 1}\to \mathbb{R}$ satisfying for any $l\in\mathbb{N}\cup\{0\}$
\begin{equation*}
    \left\vert int^{(l)}_{3}(\zeta) \right\vert\lesssim_{l} e^{-\sqrt{2}\zeta},
\end{equation*}
where the function $int_{3}$ satisfies the following identity
\begin{multline}\label{partp2}
    \left\langle U^{(3)}\left(H^{\zeta}_{0,1}(x)+H_{-1,0}(x)\right)\mathcal{G}(x-\zeta) \partial_{x} H^{\zeta}_{0,1}(x),\partial_{x} H^{\zeta}_{0,1}(x)\right\rangle\\
    -\left\langle U^{(3)}\left(H^{\zeta}_{0,1}(x)+H_{{-}1,0}(x)\right)\mathcal{G}({-}x)  H^{'}_{0,1}({-}x),\partial_{x}H^{\zeta}_{0,1}(x)\right\rangle\\ =\int_{\mathbb{R}} U^{(3)}\left(H_{0,1}(x)\right) H^{'}_{0,1}(x)^{2}\mathcal{G}(x)\,dx+int_{3}(\zeta).
\end{multline}
From Theorem \ref{toobig}, estimates \eqref{decayd} and identity $e^{{-}\sqrt{2}d(t)}=\frac{v^{2}}{8}\sech{\left(\sqrt{2}\vert t\vert v\right)}^{2},$ it is not difficult to verify for any $l\in\mathbb{N}\cup\{0\}$ that if $0<v\ll 1,$ then
\begin{equation}\label{intdecayerror}
    \frac{d^{l}}{dt^{l}}\exp\left(\frac{2\rho_{k,v}(t)}{\sqrt{1-\frac{\dot d(t)^{2}}{4}}}\right)\lesssim_{l} v^{2+l}e^{{-}2\sqrt{2}\vert t\vert v}.
\end{equation}
In conclusion, from estimates
\eqref{Y1}, \eqref{partp1}, \eqref{partp2} and Lemma \ref{est2} of Appendix Section \ref{aux}, we obtain using identity
\begin{equation*}
    w_{k,v}(t,x)=\frac{x-\frac{d(t)}{2}+c_{k,v}}{\sqrt{1-\frac{\dot d(t)^{2}}{4}}},
\end{equation*}
and Theorem \ref{toobig} that $Y_{1}(t)$ satisfies Lemma \ref{y1y2}. 
\par The proof that $Y_{2}(t)$ satisfies Lemma \ref{y1y2} is similar. First, from the Fundamental Theorem of Calculus, we have for any real number $\zeta\geq 1$ the following identity
\begin{multline*}
    \left[ U^{''}\left(H^{\zeta}_{0,1}(x)+H_{{-}1,0}(x)\right)- U^{''}(H_{{-}1,0}(x))\right] H^{'}_{{-}1,0}(x)\\
    \begin{aligned}=&
    \left[U^{''}\left(H^{\zeta}_{0,1}(x)\right)-2\right] H^{'}_{{-}1,0}(x)+U^{(3)}\left(H^{\zeta}_{0,1}(x)\right)H_{{-}1,0}(x) H^{'}_{{-}1,0}(x)\\
    &{+}\int_{0}^{1}\left[U^{(4)}\left(H^{\zeta}_{0,1}(x)+\theta H_{{-}1,0}(x)\right)-U^{(4)}\left(\theta H_{{-}1,0}(x)\right)\right]H_{{-}1,0}(x)^{2}  H^{'}_{{-}1,0}(x)(1-\theta)\,d\theta.
\end{aligned}
\end{multline*}
Therefore, estimates \eqref{le2}, \eqref{le1}, identity \eqref{u3ide} and Lemma \ref{interactt} imply for any $\zeta\geq 1$ the following estimate
\begin{multline}\label{y2ppp1}
    \left\vert\frac{d^{l}}{d\zeta^{l}}\left\langle U^{''}\left(H^{\zeta}_{0,1}(x)+H_{{-}1,0}(x)\right)-U^{''}(H_{{-}1,0}(x))-U^{''}\left(H^{\zeta}_{0,1}(x)\right)+2 ,H^{'}_{{-}1,0}(x)\partial_{x}H^{\zeta}_{0,1}(x)\right\rangle\right \vert \\\lesssim_{l} \zeta e^{{-2}\sqrt{2}\zeta}.
\end{multline}
Similarly, Lemma \ref{interactt} and identity \eqref{u3ide} imply that the functions
\begin{align*}
    int_{4}(\zeta)=&\left\langle U^{(3)}\left(H^{\zeta}_{0,1}(x)+H_{{-}1,0}(x)\right)\mathcal{G}(x-\zeta) H^{'}_{{-}1,0}(x),\, \partial_{x} H^{\zeta}_{0,1}(x)\right\rangle,\\
    int_{5}(\zeta)=&\left\langle U^{(3)}\left(H^{\zeta}_{0,1}(x)+H_{{-}1,0}(x)\right)\mathcal{G}({-}x)H^{'}_{-1,0}(x),\, \partial_{x} H^{\zeta}_{0,1}(x)\right\rangle
\end{align*} 
satisfy the estimates
\begin{equation}\label{int45}
    \left\vert int^{(l)}_{4}(\zeta)\right\vert+ \left\vert int^{(l)}_{5}(\zeta)\right\vert\lesssim_{l} e^{-\sqrt{2}\zeta},
\end{equation}
for all $\zeta\geq 1$ and any $l\in\mathbb{N}\cup\{0\}.$
Therefore, from estimates \eqref{intdecayerror}, \eqref{y2}, \eqref{y2ppp1}, \eqref{int45}, Lemma \ref{interactt} and Theorem \ref{toobig} imply that
\begin{multline}
     \left\langle Y_{2}(t,x),\, H^{'}_{0,1}\left(w_{k,v}(t,x)\right) \right\rangle=
     y_{2}(t)\int_{\mathbb{R}}\left[ U^{''}(H_{0,1}(x))-2\right]H^{'}_{0,1}(x)H^{'}_{-1,0}\left(x+\frac{d(t)}{\sqrt{1-\frac{\dot d(t)^{2}}{4}}}\right)\,dx  \\+y_{2}(t)res_{6}(v,t),
\end{multline}
where $res_{6}(v,t)$ is a real function, which satisfies for some constant $\eta_{k}\geq 0,$ if $0<v\ll 1,$   
\begin{equation*}
\left\vert \frac{\partial^{l}}{\partial t^{l}}res_{6}(v,t)\right\vert\lesssim_{l}v^{4+l}\left[\ln{\left(\frac{1}{v^{2}}\right)}+\vert t\vert v\right]^{\eta_{k}}e^{{-}2\sqrt{2}\vert t\vert v},
\end{equation*}
for all $l\in\mathbb{N}\cup\{0\}.$
So, from identity \eqref{IDD2} of Appendix Section, estimates \eqref{decayd}, 
\begin{equation*}
\left\vert \frac{d^{l}}{dx^{l}}\left[H_{{-}1,0}(x)+e^{{-}\sqrt{2}x}\right] \right\vert\lesssim_{l}\min\left(e^{{-}\sqrt{2}x},e^{{-}3\sqrt{2}x}\right),\end{equation*}  and Lemma \ref{interactt}, we conclude the proof of Lemma \ref{y1y2} for $Y_{2}(t).$
\end{proof}
\begin{remark}\label{remarky1y2}
If $v\ll1,$ using the formula $U^{''}(\phi)=2-24\phi^{2}+30\phi^{4},$ Lemmas \ref{interactt}, \ref{dots+}, the estimates \eqref{dre1}, \eqref{Y1}, \eqref{y2} and \eqref{res2} of the proof of Lemma \ref{y1y2} imply for any $s\geq 0$ that
\begin{align*}
   \max_{j\in\{1,\,2\}}\norm{Y_{j}(t)}_{H^{s}_{x}}\lesssim_{s} & \max_{j\in\{1,2\}}\vert y_{j}(t)\vert v^{2}e^{-2\sqrt{2}\vert t\vert v},\\
   \max_{j\in\{1,\,2\}}\norm{\partial_{t}Y_{j}(t)}_{H^{s}_{x}}\lesssim_{s} & \max_{j\in\{1,2\}}\vert y_{j}(t)\vert v^{3}e^{-2\sqrt{2}\vert t\vert v}+\max_{j\in\{1,2\}}\vert \dot y_{j}(t)\vert v^{2}e^{-2\sqrt{2}\vert t\vert v},\\
   \max_{j\in\{1,\,2\}}\norm{\partial^{2}_{t}Y_{j}(t)}_{H^{s}_{x}}\lesssim_{s} & \max_{j\in\{1,2\}}\vert y_{j}(t)\vert v^{4}e^{-2\sqrt{2}\vert t\vert v}+
   \max_{j\in\{1,2\}}\vert \dot y_{j}(t)\vert v^{3}e^{-2\sqrt{2}\vert t\vert v}\\&{+}
   \max_{j\in\{1,2\}}\vert y^{(2)}_{j}(t)\vert v^{2}e^{-2\sqrt{2}\vert t\vert v}.
\end{align*}
These estimates above don't depend on $k,$ because from Theorem \ref{toobig} we can verify for any $l\in\mathbb{N}\cup\{0\}$ the existence of $0<\delta_{k,l}<1$ such that if $0<v<\delta_{k,l},$ then
\begin{equation*}
    \norm{\frac{\partial^{l}}{\partial t^{l}}c_{k}(v,t)}_{L^{\infty}_{t}(\mathbb{R})}\lesssim_{l} v^{2+l}\ln{\left(\frac{1}{v}\right)},
\end{equation*}
which implies for any $l\in\mathbb{N}$ and any $v\ll1$
\begin{equation*}
    \norm{\frac{\partial^{l}}{\partial t^{l}}\left[-\frac{d(t)}{2}+c_{k}(v,t)\right]}_{L^{\infty}_{t}(\mathbb{R})}\lesssim_{l} v^{l},\quad \frac{d(t)}{2}-v<\left\vert-\frac{d(t)}{2}+c_{k}(v,t)\right\vert.
\end{equation*}
\end{remark}

\section{Energy Estimate Method}\label{energysection}
    \par In this section, we are going to repeat the main argument of Section $4$ of \cite{first} to construct a function $L:\mathbb{R}\to\mathbb{R},$ which is going to be used to estimate the energy norm of $(u(t),\partial_{t}u(t))$ during a large time interval. 
    \par First, we consider a smooth cut-off function $\chi:\mathbb{R}\to\mathbb{R}$ satisfying $0\leq \chi\leq 1$ and
    \begin{equation}\label{chi}
   \chi(x)=
   \begin{cases}
        1, \text{if $x\leq \frac{49}{100},$}\\ 
        0, \text{if $x\geq \frac{1}{2}.$}
    \end{cases}
\end{equation}
    Next, using the notation of Theorem \ref{toobig}, we denote
    \begin{equation}\label{x1x2form}
        x_{1}(t)=-\frac{d(t)}{2}+\sum_{j=2}^{k}r_{j}(v,t),\,x_{2}(t)=\frac{d(t)}{2}-\sum_{j=2}^{k}r_{j}(v,t).
   \end{equation}
   Actually, Theorem \ref{toobig} and estimates \ref{decayd} imply that
   \begin{equation}\label{x12dt}
       \max_{j\in\{1,2\}}\vert\dot x_{j} (t)\vert\lesssim v,\, \ln{\left(\frac{1}{v}\right)}\lesssim x_{2}(t)-x_{1}(t),\,\max_{j\in\{1,2\}}\vert\ddot x_{j} (t)\vert\lesssim v^{2}e^{{-}2\sqrt{2}\vert t\vert v}.
   \end{equation}
   From now on, we define the function $\chi_{1}:\mathbb{R}^{2}\to \mathbb{R}$ by
   \begin{equation}
       \chi_{1}(t,x)=\chi\left(\frac{x-x_{1}(t)}{x_{2}(t)-x_{1}(t)}\right).
   \end{equation}
 Clearly, using the identities
  \begin{align*}
      \frac{\partial }{\partial t}\chi_{1}(t,x)= & \frac{{-}\dot x_{1}(t)}{x_{2}(t)-x_{1}(t)}\dot \chi\left(\frac{x-x_{1}(t)}{x_{2}(t)-x_{1}(t)}\right)-\frac{(\dot x_{2}(t)-\dot x_{1}(t))(x-x_{1}(t))}{(x_{2}(t)-x_{1}(t))^{2}}\dot \chi\left(\frac{x-x_{1}(t)}{x_{2}(t)-x_{1}(t)}\right),\\ \frac{\partial}{\partial x}\chi_{1}(t,x)=&\frac{1}{x_{2}(t)-x_{1}(t)}\dot \chi\left(\frac{x-x_{1}(t)}{x_{2}(t)-x_{1}(t)}\right),
  \end{align*}
 we obtain the following estimates
  \begin{equation}\label{dotchi}
      \norm{\frac{\partial }{\partial t}\chi_{1}(t,x)}_{L^{\infty}_{x}(\mathbb{R})}\lesssim \frac{v}{\ln{\left(\frac{1}{v}\right)}},\, \norm{\frac{\partial }{\partial x}\chi_{1}(t,x)}_{L^{\infty}_{x}(\mathbb{R})}\lesssim\frac{1}{\ln{\left(\frac{1}{v}\right)}}.
  \end{equation}
  \par Finally, using the notation \eqref{notnot} and the functions $Y_{1}(t),\,Y_{2}(t)$ denoted respectively by \eqref{termy1} and \eqref{Y2(t)}, we define the function $A:\mathbb{R}^{2}\to \mathbb{R}$ by
   \begin{multline}\label{A(t)}
       A(t,x)=-\Lambda(\varphi_{k,v})(t,x)-\frac{8\sqrt{2}e^{-\sqrt{2}d(t)}}{1-\frac{\dot d(t)^{2}}{4}}\left[ y_{1}(t)H^{''}_{0,1}\left(w_{k,v}(t,x)\right)+y_{2}(t)H^{''}_{0,1}\left(w_{k,v}(t,{-}x)\right)\right]\\
       {-}Y_{1}(t,x)-Y_{2}(t,x)+\frac{\dot y_{1}(t)\dot d(t)}{1-\frac{\dot d(t)^{2}}{4}} H^{''}_{0,1}\left(w_{k,v}(t,x)\right)+\frac{\dot y_{2}(t)\dot d(t)}{1-\frac{\dot d(t)^{2}}{4}} H^{''}_{0,1}\left(w_{k,v}(t,{-}x)\right),
   \end{multline}
 for any $(t,x)\in\mathbb{R}^{2}.$
 Clearly, in the notation of Remark \ref{easyremark}, we have the following identity
 \begin{multline}\label{almostpdeuu}
    \partial^{2}_{t}u(t,x)-\partial^{2}_{x}u(t,x)+ U^{''}\left(H_{0,1}\left(w_{k,v}(t,x)\right)-H_{0,1}\left(w_{k,v}(t,{-}x)\right)\right)u(t,x)
    \\={-}\frac{\ddot y_{1}(t)}{\sqrt{1-\frac{\dot d(t)^{2}}{4}}} H^{'}_{0,1}\left(w_{k,v}(t,x)\right)-\frac{\ddot y_{2}(t)}{\sqrt{1-\frac{\dot d(t)^{2}}{4}}} H^{'}_{0,1}\left(w_{k,v}(t,{-}x)\right)+A(t,x)+\mathcal{Q}(t,x)
    \\{+}\left[ U^{''}\left(H_{0,1}\left(w_{k,v}(t,x)\right)-H_{0,1}\left(w_{k,v}(t,{-}x)\right)\right)- U^{''}(\varphi_{k,v}(t,x))\right]u(t,x).
 \end{multline}
   \par Next, we consider
    \begin{multline}\label{L(t)}
        L(t)=\int_{\mathbb{R}} \partial_{t}u(t,x)^{2}+\partial_{x}u(t,x)^{2}+U^{''}\left(H_{0,1}\left(w_{k,v}(t,x)-H_{0,1}\left(w_{k,v}(t,{-}x)\right)\right)\right)u(t,x)^{2}\,dx\\
    {+}2\int_{\mathbb{R}}\partial_{t}u(t,x)\partial_{x}u(t,x)\left[\dot x_{1}(t)\chi_{1}(t,x)+\dot x_{2}(t)\left(1-\chi_{1}(t,x)\right)\right]\,dx
        \\{-}2\int_{\mathbb{R}} u(t,x)A(t,x)\,dx.
    \end{multline}
    From now on, we use the notation $\overrightarrow{u}(t)=(u(t),\partial_{t}u(t))\in H^{1}_{x}(\mathbb{R})\times L^{2}_{x}(\mathbb{R}).$
    The main objective of the Section $3$ is to demonstrate the following theorem. 
\begin{theorem}\label{energyL}
    There exist constants $K,\,c>0$ and, for any $k\in\mathbb{N}_{\geq 3},$ there exists $0<\delta(k)<1$  such that if $0<v\leq  \delta(k),$ then the function $L(t)$ denoted in \eqref{L(t)} satisfies, while the following condition
    \begin{equation}\label{conditiony1y2}
        \max_{j\in\{1,\,2\}} v^{2}\vert y_{j}(t)\vert+v\vert\dot y_{j}(t)\vert<v^{2k}\ln{\left(\frac{1}{v}\right)}^{n_{k}}
    \end{equation} is true, the estimates
    \begin{equation*}
        c\norm{\overrightarrow{u}(t)}_{H^{1}_{x}\times L^{2}_{x}}^{2}\leq L(t)+C(k) v^{4k}\ln{\left(\frac{1}{v}\right)}^{2n_{k}},
   \end{equation*}
   and
    \begin{align*}
        \left\vert \dot L(t) \right\vert\leq K\left[\frac{v}{\ln{\left(\frac{1}{v}\right)}}\norm{\overrightarrow{u}(t)}_{H^{1}_{x}\times L^{2}_{x}}^{2}+C(k)\norm{\overrightarrow{u}(t)}_{H^{1}_{x}\times L^{2}_{x}}v^{2k+1}\ln{\left(\frac{1}{v}\right)}^{n_{k}}\right]\\
        +v \max_{j\in\{1,2\}}\vert\ddot y_{j}(t)\vert\norm{\overrightarrow{u}(t)}_{H^{1}_{x}\times L^{2}_{x}}+K\max_{j\in\{3.7\}}\norm{\overrightarrow{u}(t)}_{H^{1}_{x}\times L^{2}_{x}}^{j},   \end{align*}
where $C(k)>0$ is a constant depending only on $k$ and $n_{k}$ is the number defined in the statement of Theorem \ref{toobig}.
\end{theorem}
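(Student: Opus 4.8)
The plan is to write $L(t)=L_1(t)+L_2(t)+L_3(t)$ following the three lines of \eqref{L(t)}, abbreviate $V(t,x)=\ddot U\!\big(H_{0,1}(w_{k,v}(t,x))-H_{0,1}(w_{k,v}(t,-x))\big)$ and $b(t,x)=\dot x_1(t)\chi_1(t,x)+\dot x_2(t)\big(1-\chi_1(t,x)\big)$, and prove the coercivity bound and the bound on $\dot L$ separately. For coercivity, the term $\int(\partial_t u)^2\,dx$ already controls $\norm{\partial_t u}_{L^2_x}^2$, so it remains to treat $\langle(-\partial_x^2+V)u,u\rangle$. I would apply the Lorentz rescaling $x=\sqrt{1-\dot d(t)^2/4}\,\xi$ together with the translation sending $w_{k,v}(t,x)\mapsto\eta-z$ and $w_{k,v}(t,-x)\mapsto-\eta$, with $z=\tfrac{d(t)-2\sum_{j=2}^k r_j(v,t)}{\sqrt{1-\dot d(t)^2/4}}\gtrsim\ln(1/v)$ by \eqref{x12dt}; since $c_k=\sum_{j=2}^k r_j$ (so $w_{k,v}(t,\pm x)$ matches the arguments in \eqref{aproxfor}) and $-H_{0,1}(-\eta)=H_{-1,0}(\eta)$, in these variables $V$ becomes exactly $\ddot U\!\big(H_{0,1}(\eta-z)+H_{-1,0}(\eta)\big)$ and the orthogonality conditions \eqref{orthogonall} become $\langle g,\dot H_{0,1}(\cdot-z)\rangle=\langle g,\dot H_{-1,0}\rangle=0$. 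Then Lemma \ref{coerccc} gives, for $v$ small, $L_1(t)\geq 2c\,\norm{(u(t),\partial_t u(t))}_{H^1_x\times L^2_x}^2$.

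Next I would bound the correction terms: $|L_2(t)|\lesssim v\,\norm{(u,\partial_t u)}^2$ from \eqref{x12dt} and $0\le\chi_1\le1$, and $|L_3(t)|\leq 2\norm{u}_{L^2_x}\norm{A(t)}_{L^2_x}$ with $\norm{A(t)}_{L^2_x}\lesssim v^{2k}\ln(1/v)^{n_k}$, the latter from \eqref{geraldecay}, the identity $e^{-\sqrt2 d(t)}=\tfrac{v^2}{8}\sech(\sqrt2 vt)^2$, the running assumption \eqref{conditiony1y2}, and Remark \ref{remarky1y2}; a Young inequality absorbs $|L_3|$ into $\tfrac{c}{2}\norm{(u,\partial_t u)}^2+C(k)v^{4k}\ln(1/v)^{2n_k}$. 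Combining and shrinking $v$ gives the stated coercive inequality.

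For the bound on $\dot L(t)$ I would compute the three derivatives and exhibit the cancellations designed into $L_2$ and $L_3$. Integrating by parts, $\dot L_1=2\int\partial_t u\,(\partial_t^2 u-\partial_x^2 u+Vu)\,dx+\int(\partial_t V)u^2\,dx$; substituting the equation \eqref{almostpdeuu} for $u$ (keeping track also of the $\ddot y_j$ modulation terms present in \eqref{uequation}) and using $\dot L_3=-2\int\partial_t u\,A\,dx-2\int u\,\partial_t A\,dx$, the terms $\pm2\int\partial_t u\,A\,dx$ cancel — this is the purpose of $L_3$. In $\dot L_2$, the identities $2\partial_t u\,\partial_x\partial_t u=\partial_x((\partial_t u)^2)$, $2\partial_x^2 u\,\partial_x u=\partial_x((\partial_x u)^2)$, $2u\partial_x u=\partial_x(u^2)$ (again with \eqref{almostpdeuu} for $\partial_t^2 u$), after integration by parts, produce $\int u^2\,b\,\partial_x V\,dx$, built to cancel $\int(\partial_t V)u^2\,dx$ modulo $\int u^2\big(\partial_t V+b\,\partial_x V+V\partial_x b\big)\,dx$; here I would use that, off the support of $\partial_x\chi_1$, $V$ is (by the explicit form \eqref{aproxfor} of $\phi_{k,v}$) a superposition of two profiles translating with velocities $\dot x_2(t)$ and $\dot x_1(t)$, so $\partial_t V+b\,\partial_x V$ is $O(v^3)$-small there, whereas on the support of $\partial_x\chi_1$ — a neighbourhood of the midpoint of the two kinks — $V$ and its derivatives are $O(v)$-small and $\norm{\partial_x b}_{L^\infty_x}\lesssim v/\ln(1/v)$, $\norm{\partial_t b}_{L^\infty_x}\lesssim v^2/\ln(1/v)$ by \eqref{x12dt}–\eqref{dotchi}; thus this error, and the leftover quadratic boundary terms $-\int[(\partial_t u)^2+(\partial_x u)^2]\partial_x b\,dx$ and $2\int\partial_t u\,\partial_x u\,\partial_t b\,dx$, are all $\lesssim\tfrac{v}{\ln(1/v)}\norm{(u,\partial_t u)}^2$.

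What then remains are genuinely lower-order contributions: $2\int\partial_t u\,[\mathcal{Q}+(V-\ddot U(\phi_{k,v}))u]\,dx-2\int u\,\partial_t A\,dx$, the terms $2\int[\mathcal{Q}+(V-\ddot U(\phi_{k,v}))u]\,b\,\partial_x u\,dx$ from $\dot L_2$, and the $\ddot y_j$-terms. I would use $\norm{V-\ddot U(\phi_{k,v})}_{L^\infty_x}\lesssim v^2$ (from \eqref{aproxfor}, the bound on $\mathcal{R}_{k,v}$, and Lemma \ref{dots+}), the estimate from Remark \ref{easyremark} combined with \eqref{conditiony1y2} to get $\norm{\mathcal{Q}(t)}_{L^2_x}\lesssim\norm{u(t)}_{H^1_x}^2+v^{2k+1}\ln(1/v)^{n_k}$ — this is exactly where $k\geq 3$ is needed, so that $\max_j|y_j(t)|^2\lesssim v^{4k-4}\ln(1/v)^{2n_k}\lesssim v^{2k+1}\ln(1/v)^{n_k}$ for small $v$ — together with \eqref{geraldecay} (for $l=1$) and Remark \ref{remarky1y2} to get $\norm{\partial_t A(t)}_{L^2_x}\lesssim v^{2k+1}\ln(1/v)^{n_k}+v\max_j|\ddot y_j(t)|$. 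For the $\ddot y_j$-terms, I would differentiate the orthogonality relations \eqref{orthogonall} to obtain $|\langle\partial_t u,\dot H_{0,1}(w_{k,v}(t,\pm x))\rangle|\lesssim v\norm{u}_{L^2_x}$ (using $|\partial_t w_{k,v}|\lesssim v$ from \eqref{decayd} and Theorem \ref{toobig}), which supplies the extra factor $v$ and bounds their total contribution by $v\max_j|\ddot y_j(t)|\,\norm{(u,\partial_t u)}$. Summing all contributions yields the asserted bound on $|\dot L(t)|$. The main obstacle is the cancellation controlling $\int(\partial_t V)u^2\,dx$: one must verify that the specific weight $b(t,x)=\dot x_1\chi_1+\dot x_2(1-\chi_1)$ makes $\partial_t V+b\,\partial_x V$ small, which uses the fine structure of $\phi_{k,v}$ from Theorem \ref{toobig} (that near each kink $\phi_{k,v}$ is, up to $O(v^2)$, a single Lorentz-boosted kink moving with velocity governed by $\dot x_{1,2}$) and a careful treatment of the transition region of $\chi_1$, exactly as in Section 4 of \cite{first}; keeping the error orders straight, and in particular checking that $k\geq 3$ is what makes the $y_j$-quadratic remainders fit under $v^{2k+1}\ln(1/v)^{n_k}$, is the other point needing care.
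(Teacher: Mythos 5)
Your proposal follows essentially the same route as the paper: the same decomposition $L=L_{1}+L_{2}+L_{3}$, coercivity of $L_{1}$ via the Lorentz change of variables and Lemma \ref{coerccc} with $L_{2},L_{3}$ absorbed by Young's inequality, cancellation of the $A$-terms between $\dot L_{1}$ and $\dot L_{3}$, and the transport cancellation between $\int(\partial_{t}V)u^{2}$ and the $U^{(3)}$-terms produced by $\dot L_{2}$ (which the paper writes out explicitly via $\dot x_{1},\dot x_{2}$ and the cutoff localization rather than as $\partial_{t}V+b\,\partial_{x}V$). Your identification of where $k\geq 3$ enters (absorbing $\max_{j}|y_{j}|^{2}\lesssim v^{4k-4}\ln(1/v)^{2n_{k}}$ under $v^{2k+1}\ln(1/v)^{n_{k}}$) and of the source of the factor $v$ in front of $\ddot y_{j}$ (differentiating the orthogonality conditions) both agree with the paper's argument.
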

\begin{proof}[Proof of Theorem \ref{energyL}.]
To simplify the proof of this theorem, we describe briefly the organization of our arguments. First, we denote $L(t)$ as
\begin{equation*}
    L(t)=L_{1}(t)+L_{2}(t)+L_{3}(t),
\end{equation*}
such that
\begin{align}\tag{L1}\label{l1}
    L_{1}(t)=&\int_{\mathbb{R}} \partial_{t}u(t,x)^{2}+\partial_{x}u(t,x)^{2}+\ddot U\left(H_{0,1}\left(w_{k,v}(t,x)-H_{0,1}\left(w_{k,v}(t,{-}x)\right)\right)\right)u(t,x)^{2}\,dx,\\ \tag{L2}\label{l2}
    L_{2}(t)=&2\int_{\mathbb{R}}\partial_{t}u(t,x)\partial_{x}u(t,x)\left[\dot x_{1}(t)\chi_{1}(t,x)+\dot x_{2}(t)\left(1-\chi_{1}(t,x)\right)\right]\,dx,\\ \tag{L3}\label{l3}
    L_{3}(t)=&{-}2\int_{\mathbb{R}} u(t,x)A(t,x)\,dx.
\end{align}
Next, instead of estimating the size of $\left\vert \dot L(t)\right\vert,$ we are going to estimate $ \dot L_{j}(t)$ for each $j\in\{1,\,2,\,3\}.$ Then, using these estimates, we can evaluate with high precision 
\begin{equation*}
    \left\vert  \dot L_{1}(t)+ \dot L_{2}(t) + \dot L_{3}(t) \right\vert,
\end{equation*}
and obtain the second inequality of Theorem \ref{energyL}. The proof of the first inequality of Theorem \ref{energyL} is short and it will be done later.
\par From identity \eqref{d}, Remark \ref{remarky1y2} and equation \eqref{A(t)} satisfied by $A(t,x),$ we deduce from the triangle inequality that
\begin{equation*}
    \norm{A(t,x)}_{H^{1}_{x}(\mathbb{R})}\lesssim \norm{\Lambda(\varphi_{k,v})(t,x)}_{H^{1}_{x}(\mathbb{R})}+v^{2}e^{-2\sqrt{2}v\vert t\vert}\max_{j\in\{1,2\}}\vert y_{j}(t)\vert +v\max_{j\in\{1,2\}}\vert\dot y_{j}(t)\vert.
\end{equation*}
Therefore, from Theorem \ref{approximated theorem} and Theorem \ref{toobig}, we obtain the existence of a value $C(k)>0$ depending only on $k$ such that if $v\ll 1,$ then
\begin{multline}\label{Aestimat}
    \norm{A(t,x)}_{H^{1}(\mathbb{R})}\lesssim C(k)v^{2k}\left(\ln{\left(\frac{1}{v}\right)}+\vert t\vert v\right)^{n_{k}}e^{-2\sqrt{2}\vert t\vert v}+v^{2}e^{-2\sqrt{2}\vert t\vert v}\max_{j\in\{1,2\}}\vert y_{j}(t)\vert\\{+}v\max_{j\in\{1,2\}}\vert\dot y_{j}(t)\vert.
\end{multline}
In conclusion, we obtain from \eqref{l3} and Cauchy-Schwartz inequality the existence of a value $C(k)>0$ depending only on $k$ satisfying
\begin{multline}\label{L2est}
    \vert L_{3}(t)\vert\lesssim \norm{u(t)}_{L^{2}_{x}}\Bigg[C(k)v^{2k}\left(\ln{\left(\frac{1}{v}\right)}+\vert t\vert v\right)^{n_{k}}e^{-2\sqrt{2}\vert t\vert v}+v^{2}e^{-2\sqrt{2}\vert t\vert v}\max_{j\in\{1,2\}}\vert y_{j}(t)\vert\\+\max_{j\in\{1,2\}}\vert\dot y_{j}(t)\vert v\Bigg].
\end{multline}
\par Next, Lemmas \ref{dots+}, \ref{mulsoblemma}, Remark \ref{remarky1y2} and identity \eqref{A(t)} satisfied by $A(t,x)$ imply the following inequality
\begin{align*}
    \norm{\partial_{t}A(t,x)}_{H^{1}_{x}(\mathbb{R})}\lesssim \norm{\frac{\partial}{\partial t}\left[\Lambda(\phi_{k})(v,t,x)\right]}_{H^{1}_{x}(\mathbb{R})}+\max_{j\in\{1,2\}}\vert y_{j}(t)\vert v^{3} e^{-2\sqrt{2}\vert t\vert v}+\max_{j\in\{1,2\}}\vert\dot y_{j}(t)\vert v^{2}\\+\max_{j\in\{1,2\}}\vert\ddot y_{j}(t) \vert v, 
\end{align*}
from which with Theorem \ref{toobig} we conclude the existence of a new value $C(k)$ depending only on $k$ satisfying
\begin{multline}\label{dA2estimat}
     \norm{\partial_{t}A(t,x)}_{H^{1}_{x}}\lesssim C(k)v^{2k+1}\left(\ln{\left(\frac{1}{v}\right)}+\vert t\vert v\right)^{n_{k}}e^{-2\sqrt{2}\vert t\vert v}+\max_{j\in\{1,2\}}\vert y_{j}(t)\vert v^{3}e^{-2\sqrt{2}\vert t\vert v}\\{+}\max_{j\in\{1,2\}}\vert\dot y_{j}(t)\vert v^{2}+\max_{j\in\{1,2\}}\vert\ddot y_{j}(t) \vert v.
\end{multline}
In conclusion,
the identity \eqref{l3}, estimate \eqref{dA2estimat} and Cauchy-Schwartz inequality imply the existence of a new value $C(k)>0$ depending only on $k,$ which satisfies 
\begin{multline}\label{dl2estimat}
   \left\vert \dot L_{3}(t)+2\int_{\mathbb{R}}\partial_{t}u(t,x)A(t,x)\,dx\right\vert \\ \lesssim \norm{u(t,x)}_{L^{2}_{x}}\left[C(k)
   v^{2k+1}\left(\ln{\left(\frac{1}{v}\right)}+\vert t\vert v\right)^{n_{k}}e^{-2\sqrt{2}\vert t\vert v}+\max_{j\in\{1,2\}}\vert y_{j}(t)\vert v^{3}e^{-2\sqrt{2}\vert t\vert v}\right]\\{+}\norm{u(t,x)}_{L^{2}_{x}}\left[\max_{j\in\{1,2\}}\vert\dot y_{j}(t)\vert v^{2}+\max_{j\in\{1,2\}}\vert\ddot y_{j}(t) \vert v
   \right].
\end{multline}
\par Next, Theorem \ref{toobig} imply that if $v\ll 1,$ then
\begin{multline}\label{dl1tt}
    \dot L_{1}(t)\\
    \begin{aligned}
    =&2\int_{\mathbb{R}}\partial_{t}u(t,x)\left[\partial^{2}_{t}u(t,x)-\partial^{2}_{x}u(t,x)+ U^{''}\left(H_{0,1}\left(w_{k,v}(t,x)\right)-H_{0,1}\left(w_{k,v}(t,{-}x)\right)\right)u(t,x)\right]\,dx\\
    &{-}\frac{\dot d(t)}{2\left(1-\frac{\dot d(t)^{2}}{4}\right)^{\frac{1}{2}}}\int_{\mathbb{R}}U^{(3)}\left(H_{0,1}\left(w_{k,v}(t,x)\right)-H_{0,1}\left(w_{k,v}(t,{-}x)\right)\right) H^{'}_{0,1}\left(w_{k,v}(t,x)\right)u(t,x)^{2}\,dx\\
    &{+}\frac{\dot d(t)}{2\left(1-\frac{\dot d(t)^{2}}{4}\right)^{\frac{1}{2}}}\int_{\mathbb{R}}U^{(3)}\left(H_{0,1}\left(w_{k,v}(t,x)\right)-H_{0,1}\left(w_{k,v}(t,{-}x)\right)\right) H^{'}_{0,1}\left(w_{k,v}(t,{-}x)\right)u(t,x)^{2}\,dx\\
    &{+}O\left(\frac{v}{\ln{\left(\frac{1}{v}\right)}}\norm{(u(t),\partial_{t}u(t))}_{H^{1}_{x},L^{2}_{x})}^{2}\right)
\end{aligned}
\end{multline}
Therefore, from Lemma \ref{pdeu}, identity \eqref{A(t)}, Remark \ref{easyremark}, hypothesis \eqref{conditiony1y2}, estimates \eqref{dl2estimat}, \eqref{dl1tt} and orthogonality conditions \eqref{orthogonall}, we obtain the existence of a value $C(k)>0$ depending only on $k$ such that if $v\ll 1,$ then 
\begin{multline}\label{suml1l3}
    \dot L_{1}(t)+\dot L_{3}(t)\\
    \begin{aligned}
    = &2\int_{\mathbb{R}}\partial_{t}u(t,x)\left[ U^{''}\left(H_{0,1}\left(w_{k,v}(t,x)\right)-H_{0,1}\left(w_{k,v}(t,{-}x)\right)\right)- U^{''}\left(\varphi_{k,v}(t,x)\right)\right]u(t,x)\,dx
    \\
&{+}\frac{\dot d(t)}{2\sqrt{1-\frac{\dot d(t)^{2}}{4}}}\int_{\mathbb{R}}U^{(3)}\left(H_{0,1}\left(w_{k,v}(t,x)\right)-H_{0,1}\left(w_{k,v}(t,{-}x)\right)\right) H^{'}_{0,1}\left(w_{k,v}(t,{-}x)\right)u(t,x)^{2}\,dx\\&{-}\frac{\dot d(t)}{2\sqrt{1-\frac{\dot d(t)^{2}}{4}}}\int_{\mathbb{R}}U^{(3)}\left(H_{0,1}\left(w_{k,v}(t,x)\right)-H_{0,1}(w_{k,v}(t,{-}x))\right) H^{'}_{0,1}\left(w_{k,v}(t,x)\right)u(t,x)^{2}\,dx\\
    &{+}O\left(v\max_{j\in\{1,2\}}\vert \ddot y_{j}(t)\vert\norm{u(t)}_{H^{1}_{x}(\mathbb{R})}+\max_{j\in\{3,7\}}\norm{\overrightarrow{u}(t)}_{H^{1}_{x}\times L^{2}_{x}}^{j}+\norm{\overrightarrow{u}(t)}_{H^{1}_{x}\times L^{2}_{x}}\max_{j\in\{1,2\}}\vert y_{j}(t)\vert^{2}\right)\\
&{+}O\left(\norm{\overrightarrow{u}(t)}_{H^{1}_{x}\times L^{2}_{x}}\left[\max_{j\in\{1,2\}}\vert\dot y_{j}(t)\vert v^{2}+\vert y_{j}(t)\vert v^{3}e^{-2\sqrt{2}\vert t\vert v}\right]+\norm{\overrightarrow{u}(t)}_{H^{1}_{x}\times L^{2}_{x}}^{2}\frac{v}{\ln{\left(\frac{1}{v}\right)}}\right)\\&{+}O\left(C(k)\norm{\overrightarrow{u}(t)}_{H^{1}_{x}\times L^{2}_{x}}v^{2k+1}\ln{\left(\frac{1}{v}\right)}^{n_{k}}\right).
\end{aligned}
\end{multline}
Moreover, using estimates \eqref{decayd}, Lemma \ref{mulsoblemma} and identity $U(\phi)=\phi^{2}(1-\phi^{2})^{2},$ we obtain from Theorem \ref{toobig} that if $0<v\ll 1$ and $s\geq 0,$ then
\begin{equation*}
    \norm{\left[ U^{''}\left(H_{0,1}\left(w_{k,v}(t,x)\right)-H_{0,1}\left(w_{k,v}(t,{-}x)\right)\right)- U^{''}\left(\phi_{k,v}(t,x)\right)\right]}_{H^{s
}_{x}}\lesssim_{s,k}  v^{2}e^{{-}2\sqrt{2}\vert t\vert v}.
\end{equation*}
Therefore, we deduce using Cauchy-Schwarz inequality that
\begin{multline*}
    \left\vert 2\int_{\mathbb{R}}\partial_{t}u(t,x)\left[ U^{''}\left(H_{0,1}\left(w_{k,v}(t,x)\right)-H_{0,1}\left(w_{k,v}(t,{-}x)\right)\right)- U^{''}\left(\phi_{k,v}(t,x)\right)\right]u(t,x)\,dx\right\vert\\ \lesssim
    \norm{\left[ U^{''}\left(H_{0,1}\left(w_{k,v}(t,x)\right)-H_{0,1}\left(w_{k,v}(t,x)\right)\right)- U^{''}\left(\phi_{k,v}(t,x)\right)\right]u(t,x)}_{L^{2}_{x}}
\norm{\partial_{t}u(t,x)}_{L^{2}_{x}}\\ \lesssim
\norm{\left[ U^{''}\left(H_{0,1}\left(w_{k,v}(t,x)\right)-H_{0,1}\left(w_{k,v}(t,{-}x)\right)\right)- U^{''}\left(\phi_{k,v}(t,x)\right)\right]}_{H^{1}_{x}(\mathbb{R})}\norm{\overrightarrow{u}(t)}_{H^{1}_{x}\times L^{2}_{x}}^{2}\\ \lesssim
v^{2}\norm{\overrightarrow{u}(t)}_{H^{1}_{x}\times L^{2}_{x}}^{2}.
\end{multline*}
In conclusion,
\begin{multline}\label{suml1l32}
    \dot L_{1}(t)+\dot L_{3}(t)\\= 
    \frac{\dot d(t)}{2\left(1-\frac{\dot d(t)^{2}}{4}\right)^{\frac{1}{2}}}\int_{\mathbb{R}}U^{(3)}\left(H_{0,1}\left(w_{k,v}(t,x)\right)-H_{0,1}\left(w_{k,v}(t,{-}x)\right)\right) H^{'}_{0,1}\left(w_{k,v}(t,{-}x)\right)u(t,x)^{2}\,dx\\-\frac{\dot d(t)}{2\sqrt{1-\frac{\dot d(t)^{2}}{4}}}\int_{\mathbb{R}}U^{(3)}\left(H_{0,1}\left(w_{k,v}(t,x)\right)-H_{0,1}(w_{k,v}(t,-x))\right) H^{'}_{0,1}\left(w_{k,v}(t,x)\right)u(t,x)^{2}\,dx\\
    {+}O\left(v\max_{j\in\{1,2\}}\vert \ddot y_{j}(t)\vert\norm{u(t)}_{H^{1}_{x}(\mathbb{R})}{+}\max_{j\in\{3,7\}}\norm{\overrightarrow{u}(t)}_{H^{1}_{x}\times L^{2}_{x}}^{j}+\norm{\overrightarrow{u}(t)}_{H^{1}_{x}\times L^{2}_{x}}\max_{j\in\{1,2\}}\vert y_{j}(t)\vert^{2}\right)\\
{+}O\left(\norm{\overrightarrow{u}(t)}_{H^{1}_{x}\times L^{2}_{x}}\left[\max_{j\in\{1,2\}}\vert\dot y_{j}(t)\vert v^{2}+\vert y_{j}(t)\vert v^{3}e^{-2\sqrt{2}\vert t\vert v}\right]+\norm{\overrightarrow{u}(t)}_{H^{1}_{x}\times L^{2}_{x}}^{2}\frac{v}{\ln{\left(\frac{1}{v}\right)}}\right)\\{+}O\left(C(k)\norm{\overrightarrow{u}(t)}_{H^{1}_{x}\times L^{2}_{x}}v^{2k+1}\ln{\left(\frac{1}{v}\right)}^{n_{k}}\right).
\end{multline}
\par Based on the arguments of \cite{jkl} and \cite{first}, we are going to estimate the derivative of $L_{2}(t),$ for more accurate information see the third step of Lemma $4.2$ in \cite{jkl} or Theorem $4.1$ of \cite{first}. Because of an argument of analogy, we only need to estimate the time derivative of
\begin{equation*}
    L_{2,1}(t)=2\dot x_{1}(t)\int_{\mathbb{R}}\chi_{1}(t,x)\partial_{t}u(t,x)\partial_{x}u(t,x)\,dx
\end{equation*} 
to evaluate with high precision the derivative of $L_{2}(t).$
From the estimates \eqref{dotchi}, we can verify first that if $v\ll 1,$ then
\begin{align*}
    \dot L_{2,1}(t)=2\dot x_{1}(t)\int_{\mathbb{R}}\chi_{1}(t,x)\partial^{2}_{t}u(t,x)\partial_{x}u(t,x)\,dx
    +2\dot x_{1}(t)\int_{\mathbb{R}}\chi_{1}(t,x)\partial_{t}u(t,x)\partial^{2}_{x,t}u(t,x)\,dx\\+O\left(\frac{v}{\ln{\left(\frac{1}{v}\right)}}\norm{\overrightarrow{u}(t)}_{H^{1}_{x}\times L^{2}_{x}}^{2}\right),
\end{align*}
from which we deduce, using integration by parts and estimates \eqref{x12dt}, \eqref{dotchi}, that
\begin{align*}
    \dot L_{2,1}(t)=&2\dot x_{1}(t)\int_{\mathbb{R}}\chi_{1}(t,x)\partial^{2}_{t}u(t,x)\partial_{x}u(t,x)\,dx+O\left(\frac{v}{\ln{\left(\frac{1}{v}\right)}}\norm{\overrightarrow{u}(t)}_{H^{1}_{x}\times L^{2}_{x}}^{2}\right)\\=&
    2\dot x_{1}(t)\int_{\mathbb{R}}\chi_{1}(t,x)\left[\partial^{2}_{t}u(t,x)-\partial^{2}_{x}u(t,x)\right]\partial_{x}u(t,x)\,dx\\&{+}2\dot x_{1}(t)\int_{\mathbb{R}}\chi_{1}(t,x) U^{''}\left(H_{0,1}\left(w_{k,v}(t,x)\right)-H_{0,1}\left(w_{k,v}(t,{-}x)\right)\right)u(t,x)\partial_{x}u(t,x)\,dx\\
    &{+}2\dot x_{1}(t)\int_{\mathbb{R}}\chi_{1}(t,x)\partial^{2}_{x}u(t,x)\partial_{x}u(t,x)\,dx
    \\&{-}2\dot x_{1}(t)\int_{\mathbb{R}}\chi_{1}(t,x)U^{''}\left(H_{0,1}\left(w_{k,v}(t,x)\right)-H_{0,1}\left(w_{k,v}(t,{-}x)\right)\right)u(t,x)\partial_{x}u(t,x)\,dx\\&{+}O\left(\frac{v}{\ln{\left(\frac{1}{v}\right)}}\norm{\overrightarrow{u}(t)}_{H^{1}_{x}\times L^{2}_{x}}^{2}\right),
\end{align*}
and, after using integration by parts again, we deduce from \eqref{dotchi} that 
\begin{align*}
    \dot L_{2,1}(t)=&
    2\dot x_{1}(t)\int_{\mathbb{R}}\chi_{1}(t,x)\left[\partial^{2}_{t}u(t)-\partial^{2}_{x}u(t)\right]\partial_{x}u(t)\,dx\\&{+}2\dot x_{1}(t)\int_{\mathbb{R}}\chi_{1}(t) U^{''}\left(H_{0,1}\left(w_{k,v}(t,x)\right)-H_{0,1}\left(w_{k,v}(t,{-}x)\right)\right)u(t)\partial_{x}u(t)\,dx\\
    &{+}\frac{\dot x_{1}(t)}{\sqrt{1-\frac{\dot d(t)^{2}}{4}}}\int_{\mathbb{R}}\chi_{1}(t) U^{(3)}\left(H_{0,1}\left(w_{k,v}(t,x)\right)-H_{0,1}\left(w_{k,v}(t,{-}x)\right)\right) H^{'}_{0,1}\left(w_{k,v}(t,x)\right)u(t)^{2}\,dx\\&{+}\frac{\dot x_{1}(t)}{\sqrt{1-\frac{\dot d(t)^{2}}{4}}}\int_{\mathbb{R}}\chi_{1}(t)U^{(3)}\left(H_{0,1}\left(w_{k,v}(t,x)\right)-H_{0,1}\left(w_{k,v}(t,{-}x)\right)\right) H^{'}_{0,1}\left(w_{k,v}(t,{-}x)\right) u(t)^{2}\,dx\\&{+}O\left(\frac{v}{\ln{\left(\frac{1}{v}\right)}}\norm{\overrightarrow{u}(t)}_{H^{1}_{x}(\mathbb{R})\times L^{2}_{x}(\mathbb{R})}^{2}\right).    
\end{align*}
Next, using estimates \eqref{le2} satisfied by $H_{0,1},$ definition of $\chi_{1}(t,x),$ Theorem \ref{toobig} and identity \eqref{wk}, we deduce, for $v\ll 1,$ the following inequality
\begin{equation*}
    \left\vert\chi_{1}(t,x) H^{'}_{0,1}\left(w_{k,v}(t,x)\right)\right\vert+\left\vert(1-\chi_{1}(t,x)) H^{'}_{0,1}\left(w_{k,v}(t,{-}x)\right)\right\vert\lesssim e^{-\sqrt{2}\frac{49d(t)}{100}}\lesssim v^{\frac{98}{100}}\ll \frac{1}{\ln{\left(\frac{1}{v}\right)}},
\end{equation*}
from which we conclude that
\begin{align*}
    \dot L_{2,1}(t)=&2\dot x_{1}(t)\int_{\mathbb{R}}\chi_{1}(t)\left[\partial^{2}_{t}u(t,x)-\partial^{2}_{x}u(t,x)\right]\partial_{x}u(t,x)\,dx\\&{+}2\dot x_{1}(t)\int_{\mathbb{R}} \chi_{1}(t)U^{''}\left(H_{0,1}\left(w_{k,v}(t,x)\right)-H_{0,1}\left(w_{k,v}(t,{-}x)\right)\right)u(t,x)\partial_{x}u(t,x)\,dx\\
    &{+}\frac{\dot x_{1}(t)}{\sqrt{1-\frac{\dot d(t)^{2}}{4}}}\int_{\mathbb{R}}U^{(3)}\left(H_{0,1}\left(w_{k,v}(t,x)\right)-H_{0,1}\left(w_{k,v}(t,{-}x)\right)\right) H^{'}_{0,1}\left(w_{k,v}(t,{-}x)\right)u(t,x)^{2}\,dx
    \\&{+}O\left(\frac{v}{\ln{\left(\frac{1}{v}\right)}}\norm{\overrightarrow{u}(t)}_{H^{1}_{x}\times L^{2}_{x}}^{2}\right).
\end{align*}
Furthermore, from Remark \ref{easyremark}, estimate \eqref{Aestimat} of $A(t,x)$ and identity \eqref{almostpdeuu}
satisfied by $u(t,x),$ we conclude the existence of a value $C(k)>0$ depending only on $k$ and satisfying, for any positive number $v\ll 1,$
\begin{align*}
    \dot L_{2,1}(t)=&
    \frac{\dot x_{1}(t)}{\sqrt{1-\frac{\dot d(t)^{2}}{4}}}\int_{\mathbb{R}}U^{(3)}\left(H_{0,1}\left(w_{k,v}(t,x)\right)-H_{0,1}\left(w_{k,v}(t,{-}x)\right)\right)H^{'}_{0,1}\left(w_{k,v}(t,{-}x)\right)u(t,x)^{2}\,dx\\&{+}O\left(\norm{\overrightarrow{u}(t)}_{H^{1}_{x}\times L^{2}_{x}}\left[v\max_{j\in\{1,2\}}\vert\ddot y_{j}(t)\vert+C(k)v^{2k+1}\ln{\left(\frac{1}{v}\right)}^{n_{k}}+v\max_{j\in\{2,6\}}\norm{\overrightarrow{u}(t)}_{H^{1}_{x}\times L^{2}_{x}}^{j}\right]\right)\\
&{+}O\left(\norm{\overrightarrow{u}(t)}_{H^{1}_{x}\times L^{2}_{x}}\left[v^{3}e^{-2\sqrt{2}v\vert t\vert}\max_{j\in\{1,2\}}\vert y_{j}(t) \vert+v^{2}\vert \dot y_{j}(t)\vert\right]+\frac{v}{\ln{\left(\frac{1}{v}\right)}}\norm{\overrightarrow{u}(t)}_{H^{1}_{x}\times L^{2}_{x}}^{2}\right).
\end{align*}
Therefore, using an argument of analogy, we obtain, for any positive number $v\ll1,$ that
\begin{align}\nonumber
    \dot L_{2}(t)=&
    \frac{\dot x_{2}(t)}{\sqrt{1-\frac{\dot d(t)^{2}}{4}}}\int_{\mathbb{R}}U^{(3)}\left(H_{0,1}\left(w_{k,v}(t,x)\right)-H_{0,1}\left(w_{k,v}(t,{-}x)\right)\right) H^{'}_{0,1}\left(w_{k,v}(t,x)\right)u(t,x)^{2}\,dx\\ \nonumber
    &{+}\frac{\dot x_{1}(t)}{\sqrt{1-\frac{\dot d(t)^{2}}{4}}}\int_{\mathbb{R}}U^{(3)}\left(H_{0,1}\left(w_{k,v}(t,x)\right)-H_{0,1}\left(w_{k,v}(t,{-}x)\right)\right) H^{'}_{0,1}\left(w_{k,v}(t,{-}x)\right)u(t,x)^{2}\,dx\\ \nonumber
    &{+}O\left(\norm{\overrightarrow{u}(t)}_{H^{1}_{x}\times L^{2}_{x}}\left[v\max_{j\in\{1,2\}}\vert\ddot y_{j}(t)\vert+C(k)v^{2k+1}\ln{\left(\frac{1}{v}\right)}^{n_{k}}\right]+v\max_{j\in\{3,7\}}\norm{\overrightarrow{u}(t)}_{H^{1}_{x}\times L^{2}_{x}}^{j}\right)\\ \label{dL11} &{+}O\left(\norm{\overrightarrow{u}(t)}_{H^{1}_{x}\times L^{2}_{x}}\left[v^{3}e^{-2\sqrt{2}v\vert t\vert}\max_{j\in\{1,2\}}\vert y_{j}(t) \vert+v^{2}\vert \dot y_{j}(t)\vert\right]+\frac{v}{\ln{\left(\frac{1}{v}\right)}}\norm{\overrightarrow{u}(t)}_{H^{1}_{x}\times L^{2}_{x}}^{2}\right)
,\end{align}
where $C(k)>0$ is a parameter depending only on $k.$ Moreover, using \eqref{x1x2form} and Theorem \ref{toobig}, we deduce from estimate \eqref{dL11} that
\begin{align} \nonumber
    \dot L_{2}(t)=
    &\frac{\dot d(t)}{\sqrt{4-\dot d(t)^{2}}}\int_{\mathbb{R}}U^{(3)}\left(H_{0,1}\left(w_{k,v}(t,x)\right)-H_{0,1}\left(w_{k,v}(t,{-}x)\right)\right) H^{'}_{0,1}\left(w_{k,v}(t,x)\right)u(t,x)^{2}\,dx\\
    \nonumber
    &{-}\frac{\dot d(t)}{\sqrt{4-\dot d(t)^{2}}}\int_{\mathbb{R}}U^{(3)}\left(H_{0,1}\left(w_{k,v}(t,x)\right)-H_{0,1}\left(w_{k,v}(t,{-}x)\right)\right) H^{'}_{0,1}\left(w_{k,v}(t,{-}x)\right)u(t,x)^{2}\,dx\\
    \nonumber &{+}O\left(\norm{\overrightarrow{u}(t)}_{H^{1}_{x}\times L^{2}_{x}}\left[v\max_{j\in\{1,2\}}\vert\ddot y_{j}(t)\vert+C(k)v^{2k+1}\ln{\left(\frac{1}{v}\right)}^{n_{k}}\right]+v\max_{j\in\{3,7\}}\norm{\overrightarrow{u}(t)}_{H^{1}_{x}\times L^{2}_{x}}^{j}\right)\\ \label{dL112}&{+}O\left(\norm{\overrightarrow{u}(t)}_{H^{1}_{x}\times L^{2}_{x}}\left[v^{3}e^{-2\sqrt{2}v\vert t\vert}\max_{j\in\{1,2\}}\vert y_{j}(t) \vert+v^{2}\vert \dot y_{j}(t)\vert\right]+\frac{v}{\ln{\left(\frac{1}{v}\right)}}\norm{\overrightarrow{u}(t)}_{H^{1}_{x}\times L^{2}_{x}}^{2}\right).
\end{align}
\par Finally, the estimate \eqref{dL112} and \eqref{suml1l3} imply, for any $k\in\mathbb{N}_{\geq 3},$ the existence of a parameter $C(k)>0,$ depending  only on $k,$ which satisfies for any positive number $v\ll 1$ the estimate
\begin{align}\nonumber
    \vert \dot L(t)\vert = &O\left(v\max_{j\in\{1,2\}}\vert \ddot y_{j}(t)\vert\norm{\overrightarrow{u}(t)}_{H^{1}_{x}\times L^{2}_{x}}+\max_{j\in\{3,7\}}\norm{\overrightarrow{u}(t)}_{H^{1}_{x}\times L^{2}_{x}}^{j}\right)\\ \nonumber &{+}O\left(\norm{\overrightarrow{u}(t)}_{H^{1}_{x}\times L^{2}_{x}}\max_{j\in\{1,2\}}\vert y_{j}(t)\vert^{2}\right)\\ \nonumber
&{+}O\left(\norm{\overrightarrow{u}(t)}_{H^{1}_{x}\times L^{2}_{x}}\left[\max_{j\in\{1,2\}}\vert\dot y_{j}(t)\vert v^{2}+\vert y_{j}(t)\vert v^{3}e^{-2\sqrt{2}\vert t\vert v}\right]\right)\\ \label{DL(t)}
&{+}O\left(\norm{\overrightarrow{u}(t)}_{H^{1}_{x}\times L^{2}_{x}}^{2}\frac{v}{\ln{\left(\frac{1}{v^{2}}\right)}}+C(k)\norm{\overrightarrow{u}(t)}_{H^{1}_{x}\times L^{2}_{x}}v^{2k+1}\ln{\left(\frac{1}{v}\right)}^{n_{k}}\right),
\end{align}
from which we obtain the existence of a new constant $C(k)>0$ satisfying the second inequality of Theorem \ref{energyL} if the condition \eqref{conditiony1y2} is true and $v\ll 1.$
\par Now, it remains to prove the first inequality of Theorem \ref{energyL}. Using change of variables and Lemma \ref{coerccc}, it is not difficult to verify that there exists $K>0$ such that if $v\ll 1,$ then
\begin{equation*}
    L_{1}(t)\geq K\norm{(u(t),\partial_{t}u(t))}_{H^{1}_{x}\times L^{2}_{x}}^{2}.
\end{equation*}
Next, from the definition of $L_{2}(t)$ and estimates \eqref{x12dt}, we obtain that if $v\ll 1,$ then
\begin{equation*}
    \left\vert L_{2}(t)\right\vert\ll v^{\frac{3}{4}}\norm{(u(t),\partial_{t}u(t))}_{H^{1}_{x}\times L^{2}_{x}}^{2},
\end{equation*}
and while condition \eqref{conditiony1y2} is true, we deduce from Theorem \ref{toobig} and estimate \eqref{Aestimat} the following inequality
\begin{equation*}
     \left\vert L_{3}(t)\right\vert\lesssim_{k} \norm{(u(t),\partial_{t}u(t))}_{H^{1}_{x}\times L^{2}_{x}} v^{2k}\ln{\left(\frac{1}{v}\right)}^{n_{k}}.
\end{equation*}
So, using Young inequality, we can find a parameter $C_{1}(k)>0$ large enough depending only on $k$ such that
\begin{equation*}
    \left\vert L_{3}(t)\right\vert\leq \frac{K}{2}\norm{(u(t),\partial_{t}u(t))}_{H^{1}_{x}\times L^{2}_{x}}^{2}+C_{1}(k)v^{4k}\ln{\left(\frac{1}{v}\right)}^{2n_{k}}.
\end{equation*}
In conclusion, all the estimates above imply the first inequality of Theorem \ref{energyL} if $0<v\ll 1$ and condition \eqref{conditiony1y2} is true.
\end{proof}

\section{Proof of Theorem \ref{energyE}}\label{p1main}
\par From the information of Theorem \ref{energyL} in the last section, we are ready to start the demonstration of Theorem \ref{energyE}.
\begin{proof}[Proof of Theorem \ref{energyE}]
First, for any $(t,x)\in\mathbb{R}^{2},$ Lemma \ref{pdeu} implies   that $\phi(t,x)$ has the following representation
\begin{equation*}
    \phi(t,x)=\varphi_{k,v}(t,x)+\frac{y_{1}(t)}{\sqrt{1-\frac{\dot d(t)^{2}}{4}}} H^{'}_{0,1}\left(w_{k,v}(t,x)\right)+\frac{y_{2}(t)}{\sqrt{1-\frac{\dot d(t)^{2}}{4}}} H^{'}_{0,1}\left(w_{k,v}(t,{-}x)\right)+u(t,x),
\end{equation*}
such that the function $u(t,x)$ satisfies the orthogonality conditions \eqref{orthogonall} and $y_{1},\,y_{2}$ are functions in $C^{2}(\mathbb{R}).$ 
\\
\textbf{Step $1.$}(Ordinary differential system of $y_{1}(t),\,y_{2}(t).$)
From Remarks \ref{n2}, \ref{easyremark} and the definition of $A(t,x)$ in \eqref{A(t)}, we have that $u(t,x)$ is a solution of a partial differential equation of the form
\begin{multline}\label{pdeu2}
\partial^{2}_{t}u(t,x)-\partial^{2}_{x}u(t,x)+ U^{''}\left(H_{0,1}\left(w_{k,v}(t,x)\right)-H_{0,1}\left(w_{k,v}(t,{-}x)\right)\right)u(t,x)\\=
    {-}\frac{\ddot y_{1}(t)}{\sqrt{1-\frac{\dot d(t)^{2}}{4}}} H^{'}_{0,1}\left(w_{k,v}(t,x)\right)
  {-}\frac{\ddot y_{2}(t)}{\sqrt{1-\frac{\dot d(t)^{2}}{4}}} H^{'}_{0,1}\left(w_{k,v}(t,{-}x)\right)
    \\{+}A(t,x)+\mathcal{P}_{1}(v,t,x),
\end{multline}
where $\mathcal{P}_{1}(v,t,x)$ satisfies for any  $0<v\ll 1$ and any $t\in\mathbb{R}$ the inequality
\begin{align*}
    \norm{\mathcal{P}_{1}(v,t,x)}_{H^{1}_{x}}\lesssim & \norm{u(t)}_{H^{1}_{x}}^{2}+\max_{j\in\{1,\,2\}}\vert y_{j}(t)\vert^{2}+
    \max_{j\in\{1,2\}}\vert\dot y_{j}(t)\vert v^{3}\left(\ln{\left(\frac{1}{v^{2}}\right)}+\vert t\vert v\right)e^{-2\sqrt{2}\vert t\vert v}\\& {+} \norm{u(t)}_{H^{1}_{x}}^{6}+\max_{j\in\{1,2\}} \vert y_{j}(t)\vert^{6}+\max_{j\in\{1,2\}}\vert y_{j}(t)\vert v^{4}\left(\ln{\left(\frac{1}{v^{2}}\right)}+\vert t\vert v\right)e^{-2\sqrt{2}\vert t\vert v}.
\end{align*}
With the objective of simplifying our computations, we denote
\begin{align}\label{NOLL}
NOL(t)=&\norm{u(t)}_{H^{1}}^{2}+\max_{j\in\{1,\,2\}}\vert y_{j}(t)\vert^{2}+v^{2(k+1)}\left(\vert t\vert v+\ln{\left(\frac{1}{v^{2}}\right)}\right)^{n_{k}+1}e^{-2\sqrt{2}\vert t\vert v}\\ \nonumber &{+}\norm{u(t)}_{H^{1}_{x}}^{6}+\max_{j\in\{1,2\}}\vert y_{j}(t) \vert^{6}+\max_{j\in\{1,2\}}\vert\dot y_{j}(t)\vert v^{3}\left(\ln{\left(\frac{1}{v^{2}}\right)}+\vert t\vert v\right)e^{-2\sqrt{2}\vert t\vert v}\\ \nonumber &{+}\max_{j\in\{1,2\}}\vert y_{j}(t)\vert v^{4}\left(\ln{\left(\frac{1}{v^{2}}\right)}+\vert t\vert v\right)^{\max\{1,\eta_{k}\}}e^{-2\sqrt{2}\vert t\vert v},
\end{align}
where $\eta_{k}$ is the number denoted in Lemma \ref{y1y2}.
Also, from Theorem \ref{toobig}, Lemma \ref{y1y2} and identity \eqref{A(t)}, we deduce that
\begin{equation}
    \begin{bmatrix}
        \left\langle A(t,x),H^{'}_{0,1}\left(w_{k,v}(t,x)\right)\right\rangle\\
        \left\langle A(t,x),H^{'}_{0,1}\left(w_{k,v}(t,{-}x)\right)\right\rangle
    \end{bmatrix}=
    e^{-\sqrt{2}d(t)}
    \begin{bmatrix}
        -4\sqrt{2} & 4\sqrt{2}\\
        4\sqrt{2} & -4\sqrt{2}
    \end{bmatrix}
    \begin{bmatrix}
        y_{1}(t)\\
        y_{2}(t)
    \end{bmatrix}
    +Rest(t),
\end{equation}
where, if $v\ll 1,$ the real function $Rest(t)$ satisfies
for any $t\in\mathbb{R}$
\begin{multline}\label{Rest}
    e^{2\sqrt{2}\vert t\vert v}\left\vert Rest(t)\right\vert\lesssim_{k} v^{2(k+1)}\left(\vert t\vert v+\ln{\left(\frac{1}{v^{2}}\right)}\right)^{n_{k}+1}+\max_{j\in\{1,2\}} \vert y_{j}(t)\vert v^{4}\left(\vert t\vert v+\ln{\left(\frac{1}{v^{2}}\right)}\right)^{\max\{1,\eta_{k}\}}\\+\max_{j\in\{1,2\}} \vert\dot y_{j}(t)\vert v^{3}\left(\vert t\vert v+\ln{\left(\frac{1}{v^{2}}\right)}\right).
\end{multline}
From the orthogonality conditions \eqref{orthogonall}, Theorem \ref{toobig} and Lemma \ref{dots+}, we obtain the following estimate
\begin{align}\nonumber
    \left\langle \partial^{2}_{t}u(t,x),\, H^{'}_{0,1}\left(w_{k,v}(t,x)\right)\right\rangle=&\frac{\dot d(t)}{\sqrt{1-\frac{\dot d(t)^{2}}{4}}}\left\langle\partial_{t}u(t,x),\, H^{''}_{0,1}\left(w_{k,v}(t,x)\right)\right\rangle_{L^{2}_{x}}\\ \label{dt2uprod} &{+}O\left(\norm{\overrightarrow{u}(t)}_{H^{1}_{x}\times L^{2}_{x}}v^{2}\right).
\end{align}
Also, using integration by parts, identity ${-}\frac{d^{3}}{dx^{3}}H_{0,1}(x)+ U^{''}(H_{0,1}(x)) H^{'}_{0,1}(x)=0,$ Lemma \ref{interactt} and Cauchy-Schwarz inequality, we deduce that if $0<v\ll 1,$ then
\begin{multline}\label{lineardifu}
 \left\langle -\partial^{2}_{x}u(t)+ U^{''}\left(H_{0,1}\left(w_{k,v}(t,x)\right)-H_{0,1}\left(w_{k,v}(t,{-}x)\right)\right)u(t),\, H^{'}_{0,1}\left(w_{k,v}(t,x)\right)\right\rangle\\
 \begin{aligned}
 =&\left\langle u(t),\,\left[ U^{''}\left(H_{0,1}\left(w_{k,v}(t,x)\right)-H_{0,1}\left(w_{k,v}(t,{-}x)\right)\right)- U^{''}\left(H_{0,1}\left(w_{k,v}(t,x)\right)\right)\right] H^{'}_{0,1}\left(w_{k,v}(t,x)\right)\right\rangle\\&{+}O\left(v^{2}\norm{\overrightarrow{u}(t)}_{H^{1}_{x}\times L^{2}_{x}}\right)\\=&
 O\left(v^{2}\norm{\overrightarrow{u}(t)}_{H^{1}_{x}\times L^{2}_{x}}\right).
 \end{aligned}
\end{multline}
\par From now on, we denote any continuous function $f(t)$ as $O_{k}\left(NOL(t)\right),$ if and only if $f$ satisfies the following estimate
\begin{equation*}
\vert f(t) \vert\lesssim_{k} NOL(t). 
\end{equation*}
In conclusion, applying the scalar product of the equation \eqref{pdeu2} with $H^{'}_{0,1}\left(w_{k,v}(t,x)\right)$ and $H^{'}_{0,1}\left(w_{k,v}(t,{-}x)\right),$ we obtain using Lemma \ref{interactt} and estimates \eqref{dt2uprod}, \eqref{lineardifu} that
\begin{align}\nonumber
    \begin{bmatrix} 
    \norm{H^{'}_{0,1}}_{L^{2}_{x}}^{2} & O\left(d(t)e^{-\sqrt{2}d(t)}\right)\\
    O\left(d(t)e^{-\sqrt{2}d(t)}\right) & \norm{H^{'}_{0,1}}_{L^{2}_{x}}^{2}
    \end{bmatrix} 
    \begin{bmatrix}
    \ddot y_{1}(t)\\
    \ddot y_{2}(t)
    \end{bmatrix}
    =&e^{-\sqrt{2}d(t)}
    \begin{bmatrix}
        -4\sqrt{2} & 4\sqrt{2}\\
        4\sqrt{2} & -4\sqrt{2}
    \end{bmatrix}
    \begin{bmatrix}
        y_{1}(t)\\
        y_{2}(t)
    \end{bmatrix}
    \\ \nonumber &{+}\begin{bmatrix}
    O\left(v^{2}\norm{\overrightarrow{u}(t)}_{H^{1}_{x}\times L^{2}_{x}}\right)\\
     O\left(v^{2}\norm{\overrightarrow{u}(t)}_{H^{1}_{x}\times L^{2}_{x}}\right)
    \end{bmatrix} 
    \\ \nonumber &{-}\begin{bmatrix}
    \frac{\dot d(t)}{\left(1-\frac{\dot d(t)^{2}}{4}\right)^{\frac{1}{2}}}\left\langle\partial_{t}u(t,x),\, H^{''}_{0,1}\left(w_{k,v}(t,x)\right)\right\rangle\\
    \frac{\dot d(t)}{\left(1-\frac{\dot d(t)^{2}}{4}\right)^{\frac{1}{2}}}\left\langle\partial_{t}u(t,x),\, H^{''}_{0,1}\left(w_{k,v}(t,{-}x)\right)\right\rangle \end{bmatrix}
    \\ \label{odeyj}&{+}
    \begin{bmatrix}
    O_{k}\left(NOL(t)\right)\\
    O_{k}\left(NOL(t)\right)
    \end{bmatrix}.
\end{align}
\textbf{Step 2.}(Refined ordinary differential system.)
Motivated by equation \eqref{odeyj}, for $j\in\{1,\,2\}$ we define the functions
\begin{equation*}
    c_{j}(t)=y_{j}(t)-y_{j}(T_{0,k})+2\sqrt{2}\int_{T_{0,k}}^{t}\frac{\dot d(s)}{\left(1-\frac{\dot d(s)^{2}}{4}\right)^{\frac{1}{2}}}\left\langle u(s),\, H^{''}_{0,1}\left(w_{k,v}(s,(-1)^{j+1}x)\right)\right\rangle\,ds.
\end{equation*}
Clearly, we can verify using \eqref{decayd}, Lemma \ref{dots+} and Cauchy-Schwarz inequality that
\begin{align*}
    \dot c_{j}(t)=&\dot y_{j}(t)+\frac{2\sqrt{2}\dot d(t)}{\left(1-\frac{\dot d(t)^{2}}{4}\right)^{\frac{1}{2}}}\left\langle u(t,x),\,H^{''}_{0,1}\left(w_{k,v}(t,(-1)^{j+1}x)\right)\right\rangle,\\
    \ddot c_{j}(t)=&\ddot y_{j}(t)+\frac{2\sqrt{2}\dot d(t)}{\left(1-\frac{\dot d(t)^{2}}{4}\right)^{\frac{1}{2}}}\left\langle \partial_{t} u(t,x),\,\ddot H_{0,1}\left(w_{k,v}(t,(-1)^{j+1}x)\right)\right\rangle+O\left(v^{2}\norm{u(t)}_{H^{1}_{x}}\right).
\end{align*}
In conclusion, from the ordinary differential system of equations \eqref{odeyj} we deduce that
\begin{align*}
    \frac{d}{dt}\begin{bmatrix}
    y_{1}(t)\\
    y_{2}(t)\\
    \dot c_{1}(t)\\
    \dot c_{2}(t)
    \end{bmatrix}=&
    \begin{bmatrix}
    0 & 0 & 1 & 0\\
    0 & 0 & 0 & 1\\
    -16e^{-\sqrt{2}d(t)} & 16e^{-\sqrt{2}d(t)} & 0 & 0\\
    16e^{-\sqrt{2}d(t)} & -16e^{-\sqrt{2}d(t)} & 0 & 0
    \end{bmatrix}
    \begin{bmatrix}
    y_{1}(t)\\
    y_{2}(t)\\
    \dot c_{1}(t)\\
    \dot c_{2}(t)
    \end{bmatrix}
    \\&{+}\begin{bmatrix}
    O(v\norm{u(t)}_{H^{1}_{x}})\\
    O(v\norm{u(t)}_{H^{1}_{x}})\\
    O_{k}(NOL(t))+O\left(v^{2}\norm{\overrightarrow{u}(t)}_{H^{1}_{x}\times L^{2}_{x}}\right)\\
    O_{k}(NOL(t))+O\left(v^{2}\norm{\overrightarrow{u}(t)}_{H^{1}_{x}\times L^{2}_{x}}\right)
    \end{bmatrix}.
\end{align*}
Actually, using the following change of variables $e_{1}(t)=y_{1}(t)-y_{2}(t),\,e_{2}(t)=y_{1}(t)+y_{2}(t),\,\xi_{1}(t)=c_{1}(t)-c_{2}(t)$ and $\xi_{2}(t)=c_{1}(t)+c_{2}(t),$ we obtain from the ordinary differential system of equations above that
\begin{equation}\label{Trueode}
    \frac{d}{dt}\begin{bmatrix}
    e_{1}(t)\\
    e_{2}(t)\\
    \dot \xi_{1}(t)\\
    \dot \xi_{2}(t)
    \end{bmatrix}=
    \begin{bmatrix}
    0 & 0 & 1 & 0\\
    0 & 0 & 0 & 1\\
    -32e^{-\sqrt{2}d(t)} & 0 & 0 & 0\\
   0 & 0 & 0 & 0
    \end{bmatrix}
    \begin{bmatrix}
    e_{1}(t)\\
    e_{2}(t)\\
    \dot \xi_{1}(t)\\
    \dot \xi_{2}(t)
    \end{bmatrix}
    +\begin{bmatrix}
    O(v\norm{u(t)}_{H^{1}_{x}})\\
    O(v\norm{u(t)}_{H^{1}_{x}})\\
     O_{k}(NOL(t))+O\left(v^{2}\norm{\overrightarrow{u}(t)}_{H^{1}_{x}\times L^{2}_{x}}\right)\\
     O_{k}(NOL(t))+O\left(v^{2}\norm{\overrightarrow{u}(t)}_{H^{1}_{x}\times L^{2}_{x}}\right)
    \end{bmatrix}.
\end{equation}
To simplify our notation, we denote
\begin{equation}
    M(t)=
    \begin{bmatrix}
    0 & 0 & 1 & 0\\
    0 & 0 & 0 & 1\\
    -32e^{-\sqrt{2}d(t)} & 0 & 0 & 0\\
   0 & 0 & 0 & 0
    \end{bmatrix}.
\end{equation}
It is not difficult to verify that all the solutions of linear ordinary differential equation
\begin{equation*}
    \dot L(t)=M(t)L(t) \text{ for $L(t)\in \mathbb{R}^{4},$}
\end{equation*}
are the linear space generated by the following functions
\begin{gather*}
    L_{1}(t)=
    \begin{bmatrix}
    \tanh{(\sqrt{2}vt)}\\
    0\\
    \sqrt{2}v\sech{(\sqrt{2}vt)}^{2}\\
    0
    \end{bmatrix},\,
    L_{2}(t)=
    \begin{bmatrix}
    \sqrt{2}vt\tanh{(\sqrt{2}vt)}-1\\
    0\\
    2v^{2}t\sech{(\sqrt{2}vt)}^{2}+\sqrt{2}v\tanh{(\sqrt{2}vt)}\\
    0
    \end{bmatrix},\\
    L_{3}(t)=\begin{bmatrix}
    0\\
    1\\
    0\\
    0
    \end{bmatrix},\,
    L_{4}(t)=\begin{bmatrix}
    0\\
    t\\
    0\\
    1
    \end{bmatrix}.
\end{gather*}
Also, by elementary computation, we can verify for any $t\in\mathbb{R}$ that
\begin{equation}\label{det}
    \det{\left[L_{1}(t),L_{2}(t),L_{3}(t),L_{4}(t)\right]}=-\sqrt{2}v.
\end{equation}
In conclusion, using the variation of parameters technique, we can write any $C^{1}$ solution of \eqref{Trueode} as
$L(t)=\sum_{i=1}^{4}a_{i}(t)L_{i}(t),$ such that $a_{i}(t)\in C^{1}(\mathbb{R})$ for all $1\leq i\leq 4$ and
\begin{multline}\label{linsys}
    \begin{bmatrix}
    \tanh{(\sqrt{2}vt)} &  \sqrt{2}vt\tanh{(\sqrt{2}vt)}-1 & 0 & 0\\
    0 & 0 & 1 & t\\
    \sqrt{2}v\sech{(\sqrt{2}vt)}^{2} & 2v^{2}t\sech{(\sqrt{2}vt)}^{2}+\sqrt{2}v\tanh{(\sqrt{2}vt)} & 0 & 0\\
    0 & 0 & 0 & 1
    \end{bmatrix}
    \begin{bmatrix}
    \dot a_{1}(t)\\
    \dot a_{2}(t)\\
    \dot a_{3}(t)\\
    \dot a_{4}(t)
    \end{bmatrix}\\
    =\begin{bmatrix}
    O(v\norm{u(t)}_{H^{1}_{x}})\\
    O(v\norm{u(t)}_{H^{1}_{x}})\\
     O_{k}(NOL(t))+O\left(v^{2}\norm{\overrightarrow{u}(t)}_{H^{1}_{x}\times L^{2}_{x}}\right)\\
     O_{k}(NOL(t))+O\left(v^{2}\norm{\overrightarrow{u}(t)}_{H^{1}_{x}\times L^{2}_{x}}\right)
    \end{bmatrix},
\end{multline}
with
\begin{multline}\label{initialcond}
    \begin{bmatrix}
    \tanh{(\sqrt{2}vT_{0,k})} &  \sqrt{2}vT_{0,k}\tanh{(\sqrt{2}vT_{0,k})}-1 & 0 & 0\\
    0 & 0 & 1 & T_{0,k}\\
    \sqrt{2}v\sech{(\sqrt{2}vT_{0,k})}^{2} & 2v^{2}t\sech{(\sqrt{2}vT_{0,k})}^{2}+\sqrt{2}v\tanh{(\sqrt{2}vT_{0,k})} & 0 & 0\\
    0 & 0 & 0 & 1
    \end{bmatrix}
    \begin{bmatrix}
     a_{1}(T_{0,k})\\
     a_{2}(T_{0,k})\\
     a_{3}(T_{0,k})\\
     a_{4}(T_{0,k})
    \end{bmatrix}\\
    =\begin{bmatrix}
     y_{1}(T_{0,k})-y_{2}(T_{0,k})\\
     y_{1}(T_{0,k})+y_{1}(T_{0,k})\\
     \dot c_{1}(T_{0,k})\\
     \dot c_{2}(T_{0,k})
    \end{bmatrix}.
\end{multline}
\textbf{Step $3.$}(Estimate of $\norm{\overrightarrow{u}(t)}_{H^{1}_{x}\times L^{2}_{x}}.$) 
From now on, for $C_{1}>1,\,C_{2}>0$ being fixed numbers to be chosen later, we consider the following set
\begin{equation*}
    B_{C_{1},C_{2}}=\left\{t \in \mathbb{R}\big\vert \, \,\max_{j\in\{1,\,2\}}\vert y_{j}(t)\vert v^{2}+\vert \dot y_{j}(t) \vert v\leq C_{1} v^{2(k+1)}\ln{\left(\frac{1}{v}\right)}^{n_{k}+3}\exp\left(\frac{C_{2} v \vert t-T_{0,k}\vert}{\ln{\left(\frac{1}{v}\right)}}\right)\right\}.
\end{equation*}
We also consider the following set
\begin{equation*}
    D_{u,v}=\left\{t\in\mathbb{R}\big\vert\, \norm{\overrightarrow{u}(t)}_{H^{1}_{x}\times L^{2}_{x}}<v^{2}\right\}.
\end{equation*}
\par First, if $v^{2}\vert y(T_{0,k})\vert+v\vert\dot y(T_{0,k})\vert< v^{3k}$ and $v\ll 1,$ then $T_{0,k}\in B_{C_{1},C_{2}}\cap D_{u,v}.$ Indeed, this happens when
\begin{equation*}
    \norm{(\varphi_{k,v}(T_{0,k}),\partial_{t}\varphi_{k,v}(T_{0,k}))-(\phi(T_{0,k}),\partial_{t}\phi(T_{0,k})))}_{H^{1}_{x}\times L^{2}_{x}}<v^{4k}, 
\end{equation*}
because since $u(t,x)$ satisfies the orthogonality conditions \eqref{orthogonall}, we can verify using Lemma \ref{interactt} that
\begin{equation}\label{ygro}
    \norm{\varphi_{k,v}(T_{0,k})-\phi(T_{0,k})}_{H^{1}_{x}}^{2}\cong \max_{j\in\{1,2\}} y_{j}(T_{0,k})^{2}+\norm{u\left(T_{0,k}\right)}_{H^{1}_{x}}^{2}.
\end{equation}
By a similar reasoning but using now Lemma \ref{dots+} and estimate \eqref{ygro}, we can verify that if $0<v\ll 1,$ then
\begin{equation}\label{ygr02}
     \max_{j\in\{1,2\}} \dot y_{j}(T_{0,k})^{2}+\norm{\partial_{t}u\left(T_{0,k}\right)}_{L^{2}_{x}}^{2}\lesssim\norm{\left(\varphi_{k,v}(T_{0,k}),\partial_{t}\varphi_{k,}(T_{0,k})\right)-\left(\phi(T_{0,k}),\partial_{t}\phi(T_{0,k})\right)}_{H^{1}_{x}\times L^{2}_{x}}^{2},
\end{equation}
where $T_{0,k}$ satisfies the hypothesis of Theorem \ref{energyE}, for more details see Appendix $B$ in \cite{first}.  Also, for any $\theta\in(0,1),$ if $v\ll 1,$ then while
\begin{equation*}
\vert t-T_{0,k}\vert<\frac{\ln{\left(\frac{1}{v}\right)}^{2-\theta}}{v},
\end{equation*}
and $t \in B_{C_{1},C_{2}}\cap D_{u,v},$ we can verify the following estimate
\begin{equation*}
    \max_{j\in\{1,2\}} v^{2}\vert y_{j}(t)\vert+v\vert\dot y_{j}(t)\vert< v^{2k+1}\ln{\left(\frac{1}{v}\right)}^{n_{k}},
\end{equation*}
from which with estimate \eqref{odeyj}, the definition of $NOL(t)$ at \eqref{NOLL}, the definition of $D_{u,v}$ and the assumption of $k\geq 2,$ we obtain that
 \begin{equation*}
  \max_{j\in\{1,2\}} \vert \ddot y_{j}(t) \vert\lesssim_{k} v^{2k}\ln{\left(\frac{1}{v}\right)}^{n_{k}}+v\norm{\overrightarrow{u}(t)}_{H^{1}_{x}\times L^{2}_{x}}+ \norm{\overrightarrow{u}(t)}_{H^{1}_{x}\times L^{2}_{x}}^{2}.
\end{equation*}
 In conclusion, if $v\ll 1,$ from Theorem \ref{energyL}, we deduce that the functional $L(t)$ defined in last section satisfies, for a constant $ C_{0}$ and a parameter $C(k)$ depending only on $k,$ the  estimates
\begin{align*}
    \vert \dot L(t)\vert\lesssim & v\max_{j\in\{1,2\}}\vert \ddot y_{j}(t)\vert\norm{\overrightarrow{u}(t)}_{H^{1}_{x}\times L^{2}_{x}}+\norm{\overrightarrow{u}(t)}_{H^{1}_{x}\times L^{2}_{x}}^{3}
   \\&{+}C(k)\norm{\overrightarrow{u}(t)}_{H^{1}_{x}\times L^{2}_{x}}v^{2k+1}\ln{\left(\frac{1}{v}\right)}^{n_{k}}+\norm{\overrightarrow{u}(t)}_{H^{1}_{x}\times L^{2}_{x}}^{2}\frac{v}{\ln{\left(\frac{1}{v^{2}}\right)}},\\
    C_{0}\norm{\overrightarrow{u}(t)}_{H^{1}_{x}(\mathbb{R})\times L^{2}_{x}(\mathbb{R})}^{2}\leq & L(t)+C(k)v^{4k}\ln{\left(\frac{1}{v}\right)}^{2n_{k}}.
\end{align*}
Therefore, from the ordinary differential system of equations defined in \eqref{odeyj}, we conclude for $v\ll 1$ that if $t \in B_{C_{1},C_{2}}\cap D_{u,v}$ and
\begin{equation}\label{t0}
\vert t-T_{0,k}\vert<\frac{\ln{\left(\frac{1}{v}\right)}^{2-\theta}}{v},
\end{equation}
then there exists a constant $C(k)>0$ depending only on $k$ satisfying
\begin{equation*}
    \vert \dot L(t)\vert\lesssim 
C(k)\norm{\overrightarrow{u}(t)}_{H^{1}_{x}\times L^{2}_{x}}v^{2k+1}\ln{\left(\frac{1}{v}\right)}^{n_{k}}+\norm{\overrightarrow{u}(t)}_{H^{1}_{x}\times L^{2}_{x}}^{2}\frac{v}{\ln{\left(\frac{1}{v^{2}}\right)}}.
\end{equation*}
Therefore, by a similar argument to the proof of Theorem $4.5$ in \cite{first}, we can verify from Theorem \ref{energyL} and the Gronwall Lemma applied on $L(t)$ that there exists a constant $K>1$ non depending on $k$ and $v$ such that if $t$ satisfies condition \eqref{t0} and $t\in B_{C_{1},C_{2}}\cap D_{u,v}$, then we have the following estimate 
\begin{equation}\label{unorm1}
    \norm{(u(t),\partial_{t}u(t))}_{H^{1}_{x}\times L^{2}_{x}}\lesssim_{k}\max\left(\norm{\overrightarrow{u}\left(T_{0,k}\right)}_{H^{1}_{x}\times L^{2}_{x}},v^{2k}\ln{\left(\frac{1}{v}\right)}^{n_{k}+1}\right)\exp\left(\frac{K \vert t-T_{0,k}\vert v}{\ln{\left(\frac{1}{v}\right)}}\right).
\end{equation}
In conclusion, if $v\ll 1,$ $t\in B_{C_{1},C_{2}}$ and $t$ satisfies \eqref{t0}, then $t\in D_{u,v}$ and
\eqref{unorm1} is true.\\
\textbf{Step 4.}(Estimate of $y_{1}(t),\,y_{2}(t).$)
 Next, we are going to use the estimate \eqref{unorm1} in the ordinary differential system of equations \eqref{Trueode} to estimate the evolution of $y_{1}(t)$ and $y_{2}(t)$ while $t \in B_{C_{1},C_{2}}$ and $t$ satisfies condition \eqref{t0}.  From \eqref{NOLL}, we have that if $t \in B_{C_{1},C_{2}},$ $t$ satisfies condition \eqref{t0} and $0<v\ll 1,$ then
\begin{equation}\label{nloestt}
    NOL(t)\ll v^{2}\max\left(\norm{\overrightarrow{u}\left(T_{0,k}\right)}_{H^{1}_{x}\times L^{2}_{x}},v^{2k}\ln{\left(\frac{1}{v}\right)}^{n_{k}+1}\right)\exp\left(\frac{K \vert t-T_{0,k}\vert v}{\ln{\left(\frac{1}{v}\right)}}\right).
\end{equation}
In conclusion, from the Cauchy problem \eqref{IVP1} satisfied by $\phi,$ identity \eqref{det} and estimates \eqref{ygro}, \eqref{ygr02}, and \eqref{nloestt}, we deduce from the linear system \eqref{linsys} the following estimates
\begin{align*}
    \vert \dot a_{1}(t)\vert\lesssim_{k} &  v^{2k+1}\left[\vert t\vert v+1\right]\ln{\left(\frac{1}{v}\right)}^{n_{k}+1}\exp\left(K\frac{v\vert t-T_{0,k}\vert}{\ln{\left(\frac{1}{v}\right)}}\right),\\
    \vert \dot a_{2}(t)\vert\lesssim_{k} & v^{2k+1}\ln{\left(\frac{1}{v}\right)}^{n_{k}+1}\exp\left(K\frac{v\vert t-T_{0,k}\vert}{\ln{\left(\frac{1}{v}\right)}}\right),\\
    \vert \dot a_{3}(t)\vert\lesssim_{k} & v^{2k+1}\left[\vert t\vert v+1\right]\ln{\left(\frac{1}{v}\right)}^{n_{k}+1}\exp\left(K\frac{v\vert t-T_{0,k}\vert}{\ln{\left(\frac{1}{v}\right)}}\right),\\
    \vert \dot a_{4}(t)\vert\lesssim_{k} & v^{2k+2}\ln{\left(\frac{1}{v}\right)}^{n_{k}+1}\exp\left(K\frac{v\vert t-T_{0,k}\vert}{\ln{\left(\frac{1}{v}\right)}}\right).
\end{align*}
In conclusion, using the initial condition \eqref{initialcond}, we deduce from the fact that $T_{0,k}$ is in $B_{C_{1},C_{2}},$the Fundamental Theorem of Calculus and the elementary estimate
\begin{equation*}
    \vert t\vert v<\ln{\left(\frac{1}{v}\right)}\exp\left(\frac{v\vert t\vert }{\ln{\left(\frac{1}{v}\right)}}\right),
\end{equation*}
that if  $\{\theta t+(1-\theta)T_{0,k}\vert\, 0<\theta<1\} \subset B_{C_{1},c_{2}}$ and $t$ satisfies \eqref{t0}, then
\begin{align*}
   \vert a_{1}(t)\vert+\vert a_{3}(t)\vert\lesssim_{k} & v^{2k}\ln{\left(\frac{1}{v}\right)}^{n_{k}+3}\exp\left(\frac{(K+1) \vert t-T_{0,k}\vert v}{\ln{\left(\frac{1}{v}\right)}}\right),\\
  v \vert a_{2}(t)\vert +\vert a_{4}(t)\vert \lesssim_{k} & v^{2k+1}\ln{\left(\frac{1}{v}\right)}^{n_{k}+2}\exp\left(\frac{K \vert t-T_{0,k}\vert v}{\ln{\left(\frac{1}{v}\right)}}\right).
\end{align*}
In conclusion from the ordinary differential system of equations \eqref{Trueode} satisfied by $e_{j}(t)$ for $j\in\{1,2,3,4\},$ the fact that $e_{1}(t)=y_{1}(t)-y_{2}(t),\,e_{2}(t)=y_{1}(t)+y_{2}(t)$ and $\xi_{1}(t)=c_{1}(t)-c_{2}(t),\,\xi_{2}(t)=c_{1}(t)+c_{2}(t),$ we can verify by triangle inequality and the identity 
\begin{equation*}
    \begin{bmatrix}
     e_{1}(t)\\
     e_{2}(t)\\
     e_{3}(t)\\
     e_{4}(t)
    \end{bmatrix}
    =\sum_{j=1}^{4}a_{j}L_{j}(t)
\end{equation*}
the existence of $C_{1}(k)>0$ depending on $k$ such that for $C_{2}=K+2$ and $v\ll 1$ we have that if
\begin{equation*}
    \vert t-T_{0,k}\vert<\frac{\ln{\left(\frac{1}{v}\right)}^{2-\theta}}{v},
\end{equation*}
then $t\in B_{C_{1}(k),C_{2}}.$  
\end{proof}
\begin{remark}\label{Important}
For any constants $\theta,\gamma \in (0,1),$ obviously
\begin{equation*}
    \lim_{v\to^{+}0}v^{\gamma}\exp\left(\ln{\left(\frac{1}{v}\right)}^{\theta}\right)=0.
\end{equation*}
In conclusion, for fixed $k \in \mathbb{N}$ large and $0<\theta<\frac{1}{4}$, we can deduce from Theorem \ref{energyE} that there is a $\Delta_{k,\theta}>0$ such that if $0<v<\Delta_{k,\theta},$ then
\begin{equation*}
     \norm{(\phi(t,x),\partial_{t}\phi(t,x))-(\phi_{k}(v,t,x),\partial_{t}\phi_{k}(v,t,x))}_{H^{1}_{x}\times L^{2}_{x}}<v^{2k-\frac{1}{2}},
\end{equation*}
for all $t$ satisfying
\begin{equation*}
    \vert t-T_{0,k}\vert<\frac{\ln{\left(\frac{1}{v}\right)}^{2-\theta}}{v}. 
\end{equation*}
\end{remark}

\section{Proof of Theorem \ref{orbittheo}}\label{kinkstravel}
The main objective of this section is to prove Theorem \ref{orbittheo}.
\begin{remark}\label{importance}
The importance of this theorem is to describe the dynamics of the two solitons before the collision instant, for all $t<0$ and $\vert t\vert \gg 1.$ More precisely, if two moving kinks are coming from an infinite distance with a sufficiently low speed $v$ satisfying $v\leq \delta(2k),$ then the inelasticity of the collision is going to be of order at most $O(v^{k})$ and the kinks will move away each one with the speed of size in modulus $v+O(v^{k})$ when $t$ goes to $-\infty.$  
\end{remark}
The proof of Theorem \ref{orbittheo} uses energy estimate techniques from the article \cite{stabtheory}. Furthermore, the demonstration of Theorem \ref{orbittheo}  is quite similar to the proof of Theorem $1$ of the article \cite{asympt} and also uses modulation techniques inspired by \cite{blowup} and \cite{asympt}. \par From now on, we consider
\begin{equation}\label{P+}
P_{+}\left(\phi(t),\partial_{t}\phi(t)\right)={-}\int_{0}^{+\infty}\partial_{t}\phi(t,x)\partial_{x}\phi(t,x)\,dx,
\end{equation}
and since the solution $\phi(t,x)$ is an odd function in the variable $x$ for all $t\in\mathbb{R},$ we have that
\begin{equation*}
    E(\phi)=2\left[\int_{0}^{+\infty}\frac{\partial_{x}\phi(t,x)^{2}+\partial_{t}\phi(t,x)^{2}}{2}+U(\phi(t,x))\,dx\right]=2E_{+}\left(\phi(t),\partial_{t}\phi(t)\right),
\end{equation*}
where
\begin{equation}\label{E+}
E_{+}\left(\phi(t),\partial_{t}\phi(t)\right)=\int_{0}^{+\infty}\frac{\partial_{x}\phi(t,x)^{2}+\partial_{t}\phi(t,x)^{2}}{2}+U(\phi(t,x))\,dx
\end{equation}
is a conserved quantity.
\subsection{Modulation techinques}\par First, similarly to \cite{asympt}, we consider for any $0<v<1,\,y\in\mathbb{R}$ the following function on $x\in\mathbb{R}$
\begin{equation}
    \overrightarrow{H_{0,1}}((v,y),x)=
    \begin{bmatrix}
     H_{0,1}\left(\frac{x-y}{\sqrt{1-v^{2}}}\right)\\
     \frac{{-}v}{\sqrt{1-v^{2}}} H^{'}_{0,1}\left(\frac{x-y}{\sqrt{1-v^{2}}}\right)
    \end{bmatrix},
\end{equation}
and $\overrightarrow{H_{-1,0}}((v,y),x)=-\overrightarrow{H_{0,1}}((v,y),-x),$ for all $x\in\mathbb{R}.$
\par Next, we consider the anti-symmetric map 
\begin{equation}\label{Jmap}
    J=\begin{bmatrix}
     0 & 1\\
     -1& 0
    \end{bmatrix},
\end{equation}
and based on \cite{asympt}, we consider for any $0<v<1$ and any $y\in\mathbb{R}$ the following functions, which were defined in subsection $2.3$ of \cite{asympt},
\begin{align}\label{Cc4}
C_{v,y}(x)=&\begin{bmatrix}
    \frac{1}{\sqrt{1-v^{2}}} H^{'}_{0,1}\left(\frac{x-y}{\sqrt{1-v^{2}}}\right)\\
    \frac{{-}v}{1-v^{2}} H^{''}_{0,1}\left(\frac{x-y}{\sqrt{1-v^{2}}}\right)
\end{bmatrix},\\ \label{D}
D_{v,y}(x)=&
    \begin{bmatrix}
        \frac{v}{1-v^{2}}\frac{x-y}{\sqrt{1-v^{2}}} H^{'}_{0,1}\left(\frac{x-y}{\sqrt{1-v^{2}}}\right)\\
        \frac{{-}1}{(1-v^{2})^{\frac{3}{2}}} H^{'}_{0,1}\left(\frac{x-y}{\sqrt{1-v^{2}}}\right)-\frac{v^{2}}{(1-v^{2})^{\frac{3}{2}}}\frac{x-y}{\sqrt{1-v^{2}}} H^{''}_{0,1}\left(\frac{x-y}{\sqrt{1-v^{2}}}\right)
    \end{bmatrix},
\end{align}
see also the article \cite{dichotomy}.
\par The following identity is going to be useful for our next results. 
\begin{lemma}\label{usefulidentity}
 For any $v\in (0,1),$ it holds 
  \begin{equation*}
      \left\langle \partial_{x}\overrightarrow{H_{0,1}}\left((v,0),x\right),J D_{0,v} \right\rangle={-}\left(1-v^{2}\right)^{{-}\frac{3}{2}}\norm{ H^{'}_{0,1}}_{L^{2}_{x}}^{2}. 
  \end{equation*} 
\end{lemma}
\begin{proof}
    See the proof of Lemma $2.4$ from the article \cite{asympt}. 
\end{proof}
\par Next, for any value $y_{0}\gg1,$ we are going to modulate any odd function $(\phi_{0},\phi_{1})$ close to $\overrightarrow{H_{-1,0}}((v,y_{0}),x)+\overrightarrow{H_{0,1}}((v,y_{0}),x)$ in the energy norm in terms of an orthogonal condition.

\begin{lemma}\label{newmodulation}
There exist $K>0$ and $\delta_{0},\delta_{1}\in(0,1)$ such that if $0<v<\delta_{1},\,y_{0}>\frac{1}{\delta_{1}},\, 0\leq\delta\leq \delta_{0}$ and $(\phi_{1}-H_{0,1}-H_{{-}1,0},\phi_{2})\in H^{1}_{x}(\mathbb{R})\times L^{2}_{x}(\mathbb{R})$ is an odd function satisfying 
\begin{equation}
\norm{(\phi_{1}(x),\phi_{2}(x))-\overrightarrow{H_{-1,0}}((v,y_{0}),x)-\overrightarrow{H_{0,1}}((v,y_{0}),x)}_{H^{1}_{x}\times L^{2}_{x}}\leq \delta v,   
\end{equation}
then there exists a unique $\hat{y}>1$ such that $\vert \hat{y}-y_{0} \vert\leq K\delta v$ 
and the function
\begin{equation*}
\overrightarrow{\kappa}(x)=(\phi_{1}(x),\phi_{2}(x))-\overrightarrow{H_{-1,0}}((v,\hat{y}),x)-\overrightarrow{H_{0,1}}((v,\hat{y}),x)
\end{equation*}
satisfies 
 \begin{equation}\label{smallness}
\norm{\overrightarrow{\kappa}}_{H^{1}_{x}\times L^{2}_{x}}\leq K \delta v,
    \end{equation}
    and  $
        \left \langle \overrightarrow{\kappa}(x), J\circ D_{v,\hat{y}}(x)\right\rangle=0.
    $
\end{lemma}
\begin{proof}[Proof of Lemma \ref{newmodulation}]
The proof is completely analogous to the proof of Lemma $2.1$ of the article \cite{asympt}.  
\end{proof}

\begin{corollary}\label{cmodulation}
  In the notation of Lemma \ref{newmodulation}, there exists a constant $C>1$ such that if $v\in(0,1)$ is small enough, then there exists at most one number $y\geq 2\ln{\frac{1}{v}}$ satisfying with the function
\begin{equation*}
\overrightarrow{\kappa_{0}}(x)=(\phi_{1}(x),\phi_{2}(x))-\overrightarrow{H_{-1,0}}((v,y),x)-\overrightarrow{H_{0,1}}((v,y),x)
\end{equation*}
the estimate $\norm{\overrightarrow{\kappa_{0}}}_{H^{1}_{x}\times L^{2}_{x}}\leq \min\{\delta_{0}v,\frac{K}{3C}\delta_{0} v\}$
and $
        \left \langle \overrightarrow{\kappa_{0}}(x), J\circ D_{v,y}(x)\right\rangle=0.
    $
\end{corollary}
\begin{proof}[Proof of Corollary \ref{cmodulation}.]
Let $y_{1},\,y_{2}$ two real numbers satisfying the results of Corollary \ref{cmodulation}. We consider the following functions
\begin{align*}
\overrightarrow{\kappa_{1}}(x)=&(\kappa_{1,0}(x),\kappa_{1,1}(x))=(\phi_{1}(x),\phi_{2}(x))-\overrightarrow{H_{-1,0}}((v,y_{1}),x)-\overrightarrow{H_{0,1}}((v,y_{1}),x),\\
\overrightarrow{\kappa_{2}}(x)=&(\kappa_{2,0}(x),\kappa_{2,1}(x))=(\phi_{0}(x),\phi_{1}(x))-\overrightarrow{H_{-1,0}}((v,y_{2}),x)-\overrightarrow{H_{0,1}}((v,y_{2}),x).
\end{align*}
Choosing $x=y_{1},$ we obtain the following identity
\begin{equation}\label{eqba}
    H_{0,1}(0)-H_{0,1}\left(\frac{y_{1}-y_{2}}{\sqrt{1-v^{2}}}\right)={-}H_{0,1}\left(\frac{{-}2y_{1}}{\sqrt{1-v^{2}}}\right)+H_{0,1}\left(\frac{{-}y_{1}-y_{2}}{\sqrt{1-v^{2}}}\right)+\kappa_{2,0}(y_{1})-\kappa_{1,0}(y_{1}).
\end{equation}
Since there exists a constant $c>0$ satisfying for any $f\in H^{1}_{x}(\mathbb{R})$ the inequality
\begin{equation*}
    \norm{f}_{L^{\infty}_{x}(\mathbb{R})}\leq c\norm{f}_{H^{1}_{x}},
\end{equation*}
we deduce from equation \eqref{eqba} and the hypotheses of Corollary \ref{cmodulation} that
\begin{equation*}
    \left\vert H_{0,1}(0)-H_{0,1}\left(\frac{y_{1}-y_{2}}{\sqrt{1-v^{2}}}\right)\right\vert\leq \frac{2cK}{3C}\delta_{0} v +\left\vert H_{0,1}\left(\frac{{-}2y_{1}}{\sqrt{1-v^{2}}}\right)\right\vert +\left\vert H_{0,1}\left(\frac{{-}y_{1}-y_{2}}{\sqrt{1-v^{2}}}\right)\right\vert, 
\end{equation*}
from which we deduce the following estimate
\begin{equation*}
     \left\vert H_{0,1}(0)-H_{0,1}\left(\frac{y_{1}-y_{2}}{\sqrt{1-v^{2}}}\right)\right\vert\leq \frac{2cK}{3C}\delta_{0} v+2v^{4}.
\end{equation*}
Consequently, since $H_{0,1}$ is an increasing function and $H^{'}_{0,1}(0)=\frac{1}{2},$ we obtain that if $\delta_{1}\ll 1$ and $0<v<\delta_{1},$ then
\begin{equation*}
    \vert y_{1}-y_{2}\vert\leq \frac{5Kc}{3C}\delta_{0} v.
\end{equation*}
Therefore, choosing $C=2c+1,$ from Lemma \ref{newmodulation}, we shall have $y_{1}=y_{2}$ if $v>0$ is small enough.  
\end{proof}
\par Finally, using Lemma \ref{newmodulation} and repeating the argument of the demonstration of Lemma $2.11$ in \cite{asympt}, we can verify the following result.
\begin{lemma}\label{dependentmod}
There exist $K>1,\,\delta_{0}>0$ and $\delta_{1}\in(0,1)$ such that if $0<\delta_{2}<\delta_{0},\,0<v<\delta_{1},\,y_{0}>\frac{7}{2}\ln{\frac{1}{v}}$ and the solution $(\phi(t,x),\partial_{t}\phi(t,x))$ of \eqref{nlww} satisfies for a $T>0$
\begin{equation}\label{hypopo}
    \sup_{t\in\left[0,T\right]}\inf_{y\in\mathbb{R}_{\geq y_{0}}}\norm{(\phi(t,x),\partial_{t}\phi(t,x))-\overrightarrow{H_{-1,0}}((v,y),x)-\overrightarrow{H_{0,1}}((v,y),x)}_{H^{1}_{x}\times L^{2}_{x}}\leq \delta_{2}v,
\end{equation} then there exist a real function $y_{1}:\left[0,T\right]\to\mathbb{R}_{\geq\frac{y_{0}}{2}}$ such that the solution
$(\phi(t),\partial_{t}\phi(t))$ satisfies for any $0\leq t\leq T:$
\begin{align}\label{normalest0}
   (\phi(t),\partial_{t}\phi(t))=\overrightarrow{H_{-1,0}}((v,y_{1}(t)),x)+\overrightarrow{H_{0,1}}((v,y_{1}(t)),x)+(\psi_{1}(t),\psi_{2}(t)),\\
\label{normalest}
\norm{(\psi_{1}(t),\psi_{2}(t))}_{H^{1}_{x}\times L^{2}_{x}}\leq K \delta_{2}v,
\end{align} 
    where $(\psi_{1}(t),\psi_{2}(t))\in H^{1}_{x}(\mathbb{R})\times L^{2}_{x}(\mathbb{R})$ and $y_{1}(t)$ satisfy the orthogonality condition of Lemma \ref{newmodulation}, and
     $y_{1}(t)$
is a functions of class $C^{1}$ satisfying the following inequality:    \begin{equation}\label{derivest}
        \left\vert\dot y_{1}(t)-v\right\vert\leq K\left[ \norm{(\psi_{1}(t),\psi_{2}(t))}_{H^{1}_{x}\times L^{2}_{x}}+e^{-2\sqrt{2}y_{1}(t)}\right].
    \end{equation}
\end{lemma}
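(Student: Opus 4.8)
The plan is to run the classical modulation scheme of \cite{asympt}: decompose $\overrightarrow{\phi}(t)$ at every time by Lemma \ref{newmodulation}, upgrade the resulting parameters to $C^{1}$ functions of $t$ via the implicit function theorem, and then extract \eqref{derivest} by differentiating the orthogonality conditions and invoking the Hamiltonian structure of \eqref{nlww}. First, for each $t\in[0,T]$ the hypothesis \eqref{hypopo} lets me choose $y^{*}(t)\geq y_{0}$ with $\norm{(\phi(t,x),\partial_{t}\phi(t,x))-\overrightarrow{H_{-1,0}}((v_{0},y^{*}(t)),x)-\overrightarrow{H_{0,1}}((v_{0},y^{*}(t)),x)}_{H^{1}_{x}\times L^{2}_{x}}\leq 2\delta_{2}v_{0}$; since $y^{*}(t)\geq y_{0}>4\ln(1/v_{0})$ is as large as we wish once $v_{0}$ is small (which we force by shrinking $\delta_{1}$), Lemma \ref{newmodulation} applied about $(v_{0},y^{*}(t))$ produces unique $v_{1}(t)\in(0,1)$, $y_{1}(t)>1$ and $\overrightarrow{\psi}(t,x)$ for which \eqref{normalest0}, \eqref{normalest} and the two orthogonality conditions hold; the bound $|y_{1}(t)-y^{*}(t)|\leq C\delta_{2}v_{0}$ from \eqref{smallness} together with $y^{*}(t)\geq y_{0}$ gives $y_{1}(t)\geq y_{0}/2$ provided $\delta_{2}$ is small.

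\textbf{Regularity.} Next I would show $v_{1},y_{1}\in C^{1}([0,T])$. The two orthogonality conditions read $\mathcal{F}(v_{1}(t),y_{1}(t);\overrightarrow{\phi}(t))=0$, where $\mathcal{F}$ is the pair of functionals constructed in Lemma \ref{newmodulation}; as shown there its Jacobian in $(v,y)$ is uniformly invertible on the relevant neighbourhood. The map $\mathcal{F}$ is smooth in $(v,y)$ and affine-continuous in $\overrightarrow{\phi}$ even for the weak topology of $L^{2}_{x}\times H^{-1}_{x}$, because the test vectors $J\circ C_{v,y}$ and $J\circ D_{v,y}$ are Schwartz functions of $x$. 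By the local well-posedness theory for \eqref{nlww}, $t\mapsto\overrightarrow{\phi}(t)=(\phi(t),\partial_{t}\phi(t))$ is $C^{1}$ from $[0,T]$ into $L^{2}_{x}\times H^{-1}_{x}$, with $\partial_{t}\overrightarrow{\phi}=(\partial_{t}\phi,\,\partial_{x}^{2}\phi-\dot U(\phi))$. The implicit function theorem then yields the $C^{1}$ regularity of $v_{1},y_{1}$, together with the expression for $\dot v_{1},\dot y_{1}$ used below.

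\textbf{Modulation equations.} The heart of the argument is the extraction of \eqref{derivest}. I would use the identities $C_{v,y}=-\partial_{y}\overrightarrow{H_{0,1}}((v,y),\cdot)$, $D_{v,y}=\partial_{v}\overrightarrow{H_{0,1}}((v,y),\cdot)$, and — because each moving kink solves \eqref{nlww} — the Hamiltonian relation $J\nabla E(\overrightarrow{H_{0,1}}((v,y),\cdot))=-v\,C_{v,y}$ (with its mirror image for $H_{-1,0}$), where $\nabla E(\phi,\pi)=(-\partial_{x}^{2}\phi+\dot U(\phi),\pi)$ and $\partial_{t}\overrightarrow{\phi}=J\nabla E(\overrightarrow{\phi})$; oddness of $\overrightarrow{\phi}$ is what reduces the modulation to the two parameters $(v_{1},y_{1})$. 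Differentiating \eqref{normalest0} in $t$ and inserting these facts gives
\begin{equation*}
\partial_{t}\overrightarrow{\psi}=(\dot y_{1}-v_{1})\,C^{\mathrm{tot}}-\dot v_{1}\,D^{\mathrm{tot}}+J\nabla^{2}E(\overrightarrow{H}^{\mathrm{tot}})\overrightarrow{\psi}+\mathcal{N}(\overrightarrow{\psi})+\mathcal{I},
\end{equation*}
where $\overrightarrow{H}^{\mathrm{tot}}=\overrightarrow{H_{-1,0}}((v_{1},y_{1}),\cdot)+\overrightarrow{H_{0,1}}((v_{1},y_{1}),\cdot)$, $C^{\mathrm{tot}},D^{\mathrm{tot}}$ are the associated sums of kink directions, $\mathcal{N}(\overrightarrow{\psi})$ collects all terms of degree $\geq 2$ in $\overrightarrow{\psi}$ — so $\norm{\mathcal{N}(\overrightarrow{\psi})}_{L^{2}_{x}\times H^{-1}_{x}}\lesssim\norm{\overrightarrow{\psi}}_{H^{1}_{x}\times L^{2}_{x}}^{2}$ since $U$ is a polynomial and $\norm{\overrightarrow{\psi}}$ is small — and $\mathcal{I}$ is the kink--kink interaction, which by Lemma \ref{interactt} and the decay bounds \eqref{le1}--\eqref{le2} obeys $\norm{\mathcal{I}}_{L^{2}_{x}\times H^{-1}_{x}}\lesssim e^{-2\sqrt{2}\,y_{1}}$. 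Taking the scalar product of this identity with $J\circ C_{v_{1},y_{1}}$ and with $J\circ D_{v_{1},y_{1}}$, comparing with the time-derivatives of the two orthogonality conditions, and simplifying the resulting $\nabla^{2}E(\overrightarrow{H_{0,1}})C_{v,y}$ and $\nabla^{2}E(\overrightarrow{H_{0,1}})D_{v,y}$ by differentiating the Hamiltonian relation in $y$ and in $v$, produces a $2\times 2$ linear system for $(\dot y_{1}-v_{1},\dot v_{1})$. Its coefficient matrix is, up to $O(e^{-2\sqrt{2}y_{1}})$, triangular with the crucial nondegenerate entry $\langle C_{v,y},J\circ D_{v,y}\rangle=-(1-v^{2})^{-3/2}\norm{\dot H_{0,1}}_{L^{2}_{x}}^{2}\neq 0$, while the first, ``momentum-twisted'' orthogonality condition of Lemma \ref{newmodulation} is exactly the device that makes the contributions which are linear in $\overrightarrow{\psi}$ on the right-hand side cancel (this is the computation carried out in Lemma $2.11$ of \cite{asympt}). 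What remains is $O(\norm{\overrightarrow{\psi}}^{2})+O(e^{-2\sqrt{2}y_{1}})+O((|\dot y_{1}-v_{1}|+|\dot v_{1}|)\,\norm{\overrightarrow{\psi}})$; inverting the matrix and absorbing the last term into the left-hand side — legitimate because $\norm{\overrightarrow{\psi}}\lesssim\delta_{2}v_{0}$ can be made small — yields \eqref{derivest}.

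\textbf{Main obstacle.} The delicate part is precisely this algebraic bookkeeping: one must verify that after using \emph{both} orthogonality conditions and the generalized-kernel identities, every contribution to the two modulation equations that is merely linear in $\overrightarrow{\psi}$ (and not already multiplied by $|\dot y_{1}-v_{1}|+|\dot v_{1}|$) cancels, so that only quadratic-in-$\overrightarrow{\psi}$ and exponentially small remainders survive — this is the whole reason for working with the non-standard first orthogonality condition, which \cite{asympt} introduced to cope with solutions of possibly vanishing momentum. By contrast, the bound $\norm{\mathcal{I}}\lesssim e^{-2\sqrt{2}y_{1}}$ is routine once Lemma \ref{interactt} and $y_{1}\geq y_{0}/2\gtrsim\ln(1/v_{0})$ are invoked, and the $C^{1}$ regularity of the second step is entirely standard.
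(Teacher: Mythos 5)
Your proposal is correct in substance and, for the heart of the lemma --- the derivation of \eqref{derivest} --- it follows the same path as the paper: pointwise modulation via Lemma \ref{newmodulation}, the evolution equation for $\overrightarrow{\psi}$, pairing with $J\circ C_{v_{1},y_{1}}$ and $J\circ D_{v_{1},y_{1}}$, the generalized-kernel identities $\left[\hess_{1}+v\partial_{x}\right]C_{v,y}=0$ and $\hess_{1}D_{v,y}-vJ\circ\partial_{x}D_{v,y}=J\circ C_{v,y}$, and the nondegeneracy of $\left\langle D_{v,y},J\circ C_{v,y}\right\rangle$ (compare \eqref{linearodd1}--\eqref{linearoddd2}). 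Where you genuinely diverge is the $C^{1}$-regularity step: you apply the implicit function theorem in $(t,v,y)$ directly, using that $t\mapsto(\phi(t),\partial_{t}\phi(t))$ is $C^{1}$ into $L^{2}_{x}\times H^{-1}_{x}$ and that the test vectors are Schwartz, whereas the paper builds the ODE system \eqref{linearalgb} from the differentiated orthogonality conditions \eqref{odes1}--\eqref{odes2}, solves it by Picard--Lindel\"of, identifies the solution with the pointwise modulation parameters through the uniqueness clause of Lemma \ref{newmodulation}, and runs a bootstrap on the set \eqref{U} to keep $y_{1}(t)\geq 2\ln(1/v_{0})$ up to $T$. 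Your route is shorter and avoids the continuation argument (the lower bound $y_{1}\geq y_{0}/2$ then comes directly from \eqref{hypopo} and \eqref{smallness} at each fixed $t$, rather than from integrating $\dot y_{1}$); the paper's route has the advantage that the ODE it constructs is exactly the system later inverted to get \eqref{derivest}. Two points of bookkeeping deserve more care than your sketch gives them. First, the cancellation of the linear-in-$\overrightarrow{\psi}$ terms is not due to the first orthogonality condition alone: the bulk of it is the standard cancellation between the transport term $v_{1}\partial_{x}$ acting on the kernel elements and the $\dot y_{1}\left\langle\overrightarrow{\psi},J\circ\partial_{y_{1}}\cdot\right\rangle$ terms coming from differentiating the orthogonality conditions (leaving a factor $\dot y_{1}-v_{1}$); the twisted first condition is needed specifically for the single term $\left\langle\overrightarrow{\psi},J\circ C_{v_{1},y_{1}}\right\rangle$ produced by the relation $\hess_{1}D-vJ\circ\partial_{x}D=J\circ C$ in the $D$-equation. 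Second, because the first orthogonality condition carries the prefactor $v_{1}$, solving its time derivative for $\left\langle\partial_{t}\overrightarrow{\psi},J\circ C\right\rangle$ introduces terms of size $\vert\dot v_{1}\vert\,\norm{\overrightarrow{\psi}}/v_{1}$; absorbing these into the $O(1)$ diagonal entry uses $\norm{\overrightarrow{\psi}}/v_{0}\lesssim\delta_{2}$, i.e.\ the smallness of $\delta_{2}$ from \eqref{normalest}, not merely the smallness of $\norm{\overrightarrow{\psi}}$ --- your closing remark should be read in that quantitative sense.
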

\begin{proof}
First, from Lemma \ref{newmodulation} and the fact that  $\overrightarrow{\phi}\in C\left(\mathbb{R};H^{1}_{x}(\mathbb{R})\times L^{2}_{x}(\mathbb{R})\right)$, if $\delta_{1}$ is small enough, we can find a constant $K>0$ and a function $\hat{y}:\left[0,T\right]\to\left(3\ln{\frac{1}{v}},{+}\infty\right)$ such that for
\begin{equation}\label{m1c4}
    \overrightarrow{\kappa}(t,x)= (\phi(t,x),\partial_{t}\phi(t,x))-\overrightarrow{H_{-1,0}}((v,\hat{y}(t)),x)-\overrightarrow{H_{0,1}}((v,\hat{y}(t)),x),
\end{equation}
we have $\overrightarrow{\kappa}(t),\,\hat{y}(t)$ satisfying the orthogonality condition of Lemma \ref{newmodulation} and 
\begin{equation}\label{m0}
\norm{\overrightarrow{\kappa}(t)}_{H^{1}_{x}\times L^{2}_{x}}\leq K\delta_{2}v,\end{equation}
for all $0\leq t\leq T.$  
\par Next, we are going to construct a linear ordinary differential system of equations with solution $y_{1}(t)$ and we are going to verify that if $y_{1}(0)=\hat{y}(0),$ then $y_{1}(t)=\hat{y}(t),$ for all $t\in\left[0, T\right].$ \\
\textbf{Step 1.}(Construction of the ordinary differential equation satisfied by $y_{1}.$)
 \par The argument of the demonstration of the remaining part of Lemma \ref{dependentmod} is completely analogous to the proof of Lemma $2.11$ of \cite{asympt}. More precisely, similarly to Lemma $2.11$ of \cite{asympt}, we will construct an ordinary differential equation with solution $y_{1}(t),$ which, during their time of existence, preserves the following orthogonality conditions
\begin{equation}\label{ode}
       \left \langle (\psi_{1}(t,x),\psi_{2}(t,x)), JD_{v,y_{1}(t)}(x)\right\rangle=0,
\end{equation}
where $J$ is defined in \eqref{Jmap}, and we are going to verify that if $y_{1}(0)=\hat{y}(0),$ then $y_{1}(t)=\hat{y}(t)$ for all $0\leq t\leq T.$
From the global well-posedness of the partial differential \eqref{nlww} in the energy space, we have for any $T_{0}>0$ that $\phi(t,x)-H_{0,1}(x)-H_{{-}1,0}(x)\in C\left(\left[{-}T_{0},T_{0}\right],H^{1}_{x}(\mathbb{R})\right)$ and $\partial_{t}\phi(t,x)\in C\left(\left[{-}T_{0},T_{0}\right],L^{2}_{x}(\mathbb{R})\right).$ Therefore, if there exists a interval $\left[0,T_{1}\right]\subset \left[0,T \right]$ such that $ y_{1} \in C^{1}(\left[0,T_{1}\right])$ when restricted to this interval and
\begin{equation}\label{idpsi}
\left(\phi(t),\partial_{t}\phi(t)\right)=\overrightarrow{H_{{-}1,0}}\left(\left(v,y_{1}(t)\right),x\right)+\overrightarrow{H_{0,1}}\left(\left(v,y_{1}(t)\right),x\right)+(\psi_{1}(t),\psi_{2}(t)) \text{, for any $t\in\left[0,T_{1}\right],$}
\end{equation}
then $(\psi_{1}(t),\psi_{2}(t))=(\psi_{1}(t,x),\psi_{2}(t,x))$ satisfies, for any functions $h_{1},\,h_{2}\in\mathscr{S}(\mathbb{R}),$ the following identity
\begin{equation*}
   \frac{d}{dt}\left\langle (\psi_{1}(t,x),\psi_{2}(t,x)), (h_{1}(x),h_{2}(x))\right\rangle= \left\langle\partial_{t}(\psi_{1}(t,x),\psi_{2}(t,x)),(h_{1}(x),h_{2}(x))\right\rangle,
\end{equation*}
if $t\in\left[0,T_{1}\right].$ 
\par Consequently, if we derive the equation \eqref{ode} in time, we obtain the following linear ordinary differential equation satisfied by $y_{1}(t)$
\begin{equation}
    \label{odes2}
    \dot y_{1}(t)\left\langle (\psi_{1}(t,x),\psi_{2}(t,x)), J \partial_{y_{1}}D_{v,y_{1}(t)}(x) \right\rangle +\left\langle \partial_{t}(\psi_{1}(t,x),\psi_{2}(t,x)),J D_{v,y_{1}(t)}(x)\right\rangle=0.
\end{equation}
Clearly, since $x^{m} H^{'}_{0,1}(x)\in \mathscr{S}(\mathbb{R})$ for all $m\in\mathbb{N}\cup\{0\},$ we have that the functions $\omega_{1},\,\omega_{2}:\left[0,T\right]\times \left(1,{+}\infty\right)\to\mathbb{R}$ defined by
\begin{equation*}
\omega_{1}(t,y)=\left\langle (\psi_{1}(t,x),\psi_{2}(t,x)), J \partial_{y}D_{v,y}(x) \right\rangle,\,
\omega_{2}(t,y)=\left\langle \partial_{t}(\psi_{1}(t,x),\psi_{2}(t,x)),J D_{v,y}(x)\right\rangle
\end{equation*}
are continuous and, for any $t\in\left[0,T\right],$ $\omega_{1}(t,\cdot),\,\omega_{2}(t,\cdot):\left(1,{+}\infty\right)\to\mathbb{R}$ are smooth. 
\\
\textbf{Step 2.}(Partial differential equation satisfied by $\overrightarrow{\psi}.$)
First, we consider the following self-adjoint operator $\hess (y_{1}(t),x):H^{2}_{x}(\mathbb{R})\subset L^{2}_{x}(\mathbb{R})\to\mathbb{R},$ which satisfies, for all $t\in\left[0,T\right],$ 
\begin{equation}\label{hess}
    \hess(y_{1}(t),x)=\begin{bmatrix}
     {-}\partial^{2}_{x}+ U^{''}\left(H_{0,1}\left(\frac{x-y_{1}(t)}{\sqrt{1-v^{2}}}\right)-H_{0,1}\left(\frac{{-}x-y_{1}(t)}{\sqrt{1-v^{2}}}\right)\right) & 0\\
     0 & 1
    \end{bmatrix},
\end{equation}
and the self-adjoint operator $\hess_{1}(y_{1}(t),x):H^{2}_{x}(\mathbb{R})\subset L^{2}_{x}(\mathbb{R})\to\mathbb{R}$ denoted by
\begin{equation}\label{hess1}
    \hess_{1}(y_{1}(t),x)=\begin{bmatrix}
     {-}\partial^{2}_{x}+ U^{''}\left(H_{0,1}\left(\frac{x-y_{1}(t)}{\sqrt{1-v^{2}}}\right)\right) & 0\\
     0 & 1
    \end{bmatrix}.
\end{equation}
Next, we consider the following maps $Int: \mathbb{R}^{2}\to \mathbb{R}^{2}$ and $\mathcal{T}:\mathbb{R}^{2}\times H^{1}_{x}(\mathbb{R})\to \mathbb{R}^{2},$ which we denote by
\begin{align}\label{intf}
    \begin{split}
       Int(y,x)=\begin{bmatrix}
         0\\
        U^{'}\left({-}H_{0,1}\left(\frac{{-}x-y_{1}}{\sqrt{1-v^{2}}}\right)\right)+ U^{'}\left(H_{0,1}\left(\frac{x-y}{\sqrt{1-v^{2}}}\right)\right)
       \end{bmatrix}\\
       {-}\begin{bmatrix}
       0\\
        U^{'}\left(H_{0,1}\left(\frac{x-y}{\sqrt{1-v^{2}}}\right)-H_{0,1}\left(\frac{{-}x-y}{\sqrt{1-v^{2}}}\right)\right)
        \end{bmatrix}
    \end{split},\\
   \label{tff}
    \begin{split}
        \mathcal{T}(y,x,\psi)=
        \begin{bmatrix}
         0\\
         {-}\sum_{j=3}^{6}
        U^{(j)}\left(H_{0,1}\left(\frac{x-y}{\sqrt{1-v^{2}}}\right)-H_{0,1}\left(\frac{{-}x-y}{\sqrt{1-v^{2}}}\right)\right)\frac{\psi(x)^{j-1}}{(j-1)!}
        \end{bmatrix},
    \end{split}
\end{align}
for any $(y,x)\in\mathbb{R}^{2}$ and $\psi\in H^{1}_{x}(\mathbb{R}).$
Therefore, if $\left[0,T_{1}\right]\subset \left[0,T\right],\,y_{1}\in C^{1}\left(\left[0,T_{1}\right]\right)$ and $y_{1}\geq 1,\,0<v_{1}<1$ then, from the partial differential equation \eqref{nlww} and identity \eqref{idpsi}, we deduce that $(\psi_{1}(t,x),\psi_{2}(t,x))$ is a solution in the space $C\left(\left[0,T_{1}\right],H^{1}_{x}(\mathbb{R})\times L^{2}_{x}(\mathbb{R})\right)$ of the following partial differential equation
\begin{multline}\label{pdepsi}
    \partial_{t}(\psi_{1}(t,x),\psi_{2}(t,x))=(\dot y_{1}(t)-v)\left[C_{v,y_{1}(t)}(x)-C_{v,y_{1}(t)}(-x)\right]\\{+}J\hess(y_{1}(t),x)(\psi_{1}(t,x),\psi_{2}(t,x))+Int(y_{1}(t),x)+\mathcal{T}(y_{1}(t),x,\psi_{1}(t)),
\end{multline}
where $J$ is the antissymetric operator defined in \eqref{Jmap}. 
\par In the next step, we are going to assume the existence of $0\leq T_{1}\leq T$ such that $y_{1}$ is of class $C^{1}$ in the interval $\left[0,T_{1}\right],$ and $y_{1}\geq 1$ for any $t\in \left[0,T_{1}\right].$ Moreover, we will prove that when this condition is true, then $\vert \dot y_{1}(t)-v\vert $ is sufficiently small for all $t\in\left[0,T_{1}\right].$\\
\textbf{Step 3.}(Estimate of $\left\vert\dot y_{1}(t)-v\right\vert.$) Uniquely in this step, for any continuous non-negative function $f:[0,T_{1}]\times (0,1)\times (1,{+}\infty)\to \mathbb{R},$ we say that a function $g:[0,T_{1}]\times (0,1)\times (1,{+}\infty)\to \mathbb{R}$ is $O(f),$ if and only if, $g$ is a continuous function satisfying the following properties:
\begin{itemize}
\item there exists a constant $c>0$ such that $\vert g(t,v,y)\vert<c f(t,v,y)$ for all $(t,v,y)$ in $[0,T_{1}]\times (0,1)\times (1,{+}\infty),$
\item $g(t,\cdot):(0,1)\times (1,{+}\infty)\to \mathbb{R}$ is smooth for all $t\in\left[0,T_{1}\right].$
\end{itemize}
 \par We recall that $J,\,C_{v,y_{1}(t)}$ and $D_{v,y_{1}(t)}$ are defined, respectively, in \eqref{Jmap}, \eqref{Cc4} and \eqref{D}.
Using Lemma \ref{interactt}, we obtain that if $y_{1}(t)\geq 1$ and $v\in (0,1)$ is small enough, then
\begin{multline}\label{intermodul}
  \begin{aligned}
  \left\vert \left\langle C_{v,y_{1}(t)}(x),J\circ D_{v,y_{1}(t)}(-x) \right\rangle\right\vert
   +\left\vert \left\langle C_{v,y_{1}(t)}(x),J C_{v,y_{1}(t)}(-x) \right\rangle\right\vert &\\{+} \left\vert \left\langle D_{v,y_{1}(t)}(x),J D_{v,y_{1}(t)}(-x) \right\rangle\right\vert &\lesssim y_{1}(t)^{4}e^{-2\sqrt{2}y_{1}(t)}.
\end{aligned}
\end{multline}
\par Furthermore, using the partial differential equation \eqref{pdepsi} satisfied by $(\psi_{1}(t,x),\psi_{2}(t,x))$, we deduce for any $t\in\left[0,T_{1}\right]\subset \left[0,T\right]$ the following identity
\begin{multline}\label{utc}
\begin{aligned}
\left\langle \partial_{t}(\psi_{1}(t,x),\psi_{2}(t,x)),J D_{v,y_{1}(t)}(x)\right\rangle = &(\dot y_{1}(t)-v)\left\langle C_{v,y_{1}(t)}(x),J D_{v,y_{1}(t)}(x)\right\rangle
 \\&{-} (\dot y_{1}(t)-v)\left\langle C_{v,y_{1}(t)}({-}x),J D_{v,y_{1}(t)}(x)\right\rangle  
    \\&{+}\left\langle J\hess(y_{1}(t),x)(\psi_{1}(t,x),\psi_{2}(t,x)),J D_{v,y_{1}(t)}(x)
    \right\rangle\\&{+}\left\langle\mathcal{T}(y_{1}(t),x,\psi_{1}(t))+Int(y_{1}(t),x),JD_{v,y_{1}(t)}(x)\right\rangle.
\end{aligned}
\end{multline}
Moreover, from Lemma \ref{usefulidentity} and identity $J^{*}={-}J,$ we have \begin{equation}\label{biggest}
\left\langle J D_{v,y_{1}(t)}(x), C_{v,y_{1}(t)}(x)\right\rangle={-}\left\langle  D_{v,y_{1}(t)}(x), J C_{v,y_{1}(t)}(x)\right\rangle=\left(1-v^{2}\right)^{-\frac{3}{2}}\norm{ H^{'}_{0,1}}_{L^{2}_{x}}^{2}.
\end{equation}
Therefore, using equation \eqref{utc}, estimates \eqref{intermodul} and Lemma \ref{interactt}, we deduce the following estimate
\begin{align*}
    \left\langle\partial_{t}(\psi_{1}(t,x),\psi_{2}(t,x)),J D_{v,y_{1}(t)}(x)\right\rangle=  &\left(\dot y_{1}(t)-v\right)\left[\left(1-v^{2}\right)^{{-}\frac{3}{2}}\norm{H^{'}_{0,1}}_{L^{2}_{x}}^{2} +O\left(y_{1}(t)^{4}e^{{-}2\sqrt{2}y_{1}(t)}\right)\right]\\&{+}\left\langle J\hess(y_{1}(t),x)(\psi_{1}(t,x),\psi_{2}(t,x)),JD_{v,y_{1}(t)}\right\rangle\\
    &{+} \left\langle\mathcal{T}(y_{1}(t),x,\psi_{1}(t)),JD_{v,y_{1}(t)}(x)\right\rangle\\
    &{+}\left\langle Int(y_{1}(t),x),JD_{v,y_{1}(t)}(x)
    \right\rangle.
\end{align*}
\par Furthermore, since, for any $\zeta\in\mathbb{R},$ we have the following identity
\begin{multline*}
     U^{'}\left(H^{\zeta}_{0,1}(x)+H_{{-}1,0}(x)\right)- U^{'}\left(H^{\zeta}_{0,1}(x)\right)- U^{'}\left(H_{{-}1,0}(x)\right)\\={-}24 H_{{-}1,0}(x)H^{\zeta}_{0,1}(x)\left(H_{{-}1,0}(x)+H^{\zeta}_{0,1}(x)\right)+\sum_{j=1}^{4}
    \begin{pmatrix}
        5\\
        j
    \end{pmatrix}
        H_{{-}1,0}(x)^{j}H^{\zeta}_{0,1}(x)^{5-j},
\end{multline*}
we deduce from Lemma \ref{interactt} and the definition of function $Int$ that
$
    \norm{Int(y_{1}(t),x,\psi(t))}_{L^{2}_{x}}\lesssim e^{-2\sqrt{2}y_{1}(t)}.
$
Next, since $\norm{U^{(l)}}_{L^{\infty}\left[{-}1,1\right]}<{+}\infty$ for any $l\in\mathbb{N}\cup\{0\},$ we deduce using Lemma \ref{mulsoblemma} and the definition of function $\mathcal{T}$ that
\begin{equation*}
    \norm{\mathcal{T}(y_{1}(t),x,\psi_{1}(t))}_{L^{2}_{x}}\leq \norm{\mathcal{T}(y_{1}(t),x,\psi_{1}(t))}_{H^{1}_{x}}\lesssim \norm{\psi_{1}(t,x)}_{H^{1}_{x}}^{2}.
\end{equation*}
As a consequence,
\begin{align}\nonumber
    \left\langle\partial_{t}(\psi_{1}(t,x),\psi_{2}(t,x)),J D_{v,y_{1}(t)}(x)\right\rangle=\nonumber &(\dot y_{1}(t)-v)\left[\left(1-v^{2}\right)^{{-}\frac{3}{2}}\norm{ H^{'}_{0,1}}_{L^{2}_{x}}^{2}+O\left(y_{1}(t)^{4}e^{-2\sqrt{2}y_{1}(t)}\right)\right]\\&{+}
 \left\langle
    J\hess(y_{1}(t),x)(\psi_{1}(t,x),\psi_{2}(t,x)),J D_{v_{1}(t),y_{1}(t)}(x)
    \right\rangle\\ \label{almostlin1} &{+}O\left(e^{-2\sqrt{2}y_{1}(t)}+\norm{\overrightarrow{\psi}(t)}_{H^{1}_{x}\times L^{2}_{x}}^{2}\right),
\end{align}
for any $t\in\left[0,T_{1}\right].$
\par Furthermore, using identities \eqref{hess}, \eqref{hess1}, the formula of $D_{v,y}$ in \eqref{D} and Lemma \ref{interactt}, we can deduce the following estimate
\begin{equation*}
\norm{\left[\hess(y_{1}(t),x)-\hess_{1}(y_{1}(t),x)\right]D_{v,y_{1}(t)}(x)}_{L^{2}_{x}(\mathbb{R};\mathbb{R}^{2})}\lesssim e^{{-}2\sqrt{2}y_{1}(t)}, 
\end{equation*}
for all $t\in\left[0,T_{1}\right].$
Thus, after using integration by parts and Cauchy-Schwarz inequality, we deduce for all $t\in \left[0,T_{1}\right]$ that
\begin{equation*}
\left\vert\left\langle J\left[\hess(y_{1}(t),x)-\hess_{1}(y_{1}(t),x)\right]\overrightarrow{\psi}(t),J D_{v_{1}(t),y_{1}(t)}(x) \right\rangle\right\vert\lesssim \norm{\overrightarrow{\psi}(t)}_{H^{1}_{x}\times L^{2}_{x}}e^{-2\sqrt{2}y_{1}(t)}.
\end{equation*}
Consequently, since $\left\langle j(a):a \right\rangle=0$ for all $a\in\mathbb{R}^{2},$ we obtain that if $y_{1}$ is a function of class $C^{1}$ in the interval $\left[0,T_{1}\right]$ and $v\in(0,1)$ is small enough, then
\begin{multline}\label{almostlin20}
 \begin{aligned}   
    \left\langle\partial_{t}(\psi_{1}(t,x),\psi_{2}(t,x)),J D_{v,y_{1}(t)}(x)\right\rangle=
&\left(\dot y_{1}(t)- v\right)\left[{-}\frac{\norm{ H^{'}_{0,1}}_{L^{2}_{x}}^{2}}{\left(1-v^{2}\right)^{\frac{3}{2}}}+O\left(y_{1}(t)^{4}e^{-2\sqrt{2}y_{1}(t)}\right)\right]\\&{+}
 \left\langle
    J\hess_{1}(y_{1}(t),x)(\psi_{1}(t,x),\psi_{2}(t,x)),J D_{v,y_{1}(t)}(x)
    \right\rangle\\&{+}O\left(e^{-2\sqrt{2}y_{1}(t)}+\norm{\overrightarrow{\psi}(t)}_{H^{1}_{x}\times L^{2}_{x}}^{2}\right), 
\end{aligned}
\end{multline}
for any $t\in\left[0,T_{1}\right].$
\par Next, using \eqref{hess1}, it is not difficult to verify the following identity
\begin{equation*}
\hess_{1}(y_{1}(t),x)D_{v,y_{1}(t)}(x)-vJ\left[\partial_{x}D_{v,y_{1}(t)}(x)\right] =J C_{v,y_{1}(t)}(x),
\end{equation*}
see Lemma $2.4$ of \cite{asympt} for the proof. Consequently, we have for any $t\in\left[0,T_{1}\right]$ that
\begin{align*}
    \left\langle
    J\hess_{1}(y_{1}(t),x)(\psi_{1}(t,x),\psi_{2}(t,x)),J D_{v,y_{1}(t)}(x)
    \right\rangle=&{-}v\left\langle (\psi_{1}(t,x),\psi_{2}(t,x)),J\partial_{y_{1}}D_{v,y_{1}(t)}(x) \right\rangle\\&{+}\left\langle(\psi_{1}(t,x),\psi_{2}(t,x)),J C_{v,y_{1}(t)}(x)\right\rangle.
\end{align*}
In conclusion, estimate \eqref{almostlin20} and identity \eqref{odes2} imply that
\begin{multline}\label{appode}
    \left(  \dot y_{1}(t)-v\right)\left[\frac{{-}\norm{ H^{'}_{0,1}}_{L^{2}_{x}}^{2}}{(1-v^{2})^{\frac{3}{2}}}+O\left(\norm{(\psi_{1}(t),\psi_{2}(t))}_{H^{1}_{x}\times L^{2}_{x}}+y_{1}(t)^{4}e^{{-}2\sqrt{2}y_{1}(t)}\right)\right]\\=O\left(e^{{-}2\sqrt{2}y_{1}(t)}+\norm{(\psi_{1}(t),\psi_{2}(t))}_{H^{1}_{x}\times L^{2}_{x}}\right),
\end{multline}
 for all $t\in\left[0,T_{1}\right].$
 \\
\textbf{Step 4.}(Proof that $y_{1}\in C^{1}.$)
 Furthermore, the equations \eqref{odes2} and \eqref{utc} imply that $y_{1}$ shall satisfy the following ordinary differential equation
 \begin{multline}\label{odedey1}
     \left(\dot y_{1}(t)-v\right)\Bigg[\left\langle C_{v,y_{1}(t)}(x),J D_{v,y_{1}(t)}(x)\right\rangle-\left\langle C_{v,y_{1}(t)}({-}x),J D_{v,y_{1}(t)}(x)\right\rangle\\{+}\left\langle(\psi_{1}(t),\psi_{2}(t)),J\partial_{y_{1}}D_{v,y_{1}(t)}(x)\right\rangle\Bigg]
     \\
     \begin{aligned}
     =&{-}v\left\langle (\psi_{1}(t,x),\psi_{2}(t,x)),J\partial_{y_{1}}D_{v,y_{1}(t)}(x)\right\rangle
     \\ &{-}\left\langle J\hess(y_{1}(t),x)(\psi_{1}(t,x),\psi_{2}(t,x))+\mathcal{T}(y_{1}(t),x,\psi_{1}(t))+Int(y_{1}(t),x),JD_{v,y_{1}(t)}(x)\right\rangle,
     \end{aligned}
 \end{multline}
which is a first-order non-autonomous differential system of the form
\begin{equation*}
    \left(\dot y_{1}(t)-v\right)\alpha_{v}\left(t,y_{1}(t)\right)=\beta_{v}\left(t,y_{1}(t)\right),
\end{equation*}
where the functions the functions $\alpha_{v},\,\beta_{v}:\left[0,T\right]\times \mathbb{R}\to\mathbb{R}$ are continuous when $v\in (0,1).$ 
\par Moreover, from the hypotheses of Lemma \ref{dependentmod}, Lemma \ref{interactt} and identities \eqref{hess}, \eqref{intf}, \eqref{tff}, we can deduce for any $t\in \left[0,T\right]$ that the restrictions of $\alpha_{v}(t,\cdot)$ and $\beta_{v}(t,\cdot)$ in the set $\left(3\ln{\frac{1}{v}},{+}\infty\right)$ are locally Lipschitz when $v$ is small enough. 
\par Furthermore, from the first step, we have $y_{1}(0)=\hat{y}(0)>3\ln{\frac{1}{v}}$ which implies 
$y_{1}(0)^{4}e^{{-}2\sqrt{2}y_{1}(0)}<v^{3},$  if $v$ is small enough. Moreover,
we deduce from \eqref{m1c4} and \eqref{m0} that $\norm{(\psi_{1}(0),\psi_{2}(0))}_{H^{1}_{x}\times L^{2}_{x}}\leq K\delta_{2}v$ and we also have 
\begin{equation*}
    \alpha_{v}(0,y_{1}(0))=\frac{{-}\norm{ H^{'}_{0,1}}_{L^{2}_{x}}^{2}}{\left(1-v^{2}\right)^{\frac{3}{2}}}+O(v)>0,
\end{equation*}
because of the estimate \eqref{appode} when $v$ is small enough. 
\par Consequently, Picard-Lindelöf Theorem implies the existence of an interval $\left[0,T_{1}\right]\subset \left[0,T\right]$ such that $y_{1}:\left[0,T_{1}\right]\to\mathbb{R}_{>2\ln{\frac{1}{v}}}$ is a $C^{1}$ function and 
since $y_{1}$ satisfies \eqref{odes2}, we have for any $t\in\left[0,T_{1}\right]$ that
\begin{align}\label{mody1y1}
 \left\langle (\psi_{1}(t,x),\psi_{2}(t,x)),J D_{v,y_{1}(t)}(x) \right\rangle=&  \left\langle \overrightarrow{\psi}(0,x),J D_{v,y_{1}(0)}(x) \right\rangle 
= 0.
\end{align}
\par Furthermore, since $\hat{y}(t)\geq 3\ln{\frac{1}{v}},$ we can deduce from the continuity of function $y_{1},$ Lemma \ref{newmodulation} and Corollary \ref{cmodulation} the identity $y_{1}(t)=\hat{y}(t)$ for all $t\in \left[0,T_{1}\right].$ As a consequence, $y_{1}(t)\geq 3\ln{\frac{1}{v}}$ for all $t\in \left[0,T_{1}\right]$ and 
\begin{equation}\label{y1estt}
\norm{(\psi_{1}(t),\psi_{2}(t))}_{H^{1}_{x}\times L^{2}_{x}}=\norm{\overrightarrow{\phi}(t,x)-\overrightarrow{H_{-1,0}}((v,y_{1}(t)),x)-\overrightarrow{H_{0,1}}((v,y_{1}(t)),x)}_{H^{1}_{x}\times L^{2}_{x}}\leq K\delta_{2}v
\end{equation}
for all $t\in \left[0,T_{1}\right],$ because of estimate \eqref{m1c4} and identity \eqref{m0}. 
\par Therefore, using a bootstrap argument and estimate \eqref{appode}, we can conclude that the function $y_{1}$ is in $ C^{1}\left[0,T\right]$ and satisfies \eqref{mody1y1} for all $t\in \left[0,T\right].$ 
 Finally, estimate \eqref{derivest} is a direct consequence of \eqref{appode}, \eqref{y1estt} and the fact that $y_{1}\geq 3\ln{\frac{1}{v}}.$ 
\end{proof}
\subsection{Orbital stability of the parameter $y$}
In this subsection, we consider $\phi(t,x)$ as a solution of \eqref{nlww} having finite energy and with an initial data $(u_{1}(x),u_{2}(x))$ satisfying the hypotheses of Theorem \ref{orbittheo}. Moreover, if $v$ is small enough, from the local well-posedness of the partial differential equation \eqref{nlww} in the space of solutions with finite energy, we can deduce from Lemma \ref{newmodulation} the existence of a constant $C>0$ and a positive number $\epsilon$ such that
for all $t\in[0,\epsilon]$
\begin{equation*}
    (\phi(t,x),\partial_{t}\phi(t,x))=\overrightarrow{H_{-1,0}}((v,y(t)),x)+\overrightarrow{H_{0,1}}((v,y(t)),x)+(\psi_{1}(t,x),\psi_{2}(t,x)),
\end{equation*}
where $(\psi_{1}(t,x),\psi_{2}(t,x))$ is an odd function in $x,$ and $y(t),\, (\psi_{1}(t,x),\psi_{2}(t,x))$ satisfy the orthogonality conditions in Lemma \ref{newmodulation} and the following inequality
\begin{equation}\label{continous1}
     \vert y(t)-y_{0} \vert+\norm{(\psi_{1}(t,x),\psi_{2}(t,x))}_{H^{1}_{x}\times L^{2}_{x}}\leq 2C \norm{(u_{1},u_{2})}_{H^{1}_{x}\times L^{2}_{x}}.
\end{equation}
Finally, we are ready to start the proof of Theorem \ref{orbittheo}
\begin{remark}[Main argument]\label{restriction}
The main techniques of the demonstration of Theorem \ref{orbittheo}
are inspired by the proof of Theorem $1$ of \cite{asympt}. 
\par More precisely, recalling the functions $E_{+}$ and $P_{+}$ from \eqref{E+} and \eqref{P+}, we will analyze the function
\begin{equation}\label{M(t)}
M(\phi(t))=E_{+}(\phi(t))-vP_{+}(\phi(t)).
\end{equation}
\par First, from the local well-posedness of the partial differential equation \eqref{nlww} in the energy space, it is enough to verify Theorem \ref{orbittheo} to the case where $(u_{1}(x),u_{2}(x))$ is a smooth odd function because the estimate \eqref{globalorbit} and the density of smooth functions in Sobolev spaces would imply that \eqref{globalorbit} would be true for any $(u_{1}(x),u_{2}(x))\in H^{1}_{x}\times L^{2}_{x}$ satisfying the hypothesis of Theorem \ref{orbittheo}.
 
\par Since $P_{+}(t)$ is not necessarily a conserved quantity, $M(t)$ is not necessarily a constant function given any smooth initial initial data of $(\phi(0,x),\partial_{t}\phi(0,x))$ satisfying the hypotheses of Theorem \ref{globalorbit}.
\par However, $P_{+}(t)$ is a non-increasing function in time, more precisely, for smooth solutions $\phi(t,x)$ of \eqref{nlwwsmooth}, we can verify using integration by parts, from the fact that $\phi(t,x)$ is an odd function in $x$ for any $t\in \mathbb{R},$ the estimate
\begin{equation}
    \frac{d}{dt}\left[{-}\int_{0}^{+\infty}\partial_{t}\phi(t,x)\partial_{x}\phi(t,x)\,dx\right]=\frac{1}{2}\phi(t,0)^{2}\geq 0.
\end{equation}
\par In conclusion, since it was verified before that $E_{+}(t)$ is a conserved quantity, we have that
\begin{equation*}
    M(\phi(t))\leq M(\phi(0)) \text{ for any $t\geq 0,$}
\end{equation*}
and using Lemma \ref{newmodulation}, we are going to verify that $M(0)-M(t)$ satisfies a coercive inequality, from which we will deduce \eqref{globalorbit}.  
\end{remark}
\begin{proof}[Proof of Theorem \ref{orbittheo}]
From the observations in Remark \ref{restriction}, it is enough to prove Theorem \ref{orbittheo} for the case where $\overrightarrow{\psi}_{0}(x)$ is a smooth odd function. To simplify our proof, we separate the argument into different steps.\\
\textbf{Step 1.}(Local description of solution $\phi(t,x).$)
From the observation of inequality \eqref{continous1} and from the Lemma \ref{newmodulation}, we can verify the existence of an interval $[0,\epsilon]$ such that if $t\in [0,\epsilon],$ then
\begin{equation}\label{formm}
 (\phi(t,x),\partial_{t}\phi(t,x))=\overrightarrow{H_{-1,0}}((v,y(t)),x)+\overrightarrow{H_{0,1}}((v,y(t)),x)+(\psi_{1}(t,x),\psi_{2}(t,x)),
\end{equation}
with $v(t),\,y(t),\,(\psi_{1}(t,x),\psi_{2}(t,x))$ satisfying all the conditions of Lemma \ref{newmodulation}.\\
\textbf{Step 2.}(Estimate of $E_{+}\left(\phi(t),\partial_{t}\phi(t)\right)$ around the kinks.)
We recall the definition of $E_{+}\left(\phi(t),\partial_{t}\phi(t)\right)$ in \eqref{E+} given by
\begin{equation*}
E_{+}\left(\phi(t),\partial_{t}\phi(t)\right)=\int_{0}^{+\infty}\frac{\partial_{x}\phi(t,x)^{2}+\partial_{t}\phi(t,x)^{2}}{2}+U(\phi(t,x))\,dx.
\end{equation*}
Next, we substitute $\phi(t,x)$ and $\partial_{t}\phi(t,x)$ in the equation above by the formula of $(\phi(t,x),\partial_{t}\phi(t,x))$ in Step $1.$ 
 Using \eqref{le1}, \eqref{le2} and since $y(t)>1$ for $0\leq t\leq \epsilon,$ we obtain for all $x\geq 0$ that
\begin{equation}\label{lo1}
   \left\vert \frac{\partial^{l}}{\partial x^{l}}H_{-1,0}\left(\frac{x+y(t)}{\sqrt{1-v^{2}}}\right)\right\vert\lesssim_{l} (1-v^{2})^{{-}\frac{l}{2}}e^{-\sqrt{2}(y(t)+x)} \text{ for any $l\in\mathbb{N}\cup\{0\},$}
\end{equation}
from which we also deduce, using Lemma \ref{interactt}, the following estimate
\begin{equation}\label{lo2}
    \int_{\mathbb{R}} H^{'}_{0,1}\left(\frac{x-y(t)}{\sqrt{1-v^{2}}}\right) H^{'}_{-1,0}\left(\frac{x+y(t)}{\sqrt{1-v^{2}}}\right)\lesssim \left(1-v^{2}\right)^{\frac{1}{2}}y(t)e^{-2\sqrt{2}y(t)}.
\end{equation}
In addition, since  $\norm{U^{(l)}}_{L^{\infty}\left[{-}1,1\right]}<{+}\infty$ for any $l\in\mathbb{N},$ we can deduce using Lemma \ref{mulsoblemma} the following inequality
\begin{align*}
    \norm{U^{(l)}\left(H_{0,1}\left(\frac{x-y(t)}{\sqrt{1-v^{2}}}\right)+H_{-1,0}\left(\frac{x+y(t)}{\sqrt{1-v^{2}}}\right)\right)\psi_{1}(t,x)^{l}}_{H^{1}_{x}}\lesssim_{l}\norm{\psi_{1}(t,x)}_{H^{1}_{x}}^{l}.
\end{align*}
In conclusion, since \begin{align}\label{idd1}
    \phi(t,x)=& H_{0,1}\left(\frac{x-y(t)}{\sqrt{1-v^{2}}}\right)+H_{-1,0}\left(\frac{x+y(t)}{\sqrt{1-v^{2}}}\right)+\psi_{1}(t,x),\\ \label{idd2}
    \partial_{t}\phi(t,x)= &{-}\frac{v}{\sqrt{1-v^{2}}} H^{'}_{0,1}\left(\frac{x-y(t)}{\sqrt{1-v^{2}}}\right)+\frac{v}{\sqrt{1-v^{2}}} H^{'}_{-1,0}\left(\frac{x+y(t)}{\sqrt{1-v^{2}}}\right)+\psi_{2}(t,x),
\end{align}
we deduce from the formula \eqref{E+}, estimates \eqref{lo1}, \eqref{lo2} and Taylor's Expansion Theorem that
\begin{multline}\label{taylorE}
\begin{aligned}
E_{+}\left(\phi(t),\partial_{t}\phi(t)\right)=& \int_{0}^{+\infty} \frac{1+v^{2}}{2(1-v^{2})}  H^{'}_{0,1}\left(\frac{x-y(t)}{\sqrt{1-v^{2}}}\right)^{2}+U\left(H_{0,1}\left(\frac{x-y(t)}{\sqrt{1-v^{2}}}\right)\right)\,dx\\
    &{-}\frac{1}{\sqrt{1-v^{2}}}\int_{0}^{+\infty} v H^{'}_{0,1}\left(\frac{x-y(t)}{\sqrt{1-v^{2}}}\right)\psi_{2}(t,x)\,dx- H^{'}_{0,1}\left(\frac{x-y(t)}{\sqrt{1-v^{2}}}\right)\partial_{x}\psi_{1}(t,x)\\&{+}\int_{0}^{{+}\infty} U^{'}\left(H_{0,1}\left(\frac{x-y(t)}{\sqrt{1-v^{2}}}\right)\right)\psi_{1}(t,x)\,dx \\&{+}\frac{1}{2}\left[\int_{0}^{+\infty}\partial_{x}\psi_{1}(t,x)^{2}+ U^{''}\left(H_{0,1}\left(\frac{x-y(t)}{\sqrt{1-v^{2}}}\right)\right)\psi_{1}(t,x)^{2}+\psi_{2}(t,x)^{2}\right]\,dx\\&{+}O\left(\left(1-v^{2}\right)^{-\frac{1}{2}}y(t)e^{-2\sqrt{2}y(t)}\right)\\&{+}O\left(\norm{\overrightarrow{\psi}(t)}_{H^{1}_{x}\times L^{2}_{x}}e^{-\sqrt{2}y(t)}+\norm{\psi_{1}(t,x)}_{H^{1}_{x}(\mathbb{R})}^{3}\right),
\end{aligned}
\end{multline}
while $(\phi(t,x),\partial_{t}\phi(t,x))$ satisfies identities \eqref{idd1} and \eqref{idd2}. Moreover, from \eqref{idd1}, we can obtain from \eqref{taylorE}, while $(\phi(t),\partial_{t}\phi(t))$ satisfies \eqref{idd1} and \eqref{idd2}, that
\begin{multline}\label{E+2}
    \begin{aligned}
E_{+}\left(\phi(t),\partial_{t}\phi(t)\right)=& \int_{{-}\infty}^{{+}\infty} \frac{1+v^{2}}{2(1-v^{2})} H^{'}_{0,1}\left(\frac{x-y(t)}{\sqrt{1-v^{2}}}\right)^{2}+U\left(H_{0,1}\left(\frac{x-y(t)}{\sqrt{1-v^{2}}}\right)\right)\,dx\\
    &\frac{{-}1}{\sqrt{1-v^{2}}}\int_{{-}\infty}^{{+}\infty} v H^{'}_{0,1}\left(\frac{x-y(t)}{\sqrt{1-v^{2}}}\right)\psi_{2}(t,x)- H^{'}_{0,1}\left(\frac{x-y(t)}{\sqrt{1-v^{2}}}\right)\partial_{x}\psi_{1}(t,x)\\&{+}\int_{{-}\infty}^{{+}\infty} U^{'}\left(H_{0,1}\left(\frac{x-y(t)}{\sqrt{1-v^{2}}}\right)\right)\psi_{1}(t.x)\,dx \\&{+}\frac{1}{2}\left[\int_{0}^{+\infty}\partial_{x}\psi_{1}(t,x)^{2}+ U^{''}\left(H_{0,1}\left(\frac{x-y(t)}{\sqrt{1-v^{2}}}\right)\right)\psi_{1}(t,x)^{2}+\psi_{2}(t,x)^{2}\,dx\right]\\&{+}O\left(\left(1-v^{2}\right)^{-\frac{1}{2}}y(t)e^{-2\sqrt{2}y(t)}\right)\\&{+}O\left(\norm{\overrightarrow{\psi}(t)}_{H^{1}_{x}\times L^{2}_{x}}e^{-\sqrt{2}y(t)}+\norm{\psi_{1}(t,x)}_{H^{1}_{x}(\mathbb{R})}^{3}\right),
\end{aligned}
\end{multline}
\par We also recall the Bogomolny identity
$
    H^{'}_{0,1}(x)=\sqrt{2 U(H_{0,1}(x))},
$
from which we deduce with change of variables that
\begin{equation}
    \frac{1}{2}\int_{\mathbb{R}} H^{'}_{0,1}\left(\frac{x}{\sqrt{1-v^{2}}}\right)^{2}\,dx=\int_{\mathbb{R}}U\left(H_{0,1}\left(\frac{x}{\sqrt{1-v^{2}}}\right)\right)\,dx=\sqrt{1-v^{2}}\frac{\norm{ H^{'}_{0,1}}_{L^{2}_{x}}^{2}}{2}.
\end{equation}
\textbf{Step 3.}\big(Conclusion of the estimate of $E_{+}(t).$ \big)\\
Since $\overrightarrow{H_{0,1}}((v,y(t)),x))$ is defined by 
\begin{equation*}
   \overrightarrow{H_{0,1}}((v,y(t)),x)=
   \begin{bmatrix}
   H_{0,1}\left(\frac{x-y(t)}{\sqrt{1-v(t)^{2}}}\right)\\{-}\frac{v}{\sqrt{1-v^{2}}}
H^{'}_{0,1}\left(\frac{x-y(t)}{\sqrt{1-v^{2}}}\right)
   \end{bmatrix},
\end{equation*}
and we can verify by similar reasoning to \eqref{taylorE} the identity
\begin{equation*}
E\left(\overrightarrow{H_{0,1}}((v,y(t)),x)\right)=\int_{-\infty}^{+\infty} \frac{1+v^{2}}{2(1-v^{2})}  H^{'}_{0,1}\left(\frac{x-y(t)}{\sqrt{1-v^{2}}}\right)^{2}+U\left(H_{0,1}\left(\frac{x-y(t)}{\sqrt{1-v^{2}}}\right)\right)\,dx,
\end{equation*}
we conclude that
$
E\left(\overrightarrow{H_{0,1}}((v,y(t)),x)\right)=\frac{1}{\sqrt{1-v^{2}}}\norm{ H^{'}_{0,1}}_{L^{2}_{x}}^{2}.
$
In conclusion, using \eqref{E+2}, we obtain that
\begin{align*}
E_{+}\left(\phi(t),\partial_{t}\phi(t)\right)=&\frac{1}{\sqrt{1-v^{2}}}\norm{ H^{'}_{0,1}}_{L^{2}_{x}}^{2}
    -\int_{-\infty}^{+\infty} \frac{v}{\sqrt{1-v^{2}}} H^{'}_{0,1}\left(\frac{x-y(t)}{\sqrt{1-v^{2}}}\right)\psi_{2}(t,x)\,dx\\&{+}\int_{{-}\infty}^{{+}\infty}\frac{1}{\sqrt{1-v^{2}}} H^{'}_{0,1}\left(\frac{x-y(t)}{\sqrt{1-v^{2}}}\right)\partial_{x}\psi_{1}(t,x)\\&{+}\int_{{-}\infty}^{{+}\infty} U^{'}\left(H_{0,1}\left(\frac{x-y(t)}{\sqrt{1-v^{2}}}\right)\right)\psi_{1}(t,x)\,dx \\&{+}\frac{1}{2}\left[\int_{0}^{+\infty}\partial_{x}\psi_{1}(t,x)^{2}+ U^{''}\left(H_{0,1}\left(\frac{x-y(t)}{\sqrt{1-v^{2}}}\right)\right)\psi_{1}(t,x)^{2}+\psi_{2}(t,x)^{2}\right]\\
   &{+}O\left(\left(1-v^{2}\right)^{-\frac{1}{2}}y(t)e^{-2\sqrt{2}y(t)}+\norm{(\psi_{1}(t),\psi_{2}(t))}_{H^{1}_{x}\times L^{2}_{x}}e^{-\sqrt{2}y(t)}\right)\\&{+}O\left(\norm{\psi_{1}(t)}_{H^{1}_{x}(\mathbb{R})}^{3}\right),
\end{align*}
from this using integration by parts we conclude that
\begin{multline}\label{E+comp1}
\begin{aligned}
E_{+}\left(\phi(t),\partial_{t}\phi(t)\right)=&\frac{1}{\sqrt{1-v^{2}}}\norm{ H^{'}_{0,1}}_{L^{2}_{x}}^{2}+v\left\langle J\circ C_{v,y(t)},\overrightarrow{\psi(t)}\right\rangle\\ 
&{+}\frac{1}{2}\left[\int_{0}^{+\infty}\psi_{2}(t,x)^{2}+\partial_{x}\psi_{1}(t,x)^{2}+ U^{''}\left(H_{0,1}\left(\frac{x-y(t)}{\sqrt{1-v^{2}}}\right)\right)\psi_{1}(t,x)^{2}\right]\\
&{+}O\left(\left(1-v^{2}\right)^{-\frac{1}{2}}y(t)e^{-2\sqrt{2}y(t)}\right)\\&{+}O\left(\norm{(\psi_{1}(t),\psi_{2}(t))}_{H^{1}_{x}\times L^{2}_{x}}e^{-\sqrt{2}y(t)}+\norm{\psi_{1}(t)}_{H^{1}_{x}}^{3}\right),
\end{aligned}
\end{multline}
where the function $C_{v,y}(x)$ is defined in \eqref{Cc4}.\\
\textbf{Step 4.}\big(Estimate of $-vP_{+}\left(\phi(t),\partial_{t}\phi(t)\right).$\big)\\
First, we recall from \eqref{P+} that $P_{+}\left(\phi(t),\partial_{t}\phi(t)\right)$ is given by
\begin{equation*}
 P_{+}\left(\phi(t),\partial_{t}\phi(t)\right)={-}\int_{0}^{+\infty}\partial_{t}\phi(t,x)\partial_{x}\phi(t,x)\,dx.   
\end{equation*}
Then, while $(\phi(t,x),\partial_{t}\phi(t,x))$ satisfies the formula
\begin{equation*}
(\phi(t,x),\partial_{t}\phi(t,x))=\overrightarrow{H_{-1,0}}((v,y(t)),x)+\overrightarrow{H_{0,1}}((v,y(t)),x)+(\psi_{1}(t,x),\psi_{2}(t,x)),    
\end{equation*}
using the estimates \eqref{lo1} and \eqref{lo2}, we obtain by similar reasoning to the estimate of $(2.12)$ of Lemma $2.3$ in \cite{asympt} that  
\begin{multline}\label{P+v}
\begin{aligned}
{-}vP_{+}\left(\phi(t),\partial_{t}\phi(t)\right)=&{-}\frac{v^{2}}{\sqrt{1-v^{2}}}\norm{ H^{'}_{0,1}}_{L^{2}_{x}}^{2}
    -v\left\langle J\circ C_{v,y(t)},\overrightarrow{\psi}(t)\right\rangle\\ &{+}v\int_{0}^{+\infty}\partial_{x}\psi_{1}(t,x)\psi_{2}(t,x)\,dx+O\left(\frac{v^{2}}{\left(1-v^{2}\right)}y(t)e^{-2\sqrt{2}y(t)}\right)\\ &{+}O\left(\frac{v}{\sqrt{1-v^{2}}}e^{-\sqrt{2}y(t)}\norm{(\psi_{1}(t),\psi_{2}(t))}_{H^{1}_{x}\times L^{2}_{x}}\right),
\end{aligned}
\end{multline}
more precisely the errors in the estimate \eqref{P+v} above come from estimate \eqref{lo1} and Cauchy-Schwarz inequality applied in
\begin{equation*}
    \int_{0}^{+\infty}\left\vert  H^{'}_{-1,0}\left(\frac{x+y(t)}{\sqrt{1-v^{2}}}\right)\right\vert\Big[\vert \partial_{x}\psi_{1}(t,x) \vert+\vert \psi_{2}(t,x) \vert\Big]\,dx,
\end{equation*}
from Lemma \ref{interactt} applied in the following integral 
\begin{equation*}
    \int_{0}^{+\infty} H^{'}_{0,1}\left(\frac{x-y(t)}{\sqrt{1-v^{2}}}\right)H^{'}_{-1,0}\left(\frac{x+y(t)}{\sqrt{1-v^{2}}}\right)\,dx,
\end{equation*}
and from the elementary estimate 
\begin{equation*}
   \int_{{-}\infty}^{0} H^{'}_{0,1}\left(\frac{x-y(t)}{\sqrt{1-v^{2}}}\right)^{2}\,dx+ \int_{0}^{+\infty} H^{'}_{-1,0}\left(\frac{x+y(t)}{\sqrt{1-v^{2}}}\right)^{2}\,dx\lesssim e^{-2\sqrt{2}y(t)},
\end{equation*}
which can be obtained from \eqref{lo1}.\\ 
\textbf{Step 5.}(Estimate and monotonicity of $M(\phi(t),\partial_{t}\phi(t)).$) 
From estimates \eqref{E+comp1} and \eqref{P+v}, we deduce 
\begin{multline}\label{Mfirst}
  \begin{aligned}
  M\left(\phi(t),\partial_{t}\phi(t)\right)=&E_{+}\left(\phi(t),\partial_{t}\phi(t)\right)-vP_{+}\left(\phi(t),\partial_{t}\phi(t)\right)\\=&\sqrt{1-v^{2}}\norm{H^{'}_{0,1}}_{L^{2}_{x}}^{2}\\
    &{+}\frac{1}{2}\left[\int_{0}^{+\infty}\psi_{2}(t,x)^{2}+\partial_{x}\psi_{1}(t,x)^{2}+ U^{''}\left(H_{0,1}\left(\frac{x-y(t)}{\sqrt{1-v(t)^{2}}}\right)\right)\psi_{1}(t,x)^{2}\,dx\right]\\
&{+}O\left(v\norm{(\psi_{1}(t),\psi_{2}(t))}_{H^{1}_{x}\times L^{2}_{x}}^{2}+\norm{(\psi_{1}(t),\psi_{2}(t))}_{H^{1}_{x}\times L^{2}_{x}}e^{-\sqrt{2}y(t)}\right)\\&{+}O\left(\norm{\psi_{1}(t)}_{H^{1}_{x}}^{3}++y(t)e^{-2\sqrt{2}y(t)}\right).
\end{aligned}
\end{multline}
Furthermore, using estimate \eqref{E+comp1} and Lemma \ref{interactt}, we can also verify the following estimates
 \begin{align*}\label{M0}
E_{+}\left(\overrightarrow{H_{0,1}}(v,y(t))+\overrightarrow{H_{-1,0}}(v,y(t))\right)=&\frac{1}{\sqrt{1-v^{2}}}\norm{H^{'}_{0,1}}_{L^{2}_{x}}^{2}+O\left(y(t)e^{{-}2\sqrt{2}y(t)}\right),\\
P_{+}\left(\overrightarrow{H_{0,1}}(v,y(t))+\overrightarrow{H_{-1,0}}(v,y(t))\right)=&\frac{v}{\sqrt{1-v^{2}}}\norm{H^{'}_{0,1}}_{L^{2}_{x}}^{2}+O\left(y(t)e^{{-}2\sqrt{2}y(t)}\right).
\end{align*}
Therefore, we obtain that
\begin{equation}\label{Msolitons}
M\left(\overrightarrow{H_{0,1}}(v,y(t))+\overrightarrow{H_{-1,0}}(v,y(t))\right)=\sqrt{1-v^{2}}\norm{ H^{'}_{0,1}}_{L^{2}_{x}}^{2}+O\left(y(t)e^{{-}2\sqrt{2}y(t)}\right),
\end{equation}
from which we deduce
\begin{multline*}
\begin{aligned}
M\left(\phi(t),\partial_{t}\phi(t)\right)=&M\left(\overrightarrow{H_{0,1}}(v,y(0))+\overrightarrow{H_{-1,0}}(v,y(0))\right)\\&{+}\frac{1}{2}\left[\int_{0}^{+\infty}\psi_{2}(t,x)^{2}+\partial_{x}\psi_{1}(t,x)^{2}+ U^{''}\left(H_{0,1}\left(\frac{x-y(t)}{\sqrt{1-v^{2}}}\right)\right)\psi_{1}(t,x)^{2}\,dx\right]   
\\ &{+}O\left(\max\left\{y(t)e^{{-}2\sqrt{2}y(t)},y(0)e^{{-}2\sqrt{2}y(0)}\right\}\right)\\&{+}O\left(v\norm{(\psi_{1}(t),\psi_{2}(t))}_{H^{1}_{x}\times L^{2}_{x}}^{2}+\norm{(\psi_{1}(t),\psi_{2}(t))}_{H^{1}_{x}\times L^{2}_{x}}^{3}\right).
\end{aligned}
\end{multline*}
\par Consequently, since $M\left(\phi(0),\partial_{t}\phi(0)\right)\geq M(\phi(t),\partial_{t}\phi(t))$ for all $t\geq 0$ and 
\begin{equation*}
\left(\phi(0),\partial_{t}\phi(0)\right)=\overrightarrow{H_{0,1}}(v,y(0))+\overrightarrow{H_{{-}1,0}}(v,y(0))+(\psi_{1}(0),\psi_{2}(0)),
\end{equation*}
we have for every $t\geq 0$ the following estimate
\begin{multline*}
       \int_{0}^{{+}\infty}\psi_{2}(t,x)^{2}+\partial_{x}\psi_{1}(t,x)^{2}+ U^{''}\left(H_{0,1}\left(\frac{x-y(t)}{\sqrt{1-v^{2}}}\right)\right)\psi_{1}(t,x)^{2}\,dx \\\lesssim  y(t)e^{{-}2\sqrt{2}y(t)}+y(0)e^{{-}2\sqrt{2}y(0)}+v\norm{(\psi_{1}(t),\psi_{2}(t))}_{H^{1}_{x}\times L^{2}_{x}}^{2}+\norm{(\psi_{1}(t),\psi_{2}(t))}_{H^{1}_{x}\times L^{2}_{x}}^{3}\\{+}\norm{(\psi_{1}(0),\psi_{2}(0))}_{H^{1}_{x}\times L^{2}_{x}},
\end{multline*}
from which with Lemma \ref{coccer} we deduce for all $t\geq 0$ that
\begin{equation}\label{remest}
    \norm{(\psi_{1}(t),\psi_{2}(t))}_{H^{1}_{x}\times L^{2}_{x}}^{2}\lesssim y(t)e^{{-}2\sqrt{2}y(t)}+y(0)e^{{-}2\sqrt{2}y(0)}+\norm{(\psi_{1}(0),\psi_{2}(0))}_{H^{1}_{x}\times L^{2}_{x}},
\end{equation}
if $v\ll 1.$\\
\textbf{Step 6.}(Final Argument.)\\
The last argument is to prove that the set denoted by
\begin{equation}\label{BO}
    BO=\left\{t\in\mathbb{R}_{\geq 0} \Big\vert  \, \norm{(\psi_{1}(t),\psi_{2}(t))}_{H^{1}_{x}\times L^{2}_{x}}\leq v^{1+\frac{\theta}{4}},\,y(t)\geq y(0) \text{ and \eqref{formm} is true.}\right\},
\end{equation}
is the proper $\mathbb{R}_{\geq 0}.$  From the hypotheses of Theorem \ref{orbittheo} and Step $1,$ we can verify that $0 \in BO.$ 
\par Furthermore, from Step $1,$ we have obtained that there exists $\epsilon>0$ such that if $0\leq t\leq \epsilon,$ then
\begin{equation*}
    (\phi(t,x),\partial_{t}\phi(t,x))=\overrightarrow{H}_{-1,0}\left((v,y(t)),x\right)+\overrightarrow{H}_{0,1}\left((v,y(t)),x\right)+(\psi_{1}(t,x),\psi_{2}(t,x))
\end{equation*}
and 
\begin{equation}\label{lclc}
   \vert y(t)-y_{0} \vert+ \norm{(\psi_{1}(t),\psi_{2}(t))}_{H^{1}_{x}\times L^{2}_{x}}\leq 2C\norm{(u_{1},u_{2})}_{H^{1}_{x}\times L^{2}_{x}}.
\end{equation}
Since $\norm{(u_{1},u_{2})}_{H^{1}_{x}\times L^{2}_{x}}\leq v^{2+\theta}$
and Lemma \ref{newmodulation} implies the estimate $\norm{(\psi_{1}(0),\psi_{2}(0))}_{H^{1}_{x}\times L^{2}_{x}}\lesssim \norm{(u_{1},u_{2})}_{H^{1}_{x}\times L^{2}_{x}},
$ from \eqref{lclc} and Lemma \ref{dependentmod}, we deduce the existence of a constant $0<K$ independent of $\epsilon$ and $v$ such that $y(t)$ is a function of class $C^{1}$ in $[0,\epsilon]$ and for any $t\in[0,\epsilon],$ the inequality
\begin{equation}\label{yyest}
    \vert \dot y(t)-v\vert  \leq K\left[ \norm{(\psi_{1}(t),\psi_{2}(t))}_{H^{1}_{x}\times L^{2}_{x}}+e^{-2\sqrt{2}y(t)}\right]
\end{equation}
is true. Therefore,
\begin{equation}\label{dycontrol}
    \dot y(t)\geq v-K\left[ \norm{(\psi_{1}(t),\psi_{2}(t))}_{H^{1}_{x}\times L^{2}_{x}}+e^{-2\sqrt{2}y(t)}\right],
\end{equation}
while $t\in [0,\epsilon].$ Moreover, from inequality \eqref{lclc} and the observations done before, to prove that $[0,\epsilon]\subset BO$ it is only needed to verify that $y(t)\geq y(0)$ for all $t\in[0,\epsilon].$  
 \par First, since $y(t)$ is continuous for $t \in [0,\epsilon],$ there exists $\epsilon_{2}\in(0,\epsilon)$ such that if $0\leq t\leq \epsilon_{2},$ then
 \begin{equation*}
     y(t)\geq \frac{3y(0)}{4},
 \end{equation*}
so \eqref{lclc}, \eqref{dycontrol} and the estimate
$
    \norm{(\psi_{1}(0),\psi_{2}(0))}_{H^{1}_{x}\times L^{2}_{x}}\lesssim \norm{(u_{1},u_{2})}_{H^{1}_{x}\times L^{2}_{x}}\leq v^{2+\theta}
$
imply that if $0\leq t\leq \epsilon_{2}$ and $0<v\ll 1,$ then
\begin{equation}\label{goodyy}
    \dot y(t)\geq v-v^{2}-K e^{{-}\frac{3\sqrt{2}y(0)}{2}}\geq \frac{4 v}{5}. 
\end{equation}
In conclusion, estimate \eqref{lclc}, the hypothesis of $y_{0}\geq 4\ln{\frac{1}{v}}$ and inequality \eqref{goodyy} imply for $v\ll 1$ that if $0\leq t\leq \epsilon_{2},$ then $y(t)\geq y(0)+\frac{4v}{5}t$ and $[0,\epsilon_{2}]\subset BO.$ 
\par If $t\in [\epsilon_{2},\epsilon],$ it is not difficult to verify that $y(t)\geq y(0)$ in this region. Indeed, the continuity of the function $y$ would imply otherwise the existence of $t_{i}$ satisfying $\epsilon_{2}<t_{i}\leq \epsilon,$ $y(t_{i})=y(0)$ and $y(s)>y(0)$ for any $\epsilon_{2}\leq s< t_{i},$ which implies that estimate \eqref{goodyy} is true for $t\in\left[\epsilon_{2},t_{1}\right]$. But, repeating the argument above, we would conclude that  $y(t_{i})\geq y(0)+\frac{4v}{5}t_{i},$ which is a contradiction. In conclusion, the interval $[0,\epsilon]$ is contained in the set $ BO.$
\par Similarly, from Lemma \ref{dependentmod}, we can use inequality \eqref{dycontrol}
to verify that
$
    y(t)\geq y(0)+\frac{4v}{5}t
$
always when $[0,t] \subset BO.$ Therefore, estimate \eqref{remest} implies
\begin{equation}\label{phiestimate}
\norm{(\psi_{1}(t),\psi_{2}(t))}_{H^{1}_{x}\times L^{2}_{x}(x)}\lesssim \norm{(u_{1},u_{2})}_{H^{1}_{x}\times L^{2}_{x}}^{\frac{1}{2}} +y(0)^{\frac{1}{2}}e^{{-}\sqrt{2}y(0)}\ll v^{1+\frac{\theta}{4}},
\end{equation}
if $[0,t]\in BO.$
\par In conclusion, $BO=\mathbb{R}_{\geq 0}$ and estimates \eqref{yyest}, \eqref{phiestimate} imply the result of Theorem \ref{orbittheo} for all $t\geq 0.$
\end{proof}

\section{Proof of Theorem \ref{maintheo}}\label{colsec}
\par First, from Theorem $1.3$ in the article \cite{multison}, we know for any $0<v<1$ that there exist $\delta(v)>0,\,T(v)>0$ and a solution $\phi(t,x)$ of \eqref{nlww} with finite energy satisfying the identity
\begin{equation}
    \phi(t,x)=H_{0,1}\left(\frac{x-vt}{\left(1-v^{2}\right)^{\frac{1}{2}}}\right)+H_{-1,0}\left(\frac{-x-vt}{\left(1-v^{2}\right)^{\frac{1}{2}}}\right)+\psi(t,x),
\end{equation}
and the following decay estimate
\begin{equation}\label{uniqq}
    \sup_{t\geq T}\norm{(\psi(t,x),\partial_{t}\psi(t,x))}_{H^{1}_{x}\times L^{2}_{x}}e^{\delta t}<+\infty,
\end{equation}
for any  $T\geq T(v)$ and $\delta\leq \delta(v).$ Moreover, we can find $\delta(v),\,T(v)>0$ such that
\begin{equation}\label{contraction1}
    \sup_{t\geq T(v)}\norm{(\psi(t,x),\partial_{t}\psi(t,x))}_{H^{1}_{x}\times L^{2}_{x}}e^{\delta(v)t}<1,
\end{equation}
indeed, in \cite {multison} it was proved using Fixed point theorem that for any $0<v<1$ that there is a unique solution of \eqref{nlww} that satisfies \eqref{uniqq} for some $T,\,\delta>0.$
\par Next, if we restrict the argument of the proof of Proposition $3.6$ of \cite{multison} to the traveling kink-kink of the $\phi^{6}$ model, we can find explicitly the values of $\delta(v)$ and $T(v).$ More precisely, we have:
\begin{theorem}\label{purepure}
There is $\delta_{0}>0$ such that if $0<v<\delta_{0},$ then there exists a unique solution $\phi(t,x)$ of \eqref{nlww} with
\begin{equation*}
    h(t,x)=\phi(t,x)-H_{0,1}\left(\frac{x-vt}{\left(1-v^{2}\right)^{\frac{1}{2}}}\right)-H_{-1,0}\left(\frac{x+vt}{\left(1-v^{2}\right)^{\frac{1}{2}}}\right),
\end{equation*}
satisfying \eqref{uniqq} for some $0<\delta<1$ and $T>0.$ Furthermore, we have if $t\geq \frac{4\ln{\left(\frac{1}{v}\right)}}{v}$ that
\begin{equation}
    \norm{(h(t,x),\partial_{t}h(t,x))}_{H^{1}_{x}\times L^{2}_{x}}\leq e^{-v t}.
\end{equation}
This solution is also an odd function on $x.$
\end{theorem}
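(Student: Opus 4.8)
The existence of a solution $\phi(t,x)$ of \eqref{nlww} of the form stated satisfying \eqref{uniqq} for some $\delta,T>0$, together with its uniqueness among all solutions of that form, is Theorem $1.3$ of \cite{multison} applied to the two Lorentz--boosted kinks $H_{0,1}$ and $H_{-1,0}$: the $\phi^{6}$ potential has $\ddot U(0)=2>0$ and $\ddot U(\pm1)=8>0$, so the relevant vacua are non--degenerate, each kink decays exponentially, and the abstract hypotheses of \cite{multison} are met. Hence only the quantitative statement is new, and the plan is to revisit the fixed--point scheme behind Proposition $3.6$ of \cite{multison}, keeping all constants explicit, so as to display the admissible rate (one may take any $0<\delta\le v$) and the threshold $T(v)=\tfrac{4\ln(1/v)}{v}$, and to upgrade the decay to $\norm{(h(t),\partial_t h(t))}_{H^{1}_x\times L^{2}_x}\le e^{-vt}$ for $t\ge T(v)$.

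First I would fix the two--soliton profile $Q(t,x)$ equal to the sum of the two boosted kinks appearing in the statement; the corresponding configuration is odd in $x$, each boosted kink solves \eqref{nlww} exactly, so $\Lambda(Q)=\dot U(Q)-\dot U(Q_{+})-\dot U(Q_{-})$ where $Q_{\pm}$ are the two profiles, and this interaction is concentrated near the two kink centres $x=\pm vt$ and near $x=0$. Since the two kinks' facing tails approach a common non--degenerate vacuum at which $U$ has no cubic term, the quadratic part of the interaction vanishes there, and using \eqref{le1}, \eqref{le2} together with Lemma \ref{interactt} one obtains, for $t\ge 1$ and $0<v\ll1$,
\begin{equation*}
\norm{\Lambda(Q)(t,x)}_{H^{1}_x(\mathbb{R})}\;\lesssim\;P(vt)\,e^{-2\sqrt{2}\,vt}
\end{equation*}
for some fixed polynomial $P$ (the exponent $2\sqrt{2}$ reflects the slowest decaying tail at separation $2vt$; we do not need its precise value, only that it exceeds $1$).

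Next, on $[T,\infty)$ I would characterise $h$ as the fixed point of the Duhamel formula for $\partial_t^{2}h-\partial_x^{2}h+\ddot U(Q)h=-\Lambda(Q)-N(h)$ integrated from $t=+\infty$, where $N(h)=\dot U(Q+h)-\dot U(Q)-\ddot U(Q)h=O(h^{2})$ (controlled in $H^{1}_x$ by $\norm{h}_{H^{1}_x}^{2}$ since $H^{1}(\mathbb{R})\hookrightarrow L^{\infty}$), working in the ball $X_{A,\delta,T}=\{\,h:\ \sup_{t\ge T}e^{\delta t}\norm{(h(t),\partial_t h(t))}_{H^{1}_x\times L^{2}_x}\le A\,\}$ with $\delta=v$. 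The linear energy estimate for the evolution generated by $-\partial_x^{2}+\ddot U(Q)$ is uniform in $v$ because this operator has essential spectrum in $[2,\infty)$ and, by the coercivity of Lemma \ref{coerccc}, is positive on the orthogonal complement of the two translation modes $\dot H_{0,1}(\cdot-z),\ \dot H_{-1,0}(\cdot)$ once the separation $z\gg1$ (which holds for $t\ge T(v)$), the two translation components being handled exactly as in the backward construction of \cite{multison}; this yields
\begin{equation*}
\norm{(h(t),\partial_t h(t))}_{H^{1}_x\times L^{2}_x}\;\lesssim\;\int_{t}^{\infty}\norm{\Lambda(Q)(s)}_{H^{1}_x}\,ds\;+\;\int_{t}^{\infty}\norm{(h(s),\partial_s h(s))}_{H^{1}_x\times L^{2}_x}^{2}\,ds .
\end{equation*}
With the error bound above, the map preserves $X_{A,v,T}$ and contracts provided $A\gtrsim\tfrac1v e^{-2\sqrt{2} vT}$ and $A\ll1$; choosing $T=T(v)=\tfrac{4\ln(1/v)}{v}$ gives $\tfrac1v e^{-2\sqrt{2} vT}=v^{8\sqrt{2}-1}\ll1$, so a fixed point exists with $\norm{(h(t),\partial_t h(t))}_{H^{1}_x\times L^{2}_x}\lesssim\tfrac1v e^{-2\sqrt{2} vt}$ for all $t\ge T(v)$. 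Finally, for $t\ge\tfrac{4\ln(1/v)}{v}$ one has $(2\sqrt{2}-1)vt\ge 4(2\sqrt{2}-1)\ln(1/v)>\ln(1/v)$, hence $\tfrac1v e^{-2\sqrt{2} vt}=e^{-vt}\cdot\tfrac1v e^{-(2\sqrt{2}-1)vt}\le e^{-vt}$ after shrinking $\delta_{0}$ to absorb the implicit constant, which is the claimed bound. Uniqueness is then immediate: two solutions of the stated form both satisfy \eqref{uniqq}, hence coincide by the uniqueness in Theorem $1.3$ of \cite{multison} (equivalently their difference solves the linearised equation, decays, and the same energy estimate forces it to vanish).

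The step I expect to be the main obstacle is the uniform--in--$v$ linear energy estimate while keeping the rate $\delta=v$: the operator $-\partial_x^{2}+\ddot U(Q)$ carries two almost zero eigenvalues coming from the translation invariance of the two kinks, one of which can be negative, so one must show that the corresponding components of $h$ in the backward Duhamel scheme remain $O(e^{-vt})$ without introducing modulation parameters. For two well--separated, orbitally stable $\phi^{6}$ kinks this is precisely the mechanism already implemented in \cite{multison}, and specialising it here only requires the coercivity recalled in Lemma \ref{coerccc}; the remaining work is the bookkeeping of constants indicated above.
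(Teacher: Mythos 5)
Your proposal is correct and follows essentially the same route as the paper's Appendix B: a contraction mapping in the weighted space $\norm{u}_{H^{1}_{v,T_{0}}}=\sup_{t\ge T_{0}}e^{vt}\norm{\cdot}$ with $T_{0}=\tfrac{4\ln(1/v)}{v}$ chosen exactly so that the interaction term $O(e^{-2\sqrt{2}vt})$ and the quadratic nonlinearity are subordinate to the rate $e^{-vt}$, together with a $v$-uniform linear estimate obtained from an energy functional that is coercive modulo the null directions, whose coefficients are recovered by integrating their ODEs backward from $t=+\infty$. The only place where the paper does substantially more than you sketch is precisely the step you flag as the main obstacle: rather than relying on Lemma \ref{coerccc} alone, it constructs the four adjoint generalized-kernel directions $\psi^{j}_{\pm1,0},\psi^{j}_{0,1}$ and a momentum-corrected functional $Q(t,h)$ with the cutoffs $\chi_{1},\chi_{2}$ (Lemmas \ref{projest}, \ref{coerQ}, \ref{solutionp}, \ref{projl}), so that both the coercivity of the full first-order energy and its almost-monotonicity hold uniformly in $v$ --- this is the explicit specialisation of the \cite{multison} machinery you invoke by reference.
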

\begin{proof}
See the Appendix section $B.$\\
\end{proof}
\par Finally, we have obtained all the framework necessary to start the demonstration of Theorem \ref{maintheo}.
\begin{proof}[Proof of Theorem \ref{maintheo}]
\par First, from Theorem \ref{purepure}, for any $k\in\mathbb{N}$ bigger than $2$ and $0<v\leq \delta_{0},$ we have that the travelling kink-kink with speed $v$ satisfies for $T_{0,k}=\frac{32k\ln{\left(\frac{1}{v^{2}}\right)}}{2\sqrt{2}v}$ the following estimate:
\begin{equation}\label{boundary}
    \norm{(h(T_{0,k}),\partial_{t}h(T_{0,k}))}_{H^{1}_{x}\times L^{2}_{x}}\leq v^{16\sqrt{2}k},
\end{equation}
for $h(t,x)$ the function denoted in Theorem \ref{purepure}. Now, we start the proof of the second item of Theorem \ref{maintheo}.\\
\textbf{Step 1.}(Proof of second item of Theorem \ref{maintheo}.)
\par First, in notation of Theorem \ref{toobig}, we consider
\begin{equation*}
    \phi_{k}(v,t,x)=\varphi_{k,v}(t,x+\tau_{k,v}).
\end{equation*}
For the $T_{0,k}$ given before, we can verify using Theorems \ref{approximated theorem},  \ref{toobig} that 
\begin{gather*}
    \norm{\phi_{k}(v,T_{0,k},x)-H_{0,1}\left(\frac{x-vT_{0,k}}{\sqrt{1-v^{2}}}\right)-H_{-1,0}\left(\frac{x+vT_{0,k}}{\sqrt{1-v^{2}}}\right)}_{H^{1}_{x}}\\
    +\norm{\partial_{t}\phi_{k}(v,T_{0,k},x)+\frac{v}{\sqrt{1-v^{2}}} H^{'}_{0,1}\left(\frac{x-vT_{0,k}}{\sqrt{1-v^{2}}}\right)-\frac{v}{\sqrt{1-v^{2}}} H^{'}_{-1,0}\left(\frac{x+vT_{0,k}}{\sqrt{1-v^{2}}}\right)}_{H^{1}_{x}}\leq v^{15 k}.
\end{gather*}
In conclusion, Theorem \ref{energyE} and Remark \ref{Important} imply that there is $\Delta_{k,\theta}>0$ such that if also $v<\Delta_{k,\theta},$ then
\begin{equation*}
     \norm{(\phi(t,x),\partial_{t}\phi(t,x))-(\phi_{k}(v,t,x),\partial_{t}\phi_{k}(v,t,x))}_{H^{1}_{x}\times L^{2}_{x}}<v^{2k-\frac{1}{2}},
\end{equation*}
while 
\begin{equation*}
    \vert t-T_{0,k}\vert<\frac{\ln{\left(\frac{1}{v}\right)}^{2-\frac{\theta}{2}}}{v}.
\end{equation*}
Also, Theorem \ref{toobig} and Theorem \ref{approximated theorem} implies that if $v\ll1$ and  
\begin{equation*}
  - 4\frac{\ln{\left(\frac{1}{v}\right)}^{2-\theta}}{v}\leq t\leq-\frac{\ln{\left(\frac{1}{v}\right)}^{2-\theta}}{v},  
\end{equation*}
then there exist $e_{k,v}$ satisfying $\left\vert e_{v,k}-\frac{1}{\sqrt{2}}\ln{\left(\frac{8}{v^{2}}\right)}\right\vert\ll 1$ such that 
\begin{multline}\label{lastestt}
     \norm{\phi_{k}(v,t,x)-H_{0,1}\left(\frac{x-e_{k,v}+vt}{\sqrt{1-v^{2}}}\right)-H_{-1,0}\left(\frac{x+e_{k,v}-vt}{\sqrt{1-v^{2}}}\right)}_{H^{1}_{x}}\\
    +\norm{\partial_{t}\phi_{k}(v,t,x)-\frac{v}{\sqrt{1-v^{2}}} H^{'}_{0,1}\left(\frac{x-e_{k,v}+vt}{\sqrt{1-v^{2}}}\right)+\frac{v}{\sqrt{1-v^{2}}} H^{'}_{-1,0}\left(\frac{x+e_{k,v}-vt}{\sqrt{1-v^{2}}}\right)}_{H^{1}_{x}}\ll v^{2k-\frac{1}{2}}.
\end{multline}
In conclusion, the second item of Theorem \ref{maintheo} follows from the observation above and Remark \ref{Important}.\\
\textbf{Step 2.}(Proof of first item of Theorem \ref{maintheo}.)
\par From Step $1,$ for $t_{0}=-\frac{\ln{\left(\frac{1}{v}\right)}^{2-\theta}}{v},$ it was obtained that  $\phi(t_{0},x)$ satisfies 
\eqref{lastestt}. Next, we are going to study the behavior of $\phi(t,x)$ for $t\leq t_{0},$ which is equivalent to study the function $\phi_{1}(t,x)=\phi(-(t+t_{0}),x)$ for $t\geq 0.$  
\par However, from the estimate \eqref{lastestt}, we can verify that $(\phi_{1}(0,x),\partial_{t}\phi_{1}(0,x))$ satisfies the hypotheses of Theorem \ref{orbittheo}, if we consider $y_{0}=e_{k,v}-vt_{0}$ and  $0<v<\ll1.$ Therefore, using the result of Theorem \ref{orbittheo} and the identity $\phi_{1}(t,x)=\phi(-(t+t_{0}),x),$ we obtain the first item of Theorem \ref{maintheo}. 
\end{proof}
\section*{Acknowledgement(s)}

The author acknowledges the support of his supervisors
Thomas Duyckaerts and Jacek Jendrej for providing helpful comments and
orientation, which were essential to conclude this paper. The author
is also grateful to the math department LAGA of the University Sorbonne Paris
Nord and all the referees for providing remarks and suggestions in the writing of
this manuscript.

\section*{Funding}

The author acknowledges the support of the French State Program ”Investisse-
ment d’Avenir”, managed by the ”Angence Nationale de la Recherche” under
the grant ANR-18-EURE-0024. The author is a Ph.D. candidate at the University
Sorbonne Paris Nord.
\bibliographystyle{plain}
\bibliography{interacttfqsample}
\bigskip

\appendix
\section{Auxiliary Estimates}\label{aux}
In this Appendix section, we complement our article by demonstrating complementary estimates.
\begin{lemma}\label{est1}
For 
\begin{gather*}
     \mathcal{G}(x)=e^{-\sqrt{2}x}-\frac{e^{-\sqrt{2}x}}{(1+e^{2\sqrt{2}x})^{\frac{3}{2}}}+x\frac{e^{\sqrt{2}x}}{(1+e^{2\sqrt{2}x})^{\frac{3}{2}}}+k_{1}\frac{e^{\sqrt{2}x}}{(1+e^{2\sqrt{2}x})^{\frac{3}{2}}},
\end{gather*}
we have that
\begin{align*}
  \int_{\mathbb{R}} U^{(3)}(H_{0,1}(x)) H^{'}_{0,1}(x)^{2}\mathcal{G}(x)\,dx=& \int_{\mathbb{R}}U^{(3)}(H_{0,1}(x)) H^{'}_{0,1}(x)^{2}e^{-\sqrt{2}x}\,dx\\&{-}\sqrt{2}\int_{\mathbb{R}}\left[U^{''}(H_{0,1}(x))-2\right]H^{'}_{0,1}(x)e^{-\sqrt{2}x}\,dx.
  \end{align*}
\end{lemma}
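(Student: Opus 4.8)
The plan is to reduce the left-hand side to the right-hand side by exploiting that $G$ solves a second-order linear ODE and that $\dot H_{0,1}$ spans the kernel of the associated operator. First I would record the relevant facts. Set $\mathcal{L}:=-\partial_x^2+\ddot U(H_{0,1})$; differentiating the static equation $\ddot H_{0,1}=\dot U(H_{0,1})$ gives $\mathcal{L}\dot H_{0,1}=0$. Since $U(\phi)=\phi^2(1-\phi^2)^2$ one has $\ddot U(\phi)=2-24\phi^2+30\phi^4$ and $U^{(3)}(\phi)=-48\phi+120\phi^3$, so the source term in \eqref{Group} rewrites as $[-24H_{0,1}^2+30H_{0,1}^4]e^{-\sqrt2 x}+8\sqrt2\,\dot H_{0,1}=[\ddot U(H_{0,1})-2]e^{-\sqrt2 x}+8\sqrt2\,\dot H_{0,1}$; that is, $\mathcal{L}G=[\ddot U(H_{0,1})-2]e^{-\sqrt2 x}+8\sqrt2\,\dot H_{0,1}$, since $G$ here is the function $\mathcal{G}$ of \eqref{G(x)}. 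Finally, $G,\dot G\in\mathscr S(\mathbb R)$: the only delicate point is decay as $x\to-\infty$, where the terms $e^{-\sqrt2 x}$ and $-e^{-\sqrt2 x}(1+e^{2\sqrt2 x})^{-3/2}$ cancel to leading order and leave $G=O(e^{\sqrt2 x})$, so that all boundary terms in the integrations by parts below vanish.

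Next I would use the elementary commutator identity $\partial_x(\mathcal{L}w)=\mathcal{L}(\dot w)+U^{(3)}(H_{0,1})\dot H_{0,1}\,w$, valid for any smooth $w$ (it follows from $\partial_x\ddot U(H_{0,1})=U^{(3)}(H_{0,1})\dot H_{0,1}$). Applied to $w=G$, multiplied by $\dot H_{0,1}$, and integrated over $\mathbb R$, this turns the integrand on the left of the Lemma into $\dot H_{0,1}\,\partial_x(\mathcal{L}G)-\dot H_{0,1}\,\mathcal{L}(\dot G)$. One integration by parts gives $\int \dot H_{0,1}\,\partial_x(\mathcal{L}G)\,dx=-\int \ddot H_{0,1}\,\mathcal{L}G\,dx$, while self-adjointness of $\mathcal{L}$ together with $\mathcal{L}\dot H_{0,1}=0$ kills the second term, $\int \dot H_{0,1}\,\mathcal{L}(\dot G)\,dx=\int (\mathcal{L}\dot H_{0,1})\dot G\,dx=0$. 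Hence
\begin{equation*}
\int_{\mathbb R} U^{(3)}(H_{0,1})\dot H_{0,1}^2\,G\,dx=-\int_{\mathbb R}\ddot H_{0,1}\,\mathcal{L}G\,dx .
\end{equation*}

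Then I would substitute $\mathcal{L}G=[\ddot U(H_{0,1})-2]e^{-\sqrt2 x}+8\sqrt2\,\dot H_{0,1}$. The $8\sqrt2\,\dot H_{0,1}$ piece contributes $-8\sqrt2\int\ddot H_{0,1}\dot H_{0,1}\,dx=-4\sqrt2\,[\dot H_{0,1}^2]_{-\infty}^{+\infty}=0$, a perfect derivative. For the remaining piece, writing $\ddot H_{0,1}=\partial_x\dot H_{0,1}$ and integrating by parts once more,
\begin{equation*}
-\int_{\mathbb R}\ddot H_{0,1}\,[\ddot U(H_{0,1})-2]e^{-\sqrt2 x}\,dx=\int_{\mathbb R}\dot H_{0,1}\,\partial_x\!\big([\ddot U(H_{0,1})-2]e^{-\sqrt2 x}\big)\,dx ,
\end{equation*}
and since $\partial_x([\ddot U(H_{0,1})-2]e^{-\sqrt2 x})=U^{(3)}(H_{0,1})\dot H_{0,1}e^{-\sqrt2 x}-\sqrt2\,[\ddot U(H_{0,1})-2]e^{-\sqrt2 x}$, this equals $\int U^{(3)}(H_{0,1})\dot H_{0,1}^2 e^{-\sqrt2 x}\,dx-\sqrt2\int[\ddot U(H_{0,1})-2]\dot H_{0,1}e^{-\sqrt2 x}\,dx$, which is precisely the claimed identity. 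The only genuine obstacle is bookkeeping: checking that each boundary term above vanishes, which reduces to the decay statement $G,\dot G\in\mathscr S(\mathbb R)$ (already established for $\mathcal G$ in \cite{second}); once that is in place the whole argument is a chain of two or three integrations by parts together with the ODE \eqref{Group}.
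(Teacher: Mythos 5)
Your argument is correct and is essentially the paper's own proof: both reduce the left-hand side, via integration by parts and the kernel relation $\mathcal{L}\dot H_{0,1}=0$, to $-\int_{\mathbb{R}}\ddot H_{0,1}(x)\,\mathcal{L}G(x)\,dx$ (the paper writes this as $\int\dot U(H_{0,1})[\ddot G-\ddot U(H_{0,1})G]$), then substitute the ODE \eqref{Group}, discard the $8\sqrt{2}\dot H_{0,1}$ contribution as a perfect derivative, and perform the same final integration by parts on the $[\ddot U(H_{0,1})-2]e^{-\sqrt{2}x}$ term. Your commutator-plus-self-adjointness packaging is just a reorganization of the paper's two direct integrations by parts, so there is nothing substantive to add.
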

\begin{remark}
Indeed, the value $k_{1}$ in Lemma \ref{est1} can be replaced by zero, since
\begin{equation*}
    \int_{\mathbb{R}}U^{(3)}(H_{0,1}(x))H^{'}_{0,1}(x)^{3}\,dx=0.
\end{equation*}
\end{remark}
\begin{proof}[proof of Lemma \ref{est1}.]
\par First, from identity $ H^{''}_{0,1}(x)= U^{'}(H_{0,1}(x))$ and integration by parts, we can verify the following identity
\begin{align*}
\int_{\mathbb{R}}U^{(3)}\left(H_{0,1}(x)\right)H^{'}_{0,1}(x)^{2}\mathcal{G}(x)\,dx
=&\int_{\mathbb{R}} U^{'}(H_{0,1}(x))\left[ \mathcal{G}^{''}(x)- U^{''}(H_{0,1})\mathcal{G}(x)\right]\,dx,
\end{align*}
also, since $-\mathcal{G}^{''}(x)+ U^{''}(H_{0,1}(x))\mathcal{G}(x)=\left[ U^{''}(H_{0,1}(x))-2\right]e^{-\sqrt{2}x}+8\sqrt{2} H^{'}_{0,1}(x)$ and $\left\langle  H^{'}_{0,1},\,U^{'}(H_{0,1})\right\rangle=0,$ we conclude using integration by parts that
\begin{align*}
\int_{\mathbb{R}}U^{(3)}\left(H_{0,1}(x)\right)H^{'}_{0,1}(x)^{2}\mathcal{G}(x)\,dx=&{-}\int_{\mathbb{R}} U^{'}\left(H_{0,1}(x)\right)\left[ U^{''}(H_{0,1}(x))-2\right]e^{{-}\sqrt{2}x}\,dx\\
=&{-}\int_{\mathbb{R}} H^{''}_{0,1}(x)\left[U^{''}\left(H_{0,1}(x)\right)-2\right]e^{-\sqrt{2}x}\,dx,\\
=&\int_{\mathbb{R}}U^{(3)}\left(H_{0,1}(x)\right)H^{'}_{0,1}(x)^{2}e^{-\sqrt{2}x}\,dx\\&{-}\sqrt{2}\int_{\mathbb{R}}\left[ U^{''}\left(H_{0,1}(x)\right)-2\right]H^{'}_{0,1}(x)e^{-\sqrt{2}x}\,dx.
\end{align*}
\end{proof}
\par Now, using integration by parts and identity $(27)$ of \cite{first}, we have that
\begin{equation}\label{IDD2}
    -\int_{\mathbb{R}}\left[ U^{''}\left(H_{0,1}(x)\right)-2\right]e^{-\sqrt{2}x} H^{'}_{0,1}(x)\,dx=-\sqrt{2}\int_{\mathbb{R}}\left[6 H_{0,1}(x)^{5}-8H_{0,1}(x)^{3}\right]e^{-\sqrt{2}x}\,dx=4,
\end{equation}
from which we deduce the following Lemma.
\begin{lemma}\label{est2}
\begin{equation*}
\int_{\mathbb{R}} U^{(3)}(H_{0,1}(x)) H^{'}_{0,1}(x)^{2}\mathcal{G}(x)\,dx-\int_{\mathbb{R}}U^{(3)}(H_{0,1}(x)) H^{'}_{0,1}(x)^{2}e^{-\sqrt{2}x}\,dx=4\sqrt{2}. 
\end{equation*}
\end{lemma}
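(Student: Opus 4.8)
The plan is to read off Lemma \ref{est2} as an immediate corollary of Lemma \ref{est1} together with the identity \eqref{IDD2}; no new integration-by-parts computation is needed, only a rearrangement of terms already in place.

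First I would recall the conclusion of Lemma \ref{est1}, which, after moving the integral $\int_{\mathbb{R}}U^{(3)}(H_{0,1}(x))\dot H_{0,1}(x)^{2}e^{-\sqrt{2}x}\,dx$ to the left-hand side, reads
\begin{equation*}
\int_{\mathbb{R}} U^{(3)}(H_{0,1}(x))\dot H_{0,1}(x)^{2}G(x)\,dx-\int_{\mathbb{R}}U^{(3)}(H_{0,1}(x))\dot H_{0,1}(x)^{2}e^{-\sqrt{2}x}\,dx=-\sqrt{2}\int_{\mathbb{R}}\left[\ddot U(H_{0,1}(x))-2\right]\dot H_{0,1}(x)e^{-\sqrt{2}x}\,dx.
\end{equation*}
The left-hand side here is exactly the quantity whose value Lemma \ref{est2} asserts, so it remains only to identify the right-hand side with $4\sqrt{2}$. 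This is precisely the content of \eqref{IDD2}: writing $[\ddot U(H_{0,1})-2]\dot H_{0,1}=\frac{d}{dx}[\dot U(H_{0,1})-2H_{0,1}]$ and integrating by parts against $e^{-\sqrt{2}x}$ — the boundary contributions vanishing by the decay estimates \eqref{le1} and \eqref{le2} — one obtains $-\sqrt{2}\int_{\mathbb{R}}[\ddot U(H_{0,1})-2]\dot H_{0,1}e^{-\sqrt{2}x}\,dx=-2\int_{\mathbb{R}}[6H_{0,1}^{5}-8H_{0,1}^{3}]e^{-\sqrt{2}x}\,dx$, and identity $(27)$ of \cite{first} evaluates this last integral to $4\sqrt{2}$. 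Substituting back into the displayed equality finishes the proof.

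I do not expect any genuine obstacle. The only step with real content, namely the explicit evaluation of $-2\int_{\mathbb{R}}[6H_{0,1}(x)^{5}-8H_{0,1}(x)^{3}]e^{-\sqrt{2}x}\,dx$, relies on the Bogomolny identity $\dot H_{0,1}=\sqrt{2U(H_{0,1})}$ and the closed form of $H_{0,1}$, and it has already been carried out in \eqref{IDD2}; what is left for Lemma \ref{est2} is purely bookkeeping. If one wanted a fully self-contained argument one could alternatively insert the closed form $H_{0,1}(x)=e^{\sqrt{2}x}(1+e^{2\sqrt{2}x})^{-1/2}$ and compute the elementary integrals $\int_{\mathbb{R}} H_{0,1}^{m}e^{-\sqrt{2}x}\,dx$ directly, but reusing \eqref{IDD2} is cleaner.
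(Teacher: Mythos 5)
Your proposal is correct and follows exactly the paper's own route: Lemma \ref{est2} is obtained by rearranging the identity of Lemma \ref{est1} and substituting the value $4\sqrt{2}$ from \eqref{IDD2}, which is precisely how the paper deduces it ("from which we deduce the following Lemma"). Your expanded justification of \eqref{IDD2} — writing $[\ddot U(H_{0,1})-2]\dot H_{0,1}$ as the derivative of $\dot U(H_{0,1})-2H_{0,1}=6H_{0,1}^{5}-8H_{0,1}^{3}$ and integrating by parts against $e^{-\sqrt{2}x}$ — matches the computation the paper attributes to integration by parts and identity $(27)$ of \cite{first}.
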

\begin{lemma}\label{coercir}
There is $\delta>0,\,c>0$ such that if $0<v<\delta,\,d(t)=\frac{1}{\sqrt{2}}\ln{\left(\frac{8}{v^{2}}\cosh{\left(\sqrt{2}vt\right)}^{2}\right)},$ then for
\begin{align*}
   H^{+}_{0,1}(x,t)=H_{0,1}\left(\frac{x-\frac{d(t)}{2}}{\sqrt{1-\frac{\dot d(t)^{2]}}{4}}}\right),\\
   H^{-}_{0,1}(x,t)=H_{-1,0}\left(\frac{x+\frac{d(t)}{2}}{\sqrt{1-\frac{\dot d(t)^{2]}}{4}}}\right),
\end{align*}
and any $g \in H^{1}_{x}(\mathbb{R})$ such that
\begin{equation*}
    \left\langle g(x),\partial_{x}H^{+}_{0,1}(x,t)\right\rangle=0,\,\left\langle g(x),\partial_{x}H^{-}_{0,1}(x,t)\right\rangle=0,
\end{equation*}
we have
\begin{equation}
c\norm{g}_{H^{1}_{x}}^{2}\leq\left\langle -\partial^{2}_{x}g(x)+ U^{''}\left(H^{+}_{0,1}(x,t)+H^{-}_{0,1}(x,t)\right)g(x),\,g(x)\right\rangle.
\end{equation}
\begin{proof}[Proof of Lemma \ref{coercir}]
First, to simplify our computations we denote
\begin{equation*}
    \gamma_{d(t)}=\frac{1}{\sqrt{1-\frac{\dot d(t)^{2}}{4}}}.
\end{equation*}
Next, we can verify using a change of variables that
\begin{equation*}
    \left\langle U^{''}\left(H^{+}_{0,1}(x,t)\right)g(x),\,g(x)\right\rangle=\sqrt{1-\frac{\dot d(t)^{2}}{4}}\int_{\mathbb{R}} U^{''}\left(H_{0,1}(y)\right)\left[g\left(\left(y+\frac{d(t)}{2}\gamma_{d(t)}\right)\gamma_{d(t)}^{-1}\right)\right]^{2}\,dy,
\end{equation*}
and
\begin{equation}\label{did}
    \int_{\mathbb{R}} \frac{dg(x)}{dx}^{2}\,dx=\frac{1}{\sqrt{1-\frac{\dot d(t)^{2}}{4}}}\displaystyle\int\limits_{\mathbb{R}}\left[\frac{d}{dy}\left[g\left(y\gamma_{d(t)}^{-1}\right)\right]\right]^{2}\,dy.
\end{equation}
We denote now
\begin{equation*}
    g_{1}(t,y)=g\left(y\sqrt{1-\frac{\dot d(t)^{2}}{4}}\right)=g(y \gamma_{d(t)}^{-1}).
\end{equation*}
Moreover, $L=-\partial^{2}_{x}+ U^{''}(H_{0,1}(x))$ is a positive operator in $L^{2}(\mathbb{R})$ when it is restricted to the orthogonal complement of $H^{'}_{0,1}(x)$ in $L^{2}_{x}(\mathbb{R}),$ see \cite{jkl} or \cite{first} for the proof.
In conclusion, we deduce that there is a constant $C>0$ independent of $v>0$ such that
\begin{equation}
    \left\langle -\frac{d^{2}}{dx^{2}}g(x)+ U^{''}\left(H^{+}_{0,1}(x,t)\right)g(x),\,g(x)\right\rangle \geq C \sqrt{1-\frac{\dot d(t)^{2}}{4}}\norm{g_{1}(t,y)}_{H^{1}_{y}(\mathbb{R})}^{2},
\end{equation}
so, from $\dot d(t)=v\tanh{(\sqrt{2}vt)}$ and identity \eqref{did}, we deduce that there is a constant $C_{1}>0$ such that if $v\ll 1,$ then
\begin{equation}
     \left\langle -\frac{d^{2}}{dx^{2}}g(x)+ U^{''}\left(H^{+}_{0,1}(x,t)\right)g(x),\,g(x)\right\rangle\geq C_{1}\norm{g(x)}_{H^{1}(\mathbb{R})}^{2}.
\end{equation}
Similarly, we can verify for the same constant $C_{1}>0$ that if $\left\langle g(x),\,\partial_{x}H^{-}_{-1,0}(x,t)\right\rangle_{L^{2}_{x}}=0$ and $v\ll 1,$ then  
\begin{equation}
     \left\langle {-}\frac{d^{2}}{dx^{2}}g(x)+U^{''}\left(H^{-}_{0,1}(x,t)\right)g(x),\,g(x)\right\rangle \geq C_{1}\norm{g(x)}_{H^{1}(\mathbb{R})}^{2}.
\end{equation}
The remaining part of the proof proceeds exactly as the proof of Lemma $2.6$ of \cite{first}.
\end{proof}
\end{lemma}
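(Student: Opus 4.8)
The plan is to regard the Lorentz contraction $\gamma_{d(t)}^{-1}=(1-\dot d(t)^2/4)^{1/2}$ as an $O(v^2)$ perturbation of the identity and to deduce the estimate, after a dilation, from the coercivity of the \emph{uncontracted} two-kink operator already available in Lemma~\ref{coerccc}. First I would record, from $\dot d(t)=v\tanh(\sqrt2 vt)$ and \eqref{decayd}, that $|\gamma_{d(t)}-1|+|\gamma_{d(t)}^2-1|\lesssim v^2$, so that every norm distortion appearing below has constant as close to $1$ as desired once $v\ll1$. Since moreover $d(t)\ge\tfrac{1}{\sqrt2}\ln(8/v^{2})\to+\infty$ as $v\to0$, the ``stretched'' separation $z\coloneqq\gamma_{d(t)}\,d(t)\ge d(t)$ eventually exceeds the threshold $1/\delta$ of Lemma~\ref{coerccc}.

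Next I would perform the substitution $y=\gamma_{d(t)}x$, put $g_1(y)=g(\gamma_{d(t)}^{-1}y)$ exactly as in the display preceding \eqref{did}, and observe that $H^{+}_{0,1}(x,t)=H_{0,1}(\gamma_{d(t)}x-z/2)$ and $H^{-}_{0,1}(x,t)=H_{-1,0}(\gamma_{d(t)}x+z/2)$. A direct computation then gives
\[
\left\langle -\partial_x^2 g+\ddot U\!\left(H^{+}_{0,1}(\cdot,t)+H^{-}_{0,1}(\cdot,t)\right)g,\,g\right\rangle=\gamma_{d(t)}^{-1}\Big[\widetilde Q(g_1)+(\gamma_{d(t)}^2-1)\norm{\partial_y g_1}_{L^2_y}^2\Big],
\]
where $\widetilde Q(g_1)=\int_{\mathbb R}(\partial_y g_1)^2+\ddot U\big(H_{0,1}(y-z/2)+H_{-1,0}(y+z/2)\big)g_1^2\,dy$. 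Translating by $z/2$, i.e.\ setting $g_2(w)=g_1(w-z/2)$, turns $\widetilde Q(g_1)$ into $\big\langle\big(-\partial_w^2+\ddot U(H_{0,1}(\cdot-z)+H_{-1,0})\big)g_2,\,g_2\big\rangle$, and one checks $\norm{g_2}_{H^1_w}=\norm{g_1}_{H^1_y}$. Finally the two orthogonality hypotheses convert cleanly under these changes of variable: $\langle g,\partial_x H^{+}_{0,1}(\cdot,t)\rangle=0$ becomes $\langle g_2,\dot H_{0,1}(\cdot-z)\rangle=0$ and $\langle g,\partial_x H^{-}_{0,1}(\cdot,t)\rangle=0$ becomes $\langle g_2,\dot H_{-1,0}\rangle=0$, which are precisely the hypotheses of Lemma~\ref{coerccc}. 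That lemma therefore yields $\widetilde Q(g_1)\ge c\,\norm{g_2}_{H^1_w}^2=c\,\norm{g_1}_{H^1_y}^2$.

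It then remains to absorb the perturbative terms. Since $|\gamma_{d(t)}^2-1|\lesssim v^2$ and $\norm{\partial_y g_1}_{L^2_y}\le\norm{g_1}_{H^1_y}$, the displayed identity gives a lower bound $\gamma_{d(t)}^{-1}(c-O(v^2))\norm{g_1}_{H^1_y}^2\ge\tfrac{c}{2}\norm{g_1}_{H^1_y}^2$ for $v\ll1$; and because the dilation distorts the $H^1$ norm only by the factor $\gamma_{d(t)}^{\pm1}=1+O(v^2)$, we have $\norm{g_1}_{H^1_y}^2\ge\gamma_{d(t)}^{-1}\norm{g}_{H^1_x}^2$, whence the claim with, say, $c'=c/4$. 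The step I expect to require the most care is simply the bookkeeping in the displayed identity: the dilation scales the gradient part and the potential part of the quadratic form by $\gamma_{d(t)}$ and $\gamma_{d(t)}^{-1}$ respectively, so one must keep the powers straight, but since all of them differ from $1$ by $O(v^2)$ no genuine obstacle arises --- the real analytic input (positivity of the uncontracted linearization around two well-separated kinks, with a spectral gap surviving the two orthogonality conditions) is exactly Lemma~\ref{coerccc}, imported from \cite{first}. An equivalent alternative, the one indicated in the proof of the statement, is to keep the contraction and instead glue the two single-kink coercivity estimates by a partition of unity localizing near each kink, the cross terms being $O(e^{-\sqrt2 z})$ in the separation; I would prefer the dilation argument above because it cleanly isolates the only new ingredient here, namely the Lorentz factor, from the already-proved two-kink estimate.
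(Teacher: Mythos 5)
Your proposal is correct, and it takes a genuinely different route from the paper's. The paper rescales each single-kink potential separately, invokes the positivity of the one-kink linearization $-\partial_{x}^{2}+\ddot U(H_{0,1})$ on the orthogonal complement of $\dot H_{0,1}$ to get coercivity of each contracted single-kink quadratic form, and then defers the gluing of the two estimates (control of the cross-interaction terms via the large separation) to the argument of Lemma $2.6$ of \cite{first}. You instead apply one global dilation $y=\gamma_{d(t)}x$, which converts the contracted two-kink form \emph{exactly} into the uncontracted two-kink form with separation $z=\gamma_{d(t)}d(t)$ plus the error $(\gamma_{d(t)}^{2}-1)\norm{\partial_{y}g_{1}}_{L^{2}}^{2}=O(v^{2})\norm{g_{1}}_{H^{1}}^{2}$, check that the two orthogonality conditions transform precisely into those of Lemma \ref{coerccc} (they do: $\partial_{x}H^{+}_{0,1}(x,t)=\gamma_{d(t)}\dot H_{0,1}(\gamma_{d(t)}x-z/2)$, so the inner products only pick up harmless Jacobian factors), and then absorb the $O(v^{2})$ perturbation and the $\gamma_{d(t)}^{\pm1}=1+O(v^{2})$ norm distortions. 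Your bookkeeping is right: the gradient part scales by $\gamma_{d(t)}$ and the potential part by $\gamma_{d(t)}^{-1}$, which is exactly what your displayed identity encodes, and $z\geq d(t)\geq\frac{1}{\sqrt2}\ln(8/v^{2})$ guarantees the threshold $z\geq 1/\delta$ of Lemma \ref{coerccc} for $v$ small. What your route buys is that it is self-contained within the preliminaries of this paper (it uses only Lemma \ref{coerccc}, which is already quoted, rather than re-running an external gluing argument) and it cleanly isolates the Lorentz contraction as the only new ingredient; what the paper's route buys is that it never needs the two-kink coercivity in the exact form of Lemma \ref{coerccc}, only the single-kink spectral gap plus a localization, which is the more robust template when the orthogonality conditions do not transform as cleanly as they happen to here.
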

\begin{lemma}\label{coccer}
     There exist $C>1,\,c>0\,\delta>0$ such that if $0<v<\delta,$ 
then we have for any $(\varphi_{1},\varphi_{2})\in H^{1}_{x}(\mathbb{R})\times L^{2}_{x}(\mathbb{R})$ that
\begin{equation*}
    \int_{\mathbb{R}}\varphi_{2}^{2}+\partial_{x}\varphi_{1}^{2}+U^{''}\left(H_{0,1}\left(\frac{x}{\sqrt{1-v^{2}}}\right)\right)\varphi_{1}(x)^{2}\,dx\geq c\norm{(\varphi_{1},\varphi_{2})}_{H^{1}_{x}\times L^{2}_{x}}^{2}-C\left\langle (\varphi_{1},\varphi_{2}),J D_{v,0}(x)\right\rangle^{2}.
\end{equation*}
\end{lemma}
\begin{proof}
    The proof is completely analogous to the proof of property $(2)$ of Lemma $2.8$ in the article \cite{asympt}.
\end{proof}
\section{Proof of Theorem \ref{purepure}}
We start by denoting
\begin{equation*}
    J=\begin{bmatrix}
     0 & 1\\
     -1 & 0
    \end{bmatrix},
\end{equation*}
and we consider for $x\in\mathbb{R}$ and $-1<v<1$ the following functions
\begin{align}\label{psi-1,0,0}
    \psi^{0}_{-1,0}(x,v)=
    J
    \begin{bmatrix}
      H^{'}_{-1,0}(\frac{x}{\sqrt{1-v^{2}}})\\
     \frac{v}{1-v^{2}}H^{(2)}_{-1,0}\left(\frac{x}{\sqrt{1-v^{2}}}\right)
    \end{bmatrix},\\ \label{psi-1,0,1}
    \psi^{1}_{-1,0}(x,v)=J
    \begin{bmatrix}
    v x  H^{'}_{-1,0}\left(\frac{x}{\sqrt{1-v^{2}}}\right)\\
   \frac{1}{\sqrt{1-v^{2}}} H^{'}_{-1,0}\left(\frac{x}{\sqrt{1-v^{2}}}\right)+ \frac{v^{2} x}{1-v^{2}} H^{(2)}_{-1,0}\left(\frac{x}{\sqrt{1-v^{2}}}\right)
\end{bmatrix},
\end{align}
and we denote, for $j\in \{0,1\},$ $\psi^{j}_{0,1}(x,v)=\psi^{j}_{-1,0}(-x,-v).$
\par Next, we will use Lemma $2.6$ of \cite{multison}.
\begin{lemma}
The functions
\begin{align}
 Y^{0}_{-1,0}(v;x,t)={-}J \psi^{0}_{-1,0}(x+vt,v),\\ Y^{1}_{-1,0}(v;x,t)={-}J\psi^{1}_{-1,0}(x+vt,v)+t\sqrt{1-v^{2}}  Y^{0}_{-1,0}(v;x+vt,t)
\end{align}
are solutions of the linear differential system
\begin{equation}
    \frac{d}{dt}\begin{bmatrix}
    w_{1}(t)\\
    w_{2}(t)
    \end{bmatrix}=
    J\begin{bmatrix}
    -\frac{\partial^{2}}{\partial x^{2}}+ U^{''}\left(H_{-1,0}\left(\frac{x+vt}{\sqrt{1-v^{2}}}\right)\right) & 0\\
    0 & 1
    \end{bmatrix}
    \begin{bmatrix}
    w_{1}(t)\\
    w_{2}(t)
    \end{bmatrix},
\end{equation}
and the functions
\begin{align}
 Y^{0}_{0,1}(v;x,t)=-J \psi^{0}_{0,1}(x-vt,v),\\ Y^{1}_{0,1}(v;x,t)={-}J\psi^{1}_{0,1}(x-vt,v)+t \sqrt{1-v^{2}}  Y^{0}_{0,1}(v;x-vt,t)
\end{align}
are solutions of the linear differential system
\begin{equation}
    \frac{d}{dt}\begin{bmatrix}
    w_{1}(t)\\
    w_{2}(t)
    \end{bmatrix}=
    J\begin{bmatrix}
    -\frac{\partial^{2}}{\partial x^{2}}+ U^{''}\left(H_{0,1}\left(\frac{x-vt}{\sqrt{1-v^{2}}}\right)\right) & 0\\
    0 & 1
    \end{bmatrix}
    \begin{bmatrix}
    w_{1}(t)\\
    w_{2}(t)
    \end{bmatrix}.
\end{equation}
\end{lemma}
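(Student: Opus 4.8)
The linear system displayed in the statement is nothing but the first–order form of the linearisation of \eqref{nlww} about the moving kink: writing $\Phi(t,x)=H_{-1,0}\!\left(\tfrac{x+vt}{\sqrt{1-v^{2}}}\right)$ and $\mathcal{H}_{t}=\begin{bmatrix}-\partial_{x}^{2}+\ddot U(\Phi) & 0\\ 0 & 1\end{bmatrix}$, the equation $\tfrac{d}{dt}w=J\mathcal{H}_{t}w$ is equivalent to $\partial_{t}^{2}w_{1}-\partial_{x}^{2}w_{1}+\ddot U(\Phi)w_{1}=0$ together with $w_{2}=\partial_{t}w_{1}$, that is, to the linearisation $D\Lambda(\Phi)[w_{1}]=0$. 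The plan is to produce the two stated solutions from the symmetries of \eqref{nlww} — spatial translation for $Y^{0}$, Lorentz boost for $Y^{1}$ — and then to confirm the explicit formulas by direct substitution, using only the profile identity $\ddot H_{-1,0}=\dot U(H_{-1,0})$, its $x$–derivative $H^{(3)}_{-1,0}=\ddot U(H_{-1,0})\dot H_{-1,0}$, and the chain rule in $p:=\tfrac{x+vt}{\sqrt{1-v^{2}}}$; throughout one uses $J^{2}=-\mathrm{Id}$, so that $-J\psi^{j}_{-1,0}(\,\cdot\,,v)$ is just $\psi^{j}_{-1,0}(\,\cdot\,,v)$ with the outer $J$ removed.

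For $Y^{0}_{-1,0}$: for every $a\in\mathbb{R}$ the function $(t,x)\mapsto H_{-1,0}\!\left(\tfrac{x+vt-a}{\sqrt{1-v^{2}}}\right)$ solves \eqref{nlww}, hence the derivative at $a=0$ of the associated pair $(\phi,\partial_{t}\phi)$ solves $\tfrac{d}{dt}w=J\mathcal{H}_{t}w$. A one–line computation shows this derivative equals $-\tfrac{1}{\sqrt{1-v^{2}}}\bigl(\dot H_{-1,0}(p),\ \tfrac{v}{\sqrt{1-v^{2}}}\ddot H_{-1,0}(p)\bigr)$, which, by \eqref{psi-1,0,0} and $J^{2}=-\mathrm{Id}$, is precisely $-\tfrac{1}{\sqrt{1-v^{2}}}\,Y^{0}_{-1,0}(v;x,t)$. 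Since constant multiples of solutions are solutions, $Y^{0}_{-1,0}$ solves the system; alternatively one checks the two scalar equations $\partial_{t}(Y^{0}_{-1,0})_{1}=(Y^{0}_{-1,0})_{2}$ and $\partial_{t}(Y^{0}_{-1,0})_{2}=\partial_{x}^{2}(Y^{0}_{-1,0})_{1}-\ddot U(\Phi)(Y^{0}_{-1,0})_{1}$ directly, the latter reducing to $H^{(3)}_{-1,0}=\ddot U(H_{-1,0})\dot H_{-1,0}$ after inserting $\partial_{t}p=\tfrac{v}{\sqrt{1-v^{2}}}$, $\partial_{x}p=\tfrac{1}{\sqrt{1-v^{2}}}$.

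For $Y^{1}_{-1,0}$ the relevant invariance is the Lorentz boost. The family $v'\mapsto H_{-1,0}\!\left(\tfrac{x+v't}{\sqrt{1-v'^{2}}}\right)$ consists of solutions of \eqref{nlww}, so differentiating the associated pair in $v'$ at $v'=v$ again yields a solution of $\tfrac{d}{dt}w=J\mathcal{H}_{t}w$. Because $\partial_{v'}\!\left.\left(\tfrac{x+v't}{\sqrt{1-v'^{2}}}\right)\right|_{v'=v}=\tfrac{t}{\sqrt{1-v^{2}}}+\tfrac{v(x+vt)}{(1-v^{2})^{3/2}}$, this derivative splits into a part whose $t$–dependence is only through $p$ — which, after the Lorentz normalisation, is a fixed multiple of $-J\psi^{1}_{-1,0}(x+vt,v)$ from \eqref{psi-1,0,1} — plus a part proportional to $t\,Y^{0}_{-1,0}(v;x,t)$. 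The secular term cannot be discarded: $t\,Y^{0}_{-1,0}$ by itself is \emph{not} a solution, since $\tfrac{d}{dt}(t\,Y^{0}_{-1,0})-J\mathcal{H}_{t}(t\,Y^{0}_{-1,0})=Y^{0}_{-1,0}$. Hence, after cancelling the $t$–linear terms with the already–established equation for $Y^{0}_{-1,0}$, verifying the claim for $Y^{1}_{-1,0}$ reduces to the single inhomogeneous identity
\[
\frac{d}{dt}\bigl[-J\psi^{1}_{-1,0}(x+vt,v)\bigr]-J\mathcal{H}_{t}\bigl[-J\psi^{1}_{-1,0}(x+vt,v)\bigr]=-\sqrt{1-v^{2}}\;Y^{0}_{-1,0}(v;x,t),
\]
which is a direct computation using again only $\ddot H_{-1,0}=\dot U(H_{-1,0})$, $H^{(3)}_{-1,0}=\ddot U(H_{-1,0})\dot H_{-1,0}$ and the chain rule in $p$.

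Finally, the statement for $Y^{0}_{0,1},Y^{1}_{0,1}$ follows from the reflection symmetry of \eqref{nlww}: under $x\mapsto -x$, $v\mapsto -v$ the moving kink $H_{-1,0}\!\left(\tfrac{x+vt}{\sqrt{1-v^{2}}}\right)$ is carried into a translate of $H_{0,1}\!\left(\tfrac{x-vt}{\sqrt{1-v^{2}}}\right)$ — using $H_{0,1}(y)=-H_{-1,0}(-y)$ and the evenness of $U$, so that $\ddot U$ is even and the Hessian potential is preserved — while the functions $\psi^{j}_{0,1}(x,v)=\psi^{j}_{-1,0}(-x,-v)$ are by construction the images of $\psi^{j}_{-1,0}$ under this map; hence $Y^{j}_{0,1}$ solve the second linear system by exactly the computation just carried out. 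The only genuinely non‑routine point is the bookkeeping of the Lorentz factors $(1-v^{2})^{\pm1/2},(1-v^{2})^{\pm3/2}$ and of the signs in the inhomogeneous identity above; everything else is symmetry together with the kink profile equation. (This is Lemma $2.6$ of \cite{multison}, whose proof may also be cited verbatim.)
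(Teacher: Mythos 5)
The paper records no argument for this lemma at all---it is imported verbatim as Lemma $2.6$ of \cite{multison}---so your symmetry derivation (spatial translation for $Y^{0}_{-1,0}$, the velocity family for $Y^{1}_{-1,0}$, and the reflection $x\mapsto-x$, $v\mapsto-v$ combined with $H_{0,1}(y)=-H_{-1,0}(-y)$ and the evenness of $\ddot U$ for the $(0,1)$ family) is the right strategy and supplies more than the paper does. Your handling of the secular term is also correctly organised: $t\,Y^{0}_{-1,0}$ alone is not a solution, $\bigl(\tfrac{d}{dt}-J\mathcal{H}_{t}\bigr)\bigl(t\,Y^{0}_{-1,0}\bigr)=Y^{0}_{-1,0}$, and the whole claim reduces to the single inhomogeneous identity you display. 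The verification of $Y^{0}_{-1,0}$ is complete and correct.

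The problem is that the identity you defer to ``a direct computation'' is false for \eqref{psi-1,0,1} exactly as printed, so the computation you promise does not close. With $p=\tfrac{x+vt}{\sqrt{1-v^{2}}}$, already the first scalar equation fails:
\begin{equation*}
\partial_{t}\bigl(Y^{1}_{-1,0}\bigr)_{1}-\bigl(Y^{1}_{-1,0}\bigr)_{2}
=\Bigl(v^{2}+\sqrt{1-v^{2}}-\tfrac{1}{\sqrt{1-v^{2}}}\Bigr)\dot H_{-1,0}(p)
=v^{2}\Bigl(1-\tfrac{1}{\sqrt{1-v^{2}}}\Bigr)\dot H_{-1,0}(p),
\end{equation*}
which is nonzero for $v\neq0$ (the $\ddot H_{-1,0}$ terms cancel exactly; only the $\dot H_{-1,0}$ coefficients mismatch), and correspondingly your inhomogeneous identity is off by $v^{2}\bigl(\tfrac{1}{\sqrt{1-v^{2}}}-1\bigr)\dot H_{-1,0}(p)$ in its first component. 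The source is a normalisation error in the non-secular part: multiplying $\partial_{v'}\big\vert_{v'=v}$ of the pair associated with $H_{-1,0}\bigl(\tfrac{x+v't}{\sqrt{1-v'^{2}}}\bigr)$ by $(1-v^{2})$ produces the secular coefficient $\sqrt{1-v^{2}}\,t$ of the statement \emph{plus} $\bigl(v\,p\,\dot H_{-1,0}(p),\ (1-v^{2})^{-1/2}\dot H_{-1,0}(p)+v^{2}(1-v^{2})^{-1/2}\,p\,\ddot H_{-1,0}(p)\bigr)$, i.e.\ the polynomial prefactors in \eqref{psi-1,0,1} must be the Lorentz-contracted variable $\tfrac{x}{\sqrt{1-v^{2}}}$ rather than $x$; it is \emph{not} a fixed multiple of $-J\psi^{1}_{-1,0}(x+vt,v)$ as printed. (The same reading repairs the argument $Y^{0}_{-1,0}(v;x+vt,t)$ in the statement, which should be $Y^{0}_{-1,0}(v;x,t)$.) With that correction your identity holds and the proof is complete; as submitted you should either carry the computation through and fix the prefactors, or state explicitly the normalisation of $\psi^{1}_{\pm}$ for which your identity is valid, rather than asserting it for the printed one.
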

Now, similarly to \cite{multison}, we consider the linear operator $L_{+,-}(v,t)$ defined by
\begin{equation}
    L_{+,-}(v,t)=
    \begin{bmatrix}
     -\frac{\partial^{2}}{\partial x^{2}}+U^{''}\left(H_{0,1}\left(\frac{x-vt}{\sqrt{1-v^{2}}}\right)+H_{-1,0}\left(\frac{x+vt}{\sqrt{1-v^{2}}}\right)\right) & 0\\
     0 & 1
    \end{bmatrix}.
\end{equation}
We recall that
\begin{equation*}
    H_{0,1}(x)=\frac{e^{\sqrt{2}x}}{\sqrt{1+e^{2\sqrt{2}x}}},
\end{equation*}
and 
\begin{equation*}
    \left\vert\frac{d^{l}}{dx^{l}}H_{0,1}(x)\right\vert\lesssim \min\left(e^{\sqrt{2}x},e^{-2\sqrt{2}x}\right),
\end{equation*}
for any $l\in\mathbb{N}.$
\par From now on, we denote $\psi^{j}_{-1,0}(v;t,x)=\psi^{j}_{-1,0}(x+vt,v)$ and $\psi^{j}_{0,1}(v;t,x)=\psi^{j}_{-1,0}(x-vt,v)$ for any $j\in\{0,1\}.$
Furthermore, using Lemma \ref{interactt}, we can verify similarly to the proof of Proposition $2.8$ of \cite{multison} the following result.
\begin{lemma}\label{projest}
There exists $C>0$, such that for any $0<v<1,$ we have for all $t\in \mathbb{R}_{\geq 1}$ that 
\begin{gather*}
  \norm{\frac{\partial}{\partial t}\psi^{0}_{0,1}(v;t,x)- L_{+,-}J\psi^{0}_{0,1}(v;t,x)}_{L^{2}_{x}}\leq C \exp\left(\frac{{-}2\sqrt{2}v\vert t\vert}{\sqrt{1-v^{2}}} \right),\\ \norm{\frac{\partial}{\partial t}\psi^{0}_{-1,0}(v;t,x)- L_{+,-}J\psi^{0}_{-1,0}(v;t,x)}_{L^{2}_{x}}\leq C \exp\left(\frac{{-}2\sqrt{2}v\vert t\vert}{\sqrt{1-v^{2}}} \right),\\
     \norm{\frac{\partial}{\partial t}\psi^{1}_{0,1}(v;t,x)- L_{+,-}J\psi^{1}_{0,1}(v;t,x)+\sqrt{1-v^{2}}\psi^{0}_{0,1}(v;t,x)}_{L^{2}_{x}}\\\leq C(\vert t\vert v+1)v\exp\left(\frac{{-}2\sqrt{2}v\vert t\vert }{\sqrt{1-v^{2}}}\right),\\
     \norm{\frac{\partial}{\partial t}\psi^{1}_{-1,0}(v;t,x)- L_{+,-}J\psi^{1}_{-1,0}(v;t,x)+\sqrt{1-v^{2}}\psi^{0}_{-1,0}(v;t,x)}_{L^{2}_{x}}\\\leq C(\vert t\vert v+1)v\exp\left(\frac{{-}2\sqrt{2}v\vert t\vert }{\sqrt{1-v^{2}}}\right).
\end{gather*}
\end{lemma}
\par Next, we consider a smooth cut function $0\leq\chi(x)\leq 1$ that satisfies 
\begin{equation*}
   \chi(x)=
   \begin{cases}
        1, \text{if $ x\leq 2(1-10^{-3}),$}\\ 
        0, \text{if $x\geq 2.$}
    \end{cases}
\end{equation*}
From now on, for each $0<v<1,$ we consider $p(v)=\frac{v}{2}\left(1-10^{-3}\right)$ and we also denote
\begin{equation*}
    \chi_{1}(v;t,x)=\chi\left(\frac{x+vt}{p(v) t}\right),\,\chi_{2}(v;t,x)=1-\chi\left(\frac{x+vt}{p(v) t}\right).
\end{equation*}
\begin{lemma}\label{coerQ}
There is $c,\delta_{0}>0$ such that if $0<v<\delta_{0},$ then 
\begin{gather*}
    Q(t,r)=\frac{1}{2}\left[\int_{\mathbb{R}} \partial_{t}r(t,x)^{2}+\partial_{x}r(t,x)^{2}+ U^{''}\left(H_{0,1}\left(\frac{x-vt}{\sqrt{1-v^{2}}}\right)+H_{-1,0}\left(\frac{x+vt}{\sqrt{1-v^{2}}}\right)\right)r(t,x)^{2}\,dx\right]\\
    +\sum_{j=1}^{2}v\int_{\mathbb{R}}\chi_{j}(v;t,x)(-1)^{j}\partial_{t}r(t,x)\partial_{x}r(t,x)\,dx,
\end{gather*}
satisfies for any $t\geq \frac{\ln\left(\frac{1}{v}\right)}{v}$
\begin{equation*}
    Q(t,r)\geq c\norm{\overrightarrow{r}(t)}_{H^{1}_{x}\times L^{2}_{x}}^{2}-\frac{1}{c}\left[\sum_{j=0}^{1} \left\langle \overrightarrow{r}(t),\psi^{j}_{-1,0}(v;t)\right\rangle^{2}+\left\langle \overrightarrow{r}(t),\psi^{j}_{0,1}(v;t)\right\rangle^{2}\right].
\end{equation*}
\end{lemma}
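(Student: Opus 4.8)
The plan is to prove the coercivity of $Q(t,r)$ by reducing it, via a partition of unity argument, to the coercivity of the linearized operator around each single kink, which is already known (the operator $L=-\partial_x^2+\ddot U(H_{0,1})$ is positive on the orthogonal complement of $\dot H_{0,1}$, as recalled from \cite{jkl} and \cite{first}, and Lemma \ref{coerccc} gives the two-kink coercivity for well-separated kinks). First I would observe that for $t\geq \frac{\ln(1/v)}{v}$ the two kink centers $\pm vt$ are separated by $2vt\geq 2\ln(1/v)\gg 1$, so the cross terms between $\dot H_{0,1}\big(\tfrac{x-vt}{\sqrt{1-v^2}}\big)$ and $\dot H_{-1,0}\big(\tfrac{x+vt}{\sqrt{1-v^2}}\big)$ and between the corresponding $\psi^j$'s are exponentially small of order $e^{-2\sqrt{2}vt/\sqrt{1-v^2}}\lesssim v^{2\sqrt2}$ by Lemma \ref{interactt}. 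The cutoffs $\chi_1,\chi_2$ are chosen precisely so that $\chi_1$ localizes near the left kink $H_{-1,0}$ (region $x\approx -vt$) and $\chi_2$ near the right kink $H_{0,1}$ (region $x\approx vt$), with the transition occurring in a region where both kinks are exponentially close to their asymptotic constant values $\pm 1$, so that $\ddot U$ evaluated there is $\approx \ddot U(\pm 1)=\ddot U(\mp 1)$, a positive constant.

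The key steps, in order: (1) Write $r=\chi_1^{1/2}r + \chi_2^{1/2}r$ (or use a smooth quadratic partition $\chi_1+\chi_2=1$ directly on the energy density) and split the quadratic form $Q(t,r)$ into a piece supported near the left kink, a piece near the right kink, and error terms coming from the derivatives of the cutoffs. The momentum-type correction terms $v\int \chi_j (-1)^j \partial_t r\,\partial_x r$ are designed so that, after integration by parts in the region where $\chi_j$ varies, they combine with the gradient terms to produce, in each localized region, the shifted quadratic form $\int \partial_t r_j^2 + (\partial_x r_j \mp v \partial_t r_j)^2 \cdots$ — i.e. precisely the Lorentz-boosted energy form whose positivity is equivalent to that of the static linearized operator. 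This is the same mechanism as in the construction of $L(t)$ in Section \ref{energysection} and as in \cite{jkl}, \cite{first}. (2) Apply the single-kink coercivity (the positivity of $L$ on $(\dot H_{0,1})^\perp$, combined with a change of variables to undo the Lorentz factor $(1-v^2)^{1/2}$, exactly as in the proof of Lemma \ref{coercir}) to each localized piece, paying with the four scalar products $\langle (r,\partial_t r),\psi^j_{0,1}\rangle$, $\langle (r,\partial_t r),\psi^j_{-1,0}\rangle$ for $j=0,1$, which span the obstruction (generalized kernel / negative directions) for the moving kinks. (3) Collect the errors: the cutoff derivatives $\partial_t\chi_j,\partial_x\chi_j$ are of size $\lesssim \frac{1}{p(v)t}\lesssim \frac{v}{\ln(1/v)}$ on a region where the potential term is bounded, hence contribute $O\big(\frac{v}{\ln(1/v)}\big)\|(r,\partial_t r)\|^2$ which is absorbed; the kink-kink interaction errors are $O(v^{2\sqrt2})\|(r,\partial_t r)\|^2$ and likewise absorbed for $v\ll 1$; and the difference between $\ddot U$ of the sum of kinks and the sum of the two single-kink potentials is exponentially small in the overlap region, again absorbable. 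Choosing $\delta_0$ small enough then yields the stated inequality.

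I expect the main obstacle to be the bookkeeping in step (1)–(2): making the algebra of the momentum correction terms match up with the Lorentz-boosted quadratic form so that the localized forms are genuinely nonnegative up to the projections, and in particular handling the cross term $\partial_x\chi_j\,\partial_t r\,\partial_x r$ that appears after differentiating/integrating by parts — one must check it is lower order and not of a sign that spoils the estimate. A secondary subtlety is that the natural orthogonality directions for the \emph{static} operator $L$ are $\dot H_{0,1}$ and $\dot H_{-1,0}$, whereas the statement pays with the \emph{dynamical} vectors $\psi^j_{0,1},\psi^j_{-1,0}$ (which include the $j=1$ ``scaling'' directions coming from the generalized kernel associated to the Lorentz symmetry); I would need to argue that controlling these four scalar products controls the projection of $r$ onto the full non-positive subspace of the boosted operator near each kink — this is where I would follow closely the structure of Proposition $2.8$ / the coercivity lemma in \cite{multison}. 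Once those two points are handled, the remaining estimates are routine applications of Lemma \ref{interactt}, Cauchy–Schwarz, and the smallness of $v$.
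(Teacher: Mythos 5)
Your proposal follows essentially the same route as the paper: the paper's proof consists precisely of (i) observing that the static orthogonality defects $\left\langle r,\dot H_{0,1}\left(\tfrac{x-vt}{\sqrt{1-v^{2}}}\right)\right\rangle$ and $\left\langle r,\dot H_{-1,0}\left(\tfrac{x+vt}{\sqrt{1-v^{2}}}\right)\right\rangle$ are controlled by the dynamical scalar products $\langle (r,\partial_{t}r),\psi^{1}_{\pm}\rangle$ up to an $O(v^{2})\norm{(r,\partial_{t}r)}^{2}_{H^{1}_{x}\times L^{2}_{x}}$ error (your ``secondary subtlety''), and (ii) invoking the localized two-kink coercivity argument of Lemma 2.3 of \cite{jkl} / Lemma 2.5 of \cite{first} / Lemma \ref{coercir}, which is exactly your partition-of-unity plus boosted-energy completion of the square. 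Your write-up is correct and matches the intended argument.
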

\begin{proof}
From definition of $\psi^{1}_{-1,0}$ and $\psi^{1}_{0,1},$ we can verify that there is a constant $C>0$ such that if $v\ll 1,$ then
\begin{align}\label{ll1}
    \left\vert\left\langle r(t),H^{'}_{0,1}\left(\frac{x-vt}{\sqrt{1-v^{2}}}\right) \right\rangle^{2}\right\vert\leq C\left[\langle (r(t),\partial_{t}r(t)),\psi^{1}_{0,1}(v;t)\rangle^{2}+v^{2}\norm{(r(t),\partial_{t}r(t))}_{H^{1}_{x}\times L^{2}_{x}}^{2}\right],\\ \label{ll2}
    \left\vert\left\langle r(t),\dot H_{-1,0}\left(\frac{x+vt}{\sqrt{1-v^{2}}}\right) \right\rangle^{2}\right\vert\leq C\left[\langle (r(t),\partial_{t}r(t)),\psi^{1}_{-1,0}(v;t)\rangle^{2}+v^{2}\norm{(r(t),\partial_{t}r(t))}_{H^{1}_{x}\times L^{2}_{x}}^{2}\right].
\end{align}
Then, using the estimates \eqref{ll1} and \eqref{ll2}, the proof of Lemma \ref{coerQ} is analogous to the demonstration of Lemma $2.3$ of \cite{jkl} or the proof of Lemma $2.5$ in \cite{first} or the demonstration of Lemma \ref{coercir} in the section Appendix $A.$ 
\end{proof}
\begin{remark}
Indeed, Proposition $2.10$ of \cite{multison} implies that for any $0<v<1,$ there is $T_{v}$ and $c_{v},$ such that Lemma \ref{coerQ} holds with $c_{v}$ in the place of $c$ for all $t\geq T_{v}.$
\end{remark}
\begin{lemma}\label{solutionp}
There is $C>0,$ such that, for any $0<v<1,$ if $f(t,x)\in L^{\infty}_{t}(\mathbb{R};H^{1}_{x}(\mathbb{R}))$ and $h(t,x) \in L^{\infty}_{t}\left(\mathbb{R}_{\geq 1};H^{1}_{x}(\mathbb{R})\right)\cap C^{1}_{t}\left(\mathbb{R}_{\geq 1};L^{2}_{x}(\mathbb{R})\right)$ is a solution of the integral equation associated to the following partial differential equation
\begin{equation*}
    \partial^{2}_{t}h(t,x)-\partial^{2}_{x}h(t,x)+ U^{''}\left(H_{0,1}\left(\frac{x-vt}{\sqrt{1-v^{2}}}\right)+H_{-1,0}\left(\frac{x+vt}{\sqrt{1-v^{2}}}\right)\right)h(t,x)=f(t,x),
\end{equation*}
for some boundary condition $(h(t_{0}),\partial_{t}h(t_{0}))\in H^{1}_{x}(\mathbb{R})\times L^{2}_{x}(\mathbb{R}),$ then
\begin{gather*}
    Q(t,h)=\frac{1}{2}\left[\int_{\mathbb{R}} \partial_{t}h(t,x)^{2}+\partial_{x}h(t,x)^{2}+U^{''}\left(H_{0,1}\left(\frac{x-vt}{\sqrt{1-v^{2}}}\right)+H_{-1,0}\left(\frac{x+vt}{\sqrt{1-v^{2}}}\right)\right)h(t,x)^{2}\,dx\right]\\
    +\sum_{j=1}^{2}v\int_{\mathbb{R}}\chi_{j}(v;t,x)(-1)^{j}\partial_{t}h(t,x)\partial_{x}h(t,x)\,dx,
\end{gather*}
satisfies
\begin{multline*}
    \left \vert \frac{\partial}{\partial t}Q(t,h)\right\vert\leq C\Bigg[\norm{f(t)}_{L^{2}_{x}}\norm{(h(t),\partial_{t}h(t))}_{H^{1}_{x}\times L^{2}_{x}}\\+\norm{(h(t),\partial_{t}h(t))}_{H^{1}_{x}\times L^{2}_{x}}^{2}\left(v\exp\left(\frac{-\sqrt{2}v t\left(1-10^{-3}\right)^{2}}{\sqrt{1-v^{2}}}\right)+\frac{1}{t}\right)\Bigg]
\end{multline*}
for all $t\geq 1.$
\end{lemma}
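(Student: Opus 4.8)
The plan is to differentiate $Q(t,h)$ in $t$ and to exhibit the cancellation for which the momentum correction $\sum_{j=1}^{2}v\int\chi_{j}(v;t,x)(-1)^{j}\partial_{t}h\,\partial_{x}h\,dx$ was built. First I would reduce to the case where $h$ is a smooth solution: since $h$ solves the Duhamel integral equation and lies in $L^{\infty}_{t}(\mathbb{R}_{\geq 1};H^{1}_{x})\cap C^{1}_{t}(\mathbb{R}_{\geq 1};L^{2}_{x})$, I approximate the data $(h(t_{0}),\partial_{t}h(t_{0}))$ at some $t_{0}\geq 1$ by smooth data and use the well-posedness of this linear equation to make $t\mapsto Q(t,h)$ genuinely $C^{1}$; this legitimises every integration by parts below, and the final inequality then passes to the limit. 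Throughout, write $\mathcal{H}(t,x)=H_{0,1}\!\left(\tfrac{x-vt}{(1-v^{2})^{1/2}}\right)+H_{-1,0}\!\left(\tfrac{x+vt}{(1-v^{2})^{1/2}}\right)$, and abbreviate by $\dot H_{0,1}^{+}$, $\dot H_{-1,0}^{-}$ the functions $\dot H_{0,1}$, $\dot H_{-1,0}$ evaluated respectively at $\tfrac{x-vt}{(1-v^{2})^{1/2}}$ and $\tfrac{x+vt}{(1-v^{2})^{1/2}}$; note that $U(\phi)=\phi^{2}(1-\phi^{2})^{2}$ gives $\|U^{(3)}(\mathcal{H})\|_{L^{\infty}_{x}}\lesssim 1$, and that $\chi_{1}+\chi_{2}=1$, so the correction term equals $v\int_{\mathbb{R}}(1-2\chi_{1}(v;t,x))\partial_{t}h\,\partial_{x}h\,dx$.

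Split $Q(t,h)=Q_{en}(t,h)+Q_{cor}(t,h)$ into the quadratic energy part and this correction. Differentiating $Q_{en}$, substituting $\partial_{t}^{2}h=\partial_{x}^{2}h-\ddot U(\mathcal{H})h+f$ and integrating $\int\partial_{x}h\,\partial_{xt}h\,dx$ by parts gives
\begin{equation*}
\dot Q_{en}(t,h)=\int_{\mathbb{R}}f\,\partial_{t}h\,dx+\tfrac12\int_{\mathbb{R}}\partial_{t}\!\big[\ddot U(\mathcal{H})\big]h^{2}\,dx,
\end{equation*}
and by the chain rule $\partial_{t}[\ddot U(\mathcal{H})]=\tfrac{v}{(1-v^{2})^{1/2}}U^{(3)}(\mathcal{H})\big[-\dot H_{0,1}^{+}+\dot H_{-1,0}^{-}\big]$. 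The first term is $O(\|f\|_{L^{2}}\|(h,\partial_{t}h)\|_{H^{1}_{x}\times L^{2}_{x}})$; the second is of size $O(v)$ and is exactly what the correction must absorb. Differentiating $Q_{cor}$, using $\partial_{t}^{2}h=\partial_{x}^{2}h-\ddot U(\mathcal{H})h+f$ once more and integrating by parts in $x$, the output splits as follows: the pieces $v\int(1-2\chi_{1})\partial_{x}^{2}h\,\partial_{x}h$ and $v\int(1-2\chi_{1})\partial_{t}h\,\partial_{xt}h$ become $v\int\partial_{x}\chi_{1}\big[(\partial_{x}h)^{2}+(\partial_{t}h)^{2}\big]$, and since $|\partial_{x}\chi_{1}|\lesssim (p(v)t)^{-1}\lesssim (vt)^{-1}$ these are $O\big(t^{-1}\|(h,\partial_{t}h)\|^{2}\big)$; the $\partial_{t}\chi_{1}$ piece is likewise $O(t^{-1}\|(h,\partial_{t}h)\|^{2})$ because $|\partial_{t}\chi_{1}|\lesssim t^{-1}$ on its support; the $f$ piece is $O(v\|f\|_{L^{2}}\|\partial_{x}h\|_{L^{2}})$; and the potential piece $-v\int(1-2\chi_{1})\ddot U(\mathcal{H})h\,\partial_{x}h=\tfrac{v}{2}\int\partial_{x}\big[(1-2\chi_{1})\ddot U(\mathcal{H})\big]h^{2}$, whose $\partial_{x}\chi_{1}$-part is again $O(t^{-1}\|(h,\partial_{t}h)\|^{2})$, and whose main part, using $\partial_{x}\mathcal{H}=(1-v^{2})^{-1/2}[\dot H_{0,1}^{+}+\dot H_{-1,0}^{-}]$, equals $\tfrac{v}{2(1-v^{2})^{1/2}}\int(1-2\chi_{1})U^{(3)}(\mathcal{H})\big[\dot H_{0,1}^{+}+\dot H_{-1,0}^{-}\big]h^{2}$.

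Adding the two $U^{(3)}(\mathcal{H})$-integrals from $\dot Q_{en}$ and $\dot Q_{cor}$, the non-decaying $O(v)$ contributions combine into
\begin{equation*}
\frac{v}{(1-v^{2})^{1/2}}\int_{\mathbb{R}}U^{(3)}(\mathcal{H})\big[\chi_{2}(v;t,x)\,\dot H_{-1,0}^{-}-\chi_{1}(v;t,x)\,\dot H_{0,1}^{+}\big]h^{2}\,dx .
\end{equation*}
On $\supp\chi_{2}$ one has $\tfrac{x+vt}{(1-v^{2})^{1/2}}\geq \tfrac{v(1-10^{-3})^{2}t}{(1-v^{2})^{1/2}}$, hence by \eqref{le2} $|\dot H_{-1,0}^{-}|\lesssim\exp\!\big(-\tfrac{\sqrt2\,v(1-10^{-3})^{2}t}{(1-v^{2})^{1/2}}\big)$ there; on $\supp\chi_{1}$ one has $\tfrac{x-vt}{(1-v^{2})^{1/2}}\leq -\tfrac{v(1+10^{-3})t}{(1-v^{2})^{1/2}}$, so $|\dot H_{0,1}^{+}|$ is even smaller; with $\|U^{(3)}(\mathcal{H})\|_{L^{\infty}_{x}}\lesssim 1$ this whole integral is $O\big(v\exp(-\tfrac{\sqrt2\,v(1-10^{-3})^{2}t}{(1-v^{2})^{1/2}})\,\|h\|_{L^{2}}^{2}\big)$. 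Collecting this with all the $O(t^{-1}\|(h,\partial_{t}h)\|^{2})$ terms and the $O(\|f\|_{L^{2}}\|(h,\partial_{t}h)\|)$ terms yields precisely the stated bound.

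The main obstacle is this last cancellation: one must check that $\chi_{1},\chi_{2}$ are placed so that the $O(v)$ part of $\dot Q$ produced by the time translation of the potential $\ddot U(\mathcal{H})$ is genuinely cancelled — not merely bounded — leaving only the boundary contributions supported in the cut-off transition zone, and keeping careful track of which of the two profiles is exponentially small on the support of which cut-off, and in which direction the velocity $v$ transports it, is exactly what produces the precise exponent $(1-10^{-3})^{2}$ appearing in the statement. Everything else is the routine integration-by-parts bookkeeping indicated above, together with the standard smoothing argument used to justify it.
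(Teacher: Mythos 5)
Your proposal is correct and follows essentially the same route as the paper: the same split of $Q$ into energy and momentum-correction parts, the same substitution of the equation and integrations by parts, and the same cancellation of the $O(v)$ terms coming from $\partial_{t}[\ddot U(\mathcal{H})]$ against the $U^{(3)}(\mathcal{H})\partial_{x}\mathcal{H}$ term of the correction, with the leftover controlled by the support properties of $\chi_{1},\chi_{2}$ exactly as in the paper's estimate \eqref{ultimo}. The only cosmetic differences are that you keep the cut-offs inside the combined integral rather than first replacing $\chi_{1}\mapsto 1$, $\chi_{2}\mapsto 0$ at exponentially small cost, and you make the smoothing/approximation step explicit; your exponent $(1+10^{-3})$ on the $\chi_{1}\dot H_{0,1}^{+}$ piece is in fact the correct one and is in any case dominated by the binding exponent $(1-10^{-3})^{2}$.
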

\begin{proof}
First, from the equation satisfied by $h(t,x),$ we obtain that
\begin{multline}\label{ff}
    \int_{\mathbb{R}}\left[\partial^{2}_{t}h(t,x)-\partial^{2}_{x}h(t,x)+U^{''}\left(H_{0,1}\left(\frac{x-vt}{\sqrt{1-v^{2}}}\right)+H_{-1,0}\left(\frac{x+vt}{\sqrt{1-v^{2}}}\right)\right)h(t,x)^{2}\right]\partial_{t}h(t,x)\,dx\\
    =\int_{\mathbb{R}}f(t,x)\partial_{t}h(t,x)\,dx.
\end{multline}
As a consequence, we deduce by integration by parts that
\begin{multline}\label{este1}
    \frac{d}{ dt}\left[\int_{\mathbb{R}} \partial_{t}h(t)^{2}+\partial_{x}h(t)^{2}+U^{''}\left(H_{0,1}\left(\frac{x-vt}{\sqrt{1-v^{2}}}\right)+H_{-1,0}\left(\frac{x+vt}{\sqrt{1-v^{2}}}\right)\right)h(t)^{2}\,dx\right]
    \\
    \begin{aligned}
    =&{-}\frac{v}{\sqrt{1-v^{2}}}\int_{\mathbb{R}}U^{(3)}\left(H_{0,1}\left(\frac{x-vt}{\sqrt{1-v^{2}}}\right)+H_{-1,0}\left(\frac{x+vt}{\sqrt{1-v^{2}}}\right)\right) H^{'}_{0,1}\left(\frac{x-vt}{\sqrt{1-v^{2}}}\right)h(t)^{2}\,dx\\ &{+}\frac{v}{\sqrt{1-v^{2}}}\int_{\mathbb{R}}U^{(3)}\left(H_{0,1}\left(\frac{x-vt}{\sqrt{1-v^{2}}}\right)+H_{-1,0}\left(\frac{x+vt}{\sqrt{1-v^{2}}}\right)\right) H^{'}_{-1,0}\left(\frac{x+vt}{\sqrt{1-v^{2}}}\right)h(t)^{2}\,dx\\
&{+}2\int_{\mathbb{R}}f(t,x)h(t,x)\,dx.
\end{aligned}
\end{multline}
\par Next, from the definition of $\chi_{1}(v;t,x)$ and $\chi_{2}(v;t,x),$ we can verify for each $j\in \{1,2\}$ that
\begin{multline*}
   \begin{aligned}
    \frac{d}{dt}\left[v\int_{\mathbb{R}}\chi_{j}(v;t,x)(-1)^{j}\partial_{t}h(t,x)\partial_{x}h(t,x)\,dx\right]=&v\int_{\mathbb{R}}\chi_{j}(v;t,x)(-1)^{j}\partial^{2}_{t}h(t,x)\partial_{x}h(t,x)\,dx\\
&{+}v\int_{\mathbb{R}}\chi_{j}(v; t,x)(-1)^{j}\partial_{t}h(t,x)\partial^{2}_{t,x}h(t,x)\,dx
    \\&{+}
    O\left(\norm{\dot \chi}_{L^{\infty}_{x}(\mathbb{R})}\frac{v}{t }\norm{(h(t),\partial_{t}h(t))}_{H^{1}_{x}\times L^{2}_{x}}^{2}\right),
\end{aligned}
\end{multline*}
from which we deduce using integration by parts that
\begin{multline}\label{este2}
    \begin{aligned}
    \frac{d}{dt}\left[v\int_{\mathbb{R}}\chi_{j}(v;t,x)(-1)^{j}\partial_{t}h(t,x)\partial_{x}h(t,x)\,dx\right]= &v\int_{\mathbb{R}}\chi_{j}(v;t,x)(-1)^{j}\partial^{2}_{t}h(t,x)\partial_{x}r(t,x)\,dx\\
  &{+}O\left(\norm{\dot \chi}_{L^{\infty}_{x}(\mathbb{R})}\frac{1}{t }\norm{(h(t),\partial_{t}h(t))}_{H^{1}_{x}\times L^{2}_{x}}^{2}\right).
\end{aligned}
\end{multline}
From the equation satisfied by $h(t,x),$ we have that
\begin{multline*}
v\int_{\mathbb{R}}\chi_{j}(v;t,x)(-1)^{j}\partial^{2}_{t}h(t,x)\partial_{x}h(t,x)\,dx\\
 \begin{aligned}
=&v\int_{\mathbb{R}}\chi_{j}(v;t,x)(-1)^{j}f(t,x)\partial_{x}h(t,x)\,dx\\&{+}v\int_{\mathbb{R}}\chi_{j}(v;t,x)(-1)^{j}\partial^{2}_{x}h(t,x)\partial_{x}h(t,x)\,dx\\&{-}v\int_{\mathbb{R}}\chi_{j}(v;t,x)(-1)^{j} U^{''}\left(H_{0,1}\left(\frac{x-vt}{\sqrt{1-v^{2}}}\right)+H_{-1,0}\left(\frac{x+vt}{\sqrt{1-v^{2}}}\right)\right)h(t,x)\partial_{x}h(t,x)\,dx.\end{aligned}
\end{multline*}
So, using integration by parts, we obtain for any $j\in\{1,\,2\}$ that
\begin{multline*}
     2\sqrt{1-v^{2}}\int_{\mathbb{R}}\chi_{j}(v;t,x)\partial^{2}_{t}h(t,x)\partial_{x}h(t,x)\,dx\\
     \begin{aligned}
     =&\int_{\mathbb{R}}\chi_{j}(v;t,x) U^{(3)}\left(H_{0,1}\left(\frac{x-vt}{\sqrt{1-v^{2}}}\right)+H_{-1,0}\left(\frac{x+vt}{\sqrt{1-v^{2}}}\right)\right) H^{'}_{0,1}\left(\frac{x-vt}{\sqrt{1-v^{2}}}\right)h(t,x)^{2}\,dx\\ &{+}\int_{\mathbb{R}}\chi_{j}(v;t,x) U^{(3)}\left(H_{0,1}\left(\frac{x-vt}{\sqrt{1-v^{2}}}\right)+H_{-1,0}\left(\frac{x+vt}{\sqrt{1-v^{2}}}\right)\right) H^{'}_{-1,0}\left(\frac{x+vt}{\sqrt{1-v^{2}}}\right)h(t,x)^{2}\,dx\\
&{+}O\left(\norm{ \chi^{'}}_{L^{\infty}_{x}(\mathbb{R})}\frac{1}{vt }\norm{(h(t),\partial_{t}h(t))}_{H^{1}_{x}\times L^{2}_{x}}^{2}+\norm{f(t)}_{L^{2}_{x}}\norm{(h(t),\partial_{t}h(t))}_{H^{1}_{x}\times L^{2}_{x}}\right).
\end{aligned}
\end{multline*}
From the definitions of $\chi_{1}(v;t,x)$ and $\chi_{2}(v;t,x),$ we can verify for all $t>1$ that 
\begin{align*}
H^{'}_{0,1}\left(\frac{x-vt}{\sqrt{1-v^{2}}}\right)\chi_{1}(v;t,x)<&\sqrt{2}\exp\left(-\frac{\sqrt{2}vt(1+2\times 10^{-3})}{\sqrt{1-v^{2}}}\right),\\
    \dot H_{-1,0}\left(\frac{x+vt}{\sqrt{1-v^{2}}}\right)\chi_{2}(v;t,x)<&\sqrt{2}\exp\left(-\frac{\sqrt{2}vt(1-10^{-3})^{2}}{\sqrt{1-v^{2}}}\right),
\end{align*}
In conclusion, we obtain that
\begin{multline}\label{ultimo}
     \sum_{j=1}^{2}v\int_{\mathbb{R}}\chi_{j}(v;t,x)(-1)^{j}\partial^{2}_{t}h(t,x)\partial_{x}h(t,x)\,dx\\
     \begin{aligned}
     =&\frac{v}{2\sqrt{1-v^{2}}}\int_{\mathbb{R}} U^{(3)}\left(H_{0,1}\left(\frac{x-vt}{\sqrt{1-v^{2}}}\right)+H_{-1,0}\left(\frac{x+vt}{\sqrt{1-v^{2}}}\right)\right) H^{'}_{0,1}\left(\frac{x-vt}{\sqrt{1-v^{2}}}\right)h(t,x)^{2}\,dx\\
     &{-}\frac{v}{2\sqrt{1-v^{2}}}\int_{\mathbb{R}} U^{(3)}\left(H_{0,1}\left(\frac{x-vt}{\sqrt{1-v^{2}}}\right)+H_{-1,0}\left(\frac{x+vt}{\sqrt{1-v^{2}}}\right)\right) H^{'}_{-1,0}\left(\frac{x+vt}{\sqrt{1-v^{2}}}\right)h(t,x)^{2}\,dx\\
     &{+}O\left(\norm{\dot \chi}_{L^{\infty}_{x}(\mathbb{R})}\frac{1}{t }\norm{(h(t),\partial_{t}h(t))}_{H^{1}_{x}\times L^{2}_{x}}^{2}+v\norm{f(t)}_{L^{2}_{x}}\norm{(h(t),\partial_{t}h(t))}_{H^{1}_{x}\times L^{2}_{x}}\right)\\&{+}O\left(v \exp\left(-\frac{\sqrt{2}vt(1-10^{-3})^{2}}{\left(1-v^{2}\right)^{\frac{1}{2}}}\right)\norm{h(t,x)}_{H^{1}_{x}(\mathbb{R})}^{2}\right).
\end{aligned}
\end{multline}
So, using estimate \eqref{ultimo}, Lemma \ref{solutionp} will follow from the sum of \eqref{este1} and \eqref{este2}.
\end{proof}
\begin{lemma}\label{projl}
There is $C>0,$ such that, for any $0<v<1,$ if $f(t,x)\in L^{\infty}_{t}(\mathbb{R};H^{1}_{x}(\mathbb{R}))$ and $h(t,x) \in L^{\infty}_{t}\left(\mathbb{R}_{\geq 1};H^{1}_{x}(\mathbb{R})\right)\cap C^{1}_{t}\left(\mathbb{R}_{\geq 1};L^{2}_{x}(\mathbb{R})\right)$ is a solution of the integral equation associated to the following partial differential equation
\begin{equation*}
    \partial^{2}_{t}h(t,x)-\partial^{2}_{x}h(t,x)+U^{''}\left(H_{0,1}\left(\frac{x-vt}{\sqrt{1-v^{2}}}\right)+H_{-1,0}\left(\frac{x+vt}{\sqrt{1-v^{2}}}\right)\right)h(t,x)=f(t,x),
\end{equation*}
for some boundary condition $(h(t_{0}),\partial_{t}h(t_{0}))\in H^{1}_{x}(\mathbb{R})\times L^{2}_{x}(\mathbb{R}),$ then for
$\overrightarrow{h}(t)=(h(t,x),\partial_{t}h(t,x))$ we have
\begin{align*}
    \left \vert\frac{d}{dt}\left\langle\overrightarrow{h}(t),\,\psi^{0}_{-1,0}(v;t)\right\rangle \right \vert\leq &C\left[\norm{f(t)}_{L^{2}_{x}(\mathbb{R})}+\norm{\overrightarrow{h(t)}}_{H^{1}_{x}(\mathbb{R})\times L^{2}_{x}(\mathbb{R})}\exp\left(\frac{-2\sqrt{2}v t}{(1-v^{2})^{\frac{1}{2}}}\right)\right],\\
    \left \vert\frac{d}{dt}\left\langle\overrightarrow{h}(t),\,\psi^{0}_{0,1}(v;t)\right\rangle \right \vert\leq &  C\left[\norm{f(t)}_{L^{2}_{x}(\mathbb{R})}+\norm{\overrightarrow{h(t)}}_{H^{1}_{x}(\mathbb{R})\times L^{2}_{x}(\mathbb{R})}\exp\left(\frac{-2\sqrt{2}v t}{(1-v^{2})^{\frac{1}{2}}}\right)\right],
\end{align*}
and,
\begin{gather*}    
    \left\vert \frac{d}{dt}\left\langle\overrightarrow{h}(t),\,\psi^{1}_{-1,0}(v;t)\right\rangle+(1-v^{2})^{\frac{1}{2}}\left\langle\overrightarrow{h}(t),\,\psi^{0}_{-1,0}(v;t)\right\rangle\right\vert\leq  C\Bigg[\norm{f(t)}_{L^{2}_{x}}\\{+}\norm{\overrightarrow{h(t)}}_{H^{1}_{x}\times L^{2}_{x}}(\vert t\vert v+1)\exp\left(\frac{-2\sqrt{2}v t}{(1-v^{2})^{\frac{1}{2}}}\right)\Bigg],\\
    \left\vert \frac{d}{dt}\left\langle\overrightarrow{h}(t),\,\psi^{1}_{0,1}(v;t)\right\rangle+(1-v^{2})^{\frac{1}{2}}\left\langle\overrightarrow{h}(t),\,\psi^{0}_{0,1}(v;t)\right\rangle\right\vert\leq C\Bigg[\norm{f(t)}_{L^{2}_{x}}\\{+}\norm{\overrightarrow{h(t)}}_{H^{1}_{x}\times L^{2}_{x}}(\vert t\vert v+1)\exp\left(\frac{-2\sqrt{2}v t}{(1-v^{2})^{\frac{1}{2}}}\right)\Bigg],
\end{gather*}
\end{lemma}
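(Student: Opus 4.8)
The plan is to recast the linear equation for $h$ as a first order system and then differentiate each of the four scalar products, exploiting the almost-invariance of the functions $\psi^{j}_{\pm1,0}$ recorded in Lemma \ref{projest}. Writing $\overrightarrow{h}(t)=(h(t,x),\partial_{t}h(t,x))$ and recalling that $L_{+,-}(v,t)$ acts as the Schr\"odinger operator $-\partial_{x}^{2}+\ddot U\big(H_{0,1}(\tfrac{x-vt}{(1-v^{2})^{1/2}})+H_{-1,0}(\tfrac{x+vt}{(1-v^{2})^{1/2}})\big)$ in the first coordinate and as the identity in the second, the equation satisfied by $h$ is equivalent to
\begin{equation*}
  \partial_{t}\overrightarrow{h}(t)=J\circ L_{+,-}(v,t)\,\overrightarrow{h}(t)+(0,f(t,x)).
\end{equation*}
Since $L_{+,-}(v,t)$ is self-adjoint on $L^{2}_{x}(\mathbb{R})\times L^{2}_{x}(\mathbb{R})$ and $J$ is real antisymmetric, for any Schwartz pair $\Psi$ one has $\langle J\circ L_{+,-}(v,t)\overrightarrow{h},\Psi\rangle=-\langle\overrightarrow{h},L_{+,-}(v,t)\circ J\,\Psi\rangle$, the integration by parts in $x$ being legitimate because the two components of $\psi^{j}_{-1,0}(v;t)$ and $\psi^{j}_{0,1}(v;t)$ are Schwartz functions --- the factor $x$ occurring in $\psi^{1}_{\pm1,0}$ is harmless because $\dot H_{0,1}$ and $\ddot H_{0,1}$ decay exponentially by \eqref{le2}.

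Consequently, differentiating in $t$ and combining the product rule with the adjoint identity,
\begin{equation*}
  \frac{d}{dt}\big\langle\overrightarrow{h}(t),\Psi(t)\big\rangle=\big\langle\overrightarrow{h}(t),\ \partial_{t}\Psi(t)-L_{+,-}(v,t)\circ J\,\Psi(t)\big\rangle+\big\langle(0,f(t,x)),\Psi(t)\big\rangle
\end{equation*}
for $\Psi(t)\in\{\psi^{0}_{\pm1,0}(v;t),\psi^{1}_{\pm1,0}(v;t)\}$. For $\Psi(t)=\psi^{0}_{\pm1,0}(v;t)$ the first two estimates of Lemma \ref{projest} bound $\norm{\partial_{t}\Psi(t)-L_{+,-}(v,t)\circ J\,\Psi(t)}_{L^{2}_{x}}$ by $Ce^{-2\sqrt{2}vt/(1-v^{2})^{1/2}}$, while for $\Psi(t)=\psi^{1}_{\pm1,0}(v;t)$ the last two estimates give $\partial_{t}\Psi(t)-L_{+,-}(v,t)\circ J\,\Psi(t)=-(1-v^{2})^{1/2}\psi^{0}_{\pm1,0}(v;t)+R(t)$ with $\norm{R(t)}_{L^{2}_{x}}\leq C(|t|v+1)v\,e^{-2\sqrt{2}vt/(1-v^{2})^{1/2}}$; the term $-(1-v^{2})^{1/2}\langle\overrightarrow{h}(t),\psi^{0}_{\pm1,0}(v;t)\rangle$ is exactly the one that is moved to the left-hand side in the $\psi^{1}$ estimates. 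In every case I would then invoke Cauchy--Schwarz, bound $\norm{\overrightarrow{h}(t)}_{L^{2}_{x}\times L^{2}_{x}}\leq\norm{\overrightarrow{h}(t)}_{H^{1}_{x}\times L^{2}_{x}}$, use $0<v<1$ and $t\geq1$ to drop the extra factor of $v$ and write $|t|=t$, and estimate the source term by $|\langle(0,f(t,x)),\Psi(t)\rangle|=|\langle f(t,x),\Psi_{2}(t,x)\rangle|\leq\norm{f(t)}_{L^{2}_{x}}\norm{\Psi_{2}(t)}_{L^{2}_{x}}\lesssim\norm{f(t)}_{L^{2}_{x}}$, the uniform bound $\norm{\Psi_{2}(t)}_{L^{2}_{x}}\lesssim1$ for $0<v<1$ following from \eqref{le2} and a rescaling of the integral (the second components of $\psi^{0}_{\pm1,0}$ and $\psi^{1}_{\pm1,0}$ are, up to the translation by $vt$ and a multiplicative factor bounded by $1$, fixed Schwartz functions). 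This gives the four claimed estimates.

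The one genuinely technical point --- and the step I expect to need the most care --- is that $h$ is assumed only to solve the integral equation and to lie in $L^{\infty}_{t}(H^{1}_{x})\cap C^{1}_{t}(L^{2}_{x})$, so a priori $\partial_{t}^{2}h(t)\notin L^{2}_{x}$ and neither the termwise $t$-differentiation of $\langle\overrightarrow{h}(t),\Psi(t)\rangle$ nor the integration by parts moving $L_{+,-}(v,t)$ onto $\Psi(t)$ is literally justified at that regularity. I would settle this in the usual way: by density of smooth functions and continuous dependence of the linear flow generated by $J\circ L_{+,-}(v,t)$ on $H^{1}_{x}\times L^{2}_{x}$ it suffices to prove the four estimates for solutions issued from smooth boundary data, for which all of the above manipulations are classical; equivalently, one reads off the displayed derivative identity directly from the Duhamel representation of $\overrightarrow{h}(t)$, so that each map $t\mapsto\langle\overrightarrow{h}(t),\Psi(t)\rangle$ is genuinely $C^{1}$. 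With this in hand the remainder is a routine application of Lemma \ref{projest} and the Cauchy--Schwarz inequality, in the spirit of the proof of Proposition $2.8$ of \cite{multison}.
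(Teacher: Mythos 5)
Your argument is correct and is exactly the paper's proof: the paper also rewrites the equation as the first-order system $\partial_{t}\overrightarrow{h}=J\circ L_{+,-}\overrightarrow{h}+(0,f)$ and reads the four estimates off from Lemma \ref{projest} together with the antisymmetry of $J$, self-adjointness of $L_{+,-}$, and Cauchy--Schwarz. Your additional remarks on justifying the differentiation at the stated regularity via the Duhamel representation only make explicit what the paper leaves implicit.
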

\begin{proof}[Proof of Lemma \ref{projl}]
It follows directly from the identity
\begin{equation}
    \frac{d}{dt}\overrightarrow{h}(t)=J  L_{+,-} \overrightarrow{h}(t)+
    \begin{bmatrix}
     0\\
     f(t,x)
    \end{bmatrix},
\end{equation}
and from Lemma \ref{projest}.
\end{proof}
\begin{proof}[Proof of Theorem \ref{purepure}]
For $T_{0}\geq \frac{4\ln{\left(\frac{1}{v}\right)}}{v},$ we consider similarly to \cite{multison} the following norms denoted by
\begin{equation*}
    \norm{u}_{L^{2}_{v,T_{0}}}=\sup_{t\geq T_{0}}e^{vt}\norm{u(t,x)}_{L^{2}_{x}(\mathbb{R})},\, \norm{u}_{H^{1}_{v,T_{0}}}=\sup_{t\geq T_{0}}e^{vt}\left[\norm{u(t,x)}_{H^{1}_{x}(\mathbb{R})}^{2}+\norm{\partial_{t}u(t,x)}_{L^{2}_{x}(\mathbb{R})}^{2}\right]^{\frac{1}{2}}.
\end{equation*}
Next, from Lemma \ref{projl}, we can verify using the Fundamental Theorem of Calculus that there is a constant $C>1$ such that if $v\ll 1,$ then for any $t\geq T_{0}$ we have that
\begin{align}\label{ee1}
    \left\vert\left\langle\overrightarrow{h}(t),\,\psi^{0}_{-1,0}(v;t)\right\rangle\right\vert\leq & C\left[\norm{f}_{L^{2}_{v,T_{0}}}\frac{e^{-vt}}{v}+\norm{h}_{H^{1}_{v,T_{0}}}\frac{e^{-(2\sqrt{2}+1)vt}}{v}\right],\\ \label{ee2}
    \left\vert\left\langle\overrightarrow{h}(t),\,\psi^{1}_{-1,0}(v;t)\right\rangle\right\vert\leq &C\left[\norm{f}_{L^{2}_{v,T_{0}}}\frac{e^{-vt}}{v^{2}}+\norm{h}_{H^{1}_{v,T_{0}}}te^{{-}\left(2\sqrt{2}+1\right)vt}+\norm{h}_{H^{1}_{v,T_{0}}}\frac{e^{{-}\left(2\sqrt{2}+1\right)vt}}{v^{2}}\right],
\end{align}
and that
\begin{align}\label{eee1}
 \left\vert\left\langle\overrightarrow{h}(t),\,\psi^{0}_{0,1}(v;t)\right\rangle\right\vert\leq &C\left[\norm{f}_{L^{2}_{v,T_{0}}}\frac{e^{-vt}}{v}+\norm{h}_{H^{1}_{v,T_{0}}}\frac{e^{-(2\sqrt{2}+1)vt}}{v}\right],\\ \label{eee2}
   \left\vert \left\langle\overrightarrow{h}(t),\,\psi^{1}_{0,1}(v;t)\right\rangle\right\vert\leq & C\left[\norm{f}_{L^{2}_{v,T_{0}}}\frac{e^{-vt}}{v^{2}}+\norm{h}_{H^{1}_{v,T_{0}}}te^{{-}\left(2\sqrt{2}+1\right)vt}+\norm{h}_{H^{1}_{v,T_{0}}}\frac{e^{{-}\left(2\sqrt{2}+1\right)vt}}{v^{2}}\right].
\end{align}
\par Also, from Lemma \ref{solutionp}, we can verify using the Fundamental Theorem of Calculus for any $t\geq T_{0}$ that there is a constant $K\geq 1$ such that if $v\ll1,$ then
\begin{multline}\label{integest}
    \int_{t}^{+\infty}\left\vert \frac{d}{ds}Q(s,h) \right\vert\,ds\leq K\Bigg[\frac{e^{-2vt}}{v}\norm{f}_{L^{2}_{v,T_{0}}}\norm{h}_{H^{1}_{v,T_{0}}}\\{+}\norm{h}_{H^{1}_{v,T_{0}}}^{2}\left(\frac{e^{-2vt}}{vt}+e^{-t(2v+\sqrt{2}v(1-10^{-3})^{2})}\right)\Bigg]
\end{multline}
In conclusion, similarly Step $1$ in the proof of Lemma $3.1$ of \cite{multison}, we deduce using the estimates \eqref{ee1}, \eqref{eee1}, \eqref{ee2}, \eqref{eee2} with Lemma \ref{coerQ} and the estimate above \eqref{integest} that there exists a new constant $C>1$ such that for any $t\geq T_{0}$ and $v\ll 1$ we have
\begin{equation}\label{globb}
    \norm{h}_{H^{1}_{v,T_{0}}}^{2}\leq \frac{C}{v^{4}} \norm{f}_{L^{2}_{v,T_{0}}}^{2}.
\end{equation}
The fact that the constant $C$ in \eqref{globb} is independent of $v$ follows from $T_{0}\geq \frac{4\ln{\left(\frac{1}{v}\right)}}{v},$ which implies that
\begin{equation*}
    \frac{e^{-2vt}}{v^{4}}+\frac{e^{-2vt}}{vt}\ll v^{4}.
\end{equation*}
\par We also observe that if $(g_{1}(t,x),\partial_{t}g_{1}(t,x))$ and $(g_{2}(t,x),\partial_{t}g_{2}(t,x))$ are in the space $(g(t),\partial_{t}g(t))\in H^{1}_{x}(\mathbb{R})\times L^{2}_{x}(\mathbb{R})$ such that
\begin{equation}
    \norm{(g(t),\partial_{t}g(t))}_{L^{\infty}\left([T_{0},+\infty],H^{1}_{x}\times L^{2}_{x}\right)}\leq 1,
\end{equation}
then, since $U\in C^{\infty},$ we can verify that the following function
\begin{multline}
    N(v,\overrightarrow{g})(t,x)\\
    \begin{aligned}
    =&U^{'}\left(H_{-1,0}\left(\frac{x+vt}{\sqrt{1-v^{2}}}\right)+H_{0,1}\left(\frac{x-vt}{\sqrt{1-v^{2}}}\right)+g(t,x)\right) - U^{'}\left(H_{-1,0}\left(\frac{x+vt}{\sqrt{1-v^{2}}}\right)\right)\\&{-} U^{'}\left(H_{0,1}\left(\frac{x-vt}{\sqrt{1-v^{2}}}\right)\right)
    -U^{''}\left(H_{-1,0}\left(\frac{x+vt}{\sqrt{1-v^{2}}}\right)+H_{0,1}\left(\frac{x-vt}{\sqrt{1-v^{2}}}\right)\right)g(t,x)
\end{aligned}
\end{multline}
satisfies for some new constant $C\geq 1$ and any $v\ll 1$
\begin{equation*}
    \norm{ N(v,\overrightarrow{g_{1}(t)})-N(v,\overrightarrow{g_{2}(t)})}_{H^{1}_{x}}\leq C\left[\norm{g_{1}(t)}_{H^{1}_{x}}+\norm{g_{2}(t)}_{H^{1}_{x}}\right]\norm{g_{1}(t)-g_{2}(t)}_{H^{1}_{x}},
\end{equation*}
which implies the following estimate given by
\begin{equation}\label{nonlinearity}
    \norm{N(v,\overrightarrow{g_{1}(t)})-N(v,\overrightarrow{g_{2}(t)})}_{H^{1}_{v,T_{0}}}\leq C e^{-vt}\left[\norm{g_{1}}_{H^{1}_{v,T_{0}}}+\norm{g_{2}}_{H^{1}_{v,T_{0}}}\right]\norm{g_{1}-g_{2}}_{H^{1}_{v,T_{0}}}.
\end{equation}
In conclusion, by repeating the argument of the proof of proposition $3.6$ of \cite{multison}, we can verify using the Lipschtiz estimate of \eqref{nonlinearity} and estimate \eqref{globb} that if $T_{0}\geq \frac{4\ln{\left(\frac{1}{v}\right)}}{v}$ and $v\ll 1,$ then there exists a map
\begin{equation}\label{SS}
    S:\{u\in H^{1}_{v,T_{0}}\vert \norm{u}_{H^{1}_{v,T_{0}}}\leq 1\}\to \{u\in H^{1}_{v,T_{0}}\vert \norm{u}_{H^{1}_{v,T_{0}}}\leq 1\}
\end{equation}
such that $\mu(t,x)=S(u)(t,x)$ is the unique solution of the equation
\begin{equation}\label{pppp}
    \partial^{2}_{t}\mu(t,x)-\partial^{2}_{x}\mu(t,x)+ U^{''}\left(H_{-1,0}\left(\frac{x+vt}{\sqrt{1-v^{2}}}\right)+H_{0,1}\left(\frac{x-vt}{\sqrt{1-v^{2}}}\right)\right)\mu(t,x)=N(v,\overrightarrow{\mu})(t,x),
\end{equation}
such that $\mu \in H^{1}_{v,T_{0}}.$ Indeed, the uniqueness is guaranteed by estimate \eqref{globb} and from estimates \eqref{globb} and \eqref{nonlinearity} we have that the map $S$ is a contraction in the set
\begin{equation*}
B=\{u\in H^{1}_{v,T_{0}}\vert \norm{u}_{H^{1}_{v,T_{0}}}\leq 1\},    
\end{equation*}
and so, Theorem \ref{purepure} follows similarly to the proof of Proposition $3.6$ of \cite{multison} by using the Banach's fixed point theorem.
\end{proof}

\end{document}